\newtheorem{theorem}{Theorem}[section]
\newtheorem{lemma}[theorem]{Lemma}
\newtheorem{proposition}[theorem]{Proposition}
\newtheorem{corollary}[theorem]{Corollary}
\theoremstyle{definition}
\newtheorem{definition}[theorem]{Definition}
\newtheorem{problem}[theorem]{Problem}
\theoremstyle{remark}
\newtheorem{remark}[theorem]{Remark}
\newtheorem{example}[theorem]{Example}
\newtheorem{notation}[theorem]{Notation}
\numberwithin{equation}{section}
\DeclareMathOperator{\trop}{\mathrm{Trop}}
\DeclareMathOperator{\ptrop}{Trop_{>0}}
\DeclareMathOperator{\nntrop}{Trop_{\geq 0}}
\DeclareMathOperator{\Hom}{Hom}
\DeclareMathOperator{\im}{Im}
\DeclareMathOperator{\supp}{Supp}
\DeclareMathOperator{\spec}{Spec}
\DeclareMathOperator{\sat}{Sat}
\DeclareMathOperator{\Star}{Star}
\DeclareMathOperator{\init}{in}
\DeclareMathOperator{\Newton}{Newton}
\newcommand{\torus}{\mathbb{T}}
\newcommand{\N}{\mathbb{N}}
\newcommand{\Z}{\mathbb{Z}}
\newcommand{\Q}{\mathbb{Q}}
\newcommand{\R}{\mathbb{R}}
\newcommand{\C}{\mathbb{C}}
\newcommand{\V}{\mathcal{V}}
\newcommand{\W}{\mathcal{W}}
\newcommand{\ri}{\overline{\R}}
\newcommand{\zcal}{\mathcal{Z}}
\newcommand{\scal}{\mathcal{S}}
\begin{document}
\title{Local tropicalization}

\author{Patrick Popescu-Pampu}
   \address{Universit{\'e} Lille 1, UFR de Maths., B\^atiment M2\\
     Cit\'e Scientifique, 59655, Villeneuve d'Ascq Cedex, France.}
   \email{patrick.popescu@math.univ-lille1.fr}
 \author{Dmitry Stepanov}
   \address{Bauman Moscow State Technical University, Dep. of
     mathematical modeling \\
     ul. 2-ya Baumanskaya 5, Moscow 105005, Russia}
   \email{dstepanov@bmstu.ru}
\thanks{The second author was supported by the Russian Grants for Leading Scientific 
Schools no. 5139.2012.1, and RFBR grant no. 11-01-00336-a.}

\subjclass[2010]{Primary 14T05; Secondary 32S05, 14M25}
\date{19th of December 2012}

\keywords{Bieri-Groves sets, semigroups, singularities, toric geometry, toroidal 
  embeddings, tropicalization, valuation spaces}

\begin{abstract}
In this paper we propose a general functorial definition of the 
operation of \emph{local tropicalization} in commutative algebra. 
Let $R$ be a commutative ring, $\Gamma$ a finitely generated subsemigroup of a 
lattice, $\gamma : \Gamma \rightarrow R/ R^*$ a morphism of semigroups, and
$\V(R)$ the topological space of valuations on $R$ taking values in $\R \cup \infty$.
Then we may \emph{tropicalize} with respect to $\gamma$ 
any subset $\W$ of the space of valuations $\V(R)$. By definition, we get a subset
of a rational polyhedral 
cone canonically associated to $\Gamma$, enriched with strata at infinity. 
In particular, when $R$ is a local ring, $\gamma$ is a \emph{local} morphism of 
semigroups, and $\W$ is the space of valuations which are either positive or 
non-negative on $R$, we call these processes \emph{local tropicalizations}. They 
depend only on the ambient toroidal structure, which in turn allows to define 
tropicalizations of subvarieties of toroidal embeddings. We prove that with suitable 
hypothesis, these 
local tropicalizations are the supports of finite rational polyhedral fans enriched 
with strata at infinity and 
we compare the global and local tropicalizations of a subvariety of a 
toric variety. 
\end{abstract}

\maketitle

\tableofcontents

\section{Introduction}

Let $V$ be a subvariety of a torus $(K^*)^n$ over an algebraically closed field 
$K$ endowed with a non-trivial valuation $v: K \rightarrow \R \cup \{\infty\}$. 
Denote by $I_V$ the ideal of $V$ in the ring $K[X_1^{\pm 1} ,\dotsc, X_n^{\pm 1}]$ 
of Laurent polynomials. Denote by 
$x_1 ,\dotsc, x_n$ the images in the ring $K[V]$ 
of the canonical coordinate functions $X_1 ,\dotsc, X_n$ on $(K^*)^n$. 

If $\R^n$ is the real vector space generated by the lattice $\Z^n$ of 
$1$-parameter subgroups of the torus $(K^*)^n$, we may also think about a vector 
$w \in \R^n$ as a weight of the variables $X_1 ,\dotsc, X_n$. Then, by 
definition, the $w$-initial ideal $\init_w(I_V)$ is generated by all $w$-initial 
forms of elements of $I_V$ (see also Section~\ref{S:powerseriesring}).

One associates canonically to $V \subset (K^*)^n$ a polyhedral set (that is, a set
that may be represented as a finite union of convex polyhedra) in $\R^n$. 
This set is called \emph{the tropicalization} of $X$. It can be defined in at least 
three different but equivalent ways, expressed as conditions (1), (2) and (3) in 
the following theorem (see \cite{Sp 02}, \cite{SS 04}, \cite{EKL 06}, \cite{P 07},
\cite{D 08}):

\begin{theorem}\label{T:3defs}
  The following subsets of $\R^n$ coincide (the horizontal bar meaning the 
  closure with respect to the usual topology of $\R^n$):
  \begin{enumerate}
    \item \label{pointsvp}
          $\overline{\{(v(s_1) ,\dotsc, v(s_n))\: | \: (s_1 ,\dotsc, s_n) \in V \} }$.

    \item \label{invp}
          $\{ w \in \R^n \: | \:  \mbox{ the ideal } \init_w(I_V) \mbox{ is 
           monomial free } \}$.

    \item \label{valvp} 
          $\overline{\{ (W(x_1)  ,\dotsc,  W(x_n) ) \: | \: W \mbox{ is a valuation of }
            K[V] \mbox{ extending } v\} }$. 
  \end{enumerate}
\end{theorem}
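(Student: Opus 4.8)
The plan is to prove the equality of the three sets by establishing a cycle of inclusions, following the now-classical strategy but organized so that the core content is concentrated in one step. First I would fix notation: write $\mathrm{trop}_v(V)$, $\mathrm{trop}_{\init}(V)$, $\mathrm{trop}_{\mathrm{val}}(V)$ for the three candidate sets in (\ref{pointsvp}), (\ref{invp}), (\ref{valvp}) respectively, and observe that all three are closed by construction (the first and third by the explicit closure operation, the second because the condition ``$\init_w(I_V)$ is monomial-free'' is easily seen to define a closed set, e.g. by an argument showing its complement is a union of relatively open polyhedral cones). I would then recall the valuation-theoretic setup: any point $(s_1,\dots,s_n)\in V$ gives the valuation $f\mapsto v(f(s_1,\dots,s_n))$ on $K[V]$ extending $v$, which immediately yields $\mathrm{trop}_v(V)\subseteq\mathrm{trop}_{\mathrm{val}}(V)$; this is the cheap inclusion.

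Next I would prove $\mathrm{trop}_{\mathrm{val}}(V)\subseteq\mathrm{trop}_{\init}(V)$. The key point is that if $W$ is a valuation of $K[V]$ extending $v$ with $W(x_i)=w_i$, then for any $f\in I_V$ the $w$-initial form $\init_w(\bar f)$ cannot be a monomial: pulling $f$ back to a Laurent polynomial and evaluating $W$ on it, the terms of minimal $w$-weight must cancel in $K[V]$ (since $\bar f=0$ there), so $\init_w(I_V)$ contains no monomial. Since $\mathrm{trop}_{\init}(V)$ is closed this gives the inclusion of closures. The reverse-direction link, $\mathrm{trop}_{\init}(V)\subseteq\mathrm{trop}_{\mathrm{val}}(V)$, is the substantive step and I expect it to be the main obstacle: given $w$ with $\init_w(I_V)$ monomial-free, one must construct an actual valuation $W$ of $K[V]$ extending $v$ with prescribed values $w_i$ on the coordinates. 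The standard route is to first reduce to rational $w$ by a density/closedness argument (rational points are dense in any polyhedral set, and one checks the rational points of $\mathrm{trop}_{\init}$ lie in $\mathrm{trop}_{\mathrm{val}}$, then take closures), then for rational $w$ use a Gr\"obner-degeneration argument: the initial degeneration $\init_w(I_V)$ being monomial-free means the corresponding initial scheme is a nonempty subscheme of the torus, and by a theorem on existence of valuations (or by choosing a point of the initial variety over a suitable valued extension field and composing with the $w$-grading valuation) one produces the desired $W$. Here is where one invokes, as cited, the results of Speyer--Sturmfels, Draisma, and Payne; if this theorem is being quoted rather than reproved, I would simply attribute this implication to \cite{SS 04}, \cite{D 08}, \cite{P 07}.

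Finally I would close the loop with $\mathrm{trop}_{\init}(V)\subseteq\overline{\mathrm{trop}_v(V)}$, which is the Fundamental Theorem of Tropical Geometry in its classical form: for rational $w$ with $\init_w(I_V)$ monomial-free and $K$ algebraically closed with a nontrivial valuation (whose value group we may assume after a field extension to be divisible and to contain $w$), the initial variety $\mathbf{V}(\init_w(I_V))$ has a point with all coordinates nonzero, and by a lifting argument (Hensel-type, or the curve-selection/Puiseux argument over the valued field) this lifts to a point $(s_1,\dots,s_n)\in V$ with $(v(s_1),\dots,v(s_n))$ arbitrarily close to $w$; passing to closures and using density of rational points finishes it. The upshot is the chain
\[
\mathrm{trop}_v(V)\subseteq\mathrm{trop}_{\mathrm{val}}(V),\qquad
\mathrm{trop}_{\mathrm{val}}(V)\subseteq\mathrm{trop}_{\init}(V),\qquad
\mathrm{trop}_{\init}(V)\subseteq\overline{\mathrm{trop}_v(V)}\subseteq\overline{\mathrm{trop}_{\mathrm{val}}(V)}=\mathrm{trop}_{\mathrm{val}}(V),
\]
which forces all three sets to coincide. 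I would present the two easy inclusions in full and treat the lifting step and the valuation-existence step by citing the literature listed in the excerpt, since reproving the Fundamental Theorem from scratch is not the point of this paper; the only genuinely delicate bookkeeping is the reduction to rational weights and the compatibility of the various closure operations, which I would handle with a short density lemma stated once and reused.
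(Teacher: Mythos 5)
The paper itself does not prove Theorem~\ref{T:3defs}; it is quoted as a known result with citations to \cite{Sp 02}, \cite{SS 04}, \cite{EKL 06}, \cite{P 07}, \cite{D 08}, and the rest of the paper uses it as a black box. So there is no ``paper's own proof'' to compare against, and your decision to sketch the standard cycle of inclusions while citing the literature for the hard steps is exactly in the spirit of how the paper handles this statement.

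That said, a few remarks on the sketch itself. Your cycle
$\mathrm{trop}_v(V)\subseteq\mathrm{trop}_{\mathrm{val}}(V)\subseteq\mathrm{trop}_{\init}(V)\subseteq\overline{\mathrm{trop}_v(V)}=\mathrm{trop}_v(V)$
is logically complete (note that the separate argument for $\mathrm{trop}_{\init}(V)\subseteq\mathrm{trop}_{\mathrm{val}}(V)$ is then redundant, since it follows from the cycle). The first inclusion is indeed cheap: evaluation at a $K$-point of $V\subseteq(K^*)^n$ gives a valuation of $K[V]$ extending $v$ and taking finite values on the $x_i$. For the second inclusion, be careful to say that the minimum is taken over $v(c_u)+\langle u,w\rangle$, not over ``$w$-weight'' alone; the initial form in the non-trivially-valued setting is weighted by the coefficient valuations, and it is the ultrametric inequality applied to the vanishing relation $\sum_u c_u\bar x^u=0$ in $K[V]$ that forces at least two exponents to attain the minimum. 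Your argument that $\mathrm{trop}_{\init}(V)$ is closed is the one point that is genuinely hand-waved: the clean statement is that the Gr\"obner complex of $I_V$ (with respect to $v$) is a finite polyhedral complex on whose relatively open cells the initial ideal is constant, and the monomial-free condition picks out a closed subcomplex because $\init_{w'}(\init_w(I_V))=\init_{w'}(I_V)$ when $w'$ lies in the closure of the cell containing $w$; this itself requires the Gr\"obner-basis machinery that \cite{SS 04} and the later references set up. Since you are explicitly deferring those technical steps to the cited sources (as the paper does), this is acceptable, but if the sketch were ever to be fleshed out, this closedness argument and the lifting step are where the real work lives, and one should keep them as separate lemmas rather than folding them into a density remark.
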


Our initial aim was to define \emph{a local analog of tropicalization},
adapted to the study of singularities and of their deformations. More precisely, 
we wanted to tropicalize ideals of formal power series rings of the form 
$K[[X_1 ,\dotsc, X_n]]$, where $K$ is any field, and to compare them with the 
previous (global) tropicalizations. 

As subvarieties of tori are most naturally studied by taking their closures in 
associated toric varieties, we wanted to be able to define, 
more generally, tropicalizations of ideals in formal completions of the coordinate 
rings of affine toric varieties at their closed orbits, that is, in rings of the 
form $K[[\Gamma]]$, where $\Gamma$ is a (not necessarily saturated) finitely 
generated subsemigroup of a lattice. In the sequel, following \cite{CLS 11}, we 
call such semigroups \emph{affine}. 
In order to get more geometric flexibility (see Remark \ref{reasnn}), 
we consider not necessarily normal toric 
varieties, that is, not necessarily saturated semigroups. 

In order to compare local and global tropicalizations, we have to change the ring 
defining the object under study. That is why we need to develop a sufficiently 
general functorial framework for tropicalization. Among the characterizations 1--3 
in the previous definition of (global) tropicalization, it is the third one which 
lends itself most easily to such a functorial treatment. This is not surprising
since the set described by (\ref{valvp}) is an image of Berkovich's analytification
of $V$, see \cite{B 90}. Therefore, we propose the following general framework for 
tropicalization (both local and global): 

\begin{itemize}
  \item  Start from a semigroup morphism 
     $(\Gamma, +) \stackrel{\gamma}{\longrightarrow} (R, \cdot)$ from an affine 
     semigroup to the multiplicative semigroup of an arbitrary commutative ring. 
     
   \item  Consider the space $\V(R)$ of valuations of the ring $R$ with values in 
         $\overline{\R}= \R \cup \{\infty\}$.  
         
    \item Consider the tautological map: 
    $$ \begin{array}{ccc}
             \V(R) & \stackrel{\gamma^*}{\longrightarrow} & 
                   L(\Gamma) : = \Hom(\Gamma, \overline{\R}) \\
             \nu &  \longmapsto   &   \nu \circ \gamma . 
          \end{array}.  $$
          
  \item If $\W$ is any subset of $\V(R)$, its \emph{tropicalization} is defined as 
     the image $\gamma^*(\W)$. 
\end{itemize}

This construction is a functor from the category of pairs $(\gamma, \W)$ and 
commutative diagrams of morphisms between such pairs to that of maps 
$\W \longrightarrow L(\Gamma)$ and commutative diagrams between them. 

We speak about \emph{local tropicalization} when $(R,\mathfrak{m})$ is a local 
ring, $\gamma$ is a local morphism (that is, $\gamma^{-1}(\mathfrak{m})$ is the set 
of non-invertible elements of $\Gamma$), and $\W$ is a subset of the space of 
valuations centered on $R$ (that is, nonnegative on $R$). There are two main 
instances of local tropicalization. The \emph{positive tropicalization} of $R$ with 
respect to a local morphism $\gamma$ is the tropicalization of the space $\W$ of 
valuations which are strictly positive on the maximal ideal $\mathfrak{m}$ of $R$. 
The \emph{nonnegative tropicalization} is defined similarly, with the only 
difference that we tropicalize all nonnegative valuations on $\mathfrak{m}$.

We consider the following particular instances of the previous definition:

\begin{itemize}
   \item $\Gamma= \Z^n$, $R = K[V]$ where $V$ is an algebraic subvariety of the 
       torus $(K^*)^n$, $\gamma$ is the natural morphism which sends each basis 
       vector $e_i$ of $\Z^n$ to the image $x_i$ in $K[V]$ of the corresponding 
       variable $X_i$, and $\W$ is the set of valuations extending the given one 
       on $K$. Therefore, as a special case of our definition, we get the third 
       version of the definition of the tropicalization of a subvariety of a torus
       as in Theorem~\ref{T:3defs}.

   \item  $\Gamma$ is an arbitrary saturated affine semigroup, $R = K[V]$, 
       $V$ being an algebraic subvariety of the affine toric variety 
       $\mbox{Spec } K[\Gamma]$ defined by $\Gamma$ over $K$ and $\W$ is the whole 
       space $\V(R)$. We get then the notion of tropicalization of a subvariety of 
       a normal affine toric variety introduced by Payne \cite{P 08}. 

\end{itemize}

Our definition of local tropicalization can be applied in the following new 
setting:

\begin{itemize}
   \item We let $I$ be an ideal of a power-series ring $K[[\Gamma]]$, 
       $R := K[[\Gamma]] / I$, $\gamma$ be the natural semigroup morphism 
       associating to each element of $\Gamma$ the image in $R$ of the corresponding 
       monomial, and $\W$ be the subspace of $\V(R)$ of valuations centered at $R$ 
       which extend the trivial valuation of $K$. 
\end{itemize}

Our main structural results about tropicalization state the piecewise-linear 
structure of the local positive tropicalization (see 
Theorem~\ref{T:main} and Proposition~\ref{globloc} for the
general statements). To give the reader an idea of these results, we state here a 
particular case. Let us take $\Gamma=\Z_{\geq 0}^{n}$. Then $K[[\Gamma]]$ is 
isomorphic to the ring of formal power series in $n$ variables.

\begin{theorem}
Let $I$ be an ideal of the ring $K[[X_1 ,\dotsc, X_n]]$ of formal power series in $n$
variables over an arbitrary field $K$ endowed with the trivial valuation. Then:
  \begin{enumerate}
       \item The finite part of the local positive tropicalization $\ptrop(I)$ of 
          the ideal $I$ (that is, of the natural morphism from $\Gamma$ 
          to the quotient local ring 
          $K[[X_1,\dotsc ,X_n]]/I$) is the support of a finite rational polyhedral 
          fan in $(\R_+)^n$. 
          
       \item  If $I$ is prime and $K[[X_1,\dotsc,X_n]]/I$ has Krull dimension $d$, 
          then $\ptrop(I)$ has pure dimension $d$. 
            
      \item If $I$ is the formal completion of the localization at $0$ of an ideal 
          $J$ of the polynomial ring $K[X_1,\dotsc,X_n]$, then the local positive 
          tropicalization $\ptrop(I)$ coincides with the global tropicalization 
          $\trop(J)$ of the subvariety of the torus defined by $J$ inside the open 
          cone $(\R_{>0})^n$. 
   \end{enumerate}
\end{theorem}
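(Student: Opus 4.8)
The plan is to treat the three parts separately, since each invokes a different flavour of the general machinery and the difficulty is concentrated in part (1).

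\medskip

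\noindent\textbf{Part (1): piecewise-linear structure.}
First I would unwind the definition: a valuation $\nu$ centered on $R = K[[X_1,\dots,X_n]]/I$ extending the trivial valuation on $K$ is sent by $\gamma^*$ to the point $(\nu(x_1),\dots,\nu(x_n)) \in \overline{\R}_{\ge 0}^{\,n}$, and $\ptrop(I)$ is the locus of those tuples arising from valuations that are \emph{strictly} positive on $\mathfrak m$; its finite part consists of the tuples lying in $(\R_{>0})^n$, i.e. coming from valuations finite and positive on each $x_i$. The heart of the argument is a \emph{Gr\"obner / initial-ideal} translation analogous to condition (\ref{invp}) of Theorem~\ref{T:3defs}, but in the power-series setting. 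For a weight $w \in (\R_{>0})^n$ one can form the $w$-initial ideal $\init_w(I)$ inside the associated graded ring, which here is a polynomial ring (the associated graded of $K[[X]]$ with respect to a positive monomial weight). I would show: $w$ belongs to the finite part of $\ptrop(I)$ if and only if $\init_w(I)$ is monomial-free, equivalently (since $K$ carries the trivial valuation) if and only if $\init_w(I)$ does not contain any monomial $X^a$. The ``only if'' direction is immediate: if $\nu$ is a valuation with $\nu(x_i)=w_i>0$ then $\init_w$ of any $f\in I$ has positive $\nu$-value computed on its lowest-weight part, so no monomial can survive. The ``if'' direction is the Kapranov/EKL-style lemma: monomial-freeness of $\init_w(I)$ lets one build a valuation realizing $w$, using that $R/\sqrt{\init_w(I)}$ has a point in the torus or, more robustly in this local setting, invoking the structure theory of valuations centered on a complete local ring. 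Granting this equivalence, the finiteness and fan structure follow from a standard Gr\"obner-fan argument: there are only finitely many initial ideals of $I$ as $w$ ranges over $(\R_{>0})^n$, the locus where $\init_w(I)$ is a fixed ideal is a rational polyhedral cone (the relative interior of a Gr\"obner cone), and the monomial-free ones glue to the support of a finite rational polyhedral fan. One delicate point specific to power series is that $I$ need not be finitely generated Gr\"obner-theoretically in the naive sense; I would handle this by working with the completion-compatible notion of standard bases, or by reducing to a finitely generated ideal with the same positive tropicalization (cf. Theorem~\ref{T:main}), citing the general structural theorem of the paper rather than reproving it.

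\medskip

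\noindent\textbf{Part (2): purity of dimension.}
Assuming $I$ prime with $\dim R = d$, I would deduce that $\ptrop(I)$ has pure dimension $d$ by the usual dimension-theoretic comparison: each maximal Gr\"obner cone $\sigma$ in the fan of part (1) has associated initial ideal $\init_w(I)$ whose radical is a (monomial-free) binomial-type ideal, and $\dim \bigl(K[X]/\init_w(I)\bigr) = \dim R = d$ by flatness of the Gr\"obner degeneration (the Hilbert function is preserved), while the dimension of the cone $\sigma$ equals $d$ because $\init_w(I)$ is homogeneous with respect to a $d$-dimensional grading (the lineality/torus-stabilizer contribution). This is the local analogue of the Bieri--Groves theorem; I would cite it in the form already established in the body of the paper (Theorem~\ref{T:main}) and only indicate how primeness feeds in — it guarantees $R$ is a domain so that the degenerations stay equidimensional and no lower-dimensional components appear.

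\medskip

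\noindent\textbf{Part (3): comparison with the global tropicalization.}
Here $I = \widehat{J K[X]_0}$ for an ideal $J \subset K[X_1,\dots,X_n]$, and I want $\ptrop(I) \cap (\R_{>0})^n = \trop(J) \cap (\R_{>0})^n$. The key observation is that for a weight $w \in (\R_{>0})^n$, taking the $w$-initial ideal commutes with localization at $0$ and with completion: $\init_w(I) = \init_w(J)$ as ideals of $K[X]$, because only the terms of $f \in J$ of lowest $w$-weight matter, $w$ being strictly positive, and these are unaffected by inverting elements with nonzero constant term or by passing to power series. Consequently the monomial-freeness criterion of part (1) matches, on the nose, the criterion (\ref{invp}) of Theorem~\ref{T:3defs} for the subvariety $V(J) \subset (K^*)^n$, restricted to the positive orthant; so the two sets coincide there. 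I would also remark that $\trop(J)$ as a whole can meet the boundary of the orthant, which is why the statement is only an equality \emph{inside} $(\R_{>0})^n$, and that this is exactly the phenomenon motivating the ``strata at infinity'' in the general definition. A subtlety to address is the possibility that $J$ defines the empty variety in the torus while $V(I)$ is nonempty (or conversely): localizing at $0$ can change the set of components passing through the origin. I would note that since we only compare the two tropicalizations on the open cone, and since a component of $V(J)$ contributes to $\trop(J) \cap (\R_{>0})^n$ precisely when its closure passes through the origin — which is also exactly when it survives in the completion — the comparison is unaffected.

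\medskip

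\noindent\textbf{Main obstacle.}
The crux is the ``if'' direction of the initial-ideal characterization in part (1): producing an honest valuation of the complete local ring $R$ centered at $\mathfrak m$, positive on every $x_i$, and realizing a prescribed monomial-free weight $w$. In the classical torus setting one lifts a point of the torus over a suitable valued extension field (Kapranov's theorem); in the local/power-series setting one instead needs the existence theory for valuations on complete local rings (e.g. building $\nu$ from a chain of prime ideals adapted to $\init_w(I)$, or via Abhyankar valuations) together with care that the center is the closed point rather than a larger subscheme. This is where I expect the real work to lie, and where I would lean most heavily on the general structural theorem (Theorem~\ref{T:main}) already proved earlier in the paper.
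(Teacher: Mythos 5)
Your proposal follows essentially the same route as the paper: the statement is, in the paper, a specialization of Theorem~\ref{T:main} (for parts (1) and (2)) and of Proposition~\ref{P:localglobal} (for part (3)), and your outline correctly identifies the underlying ingredients — the initial-ideal characterization of $\ptrop$, the standard/tropical-basis and local Gr\"obner-fan machinery, and the flat degeneration to $\init_w(I)$ for the dimension statement. The one step you leave deliberately vague — constructing a local valuation realizing a prescribed monomial-free weight $w$ — is exactly where you correctly locate the crux; in the paper this is handled cleanly by Bergman's weak Nullstellensatz for valuations (Theorem~\ref{T:Bergman}), which yields Theorem~\ref{Tropinit} directly rather than via Kapranov-type lifting, and it is worth knowing that this is the mechanism the paper relies on (your appeal to Theorem~\ref{T:main} there is slightly off, since Theorem~\ref{T:main} itself presupposes Theorem~\ref{Tropinit}). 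For part (3), you argue via commutation of $\init_w$ with localization and completion, whereas the paper's Proposition~\ref{P:localglobal} argues directly at the level of valuations (a nonnegative valuation of $K[\Gamma]$ positive on $\Gamma^+$ extends canonically to $K[[\Gamma]]$ and conversely); the two routes are equivalent once the initial-ideal characterization is in place, so this is a stylistic rather than substantive difference.
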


The last point of the theorem shows that it is possible to reconstruct the (global) 
tropicalization of a subvariety of a torus from local tropicalizations of its 
closure at the closed points of various toric varieties associated to that torus. 
In this sense, global tropicalization depends only on the boundary structure of 
the subvariety of the torus. In fact, the local tropicalization of an ideal $I$ of 
$K[[\Gamma]]$ depends only on the toroidal structure of the ambient space 
$\mbox{Spec } K[[\Gamma]]$. In order to show this, we prove that, more generally, 
we can tropicalize semigroup morphisms of the form: 
$$(\Gamma, +) \stackrel{\gamma}{\longrightarrow} (R, \cdot)/ (R^*, \cdot),$$ 
where $(R^*, \cdot)$ denotes the subgroup of invertible elements of $(R, \cdot)$. 
This allows, e.~g., to tropicalize objects which are not necessarily endowed 
with a toroidal structure:

\begin{itemize}
    \item If $(X, 0)$ is a germ of normal (algebraic or analytic) variety and $D$ 
    is a reduced Weil divisor on it, consider a finitely generated semigroup 
    $\Gamma$ of effective Cartier divisors supported on $D$. Then, taking 
    $R = \mathcal{O}_{X,0}$, we have a natural semigroup morphism 
    $\Gamma \rightarrow  R/ R^*$, obtained by associating to each Cartier divisor a 
    defining function in $R$, which is well-defined modulo units. 
    
    \item We keep the same setting as in the previous example and let $\Gamma$ be
    the full semigroup of effective Cartier divisors supported on $D$. Then we
    obtain a canonical tropicalization for each ideal $I$ of $\mathcal{O}_{X,0}$ 
    associated to the pair $(X,D)$, by taking $R := \mathcal{O}_{X,0} / I$ and the 
    natural semigroup morphism $\Gamma \rightarrow  R/ R^*$ given by composing the 
    map of the previous example with the map $\mathcal{O}_{X,0}/ \mathcal{O}_{X,0}^* 
    \to R/R^*$ induced by the quotient morphism $\mathcal{O}_{X,0} \to 
    \mathcal{O}_{X,0}/I$. 
\end{itemize}

These examples should be useful for the local study of Weil divisors on algebraic 
or analytic varieties, in such simple cases as those of germs of plane curves. In 
particular, they should allow to understand tropically a good amount of 
combinatorial invariants of singularities, for instance those extracted from 
weighted dual graphs of resolutions or embedded resolutions. 

\medskip

Each section in this paper begins with a brief description of its content. The 
comparison with the existing literature on the subject is concentrated in the last 
section, which also contains a brief description of possible interactions with 
developing fields of mathematics and two open problems.

\medskip
{\bf Acknowledgments:} The first author has benefited a lot from conversations 
with Ang{\'e}lica Cueto, Charles Favre and Anders Jensen and the second one 
from discussions with Mark Spivakovsky. We are grateful to Pedro D. Gonz\'alez 
P\'erez, Hussein Mourtada and Bernard Teissier for their remarks on previous 
versions of this paper. Last but not least, we thank heartedly the two anonymous 
referees for their very careful reading and for their many suggestions for 
clarification.

\medskip 
\section{Geometry of semigroups}\label{S:geomofsmgrps}

In this section we introduce the vocabulary and basic facts about semigroups that
we shall use in this paper. 
\medskip

\begin{definition} \label{defsg}
  A {\bf semigroup} is a set $\Gamma$ endowed with an associative binary operation 
  $+ \colon \Gamma \times \Gamma {\longrightarrow} \Gamma$. 
\end{definition}  
  
\emph{In the sequel, we shall consider only \emph{commutative} semigroups}. The 
simplest examples are abelian groups, but semigroups are interesting precisely 
because of the existence of elements which are not invertible.

If a semigroup has a neutral element $0$, then we call it a semigroup \emph{with 
origin}. If it has an $\infty$ element (also called \emph{absorbing}), that is, an 
element which is unchanged by the addition of any other element, then it is called 
a semigroup \emph{with infinity}. We see immediately that, if they exist, then the 
origin and the infinity are unique.
  
\begin{remark}
  If the semigroup law is thought multiplicatively then, by analogy with 
  $(\R, \cdot)$, the origin is denoted $1$ and the infinity is denoted $0$ (see for 
  example \cite{H 95}), and we speak sometimes about semigroups \emph{with identity}
  and \emph{with zero}. Nevertheless, in the sequel we are consequent with the 
  previous terminology and we say that, when $(R, +,  \cdot)$ is a ring, then $0$ 
  \emph{is the infinity of} the semigroup $(R, \cdot)$. 
\end{remark}
  
  If a semigroup $\Gamma$ has no origin, then we may canonically add such element 
  to it, obtaining the semigroup with origin $\Gamma_0$. If it has no infinity, we 
  can analogously add to it a new element $\infty$, getting $\overline{\Gamma}$.

\begin{definition} \label{gensg}
  A semigroup is called {\bf cancellative} if,  
  whenever $a,b,c \in \Gamma$ satisfy $a+ b = a+c$, we have $b=c$. 
  It is called {\bf of finite type} if it can be generated by a finite number of 
  elements.  It is called {\bf torsion-free} if whenever $a,  b \in \Gamma$ and 
  $ma=mb$ for some $m \in \N^*$,  we have $a=b$. 
\end{definition}
  
Note that a 
semigroup with infinity is not cancellative, excepted in the degenerate 
case when it has only one element, which is necessarily both the origin and the 
infinity. The following type of semigroups will play an essential role in our 
paper: 

\begin{definition} \label{toricsg}
  A semigroup with origin $(\Gamma, +)$ is called {\bf affine} if it is 
  commutative, of finite type, cancellative and torsion-free. 
\end{definition}

The simplest affine semigroups are the various $(\N^n, +)$. The terminology 
``\emph{affine}'' is motivated by the fact that those are precisely the semigroups 
associated to \emph{affine} toric varieties (see the next section and 
\cite{CLS 11}). 
  
Consider a semigroup $\Gamma$ with origin. If $a \in \Gamma$, an \emph{inverse} 
of $a$ is an element $b \in \Gamma$ such that $a + b =0$. If it exists, the 
inverse of $a$ is unique and we denote it simply by $-a$. The set of invertible 
elements is a subgroup of $\Gamma$, which we denote $\Gamma^*$. We let $\Gamma^+$ 
be its complement in $\Gamma$. It is a prime  ideal of $\Gamma$ (see Lemma \ref{primid}), 
in agreement with the next definition:

\begin{definition} \label{sgideal}
  If $\Gamma$ is a semigroup, an \textbf{ideal} of $\Gamma$ is a subset 
  $I \subset \Gamma$ satisfying $I + \Gamma \subset I$. An ideal is called 
  {\bf proper} if $I \neq \Gamma$. The ideal $I$ is {\bf prime} if it is proper  
  and,  whenever $a, b \in \Gamma$ satisfy $a + b \in I$, then at least one of  
  $a,b$ is in $I$. 
\end{definition}

This vocabulary is motivated by the following fundamental example of semigroups:

\begin{example} \label{multring}
   Let $(R, +, \cdot)$ be a commutative ring. Forgetting the addition, $(R, 
   \cdot)$ is a semigroup with origin $1 \in R$. $(R\setminus\{0\},\cdot)$ is 
   cancellative if and only if $R$ is a domain. Any ideal $I$ of the ring 
   $(R, +, \cdot)$ is an ideal of the semigroup  $(R, \cdot)$. The converse is not 
   true, as we do not ask for stability of the operation $+$ in the 
   semigroup-theoretical definition of an ideal. For example, if $R= \Z$, the 
   semigroup-ideal generated by $2$ and $3$ is the set of integers divisible 
   either by $2$ or by $3$, which is not a ring-ideal. 
\end{example}

\begin{lemma}  \label{primid}
    The subsemigroup $\Gamma^+$ of non-invertible elements of $\Gamma$ is a prime 
    ideal of $\Gamma$.
\end{lemma}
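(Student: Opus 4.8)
The plan is to verify the two defining properties of a prime ideal of a semigroup directly from Definition~\ref{sgideal}, using only the elementary facts about inverses recorded just before the statement. Throughout, write $\Gamma^+ = \Gamma \setminus \Gamma^*$, where $\Gamma^*$ is the subgroup of invertible elements.

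First I would check that $\Gamma^+$ is an ideal, i.e.\ that $\Gamma^+ + \Gamma \subset \Gamma^+$. Equivalently, I would prove the contrapositive: if $a + c \in \Gamma^*$ for some $a, c \in \Gamma$, then $a \in \Gamma^*$. Indeed, if $b$ is the inverse of $a + c$, then $a + (c + b) = (a + c) + b = 0$, so $c + b$ is an inverse of $a$, whence $a \in \Gamma^*$. This shows that if $a \notin \Gamma^*$ then $a + c \notin \Gamma^*$ for every $c \in \Gamma$, which is exactly $\Gamma^+ + \Gamma \subset \Gamma^+$.

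Next I would check that $\Gamma^+$ is proper and prime. It is proper because $0 \in \Gamma^*$, so $\Gamma^+ \neq \Gamma$ (here one uses that $\Gamma$ has an origin, which is implicit since $\Gamma^*$ is defined). For primeness, suppose $a + b \in \Gamma^+$ with $a, b \in \Gamma$; I must show $a \in \Gamma^+$ or $b \in \Gamma^+$. Arguing by contraposition, assume instead that $a \notin \Gamma^+$ and $b \notin \Gamma^+$, i.e.\ $a, b \in \Gamma^*$. Since $\Gamma^*$ is a subgroup of $\Gamma$, it is closed under the semigroup operation, so $a + b \in \Gamma^*$, contradicting $a + b \in \Gamma^+$. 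Hence one of $a, b$ lies in $\Gamma^+$, so $\Gamma^+$ is prime.

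There is no real obstacle here; the only point requiring the slightest care is to make sure the uniqueness of inverses and the subgroup structure of $\Gamma^*$ are genuinely available. These are recorded in the paragraph preceding the lemma (``the inverse of $a$ is unique'', ``the set of invertible elements is a subgroup of $\Gamma$''), so the argument is self-contained. Finally, I would remark that this lemma retroactively justifies the assertion, made just before Definition~\ref{sgideal}, that $\Gamma^+$ is a prime ideal, and that the terminology ``semigroup with origin'' in Definition~\ref{toricsg} was needed precisely so that $\Gamma^*$ and hence $\Gamma^+$ make sense.
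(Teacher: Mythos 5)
Your proof is correct and follows essentially the same argument as the paper: both show $\Gamma^+$ is an ideal by observing that if $a + b$ is invertible then $a$ is invertible (via associativity and the inverse of $a+b$), and both deduce primeness from the closure of $\Gamma^*$ under addition. You additionally spell out properness and the contrapositive form, which the paper leaves implicit.
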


\begin{proof}
  Let us first verify that $\Gamma^+$ is an ideal. Suppose that $a \in \Gamma^+$ 
  and $b \in \Gamma$ satisfy $a+b \in \Gamma^*$. This means that there exists 
  $c \in \Gamma$ such that $(a+b) +c =0$. But this can be rewritten by associativity
  as $a + (b+c) =0$, which shows that $a \in \Gamma^*$, a contradiction. Therefore, 
  $\Gamma^+ + \Gamma \subset \Gamma^+$, which is the definition of the fact 
  that $\Gamma^+$ is an ideal. The fact that this ideal is prime is immediate, 
  consequence of the fact that $\Gamma^*$ is stable under addition. 
\end{proof}

It is a formal exercise to see that the preimage of an ideal by a morphism of 
semigroups is again an ideal and that, moreover, in this way, prime ideals are 
transformed into prime ideals. Notice also that each semigroup $\Gamma$ with 
origin is {\em local}, in the sense that it contains a unique maximal ideal 
$\Gamma^+$.

In ring theory, ideals are precisely the kernels of the ring-morphisms. This is 
not true for semigroups. In order to speak about this phenomenon, we introduce 
basic notation about morphisms of semigroups. 
If $\Gamma_1$ and $\Gamma_2$ are semigroups, we denote by: 
$$\Hom_{Sg}(\Gamma_1, \Gamma_2)$$ the set of morphisms of semigroups from 
$\Gamma_1$ to $\Gamma_2$. Analogously, if $R_1$ and $R_2$ are two rings, we denote 
by: 
   $$\Hom_{Rg}(R_1, R_2)$$
the set of ring-morphisms. 

\emph{If both semigroups $\Gamma_1$ and $\Gamma_2$ have origins, 
we assume that a morphism of semigroups sends one origin into the other.} 
$\Hom_{Sg}(\Gamma_1, \Gamma_2)$ has also naturally a structure of semigroup, 
by pointwise addition of the values. 

Let $\phi : \Gamma_1 \rightarrow \Gamma_2$ be a morphism of semigroups with origins. 
Its set-theoretic image $\im (\phi)$ is a subsemigroup of $\Gamma_2$ and its kernel 
$\ker (\phi):= \phi^{-1}(0)$ is a subsemigroup of $\Gamma_1$ (in general it is not 
an ideal; that is, from this viewpoint, semigroups behave more like groups than 
like rings). Nevertheless, unlike for abelian groups, the knowledge of this kernel 
is not enough to determine the associated surjective map 
$\overline{\phi} :\Gamma_1 \rightarrow \im (\phi)$ up to isomorphism.
Indeed, the fact that not all elements are invertible does not allow to conclude 
from $\phi(a) = \phi(b)$ that $a$ is obtained from $b$ by adding an element of the 
kernel. Briefly said, in general the kernel does not describe \emph{all} the 
fibers of the map $\phi$. 

In order to be able to reconstruct the whole map $\overline{\phi}$, 
we have to encode the whole collection of its fibers. This may be done by looking 
at them as the equivalence classes of an equivalence relation $\sim$. This 
equivalence relation on $\Gamma_1$ is \emph{compatible with the addition}, so it is a 
{\em congruence}:

\begin{definition} \label{congr}
   Let $(\Gamma, +)$ be a semigroup with origin. A {\bf congruence} on $\Gamma$ is 
   an equivalence relation compatible with the addition. 
\end{definition}

If $\sim$ is a congruence on $\Gamma$, we see immediately that the addition on 
$\Gamma$ descends naturally to a semigroup law on the quotient $\Gamma/ \sim$, 
the quotient map becoming a morphism of semigroups with origin.

    For instance, the relation defined by 
       $$ a \sim b \: \Leftrightarrow \: \exists \: c \in \Gamma^* 
         \mbox{ such that } a = b + c $$
    is a congruence. It allows to define the quotient semigroup $\Gamma / \Gamma^*$.

To any commutative semigroup $\Gamma$ with origin, one 
functorially associates \emph{the group} $M(\Gamma)$ generated formally by the 
differences of its elements:
  $$a_1 - b_1 = a_2 - b_2 \ \Leftrightarrow \ \exists\  
    c \in \Gamma, \ a_1 + b_2 + c = a_2 + b_1 +c.$$  
  
  The canonical morphism of semigroups 
$\gamma : \Gamma \to M(\Gamma)$ is an embedding if and only if 
$\Gamma$ is cancellative. Indeed, it is an embedding if and only if it is injective, 
which is equivalent to the fact that for any $a_1, a_2, c \in \Gamma$, 
the equality $a_1 + c = a_2 +c$ implies that $a_1 = a_2$. But this is precisely the 
condition of cancellation! 
For example, when $\Gamma = \N$ this gives
the canonical inclusion $\N \hookrightarrow \Z$. 

Assuming $\Gamma$ to be cancellative, it is moreover torsion-free if and only if 
$M(\Gamma)$ is a torsion-free abelian group. Indeed, if there exists $n \in \N^*$ 
and $a \in \Gamma$ such that $n \gamma(a) =0$, then there exists $c \in \Gamma$ 
such that $na + c = c$. As $\Gamma$ is cancellative, we deduce that $na=0$. 
As $\Gamma$ is torsion-free, we conclude that $a =0$. 

On the other hand, it is not true that $\Gamma$ is of finite type if and only if
$M(\Gamma)$ is of finite type. For instance, $M((\N^*)^2) = \Z^2$ is of finite 
type but $(\N^*)^2$ is not of finite type. Only the following implication holds: if 
$\Gamma$ is of finite type, then so is $M(\Gamma)$. 

We define a \emph{lattice} as an abelian torsion-free group of finite type. The 
previous explanations have as a direct consequence the following characterization 
of affine semigroups: 

\begin{lemma}
   A semigroup is affine if and only if it is a finitely generated subsemigroup  
   of a lattice and it has an origin.
\end{lemma}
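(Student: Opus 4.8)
The plan is to prove the two implications of the claimed equivalence separately, using the material accumulated just above the statement.

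First I would prove the ``only if'' direction. Suppose $\Gamma$ is affine, i.e.\ commutative, of finite type, cancellative and torsion-free, and with origin. Since $\Gamma$ is cancellative, the canonical morphism $\gamma \colon \Gamma \to M(\Gamma)$ is an embedding (this was established in the discussion preceding the lemma). Since $\Gamma$ is torsion-free and cancellative, $M(\Gamma)$ is a torsion-free abelian group (again established above). Since $\Gamma$ is of finite type, $M(\Gamma)$ is of finite type (the one direction of the non-equivalence noted above). Hence $M(\Gamma)$ is an abelian torsion-free group of finite type, i.e.\ a lattice in the sense just defined, and $\Gamma$ embeds in it as a subsemigroup. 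Finally, $\Gamma$ has an origin by hypothesis, which is mapped to the origin of $M(\Gamma)$ under the embedding. This gives the desired conclusion.

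Next I would prove the ``if'' direction. Suppose $\Gamma$ is (isomorphic to) a finitely generated subsemigroup of a lattice $L$ and has an origin. Being a subsemigroup of the commutative group $L$, it is commutative, and finitely generated means of finite type. It is cancellative because cancellation holds in the ambient group $L$ and is inherited by any subset closed under the operation: if $a+b=a+c$ in $\Gamma\subset L$, then adding $-a\in L$ gives $b=c$. Likewise it is torsion-free: if $ma=mb$ in $\Gamma$ for some $m\in\N^*$, then $m(a-b)=0$ in the torsion-free group $L$, so $a-b=0$, i.e.\ $a=b$. Together with the existence of an origin, these four properties are exactly the definition of an affine semigroup.

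I do not expect any genuine obstacle here; the statement is essentially a repackaging of the remarks made in the paragraphs immediately before it, and the only care needed is to track which hypotheses are used in which direction (in particular that ``of finite type for $\Gamma$ implies of finite type for $M(\Gamma)$'' is used, but not its false converse) and to note explicitly that the embedding of an affine semigroup sends origin to origin so that the image is a subsemigroup \emph{with origin} of the lattice $M(\Gamma)$.
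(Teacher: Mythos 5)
Your proof is correct, and it follows exactly the route the paper intends: the lemma is stated as a ``direct consequence'' of the preceding discussion, and you have simply assembled those remarks into the two implications (using the embedding $\Gamma\hookrightarrow M(\Gamma)$ for ``only if'' and inheriting cancellativity and torsion-freeness from the ambient lattice for ``if''). No divergence from the paper's approach and no gaps.
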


Let $\Gamma$ be an affine semigroup and $M(\Gamma)$ be its associated lattice. 
We denote by $N(\Gamma) := \Hom_{Gp}(M(\Gamma), \Z)$ the dual lattice. 

\begin{definition} \label{satur}
   The {\bf saturation}  $\sat(\Gamma)
\hookrightarrow M(\Gamma)$ of $\Gamma$ (inside 
$M(\Gamma)$) is the subset of $M(\Gamma)$ formed by the elements $v$
satisfying $n v\in\Gamma$ for some $n \in \N^*$. A semigroup is 
called {\bf saturated} if it is equal to its saturation.
\end{definition}

\begin{example} \label{nonsatex}
  Let us consider the affine subsemigroup $\Gamma$ of $\N \times \Z$ generated 
  by $v_1 = (2,1), v_2 = (5,2), v_3 = (0, 3), v_4 = (0, -3)$ (see 
  Figure \ref{fig:Affsg}). 
  The associated lattice $M(\Gamma)$ is equal to $\Z^2$, and $\sat(\Gamma)$
  is $\N \times \Z$. 
  As is visible in the drawing, $\Gamma^*$ is the subgroup of $\Z \times \Z$ 
  generated by $v_3$. In the drawing is also represented the quotient map 
  $p : \Gamma \to \Gamma/ \Gamma^*$. This last semigroup $\Gamma/ \Gamma^*$ 
  is isomorphic to the image of $\Gamma$ by the canonical projection of 
  $\N \times \Z$ to the first factor $\N$. Therefore it is affine.
\end{example}

\bigskip
\begin{figure}[h!]
\labellist
\small\hair 2pt
\pinlabel  {$0$} at 36 294
\pinlabel  {$v_1$} at 96 332
\pinlabel  {$v_2$} at 150 314
\pinlabel  {$v_3$} at 37 349
\pinlabel  {$v_4$} at 37 240
\pinlabel  {$\Gamma^*$} at -10 130
\pinlabel  {$p$} at 234 78
\pinlabel  {$0$} at 34 7
\endlabellist
\centering
\includegraphics[scale=0.50]{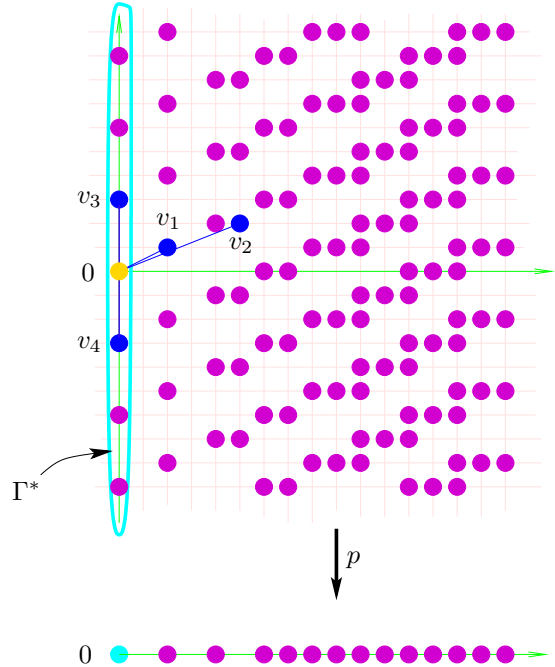}
\caption{An affine semigroup and its quotient by its subgroup of invertible 
elements}
\label{fig:Affsg}
\end{figure}
\medskip

\begin{example} \label{nonsatexbis}
  In the previous example, the quotient $\Gamma / \Gamma^*$ was again affine. This 
  is not true for all affine semigroups $\Gamma$. Consider for instance the affine 
  subsemigroup of $\Z^2$ generated by $v_1 = (1,0), v_2 = (1,1), v_3 = (0,2), 
  v_4 = (0, -2)$ (see Figure \ref{fig:Affsgtor}).  
  Then $\Gamma^*$ is the lattice 
  of rank one generated by $v_3$. 
  The quotient $\Gamma / \Gamma^*$ has torsion, as the images $\gamma(v_1)$ 
  and $\gamma(v_2)$ are different (there does not exist any $c \in \Gamma^*$ such 
  that $v_2 = v_1 + c$) but their doubles are equal (as $2v_2 = 2v_1 + v_3$). 
  In fact, the restriction to $\Gamma$ of the second projection $\Z^2 \to \Z$ 
  factors through the quotient map $p$, inducing a map $\overline{p} :  \Gamma / \Gamma^*
  \to \N$. The fibers $\overline{p}^{-1}(n)$ of this map have two points for $n >0$, 
  only the origin being covered by one point. That is why we represented the 
  $\Gamma / \Gamma^*$ as the set $\N$ in which every positive number is split into 
  two points. 
\end{example}

\bigskip
\begin{figure}[h!]
\labellist
\small\hair 2pt
\pinlabel  {$0$} at 36 294
\pinlabel  {$v_2$} at 106 350
\pinlabel  {$v_1$} at 103 286
\pinlabel  {$v_3$} at 28 372
\pinlabel  {$v_4$} at 37 227
\pinlabel  {$\Gamma^*$} at -10 180
\pinlabel  {$p$} at 234 78
\pinlabel  {$0$} at 34 7
\endlabellist
\centering
\includegraphics[scale=0.50]{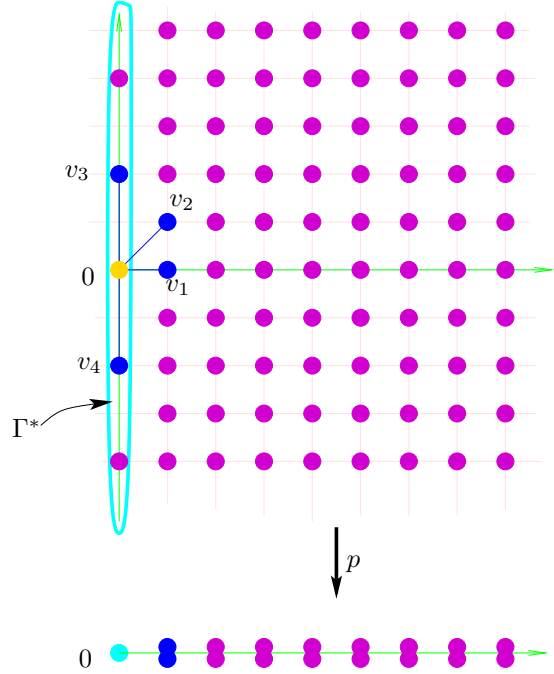}
\caption{A quotient with torsion of an affine semigroup by its subgroup of invertible 
elements}
\label{fig:Affsgtor}
\end{figure}
\medskip

The situation of the previous example cannot happen for saturated affine semigroups:

\begin{proposition}  \label{quotaff}
  If the affine semigroup $\Gamma$ is saturated, then 
     $\Gamma / \Gamma^*$ is also a saturated affine semigroup. 
\end{proposition}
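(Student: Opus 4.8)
The plan is to establish the three defining properties of an affine semigroup -- finite type, cancellative, torsion-free -- for $\Gamma / \Gamma^*$, and then separately verify saturation. Since $\Gamma$ is affine, it sits inside its lattice $M(\Gamma)$; write $M := M(\Gamma)$ and let $M^*$ be the subgroup generated by $\Gamma^*$ inside $M$. The key structural observation is that, because $\Gamma$ is saturated, $\Gamma^*$ coincides with $\Gamma \cap M^*$ and moreover $M^*$ is a \emph{saturated} sublattice of $M$, meaning that $M / M^*$ is again torsion-free, i.e.\ a lattice. Indeed, if $m \in M$ and $km \in M^*$ for some $k \in \N^*$, then $km = a - b$ with $a, b \in \Gamma^*$; write $m = a' - b'$ with $a', b' \in \Gamma$ (possible since $M = M(\Gamma)$), so $k a' - k b' = a - b$, hence $k(a' + b) = k(b' + a)$ in $M$, and by torsion-freeness $a' + b = b' + a$; applying saturation of $\Gamma$ (or directly cancellation) one deduces $a', b' \in \Gamma^*$, so $m \in M^*$. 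This shows $M^*$ is saturated in $M$, so $M / M^*$ is a lattice.

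Next I would identify $\Gamma / \Gamma^*$ with its image under the canonical projection $\pi : M \to M / M^*$. The congruence defining $\Gamma / \Gamma^*$ is exactly $a \sim b \Leftrightarrow a = b + c$ for some $c \in \Gamma^*$, and I would check that this is the same as $\pi(a) = \pi(b)$: one direction is clear, and the reverse uses saturation again -- if $a - b \in M^*$ then (arguing as above) $a - b \in \Gamma^*$ whenever $a, b \in \Gamma$, so $a \sim b$. Hence the natural map $\Gamma / \Gamma^* \to \pi(\Gamma) \subset M / M^*$ is a bijective semigroup morphism, and $\Gamma / \Gamma^*$ is realized as a subsemigroup of the lattice $M / M^*$. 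It inherits an origin from $\Gamma$, it is finitely generated (images of generators of $\Gamma$ generate it), and being a subsemigroup of a lattice it is automatically cancellative and torsion-free. By the Lemma characterizing affine semigroups as finitely generated subsemigroups of lattices with origin, $\Gamma / \Gamma^*$ is affine.

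It remains to prove saturation of $\Gamma / \Gamma^*$ inside its own lattice. First one checks that the lattice $M(\Gamma / \Gamma^*)$ is naturally $M / M^*$ (the group generated by $\pi(\Gamma)$ is all of $\pi(M) = M / M^*$ since $\Gamma$ generates $M$). Then, given $\bar v \in M / M^*$ with $n \bar v \in \Gamma / \Gamma^*$ for some $n \in \N^*$, lift $\bar v$ to some $v \in M$; then $n v = g + w$ for some $g \in \Gamma$ and $w \in M^*$, and since $M^*$ is spanned by $\Gamma^*$ which is a group, after adjusting $v$ by an element of $M^*$ we may take $w = 0$, so $nv \in \Gamma$. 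Because $\Gamma$ is saturated, $v \in \Gamma$, whence $\bar v = \pi(v) \in \Gamma / \Gamma^*$. This gives saturation.

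The main obstacle -- and the place I expect to spend real care -- is the repeated use of saturation to pass between ``$a - b \in M^*$'' and ``$a - b \in \Gamma^*$'' for $a,b \in \Gamma$; concretely, showing $\Gamma^* = \Gamma \cap M^*$ and that $M^*$ is a saturated sublattice. Once that algebraic core is pinned down cleanly, the identification $\Gamma / \Gamma^* \cong \pi(\Gamma)$ and the verification of the affine axioms and of saturation are routine. A subtle point to watch is that $\Gamma^*$ need not be saturated \emph{as a semigroup inside $M(\Gamma)$} a priori -- it is a group, so the relevant statement is that the subgroup it generates is a primitive (saturated) sublattice of $M(\Gamma)$, and this is precisely where the hypothesis that $\Gamma$ itself is saturated enters.
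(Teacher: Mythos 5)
Your overall strategy matches the paper's in its essential point: everything reduces to showing that $\Gamma^*$ is a primitive (saturated) sublattice of $M(\Gamma)$, i.e.\ that $M(\Gamma)/\Gamma^*$ is torsion-free, and this is precisely where the hypothesis that $\Gamma$ is saturated must be used. The paper establishes exactly this fact in the course of proving that $\Gamma'$ is torsion-free, and then exploits it for the saturation argument. Your variant of packaging the argument --- identifying $\Gamma/\Gamma^*$ with a finitely generated subsemigroup of the lattice $M(\Gamma)/\Gamma^*$, which immediately yields cancellativity and torsion-freeness --- is a legitimate and slightly tidier way to organize the verification of the affine axioms.

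However, your argument for the central claim (that $\Gamma^*$ is primitive) contains a genuine error. From $km = a - b$ and $m = a' - b'$ you correctly obtain $ka' - kb' = a - b$, i.e.\ $ka' + b = kb' + a$. The next line, ``hence $k(a'+b) = k(b'+a)$,'' is false: $k(a'+b) = ka' + kb$, not $ka' + b$. So the appeal to torsion-freeness to obtain $a'+b = b'+a$ has no basis. Moreover, even granting $a'+b = b'+a$, the conclusion that $a', b' \in \Gamma^*$ does not follow (take $\Gamma = \N$, $a = b = 0$, $a' = b' = 5$). The correct argument is much shorter and does not need to split $m$ as $a' - b'$ at all: since $km \in \Gamma^* \subseteq \Gamma$ and $\Gamma^*$ is a group, also $k(-m) = -(km) \in \Gamma^* \subseteq \Gamma$; saturation of $\Gamma$ gives $m \in \Gamma$ and $-m \in \Gamma$, whence $m \in \Gamma^*$. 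This is, in substance, the computation the paper performs.

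Two smaller points. First, the distinction you draw between $\Gamma^*$ and ``$M^*$, the subgroup generated by $\Gamma^*$ in $M$'' is vacuous: $\Gamma^*$ is already a group, and $\Gamma$ embeds in $M(\Gamma)$, so $M^* = \Gamma^*$. Consequently $\Gamma^* = \Gamma \cap M^*$ is tautological, and the reverse direction of ``$a \sim b \Leftrightarrow \pi(a) = \pi(b)$'' needs no appeal to saturation ($a - b \in \Gamma^*$ directly gives $a \sim b$). Second, in your saturation step, the adjustment ``replace $v$ by $v$ minus something so that $w = 0$'' is both unnecessary and problematic (you cannot in general divide $w$ by $n$ in a lattice); simply observe that $nv = g + w$ with $g \in \Gamma$ and $w \in \Gamma^* \subseteq \Gamma$ already gives $nv \in \Gamma$, and conclude by saturation of $\Gamma$.
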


\begin{proof}
  If $a \in \Gamma$, denote by $\overline{a}$ its image in $\Gamma' := \Gamma / 
  \Gamma^*$. As a quotient of a commutative semigroup of finite type, $\Gamma'$ is 
  also commutative and of finite type. 
        
  Let us show that $\Gamma'$ is {\em cancellative}. Suppose that $a, b, c \in \Gamma$ 
  satisfy the equality $\overline{a} + \overline{b} = \overline{a} + \overline{c}$. 
  This implies that there exists $d \in \Gamma^*$ such that 
  $a+ b = (a + c) +d  = a + (c + d)$. As $\Gamma$ is assumed cancellative, 
  we deduce that $b = c + d$, which implies that $\overline{b} = \overline{c}$. 
  That is, $\Gamma'$ is also cancellative.
       
  We show now that $\Gamma'$ is {\em torsion-free}. Assume that $a, b \in \Gamma$ 
  satisfy an equality of the type $n\overline{a} = n \overline{b}$, with $n \in 
  \N^*$. Therefore, there exists $c \in \Gamma^*$ such that $na = nb + c$. Inside 
  the lattice $M(\Gamma)$ (into which $\Gamma$ embeds canonically), we 
  may write the previous equality as $n(a-b) = c$. Our hypothesis that $\Gamma$ is 
  saturated implies that there exists $d \in \Gamma$ such that $a-b =d$. 
  The same argument repeated with the equality $n(b-a) = -c$ would give us 
  a $d' \in \Gamma$ with $b-a = d'$. Then $d + d' =0$, which shows that 
  $d \in \Gamma^*$. We conclude that $\overline{a} = \overline{b}$. That is, 
  $\Gamma'$ is torsion-free. 
       
  Finally, let us show that $\Gamma'$ is also {\em saturated}. 
  The previous argument shows that 
  $\Gamma^*$ is a primitive sublattice of $M(\Gamma)$, that is, that the quotient 
  $M(\Gamma) / \Gamma^*$ is torsion-free. As the images of the generators of 
  $\Gamma$ also generate this quotient, 
  we deduce that $M(\Gamma) / \Gamma^*$ is canonically isomorphic 
  to $M(\Gamma')$. We will work with this representative of the associated lattice. 
  Consider therefore $v \in M(\Gamma)$ such that there exists 
  $n \in \N$ and $a \in \Gamma$ with $n \overline{v} = \overline{a}$ in 
  $M(\Gamma) / \Gamma^*$. Therefore, there exists $c \in \Gamma^*$ such that 
  $nv = a + c$. But $a + c \in \Gamma$ and $\Gamma$ is saturated, which implies that 
  $v \in \Gamma$. Therefore  $\overline{v} \in \Gamma'$, which shows that 
  $\Gamma'$ is saturated.       
\end{proof}

\medskip
Until now we have only discussed algebraic aspects of semigroups. We now describe
their topology. Suppose that the semigroup $\Gamma$ \emph{has no infinity} and 
moreover \emph{is totally ordered}, that is, it is endowed with a total order 
compatible with the addition. Then, we can equip $\Gamma$ with a natural topology 
generated by the ``open'' intervals. We extend this topology to $\overline{\Gamma}$ 
by taking as basis of neighborhoods of $\infty$ the
subsets of the form:  
$$(a, \infty]:= \{x \in \Gamma \ | \ 
   x > a \}  \cup \{ \infty \}.$$

Note that with this convention, $\infty = + \infty$, not $- \infty$. We will mainly 
use the previous construction of topology when $\Gamma$ is $\R$, $[0, \infty)$, 
$(0, \infty)$, $\Z$, $\N$, the semigroup operation beeing addition.

\section{Toric varieties} \label{toric}

This section is intended only to set the notations we use for toric geometry. 
For details on \emph{normal} toric varieties, we refer to Fulton's book \cite{F 93}.  
For not necessarily normal toric varieties, the reader can consult the recent 
monograph \cite{CLS 11} of Cox, Little, and Schenck, or Gonzalez P\'erez and 
Teissier's paper \cite{GT 09}. 
\medskip

In the sequel, if $G$ is an abelian group and $K$ is a field, we will denote by 
$G_{K}$ the $K$-vector space $G \otimes_{\Z} K$. 

Let $\Gamma$ be an affine smigroup. We denote by: 
   $$\check{\sigma}(\Gamma) \subset M(\Gamma)_{\R}$$ 
the finite rational polyhedral cone generated by $\Gamma$. By definition, it 
consists of all the combinations with nonnegative real coefficients of elements
of $\Gamma$. It is a sub-cone of the real vector space $M(\Gamma)_{\R}$ with 
non-empty interior. The saturation of $\Gamma$ may be described geometrically using 
the cone $\check{\sigma}(\Gamma)$:
  \begin{equation} \label{geoint}
      \sat(\Gamma)= \check{\sigma}(\Gamma) \cap M(\Gamma).  
  \end{equation}

We denote by
$\sigma(\Gamma)\subset N(\Gamma)_{\R}$ the dual cone, defined by:
  $$\sigma(\Gamma):= \{ w \in N(\Gamma)_{\R} \: | 
  \: w(\check{\sigma}(\Gamma)) \subset
  \R_{\geq 0} \}.$$

More generally, if $\sigma$ is a polyhedral cone in a finite dimensional 
real vector space $V$ and $\check{\sigma}$ is its dual cone in the dual space $V^*$,
then $\check{\check{\sigma}} = \sigma$ and the map:
  $$ \tau \longrightarrow \check{\sigma} \cap  \tau^{\perp}$$
establishes an inclusion-reversing bijection between the closed faces of $\sigma$ 
and those of $\check{\sigma}$. 
  
The cone $\check{\sigma}$ has non-empty interior if and only if $\sigma$ is 
\emph{strictly convex}, that is, if it does not contain any vector subspace of 
positive dimension. We will say also in this case that $\sigma$ is a 
\emph{pointed cone}. If $\Gamma^* =0$, we say that $\Gamma$ is a \emph{pointed
semigroup}. It is immediate to check that the affine semigroup $\Gamma$ is
pointed if and only if the cone $\check{\sigma}(\Gamma)$ is pointed.

The vocabulary introduced in the following definition is taken from \cite{GT 09}:

\begin{definition}
  If $\Gamma$ is an affine semigroup, then a {\bf face} of $\Gamma$ is a 
  subsemigroup $\Lambda \subset \Gamma$ such that whenever $x, y \in \Gamma$ 
  satisfy $x + y \in \Lambda$, then both $x$ and $y$ are in $\Lambda$.  
\end{definition} 

The following proposition characterizes the faces of $\Gamma$:

\begin{proposition} \label{faces}
   The faces of $\Gamma$ are precisely the complements of the prime ideals of 
   $\Gamma$. The map $\tau \longrightarrow \Gamma \ \cap \ \tau^\perp$ establishes 
   an inclusion-reversing  bijection between the faces of $\sigma(\Gamma)$ and 
   those of $\Gamma$. 
\end{proposition}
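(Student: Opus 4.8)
The plan is to prove the two assertions separately, exploiting the translation dictionary between the semigroup $\Gamma$ and the cone $\sigma(\Gamma)$ together with its dual $\check\sigma(\Gamma)$, and in both cases reducing to statements already established in the excerpt. For the first assertion, I would start from Lemma~\ref{primid} and the remark following it: taking complements exchanges ideals with faces, and it sends prime ideals to faces in the sense of the definition just given (if $\Lambda = \Gamma \setminus I$ with $I$ prime and $x+y \in \Lambda$, then $x+y \notin I$, so by primality neither $x$ nor $y$ lies in $I$, i.e. both lie in $\Lambda$; conversely, if $\Lambda$ is a face then its complement $I$ satisfies $I + \Gamma \subset I$ because $x \in I$ and $x+y \in \Lambda$ would force $x \in \Lambda$, and $I$ is prime because $a+b \in I$ with $a,b \notin I$ would mean $a,b \in \Lambda$ hence $a+b \in \Lambda$). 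One should also check that faces are automatically subsemigroups with origin: $0 + 0 = 0 \in \Gamma$ forces $0$ into any nonempty face, and closure under addition follows by writing $x+y$ with one summand equal to $0$. This gives the first sentence of the proposition.

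For the second assertion, the natural approach is to piggyback on the inclusion-reversing bijection $\tau \mapsto \check\sigma \cap \tau^\perp$ between faces of $\sigma(\Gamma)$ and faces of $\check\sigma(\Gamma)$ that was recalled in Section~\ref{toric}. Given a face $\tau$ of $\sigma(\Gamma) \subset N(\Gamma)_\R$, I would set $\Lambda := \Gamma \cap \tau^\perp$ and show it is a face of $\Gamma$: if $x + y \in \Lambda$ with $x,y \in \Gamma$, then for every $w \in \tau$ we have $w(x) \geq 0$ and $w(y) \geq 0$ since $\tau \subset \sigma(\Gamma)$ and $x,y \in \Gamma \subset \check\sigma(\Gamma)$, while $w(x) + w(y) = w(x+y) = 0$; hence $w(x) = w(y) = 0$ for all $w \in \tau$, i.e. $x, y \in \tau^\perp$. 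Injectivity and inclusion-reversal are formal. The crux is surjectivity: I must show every face $\Lambda$ of $\Gamma$ arises this way. For this I would pass through the cone: $\check\sigma(\Gamma \cap \tau^\perp) = \check\sigma(\Gamma) \cap \tau^\perp$ for a face $\tau$, so it suffices to prove that $\Lambda \mapsto \check\sigma(\Lambda)$ maps faces of $\Gamma$ onto faces of $\check\sigma(\Gamma)$, and then invoke the cone-level bijection to recover $\tau$ with $\Gamma \cap \tau^\perp = \Lambda$.

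The main obstacle is precisely this last point: verifying that for a face $\Lambda$ of the affine semigroup $\Gamma$ the cone $\check\sigma(\Lambda)$ it generates is genuinely a face of $\check\sigma(\Gamma)$ (and not merely a subcone), and conversely that $\Lambda = \Gamma \cap \check\sigma(\Lambda)$. The forward direction requires showing that if a nonnegative real combination $\sum a_i \gamma_i$ of elements of $\Gamma$ lands in $\check\sigma(\Lambda)$, then all $\gamma_i$ with positive coefficient already lie in $\Lambda$; one reduces to the case of integral combinations by clearing denominators and scaling (using that $\check\sigma(\Lambda)$ is the cone over $\Lambda$, so $\lambda \cdot \check\sigma(\Lambda) \cap M(\Gamma)$ eventually meets $\Lambda$ itself up to the saturation), and then applies the semigroup face property inductively. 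For the reconstruction $\Lambda = \Gamma \cap \check\sigma(\Lambda)$ one uses that $\Gamma$ is finitely generated together with the face property to rule out elements of $\Gamma \setminus \Lambda$ entering the cone generated by $\Lambda$. Once this semigroup/cone correspondence for faces is in hand, the second statement of the proposition follows by composing it with the bijection between faces of $\check\sigma(\Gamma)$ and faces of $\sigma(\Gamma)$ recalled above, and checking the composite is exactly $\tau \mapsto \Gamma \cap \tau^\perp$; inclusion-reversal is preserved since both constituent bijections reverse inclusions and we compose an even number of them, or rather we track the reversals carefully so that the final map $\tau \mapsto \Gamma \cap \tau^\perp$ is itself inclusion-reversing.
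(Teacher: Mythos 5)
The paper states this proposition without proof, so I will assess your argument on its own terms. The overall plan is sound and the first sentence of the proposition is handled correctly: the complement-taking bijection between prime ideals and faces is a direct unwinding of the definitions, and you supply the right checks (for instance, that a prime ideal cannot contain $0$, so its complement does contain the origin). Likewise the forward direction of the second assertion (that $\Gamma\cap\tau^\perp$ is a face of $\Gamma$) and the observation that $\check\sigma(\Gamma)\cap\tau^\perp$ is the cone generated by $\Gamma\cap\tau^\perp$ (giving injectivity) are both correct.

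The genuine weak point is the surjectivity step, which you correctly identify as the crux but do not actually carry out. The claim you need is: for a face $\Lambda$ of $\Gamma$, the cone $\check\sigma(\Lambda)$ is a face of $\check\sigma(\Gamma)$, and moreover $\Gamma\cap\check\sigma(\Lambda)=\Lambda$. Your parenthetical explanation (\emph{``using that $\check\sigma(\Lambda)$ is the cone over $\Lambda$, so $\lambda\cdot\check\sigma(\Lambda)\cap M(\Gamma)$ eventually meets $\Lambda$ itself up to the saturation''}) is not a coherent argument; it conflates scaling the cone with scaling a fixed vector and introduces ``saturation'' where it plays no role. The correct mechanism behind the ``reduce to integral combinations'' idea is this: any real relation $\sum_i a_i\gamma_i=\sum_j b_j\lambda_j$ with $a_i,b_j\ge 0$, $\gamma_i\in\Gamma$, $\lambda_j\in\Lambda$, and some prescribed $a_{i_0}>0$, defines a rational polyhedron in the coefficient space; if it is nonempty it contains a rational point, and after clearing denominators one gets an integer relation in which the left side is an element of $\Gamma$, the right side is an element of the subsemigroup $\Lambda$, and they are equal; the face property of $\Lambda$ applied repeatedly then forces $\gamma_{i_0}\in\Lambda$. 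This simultaneously shows that $\check\sigma(\Lambda)$ is a face (take $x+y\in\check\sigma(\Lambda)$ with $x,y\in\check\sigma(\Gamma)$, expand each as a nonnegative combination of generators of $\Gamma$, and conclude every generator appearing with positive coefficient lies in $\Lambda$) and that $\Gamma\cap\check\sigma(\Lambda)=\Lambda$ (take $x\in\Gamma\cap\check\sigma(\Lambda)$, scale to get $\lambda x\in\Lambda$ for a positive integer $\lambda$, and use $\lambda x=x+(\lambda-1)x\in\Lambda$ with the face property). Alternatively, one can build a supporting functional directly: choose a finite generating set $\gamma_1,\dots,\gamma_n$ of $\Gamma$ with $\gamma_1,\dots,\gamma_k\in\Lambda$ and $\gamma_{k+1},\dots,\gamma_n\notin\Lambda$, show via the face property (exactly the integral-relation argument above) that the images of $\gamma_{k+1},\dots,\gamma_n$ in $M(\Gamma)_\R/\mathrm{span}(\gamma_1,\dots,\gamma_k)$ generate a strictly convex cone, and lift a strictly positive functional on that cone to a $w\in\sigma(\Gamma)$ with $\Gamma\cap w^\perp=\Lambda$. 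Until one of these arguments is spelled out, the proof has a gap at its central step.
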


In the sequel, if $\Gamma$ is affine and $\tau$ is a face of $\sigma(\Gamma)$, we 
denote: 
  $$\Gamma_{\tau} : =  \Gamma \cap \tau^\perp, \ \
    M(\tau, \Gamma) := M( \Gamma_{\tau}),  \ \   
  M(\tau) := M  \cap \tau^\perp. $$
If $N_{\tau}$ denotes the sublattice of $N$ spanned by $\tau \cap N$, the 
quotient $N / N_{\tau}$ is canonically dual to $M(\tau)$, i.e., $N/N_{\tau}\simeq
\Hom(M(\tau),\Z)$. 

The subgroup $\Gamma^*$ of invertible elements is the {\em minimal} face, in the sense 
that it is contained in all the other ones. By the previous bijection, 
it corresponds to $\tau = \sigma$.

\medskip
Let $\Gamma$ be an affine semigroup and $K$ be a field. We denote by:  
  $$\zcal_K(\Gamma):= \spec K[\Gamma]$$
the associated \emph{toric variety} defined over $K$. Its $K$-\emph{valued points} 
are naturally identified with the semigroup: 
   \begin{equation} \label{idpoints}
       \Hom_{Rg}(K[\Gamma], K) \simeq \Hom_{Sg}((\Gamma, +), (K, \cdot)).
    \end{equation}
Notice that the multiplicative semigroup $(K, \cdot)$ has $0$ as infinity. 

When $\Gamma$ is of the form $\check{\sigma}\cap M$, and $\sigma$ is a strictly 
convex rational polyhedral cone in the real vector space $N_{\R}$, we define:
 $$\zcal_K(\sigma, N):= \zcal_K(\check{\sigma} \cap M).$$
These are precisely the \emph{normal} affine toric varieties. 
 
In the same way as abstract varieties over a field are obtained by gluing affine 
cones, we can glue affine toric varieties by respecting the ambient structure, 
that is, the action of the torus $T_K(N) : = \spec K[M]$. This is easiest to 
describe in the case of normal toric varieties: the combinatorial object encoding 
the gluing is a \emph{fan}.

\begin{definition}
A {\bf fan} in $N_{\R}$ is a finite set $\Delta$ of convex  
polyhedral cones inside $N_{\R}$, such that:

a) for each cone $\sigma$ in $\Delta$, all its faces are in $\Delta$;

b) if $\sigma_1$ and $\sigma_2$ are cones of $\Delta$, then $\sigma_1
\cap \sigma_2$ is a common face. 

\noindent If all the cones are rational, that is, they are defined as the 
intersections of halfspaces $\{v\in N_{\R} \ |  \ \langle v,m\rangle \geq 0\}$, where 
$m\in M$ and $\langle\cdot,\cdot\rangle$ is the pairing between $N$ and $M$, then 
the fan is called {\bf rational}.
\end{definition}

These conditions imply that all the cones in $\Delta$ have a maximal common
linear suspace. We say that $\Delta$ is a \emph{pointed fan}, if this linear 
subspace is the origin, that is, if all the cones of $\Delta$ are \emph{strictly
convex}. 

If $\Delta$ is a pointed finite rational polyhedral fan inside $N_{\R}$,  
we denote by $\zcal_K(\Delta, N)$ the normal toric variety over the field $K$ 
associated to the lattice $N$ and the fan $\Delta$. It is obtained by 
the usual gluing of  affine toric varieties: 
  $\zcal_K(\sigma_1, N)$ and  $\zcal_K(\sigma_2, N)$ are glued along  
  $\zcal_K(\sigma_1 \cap \sigma_2, N)$, which is an open affine toric subvariety 
  of both of them  (see \cite{F 93} or \cite{CLS 11}).

There is an incidence-reversing bijection between the cones of $\Delta$ and the 
orbits of the torus action on $\zcal_K(\Delta, N)$. We denote by $O_{\sigma}$ the 
orbit associated to the cone $\sigma \in \Delta$. It is canonically identified 
with the torus $T_{N/N_{\sigma}, K}$. 

We may also encode combinatorially the gluing of not necessarily normal affine 
toric varieties, as explained by Gonz\'alez P\'erez and Teissier 
in \cite{GT 09}. For this, we propose the following new notion: 

\begin{definition} \label{sgfan}
A {\bf fan of semigroups} $\mathcal{S}$ in $N$ is a rational fan $\Delta$ in 
$N_{\R}$, enriched with an affine subsemigroup $\Gamma_\alpha$ of $\Gamma$ for each 
cone $\alpha \in \Delta$, with the property that:

i) for each cone $\alpha$ in $\Delta$, $\sigma(\Gamma_{\alpha}) = \alpha$ and 
   $M(\Gamma_{\alpha}) = M$;

ii) if $\beta$ is a face of $\alpha \in \Delta$, then $\Gamma_{\beta} = 
    \Gamma_{\alpha} + M(\beta, \Gamma_{\alpha})$. 
\end{definition}

Note that condition i) implies that the fan $\Delta$ is pointed. 
These conditions allow to glue the affine toric varieties $\zcal_K(\Gamma)$ 
corresponding to the various semigroups of a given fan of semigroups 
$\mathcal{S}$. We denote by $\zcal_K(\mathcal{S})$ the associated toric variety.

\medskip
\section{Linear varieties associated to semigroups}  \label{troptor} 

In this section we develop a theory of embeddings of topological semigroups into 
bigger stratified topological spaces endowed with an action of the initial 
semigroup. This generalizes a construction introduced by Ash, Mumford, Rapoport, 
Tai \cite[I.1]{AMRT 75} and developed recently by Payne \cite{P 08} and 
Kajiwara \cite{K 08}. Their setting corresponds to the case when the semigroup is 
a strictly convex cone in a finite dimensional real vector space. 
\medskip

Recall that we assume all the semigroups to be commutative and with origin. Let $G$ 
and $H$ be semigroups, and denote by:
   $$\zcal_H(G) : = \Hom_{Sg}(G, H)$$
the semigroup of semigroup morphisms from $G$ to $H$. We think about it as the set 
of $H$-\emph{valued points} of the semigroup $G$. Moreover, when $H$ is a 
topological semigroup, we endow $\zcal_H(G)$ with the \emph{topology of pointwise 
convergence}, that is, the induced topology coming from the natural embedding 
$\zcal_H(G) \hookrightarrow H^G$, the target space being endowed with the 
product topology. 

\begin{example} \label{sgpoints}
    When $G$ is an affine semigroup and $H$ is the multiplicative group 
    $(K^*, \cdot)$ of a field $K$, then $\zcal_H(G)$ equals the torus $T_K(N(G))$, 
    whose lattice of characters is the lattice $M(G)$ associated to $G$. The torus
    $T_K(N(G))$ is naturally an algebraic variety and bears the Zariski topology.
    If there is some natural topology on $K$, the topology of pointwise convergence
    on $T_K(N(G))$ is different from the Zariski topology.
\end{example}

\begin{example}
    When $G$ is either an affine semigroup or a polyhedral cone and 
    $H$ is the additive group $(\R, +)$, then $\zcal_H(G)$ is the real vector 
    space $N(G)_{\R}$. This notation was explained before in the case of affine 
    semigroups. When $G$ is a cone $\check{\sigma}$, $N(G)_{\R}$ denotes the dual 
    space to the vector space $M(\check{\sigma})$ generated by $\check{\sigma}$.
\end{example}

Notice that the functor $(G, H) \rightarrow \zcal_H(G)$ is contravariant in the 
variable $G$ and covariant in the variable $H$ (this is, of course, valid in any 
category). When $H$ has no infinity, we get in particular a natural embedding 
of semigroups: 
 $$\zcal_H(G)  \hookrightarrow \zcal_{\overline{H}}(G). $$

\begin{definition} \label{linvarsg}
  If $G$ is a semigroup and $H$ a semigroup without infinity, we say that 
  $\zcal_{\overline{H}}(G)$ is the $H$-{\bf valued (affine) linear variety of} $G$. 
\end{definition}

\begin{remark} \label{reasonlin}
 We chose this name in analogy with that of \emph{toric varieties}. Indeed, when 
 $G$ is an affine semigroup and $H = (K^*, \cdot)$, as in Example \ref{sgpoints}, 
 $\zcal_{\overline{K^*}}(G) = \zcal_{(K, \cdot)}(G) = \Hom_{Sg}(G, (K, \cdot))$, 
 is the set of $K$-valued points of the affine toric variety $\zcal_K(G)$ 
 (see formula \eqref{idpoints}). The attribute ``\emph{toric}'' makes reference to
 a natural action of an algebraic (split) torus, whose law is thought of
 multiplicatively. In our context, the analog of the torus is the semigroup 
 $\zcal_H(G)$, thought of additively. It acts naturally on the linear variety
 $\zcal_{\overline{H}}(G)$. The most important case for us is $H=\R$, when
 $\zcal_H(G)$ is a vector spce. This explains the attribute ``\emph{linear}''
 in our terminology. In what concerns the attribute ``affine'', it makes reference
 to the fact that we define an analog of the notion of \emph{affine} 
 toric variety. 
\end{remark}
 
Assume now that $H$ is a group. In the same way as toric varieties are canonically
stratified into the orbits of the associated torus, the linear variety 
$\zcal_{\overline{H}}(G)$ is stratified into the orbits of the natural action of 
$\zcal_H(G)$ on $\zcal_{\overline{H}}(G)$ induced by the addition 
$H \times \overline{H} \rightarrow \overline{H}$ on the values. For an affine 
$G$ and divisible $H$, these orbits may be described in a different way, 
using the notion of prime ideal of a semigroup (see Definition \ref{sgideal}): 
 
\begin{proposition}
   Let $H$ be a divisible group and $G$ an affine semigroup. The orbits of the 
   natural action of $\zcal_H(G)$ on $\zcal_{\overline{H}}(G)$ are in a bijection
   with the prime ideals of $G$. The bijection is given by:
   $$\text{the orbit of } \gamma\in \zcal_{\overline{H}}(G)\quad \leftrightarrow
   \text{ the prime ideal } \gamma^{-1}(\infty \in \overline{H}).$$
\end{proposition}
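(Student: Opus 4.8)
The plan is to set up the action explicitly, identify the orbits via the fibers of the map $\gamma \mapsto \gamma^{-1}(\infty)$, and then check that this fiber decomposition coincides with the orbit decomposition by using divisibility of $H$ and the affineness of $G$. First I would recall that an element $\gamma \in \zcal_{\overline H}(G) = \Hom_{Sg}(G, \overline H)$ is a semigroup morphism, and that the action of $t \in \zcal_H(G) = \Hom_{Sg}(G,H)$ is $(t\cdot\gamma)(a) = t(a) + \gamma(a)$, computed in $\overline H$. The key elementary observation is that since $\infty$ is absorbing and $H$ has no infinity, we have $(t\cdot\gamma)(a) = \infty$ if and only if $\gamma(a) = \infty$; hence $(t\cdot\gamma)^{-1}(\infty) = \gamma^{-1}(\infty)$, so the map $\gamma \mapsto \gamma^{-1}(\infty)$ is constant on orbits. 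Next I would check that $\gamma^{-1}(\infty)$ is always a prime ideal of $G$: it is an ideal because $\infty$ is absorbing ($\gamma(a) = \infty \Rightarrow \gamma(a+b) = \gamma(a)+\gamma(b) = \infty$), and it is prime because $H$ has no infinity, so $\gamma(a)+\gamma(b) = \gamma(a+b) = \infty$ forces one of $\gamma(a),\gamma(b)$ to be $\infty$; properness follows since $\gamma(0) = 0 \ne \infty$. This gives a well-defined map from the orbit set to the set of prime ideals of $G$.

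For surjectivity, given a prime ideal $\mathfrak p \subset G$, I would exhibit a morphism $\gamma$ with $\gamma^{-1}(\infty) = \mathfrak p$: by Proposition \ref{faces} the complement $\Lambda = G \setminus \mathfrak p$ is a face of $G$, hence a subsemigroup, and I can define $\gamma$ to be a group homomorphism $M(\Lambda) \to H$ (restricted to $\Lambda$) composed with the inclusion $H \hookrightarrow \overline H$ on $\Lambda$, and set $\gamma \equiv \infty$ on $\mathfrak p$; the ideal and prime properties of $\mathfrak p$ make this a well-defined semigroup morphism into $\overline H$. (Any $\gamma$ — for instance the one sending all of $\Lambda$ to $0 \in H$ — works for surjectivity; divisibility is not yet needed here.)

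The main work, and the step I expect to be the real obstacle, is injectivity: showing that two morphisms $\gamma_1, \gamma_2$ with the same preimage $\mathfrak p$ of $\infty$ lie in the same orbit. Here I would use that on the face $\Lambda = G \setminus \mathfrak p$ both $\gamma_i$ take values in the group $H$, and restricted to $\Lambda$ a semigroup morphism into a group extends uniquely to a group homomorphism $M(\Lambda) \to H$ (since $G$, hence $\Lambda$, is cancellative and $M(\Lambda)$ is its associated group). I need to produce $t \in \Hom_{Sg}(G,H)$ with $t|_\Lambda = (\gamma_2 - \gamma_1)|_\Lambda$ as group homomorphisms on $M(\Lambda)$, and arbitrary (say $0$) on $\mathfrak p$; one checks $t$ is a semigroup morphism $G \to H$ using that $\Lambda$ is a face (so a sum lands in $\Lambda$ iff both summands do). Then $t \cdot \gamma_1 = \gamma_2$: on $\Lambda$ this is the defining equation, and on $\mathfrak p$ both sides equal $\infty$. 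The point where divisibility of $H$ enters is in guaranteeing that the homomorphism $M(\Lambda) \to H$ extends to a homomorphism $M(G) \to H$ (equivalently, that $t$ extends to all of $G$ with values in $H$ rather than needing $\infty$): since $M(\Lambda)$ is a subgroup of the finitely generated abelian group $M(G)$ and $H$ is divisible, hence an injective $\Z$-module, any homomorphism $M(\Lambda) \to H$ extends to $M(G) \to H$. This extension argument — reducing the compatibility of the two morphisms on the face to a lattice-extension problem solved by injectivity of divisible groups — is the crux, and the bookkeeping about which elements of $G$ land in $\Lambda$ versus $\mathfrak p$ is the part to do carefully.
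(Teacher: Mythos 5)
Your proof is essentially correct, and since the paper states this proposition without proof, your argument fills a genuine gap. The structure is right: the map $\gamma\mapsto\gamma^{-1}(\infty)$ is constant on orbits because $\infty$ is absorbing and $H$ has no $\infty$; the image is always a prime ideal by the absorbing property (ideal), the group structure of $H$ (primeness), and $\gamma(0)=0$ (properness); surjectivity is by the ``characteristic function'' of the face $\Lambda=G\setminus\mathfrak p$; and injectivity via the universal property of $M(\Lambda)$ together with injectivity of a divisible group as a $\Z$-module is exactly where the hypotheses on $H$ and on $G$ (affine, so $\Lambda$ is cancellative and $M(\Lambda)\hookrightarrow M(G)$) are used.

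One remark on the injectivity step. Your first proposal --- define $t$ to equal $\gamma_2-\gamma_1$ on $\Lambda$ and ``arbitrary (say $0$) on $\mathfrak p$'' and ``check it is a semigroup morphism using that $\Lambda$ is a face'' --- does not work as stated. If $a\in\Lambda$ and $b\in\mathfrak p$ then $a+b\in\mathfrak p$, and you would need $t(a+b)=t(a)+t(b)$, i.e.\ $0=(\gamma_2(a)-\gamma_1(a))+0$, which fails in general. The face property controls sums of two elements of $\Lambda$, not mixed sums, so there is no freedom in prescribing $t$ on $\mathfrak p$ independently. Your second construction --- extend $(\gamma_2-\gamma_1)\colon M(\Lambda)\to H$ to $M(G)\to H$ by injectivity of the divisible group $H$, then restrict to $G$ --- is the correct one, and it automatically determines $t|_{\mathfrak p}$; the value there does not matter because both $\gamma_1$ and $\gamma_2$ are $\infty$ on $\mathfrak p$. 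With the first sentence dropped (it is superseded, not needed), the argument is clean and complete.
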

 
Therefore, those orbits are in natural bijection with the faces of $G$ (see 
Proposition~\ref{faces}). If $\scal$ is a fan of affine semigroups and $H$ is a 
divisible group, then we have the canonical identification 
$\zcal_H(\Gamma) = \zcal_H(M(\Gamma))$, where $\Gamma \in \scal$ and $M(\Gamma)$ 
is the same lattice for all the semigroups $\Gamma \in \scal$, by condition i) of 
Definition~\ref{sgfan}.

\medskip

\begin{remark} \label{remlin}
The construction of \cite[I.1]{AMRT 75} alluded to in the introductory 
paragraph of this section,
and developed further by Payne \cite{P 08} and Kajiwara \cite{K 08}, corresponds to 
the case when $G$ is a saturated affine semigroup $\check{\sigma} \cap M$ and 
$H = \R$ (see again Example \ref{sgpoints}). We chose to develop 
this more general categorical viewpoint for the following reasons:

\begin{enumerate}  
  \item To get extra structures on $G_H$ 
     from the functorial properties of our construction. For instance, when $G$ is 
     affine, then, the integral points of $\zcal_{\overline{\R}}(G)$ are the 
     points in $\zcal_{\overline{\Z}}(G)$.
  
  \item  To study also valuations taking values in totally ordered groups which do 
     not embed into $\R$, that is, which have rank at least $2$. For 
     instance, this could be useful when developing the theory initiated by 
     F. Aroca in \cite{A 10}. 
\end{enumerate}
\end{remark}

We focus now on the topological aspects of the constructions. Let  
$G$ be a saturated affine semigroup $\check{\sigma} \cap M$, $H = \R$, 
$\sigma\subset N_{\R}$ is a strictly convex rational polyhedral cone. 
Fix:
  \begin{equation} \label{aftt}
     L(\sigma, N):= \Hom_{Sg}(\check{\sigma}\cap M, \ri ) =
     \zcal_{\ri} (\check{\sigma}\cap M),   
  \end{equation}

  \begin{equation} \label{tc}
     \overline{\sigma}= \overline{(\sigma, N)}:=
     \Hom_{Sg}(\check{\sigma}\cap M, 
     \ri_{\geq 0} )  = \zcal_{\ri_{\geq 0}} (\check{\sigma}\cap M). 
  \end{equation}
Whenever $N$ is clear from the context, we omit it and denote 
$\overline{(\sigma, N)}$ simpy by $\overline{\sigma}$. We denote by
$\overline{\sigma}^\circ$ the subspace of $\overline{\sigma}$
consisting of those semigroup morphisms $\check{\sigma}\cap M\to \ri_{\geq 0}$
which take only positive values (possibly $+\infty$) on the maximal ideal
of the semigroup $\check{\sigma}\cap M$. We say that $\overline{\sigma}^\circ$
is the \emph{interior} of $\overline{\sigma}$.

We view $L(\sigma, N)$ as a space of functions from the set
$\check{\sigma} \cap M$ to the topological space $\ri$, and endow it with the 
topology of pointwise convergence. This is the weakest topology
for which all the sets $\{\gamma\in L(\sigma, N)\,|\,\gamma(m)\in U\}$ are
open, where $m\in\check{\sigma}\cap M$ and $U$ is an open subset of $\ri$. 
Since the topological space $\ri$ is separated, this topology is also separated. 
Since $\R$ is a dense open subspace of $\ri$, we see that $N_{\R}$ is also a dense
open subspace of $L(\sigma, N)$. 

Respecting the conventions of Definition \ref{linvarsg},  we introduce the 
following terminology:

\begin{definition} \label{linvardef}
   The topological space $L(\sigma, N)$, endowed with the natural
   continuous action $N_{\R} \times L(\sigma, N) \rightarrow L(\sigma,
   N)$ extending the action of $N_{\R}$ on itself by translations
   is called {\bf the affine linear variety} associated to the
   pair $(N, \sigma)$. We say that the closure of the cone $\sigma$ in $L(\sigma, N)$ 
   is the {\bf extended cone} $\overline{\sigma}$. 
\end{definition}

The affine linear variety $L(\sigma, N)$ is obtained by adding to the vector 
space $N_{\R}$ some strata \emph{at infinity}, each stratum being by definition 
an orbit of the previous action. These strata have a canonical structure of vector 
spaces, in the same way as the orbits of the canonical action of a torus on an 
associated affine toric variety are canonically lower-dimensional tori. More 
precisely, they are canonically identified with the vector spaces 
$(N/ N_{\tau})_{\R}$, where $\tau$ is a face of the cone $\sigma$ (including $0$ 
and $\sigma$ itself). Here, $N_{\tau}$ denotes the intersection of the vector 
space spanned by $\tau$ with the lattice $N$.  

We now introduce a topology on the disjoint union 
$\bigsqcup_{\tau} (N/ N_{\tau})_{\R}$.  
  If $U$ is an open subset of $N_{\R}$ and $\delta$ is a face of $\sigma$, 
  we consider the set:
  \begin{equation}\label{E:opensetoflv}
      \overline{U}_{\delta}:= \bigsqcup_{\tau \leq \delta}
      \pi_{\tau}(U + \delta) \subseteq 
      \bigsqcup_{\tau \leq \sigma} (N/ N_{\tau})_{\R} 
  \end{equation}
  where $N_{\R} \overset{\pi_{\tau}}{\longrightarrow} (N/N_{\tau})_{\R}$ is the 
  canonical projection, the first union is taken over all faces $\tau$ of the cone 
  $\delta$, and the second over those of $\sigma$. 
  
  The disjoint union $\bigsqcup_{\tau} (N/ N_{\tau})_{\R}$ 
enowed with the previous topology is a partial 
compactification of $N_\R$. Intuitively, this topology may be 
explained as follows: {\em the sequence $(v_n)_{n\in \N}\subset N_\R$ tends to 
$v^\tau \in (N/ N_{\tau})_{\R}$ if and only if $(v_n)_{n\in \N}$ 
tends to infinity in the direction
of the cone $\tau$ and the sequence of projections $(p_\tau(v_n))_{n\in \N}$ 
converges
to $v^\tau \in (N/ N_{\tau})_{\R}$ inside the space $(N/ N_{\tau})_{\R}$}. 
Let us be more precise about the meaning of the first part of this 
characterization. Choose an arbitrary 
linear projection $\psi_{\tau}$ of $N_{\R}$ onto 
the linear span $(N_{\tau})_{\R}$ of the cone $\tau$. Then {\em $(v_n)_{n\in \N}$ 
tends to infinity in the direction of the cone $\tau$ if and only if 
the sequence $(\psi_{\tau}(v_n))_{n\in \N}$ gets eventually out of any compact of 
$(N_{\tau})_{\R}$ and also enters eventually any fixed neighborhood of 
$\tau$, also inside $(N_{\tau})_{\R}$}.

\begin{example}
  Let $N = \Z^2$ and let $\sigma$ be the convex polyhedral cone generated by the 
  vectors $(1,0)$ and $(1,2)$ in $\R^2$. Denote the $1$-dimensional faces of 
  $\sigma$ by $\tau_1=\langle(1,0)\rangle$ and $\tau_2=\langle(1,2)\rangle$. 
  The corresponding
  stratification of the disjoint union $L=\bigsqcup_{\tau\leq\sigma} 
  (N/ N_{\tau})_{\R}$ consists of the four pieces: $L_0=\R^2$, 
  $L_1=\R^2/\R\cdot\tau_1$, $L_2=\R^2/\R\cdot\tau_2$, and the point $L_{12}=
  \R^2/\R\cdot\sigma$. The cone $\sigma$ and the stratification are 
  schematically
  shown in Figure~\ref{fig:Compactif}. Let the open set $U$ be an 
  open circle and
  $\delta=\tau_2$. Then the corresponding open subset of $L$ is
  $$U_{\tau_2} = (U+\tau_2)\ \sqcup \  \pi_2(U+\tau_2),$$
  where $\pi_2 := \pi_{\tau_2}\colon \R^2\to L_2$ 
 is the canonical projection (see again Figure~\ref{fig:Compactif}).
\end{example}

\bigskip
\begin{figure}[h!]
\labellist
\small\hair 2pt
\pinlabel  {$\tau_1$} at 144 50
\pinlabel  {$\tau_2$} at 46 211
\pinlabel  {$\sigma$} at 66 99
\pinlabel  {$0$} at -10 60
\pinlabel  {$U$} at 144 111
\pinlabel  {$U + \tau_2$} at 195 181
\pinlabel  {$L_1$} at 295 150
\pinlabel  {$L_2$} at 104 346
\pinlabel  {$\pi_2(U + \tau_2)$} at 220 306
\pinlabel  {$L_{12}$} at 290 260
\endlabellist
\centering
\includegraphics[scale=0.50]{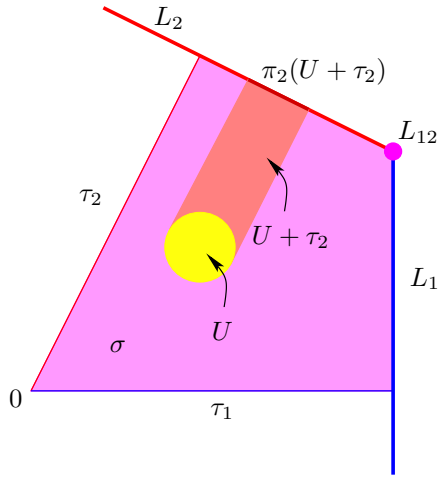}
\caption{An affine linear variety of dimension $2$}
\label{fig:Compactif}
\end{figure}
\medskip

\begin{example}
  In order to indicate better the adjacencies of strata which appear by the 
  construction of the affine linear variety associated to a pair $(\sigma, N)$, 
  let us also represent a $3$-dimensional situation. We consider a lattice $N$ of 
  rank $3$ and inside $N_{\R}$ a strictly convex cone $\sigma$ of dimension $3$ 
  having $4$ edges, denoted $\tau_1,\dotsc, \tau_4$. Denote also by $\tau_I$ the 
  face of $\sigma$ spanned by the subset $I$ of $\{1 ,\dotsc,4\}$, whenever we get 
  indeed a face, and by $L_I := (N/ N_{\tau_I})_{\R}$. In particular, 
  $\tau_{1234}= \sigma$, therefore $L_{1234}$ is a point. In 
  Figure \ref{fig:Spaceconebis} we represented $L(\sigma, N)$, as well as the 
  canonical projections $\pi_I(\sigma)$ of $\sigma$ to the strata at infinity 
  $L_I$ (where, as in the previous example, we denote $\pi_I := \pi_{\tau_I}$).  
\end{example}

\bigskip
\begin{figure}[h!]
\labellist
\small\hair 2pt
\pinlabel  {$0$} at 75 -10
\pinlabel  {$\tau_1$} at 124 88
\pinlabel  {$\tau_2$} at 124 41
\pinlabel  {$\tau_3$} at 109 140
\pinlabel  {$\tau_4$} at 72 126
\pinlabel  {$L_1$} at 237 143
\pinlabel  {$L_2$} at 267 179
\pinlabel  {$L_4$} at 46 227
\pinlabel  {$L_{1234}$} at 189 300
\pinlabel  {$\pi_1(\sigma)$} at 165 325
\pinlabel  {$\pi_2(\sigma)$} at 276 285
\pinlabel  {$\pi_4(\sigma)$} at 88 312
\pinlabel  {$\sigma$} at 185 37

\endlabellist
\centering
\includegraphics[scale=0.8]{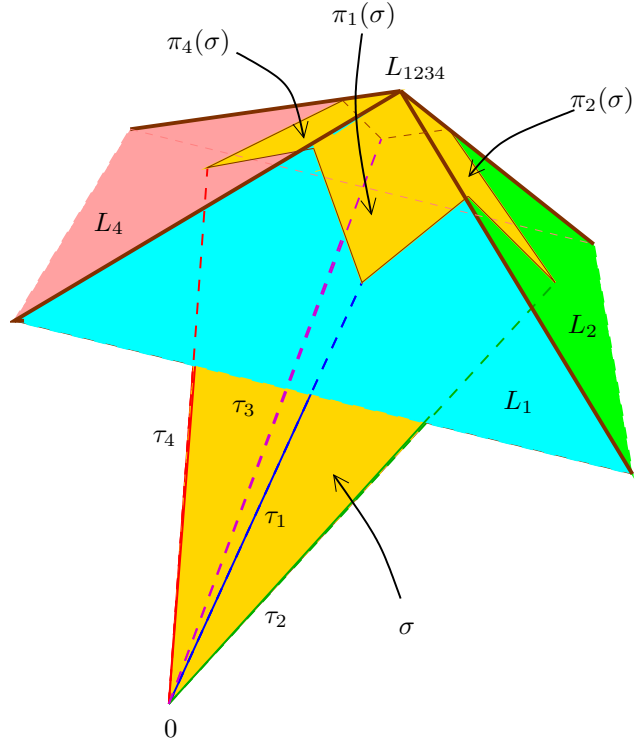}
\caption{A 3-dimensional affine linear variety}
\label{fig:Spaceconebis}
\end{figure}
\medskip

\begin{proposition}\label{P:opensetoflv}
    The sets of the form $\overline{U}_{\delta}$, where $U$ is an open subset
    of $N_{\R}$ and $\delta$ is a face of the cone $\sigma$, form a 
    basis of open sets for a topology on 
    $\bigsqcup_{\tau} (N/ N_{\tau})_{\R}$, where $\tau$ varies over the 
    faces of the cone $\sigma$. 
\end{proposition}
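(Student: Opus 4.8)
The plan is to verify the standard axioms for a basis of a topology: first, that the sets $\overline{U}_{\delta}$ cover the whole disjoint union, and second, that the intersection of two such sets is again a union of sets of the same form (equivalently, that any point in the intersection of two basic sets lies in a basic set contained in the intersection). For the covering property, note that taking $\delta = \sigma$ and letting $U$ exhaust $N_{\R}$, the sets $\pi_{\tau}(U+\sigma)$ for $\tau \leq \sigma$ already range over all of $(N/N_{\tau})_{\R}$, since $\pi_{\tau}$ is surjective; so every stratum is hit.

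The main work is the intersection property. Given two basic sets $\overline{U}_{\delta}$ and $\overline{U'}_{\delta'}$ and a point $v^{\tau} \in (N/N_{\tau})_{\R}$ lying in both, I would first observe from the definition \eqref{E:opensetoflv} that membership forces $\tau \leq \delta$ and $\tau \leq \delta'$, hence $\tau$ is a common face of $\delta$ and $\delta'$; moreover $v^{\tau} \in \pi_{\tau}(U + \delta) \cap \pi_{\tau}(U' + \delta')$. The candidate basic set contained in the intersection will be of the form $\overline{W}_{\tau}$ for a suitably chosen open $W \subseteq N_{\R}$ with $v^{\tau} \in \pi_{\tau}(W)$. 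The key point is to choose $W$ so that $\pi_{\tau}(W) \subseteq \pi_{\tau}(U+\delta) \cap \pi_{\tau}(U'+\delta')$ and so that for every face $\rho \leq \tau$ we have $\pi_{\rho}(W+\tau) \subseteq \pi_{\rho}(U+\delta) \cap \pi_{\rho}(U'+\delta')$. Here the structural fact to use is a transitivity/compatibility of the projections: if $\rho \leq \tau \leq \delta$, then $\pi_{\rho}(W + \tau) \subseteq \pi_{\rho}(U + \delta)$ whenever $\pi_{\tau}(W + \tau) \subseteq \pi_{\tau}(U + \delta)$, because $\pi_{\rho}$ factors through $\pi_{\tau}$ and $\pi_{\tau}(\delta) \supseteq \pi_{\tau}(\tau)$ while $\pi_{\tau}^{-1}(\pi_{\tau}(U+\delta)) = U + \delta + (N_{\tau})_{\R}$ behaves well under further projection. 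So it suffices to produce $W$ handling the single face $\tau$, and then the conditions for all $\rho \leq \tau$ follow automatically.

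To produce such a $W$: pick a representative $v \in N_{\R}$ of $v^{\tau}$ lying in $U + \delta$ and also (after adjusting by an element of $(N_{\tau})_{\R} \subseteq (N_{\delta})_{\R} \cap (N_{\delta'})_{\R}$, using $\tau \leq \delta, \delta'$) in $U' + \delta'$. Write $v = u + d = u' + d'$ with $u \in U$, $d \in \delta$, $u' \in U'$, $d' \in \delta'$. Since $U, U'$ are open and $\delta, \delta'$ are closed cones, the sets $U + \delta$ and $U' + \delta'$ are open, so a small open ball $B$ around $v$ lies in $(U+\delta) \cap (U'+\delta')$; take $W := B$. Then $\pi_{\tau}(W) \ni v^{\tau}$ and $\pi_{\tau}(W) \subseteq \pi_{\tau}(U+\delta) \cap \pi_{\tau}(U'+\delta')$, and by the factoring argument of the previous paragraph, $\overline{W}_{\tau} \subseteq \overline{U}_{\delta} \cap \overline{U'}_{\delta'}$, with $v^{\tau} \in \overline{W}_{\tau}$. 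This completes the check.

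The step I expect to be the main obstacle is the bookkeeping in the factoring claim $\pi_{\rho}(W+\tau) \subseteq \pi_{\rho}(U+\delta)$ for $\rho \leq \tau \leq \delta$: one must be careful that adding the cone $\tau$ rather than $\delta$ on the inner set does not cost anything after projecting by $\pi_{\rho}$, which uses $\rho \leq \delta$ so that $\pi_{\rho}$ kills $(N_{\rho})_{\R}$ and that $\pi_{\rho}(\tau) \subseteq \pi_{\rho}(\delta)$; and dually that openness is preserved under these sums of open sets with cones. None of this is deep, but it is the place where a careless argument would break, so I would write this lemma about the projections explicitly before assembling the basis proof.
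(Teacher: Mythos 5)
Your construction of $W$ is the right one, but the ``structural fact'' you isolate to justify it is stated in the wrong direction and, as stated, is simply false. For faces $\rho \leq \tau$ the inclusion of spans is $(N_{\rho})_{\R} \subseteq (N_{\tau})_{\R}$, so it is $\pi_{\tau}$ that factors through $\pi_{\rho}$, not the other way around. Consequently, the implication ``$\pi_{\tau}(W+\tau) \subseteq \pi_{\tau}(U+\delta)$ implies $\pi_{\rho}(W+\tau) \subseteq \pi_{\rho}(U+\delta)$'' does not hold: take $N = \Z^2$, $\sigma = \delta = \R^2_{\geq 0}$, $\tau = \R_{\geq 0} e_1$, $\rho = \{0\}$, $U = (0,1)^2$, $W = (-10,-9)\times(0.5,0.6)$. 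Then $\pi_\tau(W+\tau) = (0.5,0.6) \subseteq (0,\infty) = \pi_\tau(U+\delta)$, yet $W+\tau \not\subseteq U+\delta$. So this is precisely the ``place where a careless argument would break,'' as you feared; it breaks.

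Fortunately your $W$ satisfies the much stronger condition $W \subseteq (U+\delta) \cap (U'+\delta')$, and from there the argument is direct and does not need any claim about projections factoring: since $\tau \leq \delta$ one has $W + \tau \subseteq (U + \delta) + \tau = U + \delta$ (and likewise $W + \tau \subseteq U' + \delta'$), and then $\pi_{\rho}(W+\tau) \subseteq \pi_{\rho}(U+\delta) \cap \pi_{\rho}(U'+\delta')$ for every $\rho \leq \tau$ simply by applying $\pi_\rho$ to both sides. You should also justify the existence of the common representative $v$ more carefully: from $v_1 \in U+\delta$ and $v_2 \in U'+\delta'$ with $v_1 - v_2 \in (N_\tau)_{\R} = \tau - \tau$, write $v_1 - v_2 = t_1 - t_2$ with $t_1, t_2 \in \tau$; then $v := v_1 + t_2 = v_2 + t_1$ lies in $v_1 + \tau \subseteq U+\delta$ and in $v_2 + \tau \subseteq U'+\delta'$, using $\tau \leq \delta$ and $\tau \leq \delta'$. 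With these two corrections the proof is complete. (The paper leaves this proposition to the reader, so there is no paper proof to compare against.)
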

\begin{proof}
The proof is easy and left to the reader.
\end{proof}

Note that any element $\gamma$ of $\bigsqcup_{\tau} (N/ N_{\tau})_{\R}$ defines
a semigroup homomorphism from $\check{\sigma}\cap M$ to $\ri$. Indeed,
if $\gamma\in (N/ N_{\tau})_{\R}$, then $\gamma$ defines a canonical homomorphism 
from $\tau^\perp \cap \check{\sigma}\cap M$ to $\R$, where $\tau^\perp$ is the 
subspace of $M_{\R}$ orthogonal to $\tau$. Extend this homomorphism 
to $\check{\sigma}\cap M$ by setting $\gamma(m)=+\infty$ for all
$m\in\check{\sigma}\cap M$, $m\notin \tau^\perp$. In this way we get a canonical 
map $\bigsqcup_{\tau\leq \sigma} (N/ N_{\tau})_{\R} \rightarrow L(\sigma,N)$. 
Now the stratification of $L(\sigma,N)$ may be described set-theoretically
as follows:

\begin{lemma} \label{setdescraff}
   The canonical map $\bigsqcup_{\tau\leq \sigma} (N/ N_{\tau})_{\R} 
   \rightarrow L(\sigma,N)$ is a homeomorphism. 
\end{lemma}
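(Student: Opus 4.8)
The plan is to establish bijectivity first, then continuity, and finally that the inverse is continuous (equivalently, that the map is open onto $L(\sigma,N)$). For bijectivity: given a semigroup morphism $\gamma\colon\check\sigma\cap M\to\ri$, set $\tau:=\gamma^{-1}(\infty)$; by the earlier discussion (prime ideals of $\check\sigma\cap M$ correspond to faces of $\sigma$, Proposition~\ref{faces}) the preimage of the absorbing element is a prime ideal of $\check\sigma\cap M$, and its complement is the face $\check\sigma\cap M\cap\tau^\perp$ for a unique face $\tau\leq\sigma$. Thus $\gamma$ restricts to a genuine (finite-valued) semigroup morphism $\check\sigma\cap M\cap\tau^\perp\to\R$. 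Since $\tau^\perp\cap M$ is the character lattice of $N/N_\tau$ and $\check\sigma\cap\tau^\perp$ generates $\tau^\perp$ as a cone with $(\check\sigma\cap\tau^\perp)\cap M$ generating $\tau^\perp\cap M$ as a group (a standard fact about faces of rational cones), this morphism extends uniquely to a group homomorphism $\tau^\perp\cap M\to\R$, i.e. to an element of $(N/N_\tau)_\R$. This recovers a unique preimage in the disjoint union, so the map is a bijection; one checks quickly that the stratum $(N/N_\tau)_\R$ maps exactly onto the set of $\gamma$ with $\gamma^{-1}(\infty)=\tau$, i.e. onto the orbit of the $N_\R$-action indexed by $\tau$.

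For continuity: it suffices to check that the preimage of each subbasic open set $\{\gamma\in L(\sigma,N)\mid \gamma(m)\in U\}$, with $m\in\check\sigma\cap M$ and $U\subseteq\ri$ open, is open in $\bigsqcup_\tau(N/N_\tau)_\R$. Restricting to a fixed stratum $(N/N_\tau)_\R$, the condition $\gamma(m)\in U$ is, if $m\in\tau^\perp$, the condition $\langle\bar\gamma,m\rangle\in U\cap\R$ (a linear inequality/open condition on $(N/N_\tau)_\R$), and if $m\notin\tau^\perp$, it is either vacuous ($\infty\in U$) or empty ($\infty\notin U$). In all cases the intersection with each stratum is open; one then verifies, using the explicit description \eqref{E:opensetoflv} of the basic opens $\overline U_\delta$, that the union of these stratum-wise open sets is open in the disjoint-union topology — essentially because $\{v\in N_\R\mid\langle v,m\rangle\in U\}$ is $\pi_\tau$-saturated when $m\in\tau^\perp$ and the halfspace is "stable along $\tau$", which is exactly what the neighborhood description of convergence toward strata at infinity encodes.

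For openness of the map (equivalently continuity of the inverse): I would show that the image of each basic open $\overline U_\delta$ is open in $L(\sigma,N)$ by exhibiting it as a finite intersection of preimages of subbasic opens of $\ri$ under the evaluation maps, using that $U\subseteq N_\R$ is itself a finite intersection of open halfspaces $\{v\mid\langle v,m_i\rangle\in U_i\}$ with $m_i\in M$ (any open set is a union of such, and opens are preserved under unions, so it suffices to treat a basic box). The key point is that $\overline U_\delta$ corresponds precisely to requiring $\gamma(m_i)\in U_i$ for the generators $m_i$ together with $\gamma(m)=\infty$ exactly for $m$ not in the relevant face's perp — and "tending to infinity in the direction of $\delta$" is captured by the $\ri$-topology at $+\infty$ via the basic neighborhoods $(a,\infty]$. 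I expect the main obstacle to be the bookkeeping in this last step: matching the intuitive convergence description (sequences escaping to infinity along a subcone while their projections converge) with the formal pointwise-convergence topology on $\Hom_{Sg}(\check\sigma\cap M,\ri)$, and in particular verifying that the $\ri$-topology's neighborhoods of $\infty$ of the form $(a,\infty]$ generate, under finitely many evaluations at a generating set of $\check\sigma\cap M$, exactly the sets $\overline U_\delta$ and no coarser ones. Once the finite generation of $\check\sigma\cap M$ is used to reduce everything to finitely many evaluation maps, the verification becomes a routine (if slightly tedious) comparison of two explicitly described bases.
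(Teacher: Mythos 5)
Your plan follows the same overall structure as the paper's proof: bijectivity via identifying $\gamma^{-1}(\infty)$ with a prime ideal of $\check\sigma\cap M$ (hence a face of $\sigma$), continuity via preimages of subbasic opens, and openness of $F$ via images of the basic opens $\overline U_\delta$, after reducing $U$ to an intersection of halfspaces. The approach is right, but two places you wave off as ``routine bookkeeping'' contain the substantive content, and one of them as written would not go through.

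For the openness argument you propose to write $U$ as a finite intersection of halfspaces $\{v\mid\langle v,m_i\rangle\in U_i\}$ with $m_i\in M$, and then match these to subbasic opens of $L(\sigma,N)$. But the subbasic opens of $L(\sigma,N)=\Hom_{Sg}(\check\sigma\cap M,\ri)$ are evaluations at elements of $\check\sigma\cap M$ \emph{only} — for a general $m\in M$ the ``evaluation'' $\gamma\mapsto\gamma(m)$ is simply undefined. You therefore must confine the $m_i$ to $\check\sigma\cap M$; the paper in fact takes $m_i$ in the \emph{interior} of $\check\sigma$. This is possible because $\check\sigma$ is full-dimensional, so its interior contains a basis of $M$ and the halfspaces defined by interior elements still generate the topology of $N_\R$; it is also strategically important, because for interior $m$ the locus $\{\gamma\mid\gamma(m)=\infty\}$ is exactly the union of all strata at infinity, which is what lets $F(\overline U_\delta)$ be written as a single subbasic open minus a finite union of closed sets. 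Secondly, for continuity, showing that the preimage of a subbasic open meets each stratum in an open set is not by itself enough: a set can be open in every stratum without being open in $L(\sigma,N)$ (e.g.\ a single stratum at infinity). Your $\pi_\tau$-saturation observation is the correct ingredient, but what actually closes the argument — as the paper does explicitly — is exhibiting the preimage of $\{\gamma\mid\gamma(m)\in U\}$ as a \emph{basic} open $\overline H_\delta$, where $H$ is the corresponding open subset of $N_\R$ and $\delta$ is the maximal face with $m\in\delta^\perp$ (when $U\subseteq\R$), or $\overline H_\sigma$ when $U$ contains $\infty$.
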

\begin{proof}
   Let us denote this canonical map by $F$. The injectivity of $F$ follows
   directly from its construction. To show surjectivity, we consider a homomorphism
   $\gamma\colon\check{\sigma}\cap M\to \ri$. The set of 
   $m\in\check{\sigma}\cap M$ satisfying $\gamma(m)=+\infty$ is a
   semigroup ideal of $\check{\sigma}\cap M$. Moreover, this ideal is prime,
   that is, if $\gamma(m_1+m_2)=+\infty$, then $\gamma(m_1)=+\infty$ or 
   $\gamma(m_2)=+\infty$. As we already said in Section~\ref{S:geomofsmgrps}, such
   an ideal can be only the complement in $\check{\sigma}\cap M$ of an intersection
   $\tau^\perp \cap \check{\sigma}$ for some face $\tau$ of the cone $\sigma$. This
   clearly implies that $\gamma$ is in the image of $F$.

   Next, let us prove that the map $F$ is continuous in both directions. Let
   $U$ be an interval $(a,b)$, $a\in\R$, $b\in\ri$ or half-interval
   $(a,+\infty]$ of the extended real line $\ri$ and $m$ an element of
   the semigroup $\check{\sigma}\cap M$. Suppose that $\delta^\perp \cap
   \check{\sigma}$ is the minimal face of $\check{\sigma}$ containing $m$,
   where $\delta$ is a face of $\sigma$. Consider the open subset 
   $W=\{\gamma\in L(\sigma, N)\,|\,\gamma(m)\in U\}$ of $L(\sigma, N)$. 
   If $U$ is contained in $\R$, then $F^{-1}(W)$ consists of an open part 
   $H\subseteq N_{\R}$ and some strata at infinity. Namely, $F^{-1}(W)=
   \bigsqcup_{\tau\leq\delta}\pi_\tau(H+\delta)=\overline{H}_{\delta}$, 
   and this set is open in $\bigsqcup_{\tau\leq \sigma} (N/ N_{\tau})_{\R}$.
   Assume that $U=(a,+\infty]$. Let $W_0$ be the open subset of 
   $L(\sigma, N)$ corresponding to the interval $U_0=(a,+\infty)$. Then 
   $F^{-1}(W_0)$ contains the open halfspace
   $H=\{\gamma\in N_{\R}\,|\, \gamma(m)>a\}$ of $N_{\R}$ and we can write: 
   $$F^{-1}(W)=\overline{H}_{\sigma}\subseteq
   \bigsqcup_{\tau\leq \sigma} (N/ N_{\tau})_{\R}.$$
   This set is open by the definition of the topology on 
   $\bigsqcup_{\tau\leq \sigma} (N/ N_{\tau})_{\R}$.

   Let $\overline{U}_{\delta}$ be an open subset of 
   $\bigsqcup_{\tau\leq \sigma} (N/ N_{\tau})_{\R}$ of the form 
   \eqref{E:opensetoflv}. We show that the set $F(\overline{U}_{\delta})$ 
   is also open. First note that it suffices to assume that the open set $U$ 
   in $N_{\R}$ is an intersection of finite number of open half-spaces 
   $U^+(m,a)=\{\gamma\,|\,\gamma(m)>a\}$ or $U^-(m,a)=\{\gamma\,|\,\gamma(m)<a\}$ 
   for \emph{some elements $m$ of $M$ contained in the interior of 
   $\check{\sigma}$} (recall that the cone $\check{\sigma}$ has nonempty interior, 
   thus it contains a basis for $M$) and for some $a\in\R$. Using this, we can 
   further reduce to the case when $U$ is actually one half-space of the form 
   $U^+(m,a)$ or $U^-(m,a)$. In the first case:
   $$F(\overline{U}_{\delta})=F(\overline{U^{+}_{\delta}(m,a)})=$$
   $$\{\gamma\in L(\sigma,N)\,|\,\gamma(m)\in (a,+\infty]\}\setminus
   \bigcup_{m'\in \delta^\perp\cap\check{\sigma}\cap M} 
   \{\gamma\in L(\sigma,N)\,|\,\gamma(m')=+\infty\}.$$
   The set $\{\gamma\,|\,\gamma(m')=+\infty\}$ is closed in $L(\sigma,N)$,
   and the union that we subtract in the formula above is in fact finite.
   Therefore the set $F(\overline{U}_{\delta})$ is open. In the second case:
   $$F(\overline{U}_{0})=F(\overline{U^{-}_{0}(m,a)})=
   \{\gamma\in L(\sigma,N)\,|\,\gamma(m)<a\}$$
   if $\delta=\{0\}$ and:
   $$F(\overline{U}_{\delta})=F(\overline{U}^{-}_{\delta}(m,a))=L(\sigma,N)
   \setminus\bigcup_{m'\in \delta^\perp\cap\check{\sigma}\cap M} 
   \{\gamma\in L(\sigma,N)\,|\,\gamma(m')=+\infty\}$$
   if $\delta\ne \{0\}$. Again we conclude that $F(\overline{U}_{\delta})$ is
   open.
\end{proof}

Consider now a rational fan $\Delta$. By analogy with $L(\sigma, N)$, define:
\begin{equation}\label{linvar}
   L(\Delta,N)= \zcal_{\ri}(\Delta, N) = \bigsqcup_{\tau \in \Delta} 
   (N/ N_{\tau})_{\R}, \ \overline{\Delta}=\bigsqcup_{\tau\in\Delta} \overline{\tau}.
\end{equation}

The following result generalizes Proposition~\ref{P:opensetoflv}. Its 
proof is also left to the reader. 

\begin{proposition}
   The sets of the form $\overline{U}_{\delta}$ (as in \eqref{E:opensetoflv}),
   when $U$ is any open subset of $N_{\R}$ and $\delta$ is a cone of the fan 
   $\Delta$, form a basis of open sets for a topology on $L(\Delta,N)$.
\end{proposition}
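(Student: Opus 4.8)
The plan is to reduce the statement for a general fan $\Delta$ to the affine
case already treated in Proposition~\ref{P:opensetoflv}, using the fact that each
cone $\sigma\in\Delta$ contributes its own piece $\bigsqcup_{\tau\leq\sigma}
(N/N_\tau)_\R$ to $L(\Delta,N)$, and that these pieces are glued along the
sub-pieces indexed by common faces. Concretely, I would first check that the
proposed collection
$\mathcal{B}=\{\overline{U}_\delta\mid U\subseteq N_\R\text{ open},\ \delta\in\Delta\}$
is closed under finite intersections up to refinement, i.e.\ that the
intersection of two basis elements is a union of basis elements. For
$\overline{U}_{\delta_1}\cap\overline{U'}_{\delta_2}$ one observes that a stratum
$(N/N_\tau)_\R$ meets both only when $\tau\leq\delta_1$ and $\tau\leq\delta_2$,
hence only when $\tau$ is a face of $\delta_1\cap\delta_2$, which is itself a cone
of $\Delta$ by the fan axioms; on that common part the intersection is computed
exactly as in the affine situation inside $\bigsqcup_{\tau\leq\delta_1\cap\delta_2}
(N/N_\tau)_\R$, where Proposition~\ref{P:opensetoflv} (applied to the cone
$\delta_1\cap\delta_2$) already tells us it is a union of sets of the form
$\overline{W}_{\delta'}$ with $\delta'\leq\delta_1\cap\delta_2$.

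Next I would verify the covering axiom: every point of $L(\Delta,N)$ lies in some
$\overline{U}_\delta$. A point in the stratum $(N/N_\tau)_\R$ is the image
$\pi_\tau(v)$ of some $v\in N_\R$, and then $\pi_\tau(v)\in\overline{U}_\tau$ for
any open $U\ni v$, since $\tau\leq\tau$. Combined with the intersection property,
this shows $\mathcal{B}$ is a basis for a (unique) topology on $L(\Delta,N)$.

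The remaining point — and the place where a little care is genuinely needed — is
the compatibility of this topology with the affine charts, i.e.\ that for each
$\sigma\in\Delta$ the subspace $\bigsqcup_{\tau\leq\sigma}(N/N_\tau)_\R$ of
$L(\Delta,N)$ is open and that the subspace topology agrees with the one given by
Proposition~\ref{P:opensetoflv} (equivalently, via Lemma~\ref{setdescraff}, with
the topology of pointwise convergence on $L(\sigma,N)$). Openness follows because
$\bigsqcup_{\tau\leq\sigma}(N/N_\tau)_\R=\overline{N_\R}_{\,\sigma}$ is itself a
basis element; the agreement of topologies then follows because the basis elements
$\overline{U}_\delta$ with $\delta\leq\sigma$ are precisely the basis elements of
$L(\sigma,N)$, and any basis element $\overline{U}_{\delta}$ of $L(\Delta,N)$ with
$\delta\not\leq\sigma$ meets the chart only in $\overline{U}_{\delta\cap\sigma}$,
which by the intersection computation above is again open in the affine topology.
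I expect the main obstacle to be purely bookkeeping: keeping straight which
strata of $L(\Delta,N)$ a given $\overline{U}_\delta$ actually meets and checking
that the face-lattice manipulations ($\tau\leq\delta_1$ and $\tau\leq\delta_2$
iff $\tau\leq\delta_1\cap\delta_2$, using axiom (b) of a fan) go through; no new
idea beyond the affine case is required, which is why, as in
Proposition~\ref{P:opensetoflv}, the details can reasonably be left to the reader.
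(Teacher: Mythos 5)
Your strategy (verify the two basis axioms, reducing the intersection axiom to Proposition~\ref{P:opensetoflv} over the common face $\delta_1\cap\delta_2$) is sound, and the covering step is fine. One claim, however, is literally false as written: a basis element $\overline{U}_{\delta}$ does \emph{not} meet the chart $\bigsqcup_{\tau\leq\sigma}(N/N_\tau)_\R$ in $\overline{U}_{\delta\cap\sigma}$. For instance, with $N=\Z^2$, $\sigma=\R_{\geq 0}e_1$, $\delta=\R_{\geq 0}e_2$, that chart contains all of $N_\R$, and there $\overline{U}_\delta$ restricts to $U+\delta$ whereas $\overline{U}_{\{0\}}$ restricts to $U$. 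What is true is that $\overline{U}_\delta\cap\Bigl(\bigsqcup_{\tau\leq\sigma}(N/N_\tau)_\R\Bigr)=\overline{(U+\delta)}_{\delta\cap\sigma}$, because $\gamma:=\delta\cap\sigma\subseteq\delta$ forces $\gamma+\delta=\delta$, so $\pi_\tau\bigl((U+\delta)+\gamma\bigr)=\pi_\tau(U+\delta)$ for every $\tau\leq\gamma$. The same enlargement of the open set is silently needed in your first paragraph: the restriction of $\overline{U}_{\delta_1}$ to $\bigsqcup_{\tau\leq\delta_1\cap\delta_2}(N/N_\tau)_\R$ equals $\overline{(U+\delta_1)}_{\delta_1\cap\delta_2}$, which is an affine basis element for $L(\delta_1\cap\delta_2,N)$ only after replacing $U$ by $U+\delta_1$.

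Also, since the paper leaves Proposition~\ref{P:opensetoflv} itself without proof, your reduction defers rather than discharges the essential step, and that step deserves to be spelled out: the open sets $U+\delta_1$ and $U'+\delta_2$ lying over a single fibre of $\pi_\tau$ are not obviously required to intersect. Concretely, if $x\in(N/N_\tau)_\R$ lies in $\overline{U}_{\delta_1}\cap\overline{U'}_{\delta_2}$, pick $v_1\in\pi_\tau^{-1}(x)\cap(U+\delta_1)$ and $v_2\in\pi_\tau^{-1}(x)\cap(U'+\delta_2)$. Then $v_1-v_2\in(N_\tau)_\R$, which is spanned by $\tau$, so $v_1-v_2=n_+-n_-$ with $n_\pm\in\tau$, and $v:=v_1+n_-=v_2+n_+$ belongs to both $U+\delta_1$ and $U'+\delta_2$ (using $\tau+\delta_i\subseteq\delta_i$). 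With $W:=(U+\delta_1)\cap(U'+\delta_2)$, which is open and contains $v$, one gets $W+\tau\subseteq U+\delta_1$ and $W+\tau\subseteq U'+\delta_2$, hence $\overline{W}_\tau$ is a basis element containing $x$ and contained in $\overline{U}_{\delta_1}\cap\overline{U'}_{\delta_2}$. This argument works directly in the fan case and recovers the affine statement as a special instance, so the detour through Proposition~\ref{P:opensetoflv} is in fact unnecessary.
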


The following definition is to Definition \ref{linvardef} what the Definition of 
the toric variety associated to a fan is to that of an affine toric variety: 

\begin{definition} \label{linvarglob}
  Let $N$ be a lattice and $\Delta$ a rational polyhedral fan in $N_{\R}$. 
  The topological space $L(\Delta, N)$, endowed with the natural
   continuous action $N_{\R} \times L(\Delta, N) \rightarrow L(\Delta,
   N)$ extending the action of $N_{\R}$ on itself by translations
   is called {\bf the linear variety} associated to the
   pair $(N, \Delta)$. We say that the closure of $\Delta$ in $L(\Delta, N)$ 
   is the {\bf extended fan} $\overline{\Delta}$. 
\end{definition}

The fan $\Delta$ determines a fan $\Star(\tau)$  in every 
stratum $(N/N_{\tau})_{\R}$ of the linear variety $L(\Delta,N)$. The cones of 
$\Star(\tau)$ are the projections of those cones $\delta$ of $\Delta$ which 
contain the cone $\tau$ as a face. In the case of an affine linear variety 
$L(\sigma,N)$, the system of fans $\{\Star{\tau}\}_{\tau\leq\sigma}$ has the 
following interpretation:

\begin{proposition}
The system of fans $\{\Star{\tau}\}_{\tau\leq\sigma}$ gives a stratification of 
the subspace $\overline{\sigma}=\overline{(\sigma,N)}$ of the affine linear
variety $L(\sigma,N)$:
$$\overline{\sigma}=\bigsqcup_{\tau\leq\sigma} \Star(\tau).$$
\end{proposition}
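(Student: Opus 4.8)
The plan is to combine the set-theoretic description of the affine linear variety $L(\sigma,N)$ given by Lemma~\ref{setdescraff} with the description of $\overline{\sigma}$ in \eqref{tc} as the space $\zcal_{\ri_{\geq 0}}(\check{\sigma}\cap M)$ of semigroup morphisms $\check{\sigma}\cap M\to\ri_{\geq 0}$. Recall that under the homeomorphism $F$ of Lemma~\ref{setdescraff}, an element $\bar v$ of the stratum $(N/N_{\tau})_{\R}$ of $L(\sigma,N)$ corresponds to the morphism $\gamma_{\bar v}\colon\check{\sigma}\cap M\to\ri$ sending $m\in\tau^{\perp}\cap\check{\sigma}\cap M\subseteq M(\tau)$ to $\langle\bar v,m\rangle\in\R$ (via the duality $N/N_{\tau}\simeq\Hom(M(\tau),\Z)$) and sending every $m\in(\check{\sigma}\cap M)\setminus\tau^{\perp}$ to $+\infty$. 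Hence the statement reduces to computing, for each face $\tau$ of $\sigma$, the intersection $\overline{\sigma}\cap(N/N_{\tau})_{\R}$ and checking that it coincides with the support of the fan $\Star(\tau)$ inside that stratum.

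Next I would fix a face $\tau\leq\sigma$ and note that $\gamma_{\bar v}\in\overline{\sigma}$ if and only if $\langle\bar v,m\rangle\geq 0$ for every $m\in\tau^{\perp}\cap\check{\sigma}\cap M$: indeed all the other values of $\gamma_{\bar v}$ equal $+\infty$ and therefore automatically lie in $\ri_{\geq 0}$. Since $\tau$ and $\check{\sigma}$ are rational, $\tau^{\perp}\cap\check{\sigma}$ is a rational polyhedral cone, so its lattice points $\tau^{\perp}\cap\check{\sigma}\cap M$ generate it over $\R_{\geq 0}$; consequently the displayed condition on $\bar v$ is equivalent to $\bar v$ belonging to the dual cone of $\tau^{\perp}\cap\check{\sigma}$ computed inside $(N/N_{\tau})_{\R}=N(\tau)_{\R}$.

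The remaining point is the identification of this dual cone with $\pi_{\tau}(\sigma)$, where $\pi_{\tau}\colon N_{\R}\to(N/N_{\tau})_{\R}$ is the canonical projection. For $v\in N_{\R}$ and $m\in\tau^{\perp}$ one has $\langle\pi_{\tau}(v),m\rangle=\langle v,m\rangle$, so the dual cone of $\pi_{\tau}(\sigma)$ inside $M(\tau)_{\R}$ is $\{m\in\tau^{\perp}\mid\langle v,m\rangle\geq 0\ \forall v\in\sigma\}=\tau^{\perp}\cap\check{\sigma}$. As $\pi_{\tau}(\sigma)$ is a polyhedral (in particular closed convex) cone, biduality for polyhedral cones recalled in Section~\ref{toric} gives that the dual cone of $\tau^{\perp}\cap\check{\sigma}$ equals $\pi_{\tau}(\sigma)$. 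Finally, $\pi_{\tau}(\sigma)$ is itself a cone of $\Star(\tau)$ (the one obtained by projecting $\delta=\sigma$) and it contains every cone $\pi_{\tau}(\delta)$ with $\tau\leq\delta\leq\sigma$, so $\pi_{\tau}(\sigma)$ is exactly the support of $\Star(\tau)$ while the projected faces $\pi_{\tau}(\delta)$ give its fan structure. Combining, $\overline{\sigma}\cap(N/N_{\tau})_{\R}=\pi_{\tau}(\sigma)$ is the support of $\Star(\tau)$ for every $\tau\leq\sigma$, and since the strata $(N/N_{\tau})_{\R}$ partition $L(\sigma,N)$ by Lemma~\ref{setdescraff}, we obtain $\overline{\sigma}=\bigsqcup_{\tau\leq\sigma}\Star(\tau)$, compatibly with the fan structures on the strata.

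I expect the main obstacle to be purely bookkeeping: keeping straight in which lattice each dual cone is taken ($N$ versus $N/N_{\tau}$, $M$ versus $M(\tau)=M\cap\tau^{\perp}$) during the identification of the dual cone of $\tau^{\perp}\cap\check{\sigma}$ with $\pi_{\tau}(\sigma)$, and making sure that the extension and restriction of semigroup morphisms between $\check{\sigma}\cap M$ and $\tau^{\perp}\cap\check{\sigma}\cap M$ behave as claimed — but the latter is already packaged into the homeomorphism $F$ of Lemma~\ref{setdescraff}, so no new argument is needed there.
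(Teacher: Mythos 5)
Your proof is correct. The paper states this proposition without proof, presenting it as an interpretation that follows from the preceding material, so there is no ``paper's own argument'' to compare against; your argument fills in the details in exactly the way the exposition is set up to support. Specifically, you invoke Lemma~\ref{setdescraff} to work stratum-by-stratum, use the description~\eqref{tc} of $\overline{\sigma}$ as $\Hom_{Sg}(\check{\sigma}\cap M,\ri_{\geq 0})$ to reduce the claim on the stratum $(N/N_\tau)_\R$ to the statement that the dual cone of $\tau^\perp\cap\check{\sigma}$ (inside $(N/N_\tau)_\R$) equals $\pi_\tau(\sigma)$, and then prove that identification by computing $\bigl(\pi_\tau(\sigma)\bigr)^{\vee}=\tau^\perp\cap\check{\sigma}$ and applying biduality for closed convex cones (licensed here because $\pi_\tau(\sigma)$, as the linear image of a polyhedral cone, is itself polyhedral and hence closed). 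Two small points you handled correctly and were right to flag: the passage from lattice points $\tau^\perp\cap\check{\sigma}\cap M$ to the full cone $\tau^\perp\cap\check{\sigma}$ uses rationality, and the pairing identity $\langle\pi_\tau(v),m\rangle=\langle v,m\rangle$ for $m\in\tau^\perp$ is exactly the canonical duality $N/N_\tau\simeq\Hom(M(\tau),\Z)$ recalled in Section~\ref{toric}. The only thing you tacitly assume is the compatibility of the two definitions the paper gives for $\overline{\sigma}$ (the $\Hom$-space description~\eqref{tc} versus the topological closure of $\sigma$ in Definition~\ref{linvardef}); you use~\eqref{tc} directly, which is legitimate since the paper adopts both as definitions, but a one-line remark that these agree (via the approximation $v+nu$, $u\in\mathring{\tau}$) would make the proof self-contained against either reading.
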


Notice that the subspace $\overline{\sigma}^\circ$ consists of the interior
$\mathring{\sigma}$ of $\sigma$ and all projections of $\mathring{\sigma}$ to the 
strata $(N/N_{\tau})_\R$ and it is indeed the interior of $\overline{\sigma}$ in 
the usual topological sense. Moreover, if $\tau$ is a face of $\sigma$, we define 
the \emph{relative interior $\overline{\tau}^\circ$ of $\tau$ inside 
$\overline{\sigma}$} to be the union of the usual relative interior 
$\mathring{\tau}$ of $\tau$ and all projections of $\mathring{\tau}$ to the 
strata $(N/N_{\rho})_\R$, where $\rho$ is a face of $\tau$.

We could have easily avoided using the lattice $N$ and the rationality of the 
fan $\Delta$ for the construction of the linear variety $L(\Delta)$ (for instance, 
we could have defined $L(\sigma)$ as the set of semigroup morphisms from 
$\check{\sigma}$ to $\overline{R}$ which are equivariant under the natural action 
of $\R^*$). In fact, these discrete data determine an additional {\em integral 
structure} on the linear variety, as a particular case of Remark \ref{remlin}, (2):

\begin{proposition}
   If $\Delta$ is a rational polyhedral fan, then every stratum 
   $(N/N_{\tau})_{\R}$ of the linear variety $L(\Delta,N)$ carries a
   lattice $N/N_{\tau}$ in such a way that the natural action of $N$ on
   itself extends canonically to an action by addition of $N$ on all the lattices
   $N/N_{\tau}$, $\tau\in\Delta$.
\end{proposition}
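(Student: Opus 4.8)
The plan is to exhibit the lattice on each stratum explicitly and then check that the translation action of $N$ on $N_{\R}$ extends compatibly. First I would recall that by Lemma~\ref{setdescraff} we have the canonical homeomorphism $L(\Delta,N) = \bigsqcup_{\tau\in\Delta}(N/N_{\tau})_{\R}$, so the stratum indexed by a cone $\tau\in\Delta$ is the real vector space $(N/N_{\tau})_{\R}$. Since $\Delta$ is a rational fan, each $\tau$ is a rational cone, hence $N_{\tau} := (\text{linear span of }\tau)\cap N$ is a saturated (primitive) sublattice of $N$, so the quotient $N/N_{\tau}$ is itself a lattice — a torsion-free abelian group of finite type — and it spans the vector space $(N/N_{\tau})_{\R}$, i.e.\ $(N/N_{\tau})\otimes_{\Z}\R = (N/N_{\tau})_{\R}$. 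This is the lattice we attach to the stratum.

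Next I would describe the action. The group $N_{\R}$ acts on $L(\Delta,N)$ by translations extending its action on itself; concretely, on the stratum $(N/N_{\tau})_{\R}$ a vector $v\in N_{\R}$ acts by translation by its image $\pi_{\tau}(v)$ under the canonical projection $\pi_{\tau}\colon N_{\R}\to (N/N_{\tau})_{\R}$. Restricting this action to the subgroup $N\subset N_{\R}$, the point $n\in N$ acts on the $\tau$-stratum by translation by $\pi_{\tau}(n)\in N/N_{\tau}$. This translation is by an element of the lattice $N/N_{\tau}$, so $N$ acts on each lattice $N/N_{\tau}$ through the surjective group homomorphism $\pi_{\tau}\colon N\twoheadrightarrow N/N_{\tau}$, i.e.\ by addition after projection. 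Compatibility across strata and with the ambient action is then immediate: the diagrams of projections $N_{\R}\to (N/N_{\tau})_{\R}$ are equivariant for the $N_{\R}$-action by construction in Definition~\ref{linvardef}/\ref{linvarglob}, and restricting everything to $N$ preserves these diagrams because $\pi_{\tau}(N)\subseteq N/N_{\tau}$ for every $\tau$.

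Finally I would just need to check the one nontrivial point, namely that $N/N_{\tau}$ really is torsion-free, equivalently that $N_{\tau}$ is primitive in $N$. This follows because $N_{\tau}$ is by definition the intersection of $N$ with a $\Q$-rational (indeed $\R$-rational, but cut out by rational inequalities since $\tau$ is rational) linear subspace of $N_{\R}$; such an intersection is always a direct summand of $N$. Equivalently, the same primitivity argument already appeared in the proof of Proposition~\ref{quotaff}, where $\Gamma^*$ was shown to be a primitive sublattice; the situation here is the analogous one with $\tau$ in place of the cone generated by $\Gamma^*$. The main (and essentially only) obstacle is this torsion-freeness verification — once the rationality of the fan is used to guarantee primitivity of each $N_{\tau}$, the lattices $N/N_{\tau}$ are well-defined and everything else is a formal unwinding of the definition of the action. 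Thus the required lattice structure and compatible $N$-action exist, proving the proposition.
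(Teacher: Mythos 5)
Your argument is correct, and the paper itself states this proposition without proof (it is presented as a formal observation about the integral structure, a special case of Remark~\ref{remlin}(1)), so there is no paper proof to compare against beyond the definitions you already invoke. The one substantive verification — that $N_\tau$ is a primitive sublattice of $N$, so that $N/N_\tau$ is genuinely torsion-free — is exactly where the rationality hypothesis enters, and you identify it correctly: $N_\tau = \operatorname{span}_{\R}(\tau)\cap N$ (equivalent, for a rational cone, to the paper's definition of $N_\tau$ as the sublattice spanned by $\tau\cap N$), and the intersection of $N$ with a rational linear subspace is always saturated. The only cosmetic point is that for the global case you do not need Lemma~\ref{setdescraff}: the stratified decomposition $L(\Delta,N)=\bigsqcup_{\tau\in\Delta}(N/N_\tau)_{\R}$ is the definition given in \eqref{linvar}, and the lemma is its justification in the affine case only. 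Everything else — the action of $n\in N$ on the $\tau$-stratum being translation by $\pi_\tau(n)$, and compatibility being the $N_{\R}$-equivariance of the projections restricted to the integral points — is the intended unwinding of Definition~\ref{linvarglob}.
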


If we consider as starting data of the construction a fan of semigroups $\mathcal{S}$ 
(see Definition \ref{sgfan}), instead of 
simply the underlying fan of cones $\Delta$, the supplementary structure induced on 
$L(\Delta,N)$ is the knowledge, for each stratum at infinity $(N/N_{\tau})_{\R}$, 
of the semigroup $\Gamma_{\tau}$, seen as a special additive semigroup of linear 
functions on $N$.

\medskip
\section{Valuation spaces}

In this section we present the material from valuation theory we need in the sequel. 
We will work only with valuations taking values in the extended real line $\ri$. 
\medskip

\begin{definition} \label{ringval}
   Let $R$ be a ring. A \textbf{real (ring) valuation} on $R$ is a map 
   $R \overset{w}{\longrightarrow} \ri$ such that: 
 \begin{enumerate}
    \item $w$ is a morphism of semigroups from $(R,\cdot)$ to $(\ri,+)$.

    \item $w(0)= \infty$ and $w(1) = 0$.

    \item $w(f + g) \geq \min \{ w(f), w(g) \}$, for all $f,g \in R$.  
 \end{enumerate}
\end{definition}
The \emph{trivial valuation} is the valuation on an integral domain which
vanishes identically on $R\setminus\{0\}$.

In general, valuations take values in arbitrary totally ordered abelian groups 
extended by $\infty$ (see Zariski and Samuel's book \cite{ZS 60}, as well as 
Vaqui{\'e}'s introductory text \cite{V 00}). As we will not use that level of
generality, in the sequel by \emph{valuation} we mean a real ring valuation. 

The following is an immediate consequence of the definition:

\begin{lemma} \label{imcomp}
  If $f, g \in R$, $w$ is a valuation of $R$, and $w(f) \neq w(g)$, then $w(f + g)= 
  \min \{ w(f),w(g) \}$. 
\end{lemma}

\begin{definition}
   We denote by $\V(R)$ the set of valuations on $R$, endowed with the
   topology of pointwise convergence of maps from $R$ to $\ri$. We call it
   {\bf the valuation space of} $R$.  
\end{definition}

Recall that the topology of pointwise convergence is generated by the subsets of 
the form:
$$U_f=\{w_S\in\V(S)\,|
   \,w_S(f)\in U\}, \mbox{ for some } f\in S \mbox{ and some open } U\subset \ri,$$
in the sense that its open sets are arbitrary unions of finite intersections of such 
sets (one says that these sets form a {\em subbasis} of the topology). 

Any morphism of rings $\phi\colon S\to R$ induces, by composition, a function between 
the associated valuation spaces:
\begin{equation}\label{indmap}
\V(R)\overset{\V(\phi)}{\longrightarrow} \V(S),\quad 
\V(\phi)(v)=v\circ \phi.
\end{equation}

\begin{proposition} \label{valfunc}
   The function $\V(\phi)$ is continuous. Therefore, $\V$ defines a contravariant
   functor from the category of rings to the category of topological spaces. 
\end{proposition}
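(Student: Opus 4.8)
The plan is to unwind both claims (continuity of $\V(\phi)$ and functoriality of $\V$) directly from the definitions, since the statement is genuinely routine once the topology of pointwise convergence is in play. First I would prove continuity of $\V(\phi)\colon \V(R)\to \V(S)$. By the description of the subbasis recalled just before the proposition, it suffices to check that the preimage under $\V(\phi)$ of every subbasic open set of $\V(S)$ is open in $\V(R)$. So fix $f\in S$ and an open set $U\subset\ri$, and consider the subbasic open set $\{w_S\in\V(S)\mid w_S(f)\in U\}$. A valuation $v\in\V(R)$ lies in the preimage if and only if $(\V(\phi)(v))(f)=(v\circ\phi)(f)=v(\phi(f))\in U$, i.e. if and only if $v$ lies in the subbasic open set $\{v\in\V(R)\mid v(\phi(f))\in U\}$ of $\V(R)$, where now $\phi(f)$ is an element of $R$. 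Thus the preimage is itself a subbasic open set of $\V(R)$, hence open, and $\V(\phi)$ is continuous.

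Next I would check that $\V(\phi)(v)=v\circ\phi$ is actually a valuation on $S$, so that $\V(\phi)$ really does land in $\V(S)$; this is the point where one must verify the three axioms of Definition~\ref{ringval}. Condition (1) holds because a composite of semigroup morphisms $(S,\cdot)\to(R,\cdot)\to(\ri,+)$ is again a semigroup morphism (here one uses that $\phi$ is a ring morphism, hence in particular multiplicative and sends $1$ to $1$). Condition (2) follows from $\phi(0_S)=0_R$, $\phi(1_S)=1_R$ together with $v(0_R)=\infty$, $v(1_R)=0$. Condition (3) follows from additivity of $\phi$ and the fact that $v$ already satisfies the ultrametric inequality: $v(\phi(f+g))=v(\phi(f)+\phi(g))\geq\min\{v(\phi(f)),v(\phi(g))\}$.

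Finally I would record functoriality: $\V(\mathrm{id}_R)(v)=v\circ\mathrm{id}_R=v$, so $\V(\mathrm{id}_R)=\mathrm{id}_{\V(R)}$, and for composable ring morphisms $S\xrightarrow{\phi}R\xrightarrow{\psi}Q$ and any $v\in\V(Q)$ we have $\V(\phi)(\V(\psi)(v))=(v\circ\psi)\circ\phi=v\circ(\psi\circ\phi)=\V(\psi\circ\phi)(v)$, whence $\V(\phi)\circ\V(\psi)=\V(\psi\circ\phi)$; the reversal of order confirms that $\V$ is contravariant. Together with the continuity established above, this shows $\V$ is a contravariant functor from rings to topological spaces. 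There is really no serious obstacle here; the only point demanding a little care is not to forget the verification that $v\circ\phi$ is a valuation (not merely a map to $\ri$), since that is what makes $\V(\phi)$ well defined as a map $\V(R)\to\V(S)$ in the first place.
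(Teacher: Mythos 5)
Your proof is correct and uses essentially the same argument as the paper: continuity is established by checking that the preimage of each subbasic open set $U_f$ of $\V(S)$ is the subbasic open set $U_{\phi(f)}$ of $\V(R)$. You additionally spell out the verification that $v\circ\phi$ is a valuation and the functor identities, which the paper leaves implicit, but the core reasoning is the same.
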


\begin{proof}
   Consider an arbitrary subbasic open subset $U_f$ of $\V(S)$, where $f \in S$ and 
   $U$ is open in $\ri$.  Then its preimage: 
   $$(\V(\phi))^{-1}(U_f)=\{w_R\in\V(R)\,|\,w_R(\phi(f))\in U\} = U_{\phi(f)}$$
   is, by definition, an open subset of $\V(R)$. This shows that our map $\V(\phi)$ 
   is continuous. 
\end{proof}

\begin{definition}
We say that a valuation $w\in \V(R)$  
{\bf is centered in} $R$ if $w(f)\geq 0$ for every $f\in R$. 
In this case the {\bf center} of the valuation $w$ is the prime ideal 
$\{f\in R\,|\,w(f)>0\}$. The {\bf home} of the valuation $w$ is the prime
ideal $\{f\in R\,|\,w(f)=\infty\}$.
\end{definition}

The home of a valuation is characterized by the following lemma:
 
\begin{lemma} \label{homechar}
  A  valuation $w\in\V(R)$ is the preimage of a valuation $\overline{w} \in 
  \V(R/I)$ if and only if the ideal $I$ is contained in the home of $w$.
\end{lemma}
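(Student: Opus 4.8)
The plan is to prove the two implications separately, recalling that a valuation $w$ on $R$ is ``the preimage of a valuation $\overline w$ on $R/I$'' precisely when $w$ factors through the quotient morphism $q\colon R\to R/I$, i.e. $w = \overline w\circ q$ for some $\overline w\in\V(R/I)$.

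\begin{proof}
Let $q\colon R\to R/I$ be the quotient morphism and recall that the home of $w$ is the ideal $\mathfrak{h}:=\{f\in R\,|\,w(f)=\infty\}$.

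Suppose first that $w = \V(q)(\overline w) = \overline w\circ q$ for some $\overline w\in\V(R/I)$. If $f\in I$, then $q(f)=0$ in $R/I$, hence $w(f)=\overline w(q(f))=\overline w(0)=\infty$ by axiom (2) of Definition~\ref{ringval}. Thus $I\subseteq\mathfrak{h}$.

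Conversely, assume $I\subseteq\mathfrak{h}$. We must construct $\overline w\colon R/I\to\ri$ with $\overline w\circ q = w$. Since $q$ is surjective, any such map is unique, and it is well-defined provided $w$ is constant on the fibres of $q$, i.e. provided $f-g\in I$ implies $w(f)=w(g)$. So the main point is to check this compatibility. If $f-g\in I\subseteq\mathfrak{h}$, then $w(f-g)=\infty$. Write $f = g+(f-g)$. If $w(f)\neq w(g)$ were to hold, then also $w(f)\neq w(-( f-g))=w(f-g)=\infty$ would be impossible unless $w(f)=\infty$; more carefully, apply Lemma~\ref{imcomp} in a form adapted to this situation: from $f = g + (f-g)$ and $w(g)$ possibly equal to $w(f-g)=\infty$, we argue by cases. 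If $w(g)=\infty$, then by axiom (3), $w(f)\geq\min\{w(g),w(f-g)\}=\infty$, so $w(f)=\infty=w(g)$. If $w(g)<\infty=w(f-g)$, then $w(g)\neq w(f-g)$, so Lemma~\ref{imcomp} gives $w(f)=w(g+(f-g))=\min\{w(g),w(f-g)\}=w(g)$. In either case $w(f)=w(g)$, which is the desired compatibility.

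Hence $w$ factors set-theoretically as $w = \overline w\circ q$ for a unique map $\overline w\colon R/I\to\ri$. It remains to verify that $\overline w$ is a valuation, i.e. satisfies the three axioms of Definition~\ref{ringval}. Axiom (1): for $\overline f,\overline g\in R/I$, choose lifts $f,g\in R$; then $\overline w(\overline f\cdot\overline g)=\overline w(\overline{fg})=w(fg)=w(f)+w(g)=\overline w(\overline f)+\overline w(\overline g)$, using that $w$ is a semigroup morphism, and $\overline w(\overline 1)=w(1)=0$. Axiom (2): $\overline w(\overline 0)=w(0)=\infty$ and $\overline w(\overline 1)=w(1)=0$. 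Axiom (3): $\overline w(\overline f+\overline g)=\overline w(\overline{f+g})=w(f+g)\geq\min\{w(f),w(g)\}=\min\{\overline w(\overline f),\overline w(\overline g)\}$. Thus $\overline w\in\V(R/I)$ and, by construction, $w=\V(q)(\overline w)$ is the preimage of $\overline w$. This completes the proof.
\end{proof}

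The only slightly delicate step is the well-definedness of $\overline w$ on the fibres of $q$; everything else is a routine transport of the valuation axioms across the quotient morphism. In fact, the well-definedness argument above can be streamlined: the inequality $w(f)\geq\min\{w(g),w(f-g)\}$ together with the symmetric inequality $w(g)\geq\min\{w(f),w(g-f)\}$ and $w(f-g)=w(g-f)=\infty$ immediately force $w(f)=w(g)$, so one need not even invoke Lemma~\ref{imcomp}, only axiom (3). Either presentation is fine.
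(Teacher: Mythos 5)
Your proof is correct. The paper states Lemma~\ref{homechar} without proof (it is treated as an immediate consequence of the definitions), so there is no argument in the source to compare against; your write-up supplies exactly the routine verification the authors skipped. Both directions are right: the forward direction uses only $w(0)=\infty$, and for the converse the key point is well-definedness of $\overline w$ on the fibres of $q$, which you establish correctly. Your case analysis via Lemma~\ref{imcomp} works, but the streamlined version you note at the end is the cleaner route: since $I$ is an ideal, $f-g\in I$ implies $g-f\in I$, so both $w(f-g)$ and $w(g-f)$ are $\infty$, and the two applications of axiom (3), $w(f)\geq\min\{w(g),w(f-g)\}=w(g)$ and $w(g)\geq\min\{w(f),w(g-f)\}=w(f)$, immediately force equality without any case split. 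The axiom checks for $\overline w$ are then mechanical, as you say.
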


\begin{remark}
 Classically (see Zariski and Samuel's book \cite{ZS 60}), the definition of
 valuations requires that non-zero elements take values in $\R$. Therefore, a 
 valuation in the extended sense which we use in this paper is simply obtained by
 pulling back a classical valuation from a quotient ring. We need such extended 
 valuations, as they may appear as limits of classical ones. Since we work with 
 valuations centered in a ring $R$, we do not need to add more points to 
 the valuation space, as the next proposition shows. 
\end{remark}

\begin{proposition} \label{compactness}
   The space $\V_{\geq 0}(R)$ of valuations centered in $R$ is compact. 
\end{proposition}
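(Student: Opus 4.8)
The plan is to realize $\V_{\geq 0}(R)$ as a closed subset of a compact product space and apply Tychonoff's theorem. Concretely, I would consider the product $\ri_{\geq 0}^{\,R} = \prod_{f \in R} \ri_{\geq 0}$, where each factor carries the order topology on $\ri_{\geq 0} = [0,\infty]$. Since $[0,\infty]$ is a compact Hausdorff space (it is homeomorphic to a closed interval), Tychonoff's theorem tells us that $\ri_{\geq 0}^{\,R}$ is compact. By definition of the topology of pointwise convergence, the natural map $\V_{\geq 0}(R) \hookrightarrow \ri_{\geq 0}^{\,R}$, sending a valuation $w$ to the tuple $(w(f))_{f \in R}$, is a topological embedding. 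So it suffices to prove that the image of this embedding is closed in $\ri_{\geq 0}^{\,R}$.

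The key step, then, is to verify that all of the defining conditions for a centered valuation are \emph{closed} conditions, i.e., that a limit (in the product topology) of tuples satisfying them again satisfies them. A point of $\ri_{\geq 0}^{\,R}$ is a function $w \colon R \to \ri_{\geq 0}$, and membership in $\V_{\geq 0}(R)$ requires: (1) $w(fg) = w(f) + w(g)$ for all $f, g \in R$; (2) $w(0) = \infty$ and $w(1) = 0$; (3) $w(f+g) \geq \min\{w(f), w(g)\}$ for all $f, g$; and nonnegativity $w(f) \geq 0$ for all $f$, which is automatic in $\ri_{\geq 0}^{\,R}$. For each fixed pair $(f,g)$, the locus where $w(fg) = w(f)+w(g)$ holds is the preimage of the diagonal under the continuous map $\ri_{\geq 0}^{\,R} \to \ri_{\geq 0} \times \ri_{\geq 0}$, $w \mapsto (w(fg),\, w(f)+w(g))$; since $\ri_{\geq 0}$ is Hausdorff and addition $\ri_{\geq 0} \times \ri_{\geq 0} \to \ri_{\geq 0}$ is continuous, this locus is closed. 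Similarly, $\{w : w(0) = \infty\}$ and $\{w : w(1)=0\}$ are closed (preimages of points under coordinate projections), and $\{w : w(f+g) \geq \min\{w(f),w(g)\}\}$ is closed because $\min \colon \ri_{\geq 0} \times \ri_{\geq 0} \to \ri_{\geq 0}$ and the coordinate projections are continuous and $\{(s,t) : s \geq t\}$ is closed in $\ri_{\geq 0}^2$. The image of $\V_{\geq 0}(R)$ is the intersection over all $f, g \in R$ of these closed sets, hence closed. A closed subset of a compact space is compact, which gives the claim.

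The mild point requiring care is the extended arithmetic: addition and $\min$ on $\ri_{\geq 0} = [0,\infty]$ must be checked to be continuous at the point $(\infty,\infty)$ and along the "boundary" where one coordinate is $\infty$, but this is straightforward for the order topology on a totally ordered set with top element (indeed $[0,\infty]$ with this topology is metrizable, e.g. via the homeomorphism $x \mapsto x/(1+x)$ onto $[0,1]$). I would also remark that one only needs conditions (2) with $w(1)=0$ together with the semigroup-morphism property to exclude the constant function $w \equiv \infty$; since $w(1) = 0$ is imposed, this degenerate tuple is excluded from the closed image, and no extra work is needed. The main (and only) obstacle is thus purely bookkeeping: confirming that each axiom cuts out a closed subset, which the above handles uniformly.
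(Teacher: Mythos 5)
Your proposal is exactly the paper's argument: embed $\V_{\geq 0}(R)$ into $[0,\infty]^R$, invoke Tychonoff's theorem for compactness of the product, and observe that the valuation axioms (each involving at most two elements of $R$) are preserved under pointwise limits, so the image is closed. You simply spell out more carefully why each axiom cuts out a closed set, which the paper leaves as an implicit remark.
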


\begin{proof}
  By Tychonoff's theorem (see for instance \cite[Section 1-10]{HY 61}), the space 
  $[0, \infty]^R$ is compact when endowed with the product topology. Therefore it 
  is enough to prove that $\V_{\geq 0}(R)$ is closed inside $[0, \infty]^R$. But 
  any function in $[0, \infty]^R$ which is a limit of valuations is also a 
  valuation. Indeed, the axioms of Definition \ref{ringval} depend on at most two 
  elements of $R$, and those equalities or inequalities are preserved by the limit 
  process.
\end{proof}

\begin{remark}
  The previous proof is similar to Zariski's proof of the quasi-compactness of 
  the Rieman-Zariski space $S$ of a field extension $K/k$ (see \cite[Ch. VI, 
  Sect. 17, Theorem 40]{ZS 60}). By definition, the points of this space are the 
  Krull-valuation rings of $K$ containing $k$. The topology of $S$ is obtained by 
  taking as basis of open sets the subsets of valuation rings which contain 
  a given finitely generated subring of $K$ containing $k$. In order to get the 
  announced quasi-compactness, Zariski embeds $S$ in the space of maps from $K$ 
  to $\{-, 0, +\}$ (associating to each element of $K$ the sign of its value). 
  Then he uses Tychonoff's theorem for the space $\{-, 0, +\}^K$. A subtle point 
  here is that in order to apply Tychonoff's theorem we must use the discrete 
  topology on $\{-, 0, +\}$ (making it Hausdorff compact), but in order to get the
  correct topology on $S$ we have to consider a weaker topology (having the full 
  set and $\{0, + \}$ as basis of open sets), which is non-Hausdorff. 
\end{remark}

In the sequel, we will need to work with special subspaces of the
valuation space of a ring. In addition to the spaces $\V_{\geq 0}(R)$ introduced in
Proposition~\ref{compactness}, the main types of subspaces we need are
described in the next two definitions. 

\begin{definition} \label{valrel}
  Let $S \overset{\phi}{\longrightarrow} R$ be a morphism of rings and
  let $w_S \in \V(S)$ be a fixed valuation. Denote by 
  $\V_{(\phi,  w_S)}(R) \subset \V(R)$ the set of valuations $w_R$ on $R$ 
  such that
  $\V(\phi)(w_R)= w_S$. We call it {\bf the valuation space of $R$
    relative to} $(\phi, w_S)$. 
  When $S$ is a subring of $R$ and $S \overset{\phi}{\hookrightarrow}
  R$ is the inclusion morphism, we also write: $\V_{(S, w_S)}(R) :=  
  \V_{(\phi,  w_S)}(R)$, and we call it {\bf the valuation space of $R$ 
  relative to} $(S, w_S)$. 
\end{definition}

\begin{remark} \label{explberk}
  When $S$ is a subfield of $R$ and $w_S$ is a valuation such that $S$ is 
  complete with respect to the associated norm $e^{-w_S}$, the relative 
  valuation space $\V_{(S, w_S)}(R)$ is precisely the underlying topological space 
  of the {\em Berkovich analytic space} associated to $\mbox{Spec} R$ 
  (see \cite{B 90}). 
  One may consult Gubler \cite{Gub 11} for relations between Berkovich 
  analytification and tropicalization. 
\end{remark}

\begin{definition} \label{relideal}
  Let $\mathfrak{p}$ be a prime ideal of $R$. Denote by $\V(R, \mathfrak{p})$
  the subspace of $\V(R)$ consisting of all valuations centered in $R$ and whose 
  center is precisely  $\mathfrak{p}$. Call it {\bf the valuation space of $R$ 
  relative to $\mathfrak{p}$}.  
\end{definition}

Any valuation $w\in \V(R,\mathfrak{p})$ extends to the localization 
$R_\mathfrak{p}$ by setting $w(f/g)=w(f)-w(g)$ for $f/g\in R_\mathfrak{p}$ 
and $f \in R, \ g \in R \setminus \mathfrak{p}$. 
Moreover, in this way we get a valuation from 
$\V(R_\mathfrak{p},\mathfrak{p}R_\mathfrak{p})$. Thus the spaces 
$\V(R,\mathfrak{p})$ and $\V(R_\mathfrak{p},\mathfrak{p}R_\mathfrak{p})$ are 
naturally homeomorphic. 

As a particular case of the previous definition: 

\begin{definition} \label{Locval}
If $(R,\mathfrak{m})$ is a local ring, we call $\V(R,\mathfrak{m})$ the 
{\bf space of local valuations} of $(R,\mathfrak{m})$. 
\end{definition}

We recall that 
\emph{a local morphism} $(S, \mathfrak{n}) \overset{\phi}{\rightarrow} 
     (R, \mathfrak{m})$ between local rings is a morphism of rings such that 
     $\phi^{-1}(\mathfrak{m})= \mathfrak{n}$.
As a local analog of Proposition \ref{valfunc}, we have:

\begin{proposition} \label{valfuncloc}
  Let $(S, \mathfrak{n}) \overset{\phi}{\rightarrow} (R, \mathfrak{m})$ be a
  local morphism of local rings. Then the canonical map $\V(R, \mathfrak{m})
  \overset{\V(\phi)}{\longrightarrow} \V(S, \mathfrak{n})$ is continuous.  
  Therefore, taking valuation spaces \emph{defines a contravariant
  functor from the category of local rings and local morphisms to the category of
  topological spaces}.  
\end{proposition}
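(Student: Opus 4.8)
The plan is to reduce everything to the functoriality of $\V$ on the category of \emph{all} rings (Proposition~\ref{valfunc}), the only new ingredient being the remark that the locality of $\phi$ forces $\V(\phi)$ to carry the subspace $\V(R,\mathfrak{m})$ into the subspace $\V(S,\mathfrak{n})$.

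First I would check that $\V(\phi)$ is well defined as a map $\V(R,\mathfrak{m})\to\V(S,\mathfrak{n})$. Let $w_R\in\V(R,\mathfrak{m})$ and put $w_S:=\V(\phi)(w_R)=w_R\circ\phi$. For every $g\in S$ we have $w_S(g)=w_R(\phi(g))\geq 0$ because $w_R$ is centered in $R$, so $w_S$ is centered in $S$. Its center is
\[
\{g\in S\mid w_S(g)>0\}=\phi^{-1}\bigl(\{f\in R\mid w_R(f)>0\}\bigr)=\phi^{-1}(\mathfrak{m})=\mathfrak{n},
\]
the last equality being exactly the definition of a local morphism. Hence $w_S\in\V(S,\mathfrak{n})$, as wanted.

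For continuity I would invoke Proposition~\ref{valfunc}: the map $\V(\phi)\colon\V(R)\to\V(S)$ is continuous, and $\V(R,\mathfrak{m})$, $\V(S,\mathfrak{n})$ carry the subspace topologies inherited from $\V(R)$, $\V(S)$. The restriction of a continuous map to a subspace of its source, with target corestricted to any subspace containing the image, is again continuous; by the previous paragraph this applies here and yields the continuity of $\V(R,\mathfrak{m})\to\V(S,\mathfrak{n})$.

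Finally, for the functoriality statement I would observe that the identity of a local ring is a local morphism, and that a composite $(T,\mathfrak{p})\overset{\psi}{\longrightarrow}(S,\mathfrak{n})\overset{\phi}{\longrightarrow}(R,\mathfrak{m})$ of local morphisms is local, since $(\phi\circ\psi)^{-1}(\mathfrak{m})=\psi^{-1}(\phi^{-1}(\mathfrak{m}))=\psi^{-1}(\mathfrak{n})=\mathfrak{p}$. The identities $\V(\mathrm{id}_R)=\mathrm{id}$ and $\V(\phi\circ\psi)=\V(\psi)\circ\V(\phi)$ then follow immediately from the corresponding identities for arbitrary rings, once one knows (again by the first two paragraphs) that each map in sight restricts to the relevant space of valuations with prescribed center. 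The only hypothesis that is ever used is the equality $\phi^{-1}(\mathfrak{m})=\mathfrak{n}$, so there is no genuine obstacle here: the whole content of the proposition is that the condition ``centered in the ring, with center equal to the maximal ideal'' is stable under pullback along local morphisms.
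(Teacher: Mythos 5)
Your argument is correct and matches the proof the paper implicitly intends: the proposition is stated without proof precisely because it is the restriction of Proposition~\ref{valfunc} to the relevant subspaces, and the only thing to check is that locality of $\phi$ ensures $\V(\phi)$ carries $\V(R,\mathfrak{m})$ into $\V(S,\mathfrak{n})$, which you do correctly via the computation of the center. The functoriality remark is likewise just the observation that identities and composites of local morphisms are local, combined with the already-established functoriality of $\V$ on all rings.
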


Given a valuation $w$ on a ring $R$, we define the associated
value of an ideal $I$:
  \begin{equation} \label{valid}
      w(I) : = \inf \{ w(f) \: | \: f \in I\}.
  \end{equation} 
When $R$ is Noetherian and $w$ is nonnegative on $R$, any ideal is finitely 
generated, and the infimum is achieved, due to the following lemma:

\begin{lemma} \label{valmin}
  Suppose that the ideal $I$ of the ring $R$ is generated by
  $f_1,...,f_r$ and let $w \in \V(R)$ be a valuation center in $R$. Then:
    $$  w(I) = \min\{ w(f_1) ,\dotsc, w(f_r)\}.$$
\end{lemma}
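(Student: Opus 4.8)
The plan is to prove the two inequalities separately. The inequality $w(I) \le \min\{w(f_1),\dotsc,w(f_r)\}$ is immediate from the definition \eqref{valid}, since each generator $f_i$ belongs to $I$, so the infimum over all elements of $I$ is at most each $w(f_i)$. For the reverse inequality it suffices to show that $w(f) \ge \min\{w(f_1),\dotsc,w(f_r)\}$ for every $f \in I$. Write $f = \sum_{i=1}^r g_i f_i$ with $g_i \in R$. Since $w$ is a valuation, property (1) of Definition \ref{ringval} gives $w(g_i f_i) = w(g_i) + w(f_i)$, and because $w$ is centered in $R$ we have $w(g_i) \ge 0$, hence $w(g_i f_i) \ge w(f_i) \ge \min_j w(f_j)$. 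Then property (3) of Definition \ref{ringval}, applied inductively to the finite sum, yields $w(f) = w\bigl(\sum_i g_i f_i\bigr) \ge \min_i w(g_i f_i) \ge \min_j w(f_j)$.

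Combining the two inequalities gives $w(I) = \min\{w(f_1),\dotsc,w(f_r)\}$, and in particular the infimum in \eqref{valid} is attained. The only mildly delicate points are handling the value $+\infty$ (which causes no trouble: the conventions on $\ri$ make $+\infty + a = +\infty$ and $\min$ well-defined), and the induction on the number of summands for the ultrametric inequality (3), which extends from two terms to $r$ terms by a trivial induction. I do not expect any real obstacle here; this is a routine verification using only the axioms of a valuation together with the hypothesis of centeredness (needed precisely to control the factors $g_i$).
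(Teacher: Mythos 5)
Your proof is correct and is exactly the argument the paper leaves implicit (the paper's proof simply says it "follows directly from the definition of valuations and from nonnegativity of $w$ on $R$"). You have filled in the two routine steps — multiplicativity gives $w(g_i f_i) \geq w(f_i)$ since $w(g_i)\geq 0$, and the ultrametric inequality extended to finite sums — which is precisely what was intended.
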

\begin{proof}
This lemma follows directly from the definition of valuations and from
nonnegativity of $w$ on $R$.
\end{proof}

Now, let $(R,\mathfrak{m})$ be a Noetherian local ring and let 
$(\hat{R}, \hat{\mathfrak{m}})$ be its completion with respect to $\mathfrak{m}$.

\begin{lemma} \label{extcompl}
    Let $w \in \V(R, \mathfrak{m})$. 
    Then $w$ is continuous with respect to the $\mathfrak{m}$-adic topology. It
    may therefore be extended by continuity to a valuation in $\V(\hat{R}, 
    \hat{\mathfrak{m}})$. 
\end{lemma}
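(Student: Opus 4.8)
The plan is to establish two things: first, that any local valuation $w \in \V(R,\mathfrak{m})$ is continuous for the $\mathfrak{m}$-adic topology on $R$, and second, that continuity of $w$ together with completeness of $\hat R$ forces $w$ to extend uniquely and continuously to $\hat R$, the extension still being a valuation centered at $\hat{\mathfrak m}$.

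For continuity, I would argue that $w$ being centered in $R$ with center $\mathfrak m$ means $w \geq 0$ on $R$ and $w > 0$ exactly on $\mathfrak m$. The key quantitative input is that $w(\mathfrak m^k) \to \infty$ as $k \to \infty$: indeed, if $g_1,\dots,g_s$ generate $\mathfrak m$, then by Lemma~\ref{valmin} we have $w(\mathfrak m) = \min_i w(g_i) =: c > 0$, and since any element of $\mathfrak m^k$ is a sum of $k$-fold products of the $g_i$, axioms (1) and (3) of Definition~\ref{ringval} give $w(\mathfrak m^k) \geq kc \to \infty$. Hence, given any $\varepsilon$, the ball $\{f : w(f) > \varepsilon\}$ (or $w(f-f_0) > \varepsilon$, using Lemma~\ref{imcomp} to reduce to the value-comparison at a point) contains a neighbourhood $\mathfrak m^k$ of $0$ for $k$ large, which is precisely $\mathfrak m$-adic continuity of $w$ as a map $R \to \ri$ (with $\ri$ carrying its order topology, so that neighbourhoods of finite values are intervals and neighbourhoods of $\infty$ are the sets $(a,\infty]$).

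For the extension, every element $\hat f \in \hat R$ is the limit of an $\mathfrak m$-adic Cauchy sequence $(f_n)$ from $R$; by the continuity just established, $(w(f_n))$ is eventually constant, or more precisely eventually stable in $\ri$ — either it stabilizes at a finite value (when $\hat f \notin \hat{\mathfrak m}^\infty$ roughly speaking) or tends to $\infty$ — so we may define $\hat w(\hat f) := \lim_n w(f_n)$. One checks this is independent of the chosen Cauchy sequence: if $(f_n), (f_n')$ both converge to $\hat f$, then $f_n - f_n' \in \mathfrak m^{k_n}$ with $k_n \to \infty$, so $w(f_n - f_n') \to \infty$ and Lemma~\ref{imcomp} forces $w(f_n)$ and $w(f_n')$ to agree for $n$ large. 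That $\hat w$ satisfies the three axioms of Definition~\ref{ringval} and is nonnegative follows because each axiom involves at most two elements and is preserved under the limit process, exactly as in the proof of Proposition~\ref{compactness}; and $\hat w > 0$ on $\hat{\mathfrak m}$ because $\hat{\mathfrak m} = \mathfrak m \hat R$ and $\hat w(\mathfrak m^k \hat R) \geq kc$. Thus $\hat w \in \V(\hat R, \hat{\mathfrak m})$, and it clearly restricts to $w$ on $R$.

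The main obstacle I anticipate is the bookkeeping needed to show multiplicativity of the extension, i.e.\ $\hat w(\hat f \hat g) = \hat w(\hat f) + \hat w(\hat g)$, when one or both values are $+\infty$: one must be careful that products of Cauchy sequences remain Cauchy and that the limit of $w(f_n g_n) = w(f_n) + w(g_n)$ behaves correctly in $\ri$ even in the infinite cases — here the additive structure on $\ri = \R \cup \{\infty\}$ with $\infty$ absorbing makes this routine, but it is the point requiring the most care. Everything else — well-definedness, the triangle inequality, nonnegativity, and the identification of the center — is a direct limit argument of the type already used in the paper.
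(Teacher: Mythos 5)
Your proof is correct and follows essentially the same route as the paper's: use Lemma~\ref{valmin} to get $w(\mathfrak m) > 0$ and hence $w(\mathfrak m^k)\to\infty$, then combine this with Lemma~\ref{imcomp} to conclude that $w$ is locally constant (at points of finite value) and hence $\mathfrak m$-adically continuous. The paper stops at continuity and leaves the extension to $\hat R$ as an immediate consequence; your Cauchy-sequence construction spells out that step in sound detail, and the ``obstacle'' you flag about multiplicativity in the $\infty$-valued cases is, as you correctly observe, harmless.
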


\begin{proof}
  Since the ring $R$ is Noetherian, Krull's theorem implies that
  it is separated in its $\mathfrak{m}$-adic topology, that is,  
   $\cap_{n \in \N} \ \mathfrak{m}^n = 0$ 
   (see \cite[Corollary 10.18]{AM 69}).  Using Lemma
  \ref{valmin}, we see that:  
    $$m_0 := w(\mathfrak{m}) \in (0, \infty].$$ 
  The same lemma implies that $w(\mathfrak{m}^n)= n\cdot m_0$, for
  all $n \in \N$. 

  Consider any  $f \in R$. By the definition of the $\mathfrak{m}$-adic
  topology, the sets $(f + \mathfrak{m}^n)_{n \in \N}$ form a basis of
  neighborhoods of $f$. We consider now two cases, according to the value
  of $w(f)$. 

  $\bullet$ First, \emph{suppose that $w(f) \neq \infty$}. Then, there exists
  $n_0 \in \N$ such that $n\cdot m_0 > w(f)$ for any $n \geq
  n_0$. For such a value of $n$, consider any  $g \in f +
  \mathfrak{m}^n$, and we write $g= f + \mu$, with $\mu \in
  \mathfrak{m}^n$. Therefore $w(f) < w(\mu)$, which by Lemma \ref{imcomp}
  implies that  $w(g)= w(f)$. Thus, $w$ is constant in the neighborhood 
  $f + \mathfrak{m}^n$ of $f$, and so it is continuous at $f$. 
  
  $\bullet$ Secondly, \emph{suppose that $w(f) = \infty$}. We split
  this situation into two subcases:
     
      \begin{enumerate}
         \item \emph{If $m_0= \infty$}, then we see that $w(g)=\infty$ for 
                   any $g \in f + \mathfrak{m}$, which implies again that $w$
                   is continuous  
                   in a neighborhood of $f$.  
         
         \item \emph{If $m_0 \in (0, \infty)$}, then we see that $w(g)
           \geq n m_0$  
                   for all $g \in f + \mathfrak{m}^n$, which shows again
                   that $w$ is  
                   continuous at $f$. 
      \end{enumerate}
\end{proof}

As a consequence, we can canonically identifify the valuation space of a local 
Noetherian ring with the one of its completion.

\begin{corollary}\label{C:completion}
   The inclusion $(R, \mathfrak{m}) \overset{i}{\hookrightarrow} (\hat{R},
   \hat{\mathfrak{m}})$  induces an isomorphism of local valuation spaces:
   $\V(\hat{R}, \hat{\mathfrak{m}}) \overset{\V(i)}{\simeq} \V(R,\mathfrak{m})$. 
\end{corollary}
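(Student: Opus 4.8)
The plan is to realise $\V(i)$ as a homeomorphism by exhibiting a continuous two-sided inverse. First I would note that $i$ is a local morphism, since $i^{-1}(\hat{\mathfrak m})=\hat{\mathfrak m}\cap R=\mathfrak m$, so $\V(i)\colon \V(\hat R,\hat{\mathfrak m})\to\V(R,\mathfrak m)$ is continuous by Proposition~\ref{valfuncloc}. Lemma~\ref{extcompl} supplies the candidate inverse as a map of sets: the assignment $e$ sending a local valuation $w$ of $R$ to its extension $\hat w$ to $\hat R$ obtained by continuity for the $\hat{\mathfrak m}$-adic topology. To see that $\V(i)$ is bijective with inverse $e$, I would use that $R$ is dense in $\hat R$ and that, by Lemma~\ref{extcompl} applied to the complete Noetherian local ring $\hat R$ (which is its own completion; $\hat R$ is Noetherian by \cite{AM 69}), every valuation in $\V(\hat R,\hat{\mathfrak m})$ is continuous for the $\hat{\mathfrak m}$-adic topology; hence two such valuations agreeing on $R$ coincide. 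This gives injectivity of $\V(i)$ and the identity $e\circ\V(i)=\mathrm{id}$, while $\V(i)\circ e=\mathrm{id}$ and surjectivity of $\V(i)$ are immediate from Lemma~\ref{extcompl}. It then remains only to prove that $e$ is continuous.

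The main obstacle is exactly this continuity of $e$, and I expect one cannot shortcut it via the ``continuous bijection out of a compact space'' argument, because $\V(R,\mathfrak m)$ need not be compact (for $R=k[x]_{(x)}$ it is homeomorphic to the half-open interval $(0,\infty]$). So I would argue directly, the key device being a formula computing the values of $\hat w$ from those of $w$. Fix $\hat f\in\hat R$ and, using density of $R$ in $\hat R$, fix once and for all a sequence $(f_N)_{N\geq1}$ in $R$ with $\hat f-f_N\in\hat{\mathfrak m}^{N}$ for every $N$. Writing $m_0(w):=w(\mathfrak m)\in(0,\infty]$ and using that $\hat w(\hat{\mathfrak m}^{N})=N\,m_0(w)$ (the computation is the one in the proof of Lemma~\ref{extcompl}, since $\hat{\mathfrak m}^N$ is generated by elements of $\mathfrak m^N\subseteq R$), we get $\hat w(\hat f-f_N)\geq N\,m_0(w)$; applying Lemma~\ref{imcomp} to $\hat f=f_N+(\hat f-f_N)$ then yields that $w(f_N)=\hat w(\hat f)$ for all large $N$ if $\hat w(\hat f)<\infty$, and $w(f_N)\geq N\,m_0(w)\to\infty$ if $\hat w(\hat f)=\infty$. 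In particular $\hat w(\hat f)=\lim_N w(f_N)$ for every $w\in\V(R,\mathfrak m)$, with the sequence $(f_N)$ depending on $\hat f$ but not on $w$.

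With this in hand, I would check that $e^{-1}$ of each subbasic open set $\{W\mid W(\hat f)\in U\}$ of $\V(\hat R,\hat{\mathfrak m})$ is open in $\V(R,\mathfrak m)$, reducing $U$ to a half-line $(a,+\infty]$ or a bounded interval $(a,b)$ with $a<b$ in $\R$. The analysis of the previous paragraph (together with the ultrametric inequality, respectively Lemma~\ref{imcomp}) gives the identities
$$\{w\in\V(R,\mathfrak m)\mid\hat w(\hat f)>a\}=\bigcup_{N\geq1}\bigl(\{w\mid w(f_N)>a\}\cap\{w\mid N\,w(\mathfrak m)>a\}\bigr)$$
and, similarly, $\{w\mid a<\hat w(\hat f)<b\}=\bigcup_{N\geq1}\bigl(\{w\mid a<w(f_N)<b\}\cap\{w\mid N\,w(\mathfrak m)>b\}\bigr)$. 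By Lemma~\ref{valmin}, $w(\mathfrak m)=\min_j w(g_j)$ for a finite generating set $g_1,\dots,g_r$ of $\mathfrak m$, so $\{w\mid N\,w(\mathfrak m)>c\}=\bigcap_j\{w\mid w(g_j)>c/N\}$ is a finite intersection of subbasic open sets, and the sets $\{w\mid w(f_N)\in(a,b)\}$ and $\{w\mid w(f_N)>a\}$ are subbasic open as well; hence each displayed set is a union of open sets. This proves $e$ continuous, so $\V(i)$ is a homeomorphism, which is the assertion of the corollary.
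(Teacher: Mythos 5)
Your proof is correct and in fact fills a genuine gap in the paper's own proof. The paper argues as follows: by Lemma~\ref{extcompl} applied to $(\hat R,\hat{\mathfrak m})$, every $\hat w\in\V(\hat R,\hat{\mathfrak m})$ is $\hat{\mathfrak m}$-adically continuous and hence determined by its restriction to $R$, giving injectivity of $\V(i)$; surjectivity follows from Lemma~\ref{extcompl}. That establishes that $\V(i)$ is a continuous bijection but says nothing about continuity of the inverse, even though the statement asserts an isomorphism in the category of topological spaces (which is what the functor $\V$ lands in by Proposition~\ref{valfuncloc}). You anticipate correctly that there is no shortcut via ``continuous bijection from a compact space is a homeomorphism'' --- your example $R=k[x]_{(x)}$, with $\V(R,\mathfrak m)\simeq(0,\infty]$, shows $\V(R,\mathfrak m)$ need not be compact (it is an \emph{open} subspace of the compact $\V_{\geq 0}(R)$ of Proposition~\ref{compactness}, not a closed one) --- and you supply the missing direct argument. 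The key formula $\hat w(\hat f)=\lim_N w(f_N)$ with $(f_N)$ depending on $\hat f$ but not on $w$, combined with the explicit decomposition of $e^{-1}$ of each subbasic open set as a union of finite intersections of subbasic opens (the sets $\{w\mid N\,w(\mathfrak m)>c\}$ being handled via Lemma~\ref{valmin} and a finite generating set of $\mathfrak m$), is exactly what is needed; I checked both inclusions in each of your displayed identities and they hold, using Lemma~\ref{imcomp} in the half-line case and the ultrametric bound $\hat w(\hat f-f_N)\geq N\,w(\mathfrak m)$ throughout. Your proof therefore strengthens the paper's by proving the assertion that is actually stated, rather than only bijectivity.
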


\begin{proof} By the previous lemma applied to $(\hat{R}, \hat{\mathfrak{m}})$, 
   any $\hat{w} \in \V(\hat{R}, \hat{m})$ is continuous for the 
   $\mathfrak{m}$-adic topology. Therefore it is determined by its restriction to 
   $R$, which proves the injectivity of $\V(i)$. The surjectivity follows 
   from Lemma~\ref{extcompl}.
\end{proof}

The next lemma shows that we can reduce the study of the valuation space of an 
affine scheme to those of the irreducible components of the associated reduced 
scheme. 

\begin{lemma} \label{redval}
  Let $R$ be a ring and $R \overset{\rho}{\longrightarrow} R_{red}$ be its 
  reduction morphism (that is, the morphism of quotient by its nilradical). 
  Then, the map $\V(\rho)$ induced by $\rho$ is a homeomorphism of $\V(R)$ 
  and $\V(R_{red})$. If $R$ is reduced and the $(\mathfrak{p}_i)_{i \in I}$ are 
  the prime ideals of the primary decomposition of the zero ideal, then 
  $\V(R)= \bigcup_{i \in I} \V(R/ \mathfrak{p}_i)$, that is, the valuation space 
  of $\spec R$ is the union of the valuation spaces  of its irreducible 
  components. The same holds for the space $\V(R,\mathfrak{p})$ of valuations
  relative to a prime ideal $\mathfrak{p}$ of $R$.
\end{lemma}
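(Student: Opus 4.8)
The plan is to handle the three assertions in succession, the common tool being the elementary remark that \emph{the nilradical of $R$ lies inside the home of every valuation}: if $f\in R$ satisfies $f^{n}=0$ then, since any $w\in\V(R)$ is a morphism of semigroups $(R,\cdot)\to(\ri,+)$, one has $n\,w(f)=w(f^{n})=w(0)=\infty$, whence $w(f)=\infty$. Combined with Lemma~\ref{homechar} this already gives most of the first assertion: the map $\V(\rho)\colon\V(R_{red})\to\V(R)$ is continuous by Proposition~\ref{valfunc}, injective because $\rho$ is surjective, and surjective because, the nilradical being contained in the home of any $w\in\V(R)$, Lemma~\ref{homechar} realizes $w$ as the pullback of a (necessarily unique) valuation of $R_{red}$.

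To promote this bijection to a homeomorphism I would prove that $\V(\rho)$ is open. Given a subbasic open set $U_{g}=\{\overline{w}\in\V(R_{red})\mid\overline{w}(g)\in U\}$, choose a lift $f\in R$ of $g$; then for $w=\overline{w}\circ\rho$ one has $w(f)=\overline{w}(\rho(f))=\overline{w}(g)$, so $\V(\rho)(U_{g})$ is exactly the subbasic open set $\{w\in\V(R)\mid w(f)\in U\}$, the reverse inclusion using surjectivity of $\V(\rho)$. Since $\V(\rho)$ is bijective, it therefore carries arbitrary open sets to open sets. The very same argument, run with the surjection $R\twoheadrightarrow R/\mathfrak{p}_{i}$ in place of $\rho$, shows that $\V(R/\mathfrak{p}_{i})$ embeds as a topological subspace of $\V(R)$; by Lemma~\ref{homechar} its image is the subset $\{\,w\in\V(R)\mid\mathfrak{p}_{i}\subseteq\{f\mid w(f)=\infty\}\,\}$. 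It is this identification that is understood in the formula $\V(R)=\bigcup_{i\in I}\V(R/\mathfrak{p}_{i})$.

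For the second assertion, suppose $R$ reduced, so that $(0)=\bigcap_{i\in I}\mathfrak{p}_{i}$ with $I$ finite. The inclusion $\supseteq$ is the content of the previous paragraph. For $\subseteq$, take $w\in\V(R)$; its home is a prime ideal of $R$ containing $(0)=\bigcap_{i}\mathfrak{p}_{i}$, hence it contains one of the $\mathfrak{p}_{i}$, and Lemma~\ref{homechar} places $w$ in $\V(R/\mathfrak{p}_{i})$. For the relative version one simply intersects everything with the defining condition of $\V(R,\mathfrak{p})$: if $w\in\V(R,\mathfrak{p})$ then $w$ is nonnegative on $R$ and its home, being contained in its center $\mathfrak{p}$, still contains some $\mathfrak{p}_{i}$, necessarily with $\mathfrak{p}_{i}\subseteq\mathfrak{p}$; the induced valuation on $R/\mathfrak{p}_{i}$ is then nonnegative with center $\mathfrak{p}/\mathfrak{p}_{i}$, and conversely each valuation of $\V(R/\mathfrak{p}_{i},\mathfrak{p}/\mathfrak{p}_{i})$, viewed inside $\V(R)$, has center $\mathfrak{p}$; hence $\V(R,\mathfrak{p})=\bigcup_{i\,:\,\mathfrak{p}_{i}\subseteq\mathfrak{p}}\V(R/\mathfrak{p}_{i},\mathfrak{p}/\mathfrak{p}_{i})$.

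The only step that is more than bookkeeping is the openness of $\V(\rho)$ and of the embeddings $\V(R/\mathfrak{p}_{i})\hookrightarrow\V(R)$; and even there the point is just that the ring maps involved are surjective, so that a subbasic open set downstream both pulls back from and pushes forward to a subbasic open set. Everything else is a direct dictionary between ring-theoretic ideals and the homes and centers of valuations, supplied by Lemma~\ref{homechar} together with the fact that a prime ideal containing a finite intersection of ideals contains one of them.
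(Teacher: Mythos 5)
Your proof is correct and follows the same route the paper takes (Lemma~\ref{homechar} plus the observation that the home of a valuation is a prime ideal containing $\bigcap_i\mathfrak{p}_i$ and hence some $\mathfrak{p}_i$), merely supplying the details — the openness of $\V(\rho)$ via subbasic sets, and the bookkeeping for $\V(R,\mathfrak{p})$ — that the paper's one-sentence proof leaves implicit.
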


\begin{proof}
  The result follows from Lemma \ref{homechar} and the fact that if $w$ is a 
  valuation of $R$, then the home of $w$ contains necessarily at least 
  one of the ideals $\mathfrak{p}_i$ of the primary decomposition of $\{0\}$. 
\end{proof}

\medskip
\section{An affine theory of tropicalization}\label{S:defoftrop}

In this section we describe our proposed framework for a theory of tropicalization 
which both generalizes the existing one of tropicalization of subvarieties of tori 
and allows in particular to tropicalize (even formal) germs of subvarieties of 
toric varieties. We stress also the functorial properties of our notion of 
tropicalization. The qualificative ``\emph{affine}'' in the title of this section
is explained in Remark \ref{qualaffine}. In Section~\ref{S:extdef} we describe a 
more general framework for functorial tropicalization.
\medskip

In the sequel, $(\Gamma, +)$ denotes an arbitrary \emph{affine semigroup} and  
$(R, +, \cdot)$ a commutative ring. 
Consider a \emph{morphism of semigroups}:
$$(\Gamma,+) \overset{\gamma}{\longrightarrow} (R, \cdot).$$ 
This is the same as giving a morphism of rings $\Z[\Gamma]
\overset{\gamma}{\longrightarrow} R$, and, thus, a morphism of schemes 
$\spec(R) \overset{u}{\longrightarrow} \spec(\Z[\Gamma])$.
   
If $w\in \V(R)$, we have $w \circ \gamma \in \Hom_{Sg}(\Gamma, \ri)$.
By formula (\ref{aftt}), we see that $w \circ \gamma \in L(\sigma(\Gamma),
N(\Gamma))$. We can define: 
\begin{equation} \label{deffi}
   \mathcal{V}(R) \overset{\Phi_{\gamma}}{\longrightarrow}
       L(\sigma(\Gamma), N(\Gamma)). 
\end{equation}

It is a routine exercise to check:

\begin{lemma}\label{L:contoftrop}
  The map $\Phi_\gamma$ is continuous with respect to the topologies of
  pointwise convergence on $\V(R)$ and $L(\sigma(\Gamma), N(\Gamma))$.
\end{lemma}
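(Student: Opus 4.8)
The plan is to verify continuity directly using the characterization of the topology of pointwise convergence by a subbasis of open sets. Recall from the discussion preceding Lemma~\ref{setdescraff} that the topology on $L(\sigma(\Gamma), N(\Gamma))$, viewed as a subspace of $\ri^{\,\check{\sigma}(\Gamma)\cap M(\Gamma)}$ with the product topology, is generated by the sets of the form $\{\lambda \in L(\sigma(\Gamma),N(\Gamma)) \mid \lambda(m) \in U\}$, where $m \in \check{\sigma}(\Gamma) \cap M(\Gamma)$ and $U \subseteq \ri$ is open; similarly, the topology on $\V(R)$ is generated by the sets $U_f = \{w \in \V(R) \mid w(f) \in U\}$ for $f \in R$ and $U \subseteq \ri$ open. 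Since a map is continuous as soon as the preimage of every subbasic open set is open, it suffices to compute the preimage under $\Phi_\gamma$ of each set of the first kind.

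First I would fix $m \in \check{\sigma}(\Gamma) \cap M(\Gamma)$ and an open $U \subseteq \ri$, and examine when $\Phi_\gamma(w)(m) \in U$. The subtlety is that $\Phi_\gamma(w) = w \circ \gamma$ is a priori only defined on $\Gamma$, not on all of $\check{\sigma}(\Gamma) \cap M(\Gamma) = \sat(\Gamma)$ by \eqref{geoint}. However, as explained after Lemma~\ref{setdescraff}, a semigroup morphism from $\Gamma$ to $\ri$ extends canonically to $\sat(\Gamma)$: for $m \in \sat(\Gamma)$, pick $n \in \N^*$ with $nm \in \Gamma$ and set $(w \circ \gamma)(m) := \frac{1}{n}(w\circ\gamma)(nm) = \frac{1}{n} w(\gamma(nm))$, a value in $\ri$ independent of the choice of $n$. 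Writing $f := \gamma(nm) \in R$, the condition $\Phi_\gamma(w)(m) \in U$ becomes $\frac{1}{n} w(f) \in U$, i.e. $w(f) \in nU$, where $nU := \{nx \mid x \in U\}$ is again an open subset of $\ri$ (multiplication by the positive integer $n$ is a homeomorphism of $\ri$). Hence $\Phi_\gamma^{-1}\bigl(\{\lambda \mid \lambda(m) \in U\}\bigr) = U'_f$ with $U' = nU$, which is a subbasic open subset of $\V(R)$. Therefore $\Phi_\gamma$ is continuous.

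I do not anticipate a real obstacle here — this is the "routine exercise" flagged in the text — but the one point deserving care is the handling of the saturation: one must note that even though $\gamma$ is only given on $\Gamma$, the target of $\Phi_\gamma$ is built from $\check{\sigma}(\Gamma)\cap M(\Gamma)$, so one needs the canonical extension of $w\circ\gamma$ to $\sat(\Gamma)$ and the fact that dividing a value in $\ri$ by a fixed positive integer (equivalently, multiplying an open set by it) is a homeomorphism of $\ri$, so that open sets pull back to open sets. If one instead prefers to describe the topology on $L(\sigma(\Gamma),N(\Gamma))$ using only evaluations at elements of $\Gamma$ — which generate the same topology since $\Gamma$ generates $\sat(\Gamma)$ up to finite index in each coordinate — one can avoid the saturation altogether: for $m \in \Gamma$ the preimage of $\{\lambda \mid \lambda(m)\in U\}$ is simply $U_{\gamma(m)}$, manifestly open. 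Either way the argument closes in a line, which is why the authors leave it to the reader.
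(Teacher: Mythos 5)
Your proposal is correct; the paper declares this a routine exercise and omits the proof, and your verification via preimages of subbasic opens is exactly what is expected. The one genuine subtlety --- that $\gamma$ is only defined on $\Gamma$ while $L(\sigma(\Gamma),N(\Gamma))$ is built from $\sat(\Gamma) = \check{\sigma}(\Gamma)\cap M(\Gamma)$ --- is handled properly, both by passing through the canonical extension to the saturation (using that multiplication by a positive integer is a homeomorphism of $\ri$) and by the alternative observation that evaluations at $\Gamma$ already generate the topology on $L(\sigma(\Gamma),N(\Gamma))$.
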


In the next definition, we allow $\mathcal{W}$ to be \emph{any}
subset of $\V(R)$. In the sequel we will be particularly interested in
these subsets of valuation spaces relative to valuations defined on subrings
(see Definition \ref{valrel}) or to ideals (see Definition
\ref{relideal}).

\begin{definition} \label{deftrop}
  Let $\W$ be a subset of the valuation space $\V(R)$. The closure in 
  $L(\sigma(\Gamma), N(\Gamma))$ of the image $\Phi_{\gamma}(\mathcal{W} )$ 
  is called {\bf the (global) tropicalization of } $\mathcal{W}$ {\bf with 
  respect to the semigroup morphism} $\gamma$, and it is denoted by
  $\trop(\mathcal{W}, \gamma)$ or $\trop(\mathcal{W}, u)$.
\end{definition}

\begin{remark} \label{caseEKL}
   One of the definitions of tropicalization of subvarieties of tori proposed by 
   \cite{EKL 06} corresponds to the case where $\Gamma$ is the lattice of 
   exponents of monomials of the torus $\mbox{Spec} (K[\Gamma])$,
   $R = K[\Gamma] / I$ for an ideal $I$ of $K[\Gamma]$, $\gamma$ is the 
   composition $\Gamma \hookrightarrow K[\Gamma] \rightarrow K[\Gamma] / I$, 
   and $\mathcal{W} = \mathcal{V}_{K, v}(R)$ is the set of valuations on $R$
   extending a valuation $v$ of the field $K$.
\end{remark}

\begin{remark}
   Our definition is indeed more general than the one explained in the previous 
   remark. More precisely, if $\Gamma$ is an affine semigroup, and 
   $\gamma\colon \Gamma\to (R,\cdot)$ is a morphism of semigroups, then $\gamma$ 
   does not extend in general to a morphism from the associated lattice 
   $M(\Gamma)$ of $\Gamma$. In fact, such an extension exists if and only if the 
   image of $\gamma$ is contained in the group of units of $(R, \cdot)$. 
\end{remark}

\begin{remark} \label{specnot}
  When $K$ is a field and $I$ is an ideal of the ring $K[\Gamma]$, we set:
  $$\trop(I) := \trop(\V(R), \gamma),$$
  where $R := K[\Gamma] / I$ and $\gamma : \Gamma \to R$ is the morphism of 
  semigroups induced by the  quotient map $K[\Gamma] \to K[\Gamma] / I$. 
  When $\Gamma$ is saturated, this agrees with the notion of tropicalization of a 
  subvariety of a normal affine toric variety introduced by Payne \cite{P 08}. In 
  fact, these tropicalizations may be glued to produce a tropicalization of an 
  arbitrary subscheme of a general (not necessarily normal) toric variety. In 
  this case, if the toric variety is clear from the context and $X$ is a subscheme 
  of it, we denote this tropicalization simply by $\trop(X)$. 
\end{remark}

Next, we define the notion of \emph{local tropicalization}.
Denote by $\sigma$ the cone 
$\sigma(\Gamma)  \subset N(\Gamma)_{\R}$. Let $(R,\mathfrak{m})$ be a local ring 
and $\gamma\colon\Gamma\to R$ be a \emph{local} morphism of semigroups, i.e., 
$\gamma^{-1}(\mathfrak{m})=\Gamma^+$. Recall from Definition \ref{Locval} 
that by a \emph{local valuation} of $R$ we mean a valuations nonnegative on $R$ 
and positive on $\mathfrak{m}$, that is, an element  of the space 
$\V(R,\mathfrak{m})$. Note that, by \eqref{tc}, the map $\Phi_\gamma$ considered 
above sends the space $\V(R,\mathfrak{m})$ into the extended cone $\overline{\sigma}$ 
(see Definition \ref{linvardef}).

\begin{definition}\label{D:loctrop}
  The {\bf local positive tropicalization} of $\gamma$, 
  denoted \linebreak
  $\ptrop(\V(R,\mathfrak{m}),\gamma)$ or simply 
  $\ptrop(\gamma)$, is the closure of the image of the map $\nu(R, \mathfrak{m}) 
  \stackrel{\Phi_\gamma}{\rightarrow} L(\sigma, N) $ in the relative interior 
  $\overline{\sigma}^\circ$ of the space $\overline{\sigma}$.
\end{definition}

Notice that in this definition we only consider those valuations of $R$ which have 
as center the closed point of $\spec R$. Instead, if we only require that the 
valuations have a center on $\spec R$, possibly smaller than $\mathfrak{m}$, 
we get another version of local tropicalization, which will also be important 
in the sequel: 

\begin{definition}\label{D:nonnegtrop}
  The {\bf local nonnegative tropicalization} of $\gamma$, 
  denoted $\nntrop(\gamma)$, is the image in the extended cone $\overline{\sigma}$
  of the map $\Phi_\gamma$ applied to all valuations of $R$ having
  a center on $\spec R$, that is, all nonnegative valuations of $R$.
\end{definition}

The following proposition states direct consequences of the definitions of the 
two kinds of local tropicalizations:

\begin{proposition}\label{P:closedtrop} 
  Let $\Gamma$ be an arbitrary affine semigroup and $\sigma = \sigma(\Gamma)$. 
  \begin{itemize}
   \item[(i)] The local nonnegative tropicalization is a closed subset of
    $\overline{\sigma}$.
   \item[(ii)] If the set $\gamma(\Gamma^+)\subseteq\mathfrak{m}$, where
    $\Gamma^+$ is the maximal ideal of $\Gamma$, generates $\mathfrak{m}$ (as
    an ideal of the ring $R$), then the image of the map $\Phi_\gamma\colon
    \nu(R, \mathfrak{m})\to L(\sigma,N)$ coincides with $\nntrop(\gamma)\cap
    \overline{\sigma}^\circ$. In particular, this image is closed in
    $\overline{\sigma}^\circ$ and $\ptrop(\gamma)=\nntrop(\gamma)\cap
    \overline{\sigma}^\circ$.
  \end{itemize}
\end{proposition}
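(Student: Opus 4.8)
The plan is to prove the two items separately, using the structural facts already established: compactness of $\V_{\geq 0}(R)$ (Proposition~\ref{compactness}), continuity of $\Phi_\gamma$ (Lemma~\ref{L:contoftrop}), and the set-theoretic description of the strata of $L(\sigma,N)$ via prime ideals / faces (Lemma~\ref{setdescraff}). For (i), I would argue as follows. The set of nonnegative valuations of $R$ is exactly $\V_{\geq 0}(R)$, which is compact. Since $\Phi_\gamma$ is continuous and the target $L(\sigma,N)$ is Hausdorff (it is separated, being a space of functions into the separated space $\ri$ with the topology of pointwise convergence), the image $\Phi_\gamma(\V_{\geq 0}(R))$ is compact, hence closed in $L(\sigma,N)$. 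It remains to observe that $\nntrop(\gamma) = \Phi_\gamma(\V_{\geq 0}(R))$ actually lands in $\overline{\sigma}$: this is because $\gamma$ being local forces $w\circ\gamma$ to be nonnegative on $\Gamma$ (more precisely, $w(\gamma(a))\geq 0$ for all $a\in\Gamma$ since $w$ is nonnegative on $R$), so $w\circ\gamma \in \Hom_{Sg}(\check\sigma\cap M, \ri_{\geq 0})$ after passing through the saturation — here I would use that a semigroup morphism $\Gamma\to\ri_{\geq 0}$ extends uniquely to $\sat(\Gamma) = \check\sigma(\Gamma)\cap M(\Gamma)$, essentially by $Q$-linearity on the group generated by the relevant face. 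Thus $\nntrop(\gamma)$ is a compact, hence closed, subset of $\overline{\sigma}$.

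For (ii), the key point is to identify which valuations centered on $\spec R$ have center precisely the closed point $\mathfrak{m}$, i.e.\ which points of $\V_{\geq 0}(R)$ lie in $\V(R,\mathfrak{m})$, in terms of their images under $\Phi_\gamma$. A valuation $w\in\V_{\geq 0}(R)$ has center $\mathfrak{m}$ if and only if $w(f)>0$ for every $f\in\mathfrak{m}$; under the hypothesis that $\gamma(\Gamma^+)$ generates $\mathfrak{m}$, Lemma~\ref{valmin} (applied to a finite generating set of $\mathfrak{m}$ drawn from $\gamma(\Gamma^+)$) shows this is equivalent to $w(\gamma(a))>0$ for every $a\in\Gamma^+$, which is exactly the condition that $\Phi_\gamma(w) = w\circ\gamma$ lies in $\overline{\sigma}^\circ$ (the locus of morphisms $\check\sigma\cap M\to\ri_{\geq 0}$ strictly positive, possibly $+\infty$, on the maximal ideal of the semigroup). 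Therefore
$$\Phi_\gamma(\V(R,\mathfrak{m})) = \Phi_\gamma(\V_{\geq 0}(R)) \cap \overline{\sigma}^\circ = \nntrop(\gamma)\cap\overline{\sigma}^\circ.$$
The first equality is the content of the previous sentence: the preimage under $\Phi_\gamma$ of $\overline{\sigma}^\circ$ inside $\V_{\geq 0}(R)$ is exactly $\V(R,\mathfrak{m})$. From this the remaining assertions follow formally: $\nntrop(\gamma)\cap\overline{\sigma}^\circ$ is closed in $\overline{\sigma}^\circ$ because $\nntrop(\gamma)$ is closed in $\overline{\sigma}$ by part (i); and $\ptrop(\gamma)$, being by Definition~\ref{D:loctrop} the closure of $\Phi_\gamma(\V(R,\mathfrak{m}))$ inside $\overline{\sigma}^\circ$, equals $\Phi_\gamma(\V(R,\mathfrak{m}))$ itself, hence equals $\nntrop(\gamma)\cap\overline{\sigma}^\circ$.

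I expect the main obstacle to be the careful handling of the extension of semigroup morphisms from $\Gamma$ to its saturation $\check\sigma(\Gamma)\cap M(\Gamma)$, together with making the identification of $\overline\sigma$ and $\overline\sigma^\circ$ as subspaces of $L(\sigma,N)$ precise enough that "center equals $\mathfrak{m}$" translates cleanly into "image lies in $\overline\sigma^\circ$"; the compactness and continuity inputs are routine once the target is known to be Hausdorff. A secondary point to watch is that $\overline\sigma$ must genuinely be closed in $L(\sigma,N)$ so that "compact subset of $\overline\sigma$" is unambiguous — this follows since $\overline\sigma$ is the preimage of the closed set $\ri_{\geq 0}^{\,\check\sigma\cap M}$ under the inclusion into $\ri^{\,\check\sigma\cap M}$, i.e.\ it is cut out by the closed conditions $\gamma(m)\geq 0$.
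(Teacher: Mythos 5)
Your proposal is correct and takes the same route as the paper: part (i) from compactness of $\V_{\geq 0}(R)$, continuity of $\Phi_\gamma$, and separatedness of the target, and part (ii) by translating "center equals $\mathfrak{m}$" into "image lies in $\overline{\sigma}^\circ$" using the hypothesis that $\gamma(\Gamma^+)$ generates $\mathfrak{m}$, then invoking (i) and the definition of $\ptrop$. The paper's proof is extremely terse (it only cites Proposition~\ref{compactness}, Lemma~\ref{L:contoftrop}, and "the definition"); you have filled in exactly the implicit steps, including the minor but worthwhile observation that semigroup morphisms $\Gamma\to\ri_{\geq 0}$ extend uniquely to $\sat(\Gamma)$ so that $\Phi_\gamma$ genuinely lands in $\overline{\sigma}$.
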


\begin{proof}
  Statement (i) follows from Proposition~\ref{compactness} and 
  Lemma~\ref{L:contoftrop}. Statement (ii) follows from (i) and the definition of
  positive tropicalization. In general, the question of closedness of the
  image of $\Phi_\gamma$ is subtler and connected to the problem of extension
  of valuations, see Section~\ref{S:extval}.
\end{proof}

\begin{remark} \label{specnotloc} (Local analog of Remark~\ref{specnot}). 
When $K$ is a field, $\Gamma$ is an affine pointed semigroup 
and $I$ is an ideal of the ring $K[[\Gamma]]$ of formal power series 
with exponents in $\Gamma$ (discussed more carefully in Section 
\ref{S:powerseriesring}), we denote $\ptrop(I) := \ptrop(\gamma)$ and 
$\nntrop(I) : = \nntrop(\gamma)$.
\end{remark}

\begin{definition} \label{defloctropter}
  Let $(S, \mathfrak{n})$ be a local subring of $(R,\mathfrak{m})$, endowed with a
  valuation $w_S \in \V(S, \mathfrak{n})$.  Denote by $\V_{(S, w_S)}(R, 
  \mathfrak{m})$ the set of valuations in $\V(R, \mathfrak{m})$ which extend $w_S$,
  called {\bf the valuation space of} $(R, \mathfrak{m})$ {\bf relative to}
  $((S, \mathfrak{n}), w_S)$.
  Then $\ptrop(\V_{(S, w_S)}(R, \mathfrak{m}), \gamma)$ is called  
  {\bf the local positive tropicalization of the semigroup morphism} $\gamma$ 
  {\bf relative to} $((S, \mathfrak{n}), w_S)$. We denote it by 
  $\ptrop(R, (S, w_S), \gamma)$. 
\end{definition}

\medskip
Let us now discuss the functorial properties of our definition of tropicalization 
(both local and global). 

\begin{definition}
  Consider two semigroup morphisms $\Gamma_i
  \overset{\gamma_i}{\longrightarrow} (R_i, \cdot)$, for $i =1,2$. A
  {\bf morphism from $\gamma_1$ to $\gamma_2$} is a pair of maps:
  $$(\phi_H \in \Hom_{Rg}(R_1, R_2); \lambda_H \in \Hom_{Sg}(\Gamma_1,
  \Gamma_2))$$ making the following diagram commutative:
  $$\xymatrix{
      R_1  \ar[r]^{\phi_{H}}  & 
      R_2   \\
      \Gamma_1   \ar[u]^{\gamma_1}  \ar[r]_{\lambda_{H}} & 
      \Gamma_2   \ar[u]_{\gamma_2} }$$
  We denote by \emph{\bf SgRg} the category defined in this way, 
  and by \emph{\bf SgRgVal} the category whose objects are
  pairs $(\Gamma \overset{\gamma}{\longrightarrow} (R, \cdot), \W
  \subset \V(R))$ and whose morphisms are the morphisms of the
  category SgRg which respect the chosen subsets of the valuation
  spaces (that is, which send one into the other). 
\end{definition}

\begin{proposition}
  Let $(\Gamma_i
  \overset{\gamma_i}{\longrightarrow} (R_i, \cdot), \W_i)$, for $i =1,2$ be
  two objects of the category SgRgVal and $H$ a morphism from
  $(\gamma_1, \W_1)$ to $(\gamma_2, \W_2)$. Then $H$ induces a
  functorial linear map:
   $$\trop(\W_2, \gamma_2) \overset{\trop(H)}{\longrightarrow} 
         \trop(\W_1, \gamma_1),$$
  Moreover, positive tropicalizations are preserved by $\trop(H)$.
\end{proposition}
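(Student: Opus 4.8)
The plan is to unwind the definitions and check that the tautological composition with $\lambda_H$ and $\phi_H$ does the job. Concretely, the morphism $H = (\phi_H, \lambda_H)$ gives a commutative square relating $\gamma_1, \gamma_2, \phi_H, \lambda_H$, and by applying the contravariant functors $\V(\cdot)$ (from rings to topological spaces, Proposition~\ref{valfunc}) and $\zcal_{\ri}(\cdot) = L(\sigma(\cdot), N(\cdot))$ (from affine semigroups to topological spaces) one obtains a commutative square
$$\xymatrix{
  \V(R_2) \ar[r]^-{\V(\phi_H)} \ar[d]_{\Phi_{\gamma_2}} & \V(R_1) \ar[d]^{\Phi_{\gamma_1}} \\
  L(\sigma(\Gamma_2), N(\Gamma_2)) \ar[r]_-{L(\lambda_H)} & L(\sigma(\Gamma_1), N(\Gamma_1)) .
}$$
The commutativity is a pointwise check: for $w \in \V(R_2)$ one has $\Phi_{\gamma_1}(\V(\phi_H)(w)) = (w \circ \phi_H)\circ \gamma_1 = w \circ (\phi_H \circ \gamma_1) = w \circ (\gamma_2 \circ \lambda_H) = (w \circ \gamma_2)\circ\lambda_H = L(\lambda_H)(\Phi_{\gamma_2}(w))$, where the middle equality is exactly the commutativity of the defining square of $H$. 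Here $L(\lambda_H)$ is the map induced on $\Hom_{Sg}(-, \ri)$ by precomposition with $\lambda_H : \Gamma_1 \to \Gamma_2$; it is linear on the dense open vector subspace $N(\Gamma_2)_\R \hookrightarrow N(\Gamma_1)_\R$ because it is dual to the lattice map $M(\lambda_H) : M(\Gamma_1) \to M(\Gamma_2)$, and it is continuous because it is again a map of pointwise-convergence function spaces (same argument as Lemma~\ref{L:contoftrop}).

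Next I would transport the subsets along the square. Since $H$ is a morphism in \textbf{SgRgVal}, it sends $\W_2$ into $\W_1$, i.e. $\V(\phi_H)(\W_2) \subseteq \W_1$. Applying $\Phi_{\gamma_1}$ and using the commutativity of the square gives $L(\lambda_H)(\Phi_{\gamma_2}(\W_2)) = \Phi_{\gamma_1}(\V(\phi_H)(\W_2)) \subseteq \Phi_{\gamma_1}(\W_1)$. Taking closures and using continuity of $L(\lambda_H)$ (a continuous map sends the closure of a set into the closure of its image), we get $L(\lambda_H)\bigl(\overline{\Phi_{\gamma_2}(\W_2)}\bigr) \subseteq \overline{L(\lambda_H)(\Phi_{\gamma_2}(\W_2))} \subseteq \overline{\Phi_{\gamma_1}(\W_1)}$, i.e. $L(\lambda_H)$ restricts to a map $\trop(\W_2, \gamma_2) \to \trop(\W_1, \gamma_1)$. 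This restricted map is $\trop(H)$; it is linear because $L(\lambda_H)$ is linear on the relevant vector-space strata, and functoriality ($\trop(H' \circ H) = \trop(H) \circ \trop(H')$ and $\trop(\mathrm{id}) = \mathrm{id}$) follows from the functoriality of $\V$, of $M(\cdot)$, and hence of $L(\cdot)$ — one simply composes the two commutative squares.

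Finally, for the positive (local) tropicalization statement, I would check that $L(\lambda_H)$ respects the extended-cone and relative-interior structures. If $\gamma_1, \gamma_2$ are local morphisms of local rings and $\phi_H$ is a local ring morphism, then $\V(\phi_H)$ maps $\V(R_2, \mathfrak{m}_2)$ into $\V(R_1, \mathfrak{m}_1)$ (a valuation positive on $\mathfrak{m}_2$ pulls back to one positive on $\phi_H^{-1}(\mathfrak{m}_2) = \mathfrak{m}_1$, using that $\lambda_H$ is compatible with the maximal ideals $\Gamma_i^+$, so $\lambda_H(\Gamma_1^+) \subseteq \Gamma_2^+$). Correspondingly $L(\lambda_H)$ sends $\overline{\sigma(\Gamma_2)}$ into $\overline{\sigma(\Gamma_1)}$ and the relative interior $\overline{\sigma(\Gamma_2)}^\circ$ into $\overline{\sigma(\Gamma_1)}^\circ$, because the condition of being strictly positive on the maximal ideal $\check\sigma(\Gamma_i)\cap M_i$ is preserved under precomposition with $\lambda_H$ given $\lambda_H(\Gamma_1^+)\subseteq\Gamma_2^+$. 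Then the same closure argument, carried out inside $\overline{\sigma(\cdot)}^\circ$ rather than inside all of $L(\sigma(\cdot),N(\cdot))$, yields $\ptrop(\W_2,\gamma_2) \to \ptrop(\W_1,\gamma_1)$. The only mildly delicate point — and the one I would be most careful about — is this last compatibility of $L(\lambda_H)$ with the \emph{relative interiors}: one must verify that precomposition by $\lambda_H$ cannot send a morphism that is everywhere positive on $\Gamma_2^+$ to one that vanishes somewhere on $\Gamma_1^+$, which is precisely guaranteed by the locality hypothesis $\lambda_H^{-1}(\Gamma_2^+) \supseteq \Gamma_1^+$ coming from commutativity of the square together with locality of $\gamma_1,\gamma_2,\phi_H$; everything else is a routine diagram chase.
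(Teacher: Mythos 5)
The paper states this proposition without proof, treating it as a formal consequence of the definitions. Your reconstruction is correct and is precisely the routine diagram chase the authors intended: you set up the commutative square of $\V(\phi_H)$, $\Phi_{\gamma_1}$, $\Phi_{\gamma_2}$, $L(\lambda_H)$, verify its commutativity pointwise from the commuting square defining $H$, push $\W_2$ through and take closures using continuity of $L(\lambda_H)$, and then check that the locality hypotheses force $\lambda_H^{-1}(\Gamma_2^+) = \Gamma_1^+$ (you only needed $\supseteq$, but the commuting square with local $\gamma_1$, $\gamma_2$, $\phi_H$ in fact gives equality) so that $L(\lambda_H)$ carries $\overline{\sigma(\Gamma_2)}^\circ$ into $\overline{\sigma(\Gamma_1)}^\circ$ and hence $\ptrop$ into $\ptrop$. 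This is sound, and you correctly flag the compatibility with relative interiors as the one point requiring the locality assumptions rather than following purely formally.
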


\begin{remark} \label{qualaffine}
  We call the theory developed in this section ``affine'', because we think 
  about the category SgRgVal as the analog of affine schemes. A next step, which 
  we do not develop in this paper (for some folow up on this matter, see 
  Sections~\ref{Toroidal} and \ref{S:extdef}), would be to glue objects of 
  this ``affine'' category into non-affine objects which may again be tropicalized.  
\end{remark}

\medskip
\section{Extensions of valuations}\label{S:extval}

In this section we address the problem of extending of a valuation from a ring to 
a bigger ring, in a generality suitable for our purposes. As an application, we 
show that under convenient hypothesis, the real part of the local tropicalization 
is necessarily non-empty (see Lemma~\ref{L:finitepart}), and that tropicalization 
is unchanged by passage to the normalization (see Corollary~\ref{C:normal}). 

\medskip
The following extension principle plays an important role in \cite{BG 84}: 
if $K\subseteq L$ is a field extension and $v$ is a real valuation of $K$, 
then $v$ can always be extended to a \emph{real} valuation $w$ of $L$, that is 
there is a real valuation $w$ of $L$ such that $w$ restricted to $K$ coincides 
with $v$. We now give a local version of this extension principle. Let 
$(R,\mathfrak{m})$ and $(S,\mathfrak{n})$ be two local rings such that 
$R\subseteq S$, $\mathfrak{n}\cap R=\mathfrak{m}$, and let $v$ be a local real 
ring valuation of the ring $R$, i.e., $v$ is nonnegative on $R$ and positive on 
the maximal ideal $\mathfrak{m}$. We address the following question: \emph{Does 
there exist a local real valuation $w$ of the ring $S$ such that $w$ restricted to 
$R$ coincides with $v$?}

As a first approach, we may assume that the given valuation $v$ is only 
nonnegative on $\mathfrak{m}$, and ask whether there exists an extension $w$ 
nonnegative on $\mathfrak{n}$. Geometrically, we consider only valuations 
centered at the maximal ideals of our local rings, or, if $v$ and $w$ are only 
nonnegative on maximal ideals, such that their centers (thought geometrically as 
irreducible subschemes) contain the 
special points of $\spec R$ and $\spec S$ respectively. The answer to this 
question is not always positive, as shown by the following simple example.

\begin{example}
  Let $R=K[[x,y]]$, $S=K[[s,t]]$ be two copies of the ring of formal power
  series in two variables over a field $K$, and assume that the inclusion of $R$ 
  into $S$ is given by the map $x\mapsto s$, $y\mapsto st$ (this corresponds to the
  blow up of a point in a plane). Let $v$ be a monomial valuation on $R$,
  trivial on $K$, and determined by $v(x)=1$, $v(y)=1$. Then $v$ cannot
  be lifted to a local valuation $w$ of $S$ because $w$ must take value $0$ on 
  $t$. If we set $v(x)=2$, $v(y)=1$, then it is impossible to find
  a nonnegative extension $w$, because $w(t)$ must be equal to $-1$.
\end{example}

Next, we derive some sufficient conditions for the extension principle to 
hold.

\begin{theorem}\label{T:extprinciple}
  Let $(R,\mathfrak{m})$ and $(S,\mathfrak{n})$ be two local rings, $R\subseteq S$, 
  $\mathfrak{n}\cap R=\mathfrak{m}$. Let $v$ be a real nonnegative ring valuation 
  on $R$, and assume that one of the following conditions holds:
  \begin{itemize}
    \item[a)] $S$ is an integral extension of $R$ (e.g. $S$ is a finite 
    $R$-module);
    \item[b)] $R$ and $S$ are Noetherian, complete with respect to the 
    $\mathfrak{m}$-adic and $\mathfrak{n}$-adic topologies, and \emph{(i)} $S$ is 
    flat over $R$, \emph{(ii)} the residue fields $R/\mathfrak{m}$ and 
    $S/\mathfrak{n}$ are naturally isomorphic, and \emph{(iii)} the fiber of the 
    scheme $\spec(S)$ over the maximal ideal $\mathfrak{m}$ of $\spec(R)$ is 
    reduced and irreducible, i.~e., the ideal $\mathfrak{m}S$ is prime in $S$.
  \end{itemize}
  Then, there is a real nonnegative valuation $w$ of the ring $S$ such that 
  $w$ restricted to $R$ coincides with $v$. If, moreover, $v$ is local 
  (that is, positive on $\mathfrak{m}$), then $w$ can also be chosen to be local.
  In fact, in case a) every valuation $w$ extending $v$ is
  nonnegative, and local if $v$ is local.
\end{theorem}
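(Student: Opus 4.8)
The plan is to establish case~a) directly and then deduce case~b) from it by interposing a power-series subring.

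\textbf{Case a).} First I would produce \emph{some} extension. Let $\mathfrak{p}$ be the home of $v$, a prime of $R$; by Lemma~\ref{homechar}, $v$ descends to a valuation $\bar v$ on the domain $R/\mathfrak{p}$ which is finite off $0$, and extends to a valuation $v'$ on $K:=\mathrm{Frac}(R/\mathfrak{p})$. By the lying-over theorem for the integral extension $R\subseteq S$ there is a prime $\mathfrak{q}$ of $S$ with $\mathfrak{q}\cap R=\mathfrak{p}$, and $R/\mathfrak{p}\hookrightarrow S/\mathfrak{q}$ is integral, so $L:=\mathrm{Frac}(S/\mathfrak{q})$ is an algebraic extension of $K$. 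By Chevalley's extension theorem $v'$ extends to a valuation $w'$ on $L$; since $L/K$ is algebraic the value group of $w'$ lies in the divisible hull of that of $v'$, hence is again of rank $\leq 1$, so $w'$ is real. Pulling $w'$ back along $S\twoheadrightarrow S/\mathfrak{q}$ (with value $+\infty$ on $\mathfrak{q}$) gives $w\in\V(S)$, and since $\mathfrak{q}\cap R=\mathfrak{p}$ one has $w|_R=v$. The remaining assertions I would check for an \emph{arbitrary} valuation $w$ of $S$ extending $v$. Nonnegativity: from an integral relation $s^n+a_{n-1}s^{n-1}+\dots+a_0=0$ with $a_i\in R$, if $w(s)<0$ then $n\,w(s)=w(s^n)\geq\min_{0\leq i<n}\bigl(w(a_i)+i\,w(s)\bigr)\geq(n-1)w(s)$, a contradiction, because $w(a_i)=v(a_i)\geq 0$. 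Locality: if $v$ is local its center $\{v>0\}\cap R$ is $\mathfrak{m}$, so the center $\mathfrak{c}=\{w>0\}\cap S$ of the now-nonnegative $w$ satisfies $\mathfrak{c}\cap R=\mathfrak{m}$ and $\mathfrak{c}\subseteq\mathfrak{n}$; incomparability of primes over $\mathfrak{m}$ in the integral extension forces $\mathfrak{c}=\mathfrak{n}$.

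\textbf{Case b).} Since $R\to S$ is flat and local it is faithfully flat, hence injective. Write $k=R/\mathfrak{m}\cong S/\mathfrak{n}$; by hypothesis~(iii) the closed fibre $\bar S:=S/\mathfrak{m}S$ is a complete Noetherian local domain with residue field $k$, say of dimension $d$, and I would choose $t_1,\dots,t_d\in\mathfrak{n}$ whose images in $\bar S$ form a system of parameters. The key claim is that the induced local homomorphism $\varphi\colon A:=R[[t_1,\dots,t_d]]\to S$ is \emph{injective and module-finite}; granting this, case~a) will apply to $A\subseteq S$.

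\textbf{Proof of the claim (the crux).} For injectivity, since $S$ is $R$-flat the functor $-\otimes_R k$ keeps $0\to\ker\varphi\to A\to S\to 0$ exact, yielding $0\to(\ker\varphi)\otimes_R k\to k[[\underline t]]\to\bar S\to 0$; here $k[[\underline t]]\to\bar S$ sends the $t_i$ to a system of parameters of the complete local ring $\bar S$, hence is module-finite with kernel a prime ideal (as $\bar S$ is a domain) whose quotient has dimension $d=\dim k[[\underline t]]$, so of height $0$, hence zero. Therefore $\ker\varphi=\mathfrak{m}\,(\ker\varphi)=\mathfrak{m}^n(\ker\varphi)\subseteq\bigcap_n\mathfrak{m}^nA=0$, the last intersection being $0$ because $\mathfrak{m}^nA$ consists of the power series with all coefficients in $\mathfrak{m}^n$ and $\bigcap_n\mathfrak{m}^n=0$ by Krull's theorem ($R$ being Noetherian). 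For finiteness, the maximal ideal of $A$ is $\mathfrak{m}_A=\mathfrak{m}A+(\underline t)$, and $S/\mathfrak{m}_AS=\bar S/(\underline t)\bar S$ has finite length over $k$ since $\underline t$ is a system of parameters; moreover $\mathfrak{m}_AS$ is $\mathfrak{n}$-primary, so $S$ is $\mathfrak{m}_A$-adically separated, and since $A$ is a complete Noetherian local ring the topological Nakayama lemma gives that $S$ is a finite $A$-module.

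\textbf{Conclusion.} I would extend $v$ from $R$ to $A=R[[t_1,\dots,t_d]]$ by a monomial valuation: choosing reals $c_1,\dots,c_d>0$, set $w_c\bigl(\sum_\alpha a_\alpha t^\alpha\bigr):=\min_\alpha\bigl(v(a_\alpha)+\sum_i c_i\alpha_i\bigr)$. The minimum is attained ($v\geq 0$ on $R$ and the $c_i>0$, so only finitely many $\alpha$ fall below any bound), the value group lies in $\Gamma_v+\sum_i\Z c_i\subseteq\R$, and $w_c$ is multiplicative because its associated graded ring is $\mathrm{gr}_v(R)[T_1,\dots,T_d]$, a polynomial ring over the domain $\mathrm{gr}_v(R)$. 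Thus $w_c$ is a real, nonnegative valuation on $A$ restricting to $v$, and it is local whenever $v$ is, since every element of $\mathfrak{m}_A$ has all monomial contributions of positive value. As $S$ is integral over $A$ with $\mathfrak{n}\cap A=\mathfrak{m}_A$, case~a) applied to $A\subseteq S$ and $w_c$ produces a real nonnegative valuation $w$ of $S$ with $w|_A=w_c$, hence $w|_R=v$, and $w$ can be chosen local whenever $w_c$, equivalently $v$, is local. I expect the whole weight of the argument to rest on the claim above: the relative Noether normalization $A\hookrightarrow S$ is where completeness of $R$ and $S$ is indispensable, where flatness of $S$ over $R$ is used (via the $\mathrm{Tor}$-vanishing in the injectivity step), and where the hypotheses $R/\mathfrak{m}\cong S/\mathfrak{n}$ and $\mathfrak{m}S$ prime enter (so that $k[[\underline t]]\to\bar S$ is module-finite with a height-zero prime as kernel); once the extension has been reduced to an integral one, everything else is formal.
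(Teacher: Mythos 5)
Your overall strategy is the same as the paper's: in case a), pass to fraction fields, extend there, and deduce nonnegativity and locality from integrality; in case b), produce a relative Noether normalization $A=R[[t_1,\dots,t_d]]\hookrightarrow S$ with $S$ module-finite over $A$, extend $v$ to $A$ by a monomial valuation, and invoke case a). One point where you genuinely improve on the paper: your locality argument in case a) --- the center of $w$ is a prime of $S$ that is contained in $\mathfrak{n}$ and lies over $\mathfrak{m}$, hence equals $\mathfrak{n}$ by incomparability of primes in an integral extension --- is shorter and cleaner than the paper's, which builds up and analyzes an auxiliary ideal $I$ of elements with integral dependence relations over $\mathfrak{m}$.

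There is, however, a gap precisely at what you identify as the crux: the injectivity of $\varphi\colon A\to S$. You tensor ``$0\to\ker\varphi\to A\to S\to 0$'' with $k$ and claim exactness on the left ``because $S$ is $R$-flat''. This deduction does not go through: $A\to S$ is not surjective, so this is not a short exact sequence ending in the flat module $S$. The correct short exact sequence is $0\to\ker\varphi\to A\to \varphi(A)\to 0$, and tensoring with $k$ (using flatness of $A=R[[\underline t]]$) gives an isomorphism $\ker\varphi/\mathfrak{m}\ker\varphi\cong\operatorname{Tor}_1^R(k,\varphi(A))$ together with the fact, which you correctly established, that $\ker\varphi/\mathfrak{m}\ker\varphi$ maps to $0$ in $A/\mathfrak{m}A$. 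But $\varphi(A)$ is merely a submodule of the flat module $S$; there is no reason for $\operatorname{Tor}_1^R(k,\varphi(A))$ to vanish, and indeed its vanishing is exactly what you are trying to prove, so the argument is circular. The paper's Theorem~\ref{T:flatext} instead works with the graded pieces directly: a putative relation $\sum a_\alpha x^\alpha=0$ with some $a_\alpha\notin\mathfrak{m}^{n+1}$ is reduced modulo $\mathfrak{m}^{n+1}S$ and, using the injectivity of $k[[\underline t]]\to\bar S$, yields a nonzero element in the kernel of the canonical map $\gamma_n\colon(\mathfrak{m}^n/\mathfrak{m}^{n+1})\otimes_k\bar S\to\mathfrak{m}^nS/\mathfrak{m}^{n+1}S$; since $S$ is $R$-flat, $\gamma_n$ is an isomorphism by the local criterion of flatness, a contradiction. (Equivalently, $\operatorname{gr}_{\mathfrak{m}}\varphi$ is injective in each degree and $A$ is $\mathfrak{m}$-adically separated.) This is the step where flatness of $S$ over $R$ actually enters; a bare $\operatorname{Tor}$-vanishing argument does not suffice. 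The finiteness of $S$ over $A$ via topological Nakayama and the monomial extension $w_c$ are both sound and match the paper.
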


\begin{proof}
  First we prove the sufficiency of condition a). Let $\mathfrak{p}$ be 
  the home of  the valuation $v$. By basic properties of integral extensions
  (see, e.~g., \cite[Chapter 5]{AM 69}) there exists a prime ideal 
  $\mathfrak{q}$ of $S$ such that $\mathfrak{q}\cap R=\mathfrak{p}$. Then, 
  $S/\mathfrak{q}$ is an integral extension of $R/\mathfrak{p}$. By 
  Lemma~\ref{homechar}, $v$ defines a valuation $v'$ of the ring $R/\mathfrak{p}$, 
  and it suffices to extend the valuation $v'$ to $S/\mathfrak{q}$. This shows that 
  from the beginning we can assume that $S$ and $R$ are local domains and the home 
  of $v$ is $\{0\}$. Let $K(R)$ and $K(S)$ denote the fields of fractions of $R$ 
  and $S$ respectively, so $K(R)\subseteq K(S)$. The valuation $v$ can be defined 
  on $K(R)$ by the rule $v(a/b)=v(a)-v(b)$. As we have already mentioned at 
  the beginning of this section, valuations from fields can always be 
  extended, so let $w$ be any real valuation of the field $K(S)$ extending 
  $v$ from $K(R)$. Since $v$ is nonnegative on $R$, the valuation ring
  $S_w$ of $w$ contains $R$. On the other hand, the integral closure of $R$ in the 
  field $K(S)$ is the intersection of all valuation rings of $K(S)$ containing $R$ 
  (\cite[Corollary~5.22]{AM 69}), thus $S$ is contained in $S_w$ and $w$ is 
  nonnegative on $S$.

  Now assume that the valuation $v$ is local, and let $w$ be any 
  extension of it. We have just seen that $w$ is nonnegative on $S$.
  Consider the set $I$ of elements $x$ of $S$ which satisfy an integral dependence 
  relation:
  $$f(x)=x^n+r_1x^{n-1}+\dotsc+r_n=0$$
  with $r_1,\dots,r_n\in\mathfrak{m}$ and $n \in \N$. Fix such an $x$, and let
  $s\in S$. The element $s$ also satisfies an integral dependence relation:
  $$g(s)=s^m+a_1s^{m-1}+\dotsc+a_m=0,$$
  where $a_1,\dots,a_m\in R$. Let $s_1=s$, $s_2, \dotsc, s_m$ be all the roots
  of $g$ in the field $\overline{K(S)}$. Consider the polynomial:
  $$F(X)=(s_1\cdots s_m)^n \prod_{i=1}^{m} f\left(\frac{X}{s_i}\right).$$
  This is a monic polynomial in the variable $X$ and, moreover, its coefficients 
  are symmetric polynomials in $s_1, \dotsc, s_m$ with coefficients in 
  $\mathfrak{m}$. It follows that $F$ has coefficients in $\mathfrak{m}$, and,
  since $F(sx)=0$, $sx\in I$. Consider one more element $y\in I$. Let:
  $$h(y)=y^d+t_1y^{d-1}+\dotsc+t_d=0,$$
  where $t_1 ,\dotsc, t_d\in\mathfrak{m}$, be the corresponding integral dependence
  relation. Let $y_1=y ,\dotsc, y_d$ be the roots of $h$ in $\overline{K(S)}$. 
  Applying the same argument to the polynomial:
  $$H(X)=\prod_{i=1}^{d} h(X-y_i),$$
  we show that $x+y\in I$. Thus $I$ is an ideal of the ring $S$. Clearly 
  $I\cap R=\mathfrak{m}$. It follows from \cite[Proposition~4.2 and
  Corollary~5.8]{AM 69} that $I$ is $\mathfrak{n}$-primary and 
  if $s$ is any element of the maximal ideal $\mathfrak{n}$ of $S$, then
  $s^k\in I$ for some $k$ (in fact, we thus have $I=\mathfrak{n}$). But any 
  $x\in I$ should satisfy $w(x)>0$, because otherwise we would have: 
  $$w(x^n+ r_1x^{n-1}+\cdots+r_n)=
  \min\{w(x^n), w(r_1 x^{n-1}) ,\dotsc, w(r_n)\}=0.$$
  It follows that $w(s)>0$. As $s \in \mathfrak{n}$ is arbitrary, we see that $w$ 
  is also local.
  
  \medskip

  Now we prove the \emph{sufficiency} of condition b). By Theorem~\ref{T:flatext},
  we can find analytically independent elemnts $x_1,\dotsc, x_k\in S$ over $R$
  such that $S$ is a finite module over $R[[x_1,\dotsc,x_k]]$. First we have 
  to extend the valuation $v$ to the intermediate ring 
  $R'=R[[x_1,\dotsc,x_k]]$. For this we choose any positive real
  values $w'(x_1) ,\dotsc, w'(x_k)$ and for any $f=\sum_m a_m x^m
  \in R'$, $x^m=x_{1}^{m_1}\cdots x_{k}^{m_k}$, $a_m\in R$, we define:
  $$w'(f)=\min_{m}\{v(a_m)+m_1 w'(x_1)+\cdots +m_k w'(x_k)\}.$$
  We can easily check that this defines a nonnegative (local if
  $v$ is local) valuation $w'$ on the ring $R'$. Then by a) we can extend 
  $w'$ from $R'$ to $S$. This concludes the proof.
\end{proof}

The proof of the following result was communicated to us by Mark Spivakovsky.

\begin{theorem}\label{T:flatext}
  Let $R$ and $S$ be local rings satisfying the assumptions of condition b) 
  of Theorem~\ref{T:extprinciple}. Then, there exists a finite number of elements 
  $x_1 ,\dotsc, x_k$ of $S$ which are analytically independent over $R$, 
  such that the extension $R\subseteq S$ factorizes as: 
  $$R\subseteq R[[x_1,\dotsc,x_k]]\subseteq S,$$
  and $S$ is a finite module over $R[[x_1,\dotsc,x_k]]$.
\end{theorem}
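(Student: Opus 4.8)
The plan is to produce the elements $x_1, \dotsc, x_k$ by a dimension count on the special fiber, following the standard strategy for comparing a flat local extension with a power series extension. First I would pass to the special fibers: let $\bar S := S/\mathfrak{m}S$, which by hypothesis (iii) is a local domain with maximal ideal $\bar{\mathfrak n} = \mathfrak n/\mathfrak m S$, and whose residue field is $R/\mathfrak m = S/\mathfrak n$ by (ii). Set $k := \dim \bar S$. I would then choose elements $\bar x_1, \dotsc, \bar x_k \in \bar{\mathfrak n}$ forming a system of parameters for $\bar S$, and lift them to elements $x_1, \dotsc, x_k \in \mathfrak n \subseteq S$. The claim to establish is that these $x_i$ are analytically independent over $R$ (i.e.\ the natural map $R[[X_1, \dotsc, X_k]] \to S$ sending $X_i \mapsto x_i$ is injective) and that $S$ is a finite module over the image $R' := R[[x_1, \dotsc, x_k]]$.

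The key steps, in order, would be: (1) Observe that $R[[X_1, \dotsc, X_k]]$ is Noetherian, complete, local, flat over $R$, with the same residue field, and its special fiber is $(R/\mathfrak m)[[X_1, \dotsc, X_k]]$, a regular local ring of dimension $k$. (2) The map $\phi\colon R[[X_1, \dotsc, X_k]] \to S$ is a local homomorphism of Noetherian complete local rings; to check it is injective, reduce modulo $\mathfrak m$ (using flatness of both sides over $R$ and completeness, so that $\ker\phi = 0$ iff $\ker\bar\phi = 0$ where $\bar\phi\colon (R/\mathfrak m)[[X_1, \dotsc, X_k]] \to \bar S$) — and $\bar\phi$ is injective because it sends a system of parameters of the regular ring of dimension $k$ to a system of parameters of $\bar S$, which is again $k$-dimensional, so the kernel is a prime of height $0$ in a domain, hence zero. (3) For finiteness: $\bar S$ is a finite module over $(R/\mathfrak m)[[\bar x_1, \dotsc, \bar x_k]]$ because $\bar x_1, \dotsc, \bar x_k$ is a system of parameters (so $\bar{\mathfrak n}$ is nilpotent modulo the ideal they generate, and one invokes the standard fact that a local ring is finite over a subring generated by a system of parameters via the residue-field hypothesis plus Cohen's structure theory / the graded Nakayama argument). (4) Lift finiteness from the special fiber to $S$: since $S$ is $\mathfrak n$-adically complete and $R' = R[[x_1, \dotsc, x_k]]$ is $\mathfrak m R'$-adically complete, and $S/\mathfrak m S$ is finite over $R'/\mathfrak m R'$, the topological Nakayama lemma (for the ideal $\mathfrak m R' \subseteq \mathfrak{m}_{R'}$, using that $R$ is complete local so $\mathfrak m$ is contained in its Jacobson radical and $S$ is separated and complete) gives that $S$ is finite over $R'$.

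The main obstacle I expect is step (3)–(4): carefully justifying that $\bar S$ is \emph{finite} — not merely integral or quasi-finite — over the subring generated by a system of parameters, and then correctly transferring this across the completion. The subtlety is that a system of parameters generates an ideal primary to the maximal ideal but need not generate a \emph{regular} subring unless one knows $\bar S$ contains a coefficient field; here hypothesis (ii) (equal residue fields) is exactly what lets one write $(R/\mathfrak m)[[\bar x_1, \dotsc, \bar x_k]]$ as a regular local subring and apply the Cohen structure theorem to conclude module-finiteness. The lifting in step (4) then requires that $\mathfrak m R'$ together with $(x_1, \dotsc, x_k)$ is $\mathfrak{m}_{R'}$-primary and that completeness of $S$ over this ideal topology holds, which follows since the $x_i \in \mathfrak n$ and $R$ is already $\mathfrak m$-adically complete, so the $\mathfrak m_{R'}$-adic and $\mathfrak n$-adic topologies on $S$ agree. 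Once module-finiteness over $R'$ is secured, analytic independence of the $x_i$ (injectivity of $\phi$) follows from comparing dimensions: $\dim S = \dim R + k$ on one side by flatness with $k$-dimensional fibers, and $\dim R' = \dim R + k$, so a surjective-on-spectra finite map between domains of equal dimension forces $\phi$ injective.
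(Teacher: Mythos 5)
Your proposal is correct and follows essentially the same route as the paper's proof: pick lifts of a system of parameters of the special fiber $S/\mathfrak{m}S$, establish analytic independence by comparing the associated graded modules over $R$ (which is exactly what the paper's invocation of the local criterion of flatness, giving the isomorphisms $\gamma_n$, encodes), and obtain finiteness of $S$ over $R[[x_1,\dotsc,x_k]]$ by first applying Cohen/Nagata structure theory to the special fiber and then lifting via a completed Nakayama argument (the paper cites Nagata, Cor.~31.6 and Thm.~30.6, respectively). The only expository point worth noting is that your step (2) (injectivity of $\bar\phi$ because ``the kernel is a prime of height~$0$'') already implicitly relies on $\bar S$ being module-finite over the image of $\bar\phi$, which is what you establish only in step~(3); the Cohen/Nagata result proves both the analytic independence of $\bar x_1,\dotsc,\bar x_k$ and the finiteness simultaneously, so it is cleaner to invoke it once rather than split the argument and risk circularity.
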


\begin{proof}
  The rings $R$ and $S$ are local and Noetherian, hence they both have
  finite Krull dimension. Since $S$ is flat over $R$, $\dim S -\dim R=
  \dim S/\mathfrak{m}S$ (see \cite[Theorem~19 (2), p. 79]{M 70}).  We 
  denote this number by $k$. Let $x_1 ,\dotsc, x_k$ be elements of
  $\mathfrak{n}\setminus\mathfrak{m}$ whose images in $\mathfrak{n}(S/\mathfrak{m}S)$ 
  form a system of parameters. The fact that $x_1 ,\dotsc, x_k$ are 
  analytically independent over $R$ follows from the local criterion of flatness
  (\cite[Theorem~49 (4), p. 147]{M 70}), which says that $S$ is $R$-flat if and
  only if $S/\mathfrak{m}S$ is $R/\mathfrak{m}$-flat and the canonical maps:
  $$\gamma_n\colon (\mathfrak{m}^n/\mathfrak{m}^{n+1})  
  \bigotimes_{R/\mathfrak{m}R}(S/\mathfrak{m}S) \to 
  \mathfrak{m}^n S/\mathfrak{m}^{n+1}S$$
  are isomorphisms. Indeed, suppose that there is an analytic dependence relation: 
  \begin{equation}\label{E:andeprel}
    \sum_m a_m x^m =0,
  \end{equation}
  where $m=(m_1,\dotsc,m_k)$, $x^m=x_{1}^{m_1}x_{2}^{m_2}\cdots x_{k}^{m_k}$,
  $a_m\in R$. Denote by $n$ the smallest nonnegative integer such that
  $a_m\notin \mathfrak{m}^{n+1}$ for some $m$. Then relation~\eqref{E:andeprel}
  gives rise to a relation of the form: 
  $$\sum_{i=1}^{t} b_i f_i=0$$
  with $b_i\in \mathfrak{m}^n/\mathfrak{m}^{n+1}$, 
  $f_i\in R[[x_1,\dots,x_k]]/\mathfrak{m}R[[x_1,\dots,x_k]]$,
  which holds in $\mathfrak{m}^n S/\mathfrak{m}^{n+1} S$. Thus, the element:
  $$\sum_{i=1}^{t} b_i\otimes f_i \in (\mathfrak{m}^n/\mathfrak{m}^{n+1})
  \bigotimes_{R/ \mathfrak{m}R} (S/\mathfrak{m}S),$$
  where $b_i$ are as above and $f_i$ are now considered as elements of
  $S/\mathfrak{m}S$, is a nonzero element of the kernel of the canonical map 
  $\gamma_n$, but this contradicts the quoted criterion of flatness.

  Now we prove that the ring $S$ is finite over $R'=R[[x_1,\dotsc,x_k]]$.
  Note that $R'$ is also a Noetherian complete local ring with 
  maximal ideal $\mathfrak{m}'$ generated by $\mathfrak{m}$ and $x_1$, $\dots$,
  $x_k$. Consider the extension $R/\mathfrak{m}\subseteq S/\mathfrak{m}S$ of 
  complete local rings. Note that $S/\mathfrak{m}S$ is a domain and its residue 
  field is isomorphic to $R/\mathfrak{m}$, this follows from assumptions b) (ii) 
  and (iii) of Theorem~\ref{T:extprinciple}. Then we can apply 
  \cite[Corollary~31.6, p. 109]{N 62}, which states that if $x_1 ,\dotsc, 
  x_k$ is a system of local parameters for $S/\mathfrak{m}S$, then $S$ is
  finite over $(R/\mathfrak{m})[[x_1,\dotsc,x_k]]$. But then $S/\mathfrak{m}S$ is 
  also finite over $R'$. By \cite[Theorem~30.6, p. 105]{N 62} we conclude that
  $S$ is a finite module over $R'$. This finishes the proof of 
  Theorem~\ref{T:flatext}.
\end{proof}

\medskip

Theorem~\ref{T:extprinciple} may be expressed geometrically in the following 
way:  \emph{Any (flat) deformation of an algebroid germ over another such germ 
may be obtained as a finite (ramified) covering of the product of the base germ 
with a smooth algebroid variety.}

Corollary~\ref{C:completion} implies that when working with local 
tropicalization we can always pass to complete rings. Note also that the
positive tropicalization is never empty, since any local ring possesses the
trivial local valuation $v$, where $v(r)=0$ if $r\notin\mathfrak{m}$ and
$v(r)=\infty$ if $r\in \mathfrak{m}$. Under rather general assumptions on $R$ and 
some natural restrictions on $\gamma$ the real part of the positive 
tropicalization is also nonempty.

\begin{lemma}\label{L:finitepart}
  Assume that $(R,\mathfrak{m})$ is a complete local Noetherian domain, $\Gamma$
  is an affine semigroup, and $\gamma\colon\Gamma\to R$ is any local semigroup 
  morphism such that no element of $\Gamma$ goes to $0$. Then 
  $\ptrop(\gamma)\cap \sigma\ne \emptyset$.
\end{lemma}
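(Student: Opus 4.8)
The plan is to reduce the statement to the construction of one suitable valuation on $R$, and then to produce that valuation using the structure theory of complete local domains together with the extension principle of Theorem~\ref{T:extprinciple}. First I would observe that it suffices to produce a valuation $w\in\V(R,\mathfrak{m})$ whose home is the zero ideal, i.e. with $w(f)<\infty$ for every $f\ne 0$. Indeed, for such a $w$ the element $\Phi_\gamma(w)=w\circ\gamma$ of $\Hom_{Sg}(\Gamma,\ri)$ satisfies $w(\gamma(a))<\infty$ for all $a\in\Gamma$, because $\gamma(a)\ne 0$ by hypothesis and hence $\gamma(a)$ does not lie in the home of $w$; since $w$ is moreover nonnegative on $R$, the composite $w\circ\gamma$ is a semigroup morphism $\Gamma\to\R_{\geq 0}$, hence extends to a group homomorphism $M(\Gamma)\to\R$ that is nonnegative on the cone $\check{\sigma}(\Gamma)$. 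In other words $\Phi_\gamma(w)\in\sigma=\sigma(\Gamma)\subset N(\Gamma)_{\R}$, and since $w\in\V(R,\mathfrak{m})$ this point already lies in $\ptrop(\gamma)$ by Definition~\ref{D:loctrop}; so $\ptrop(\gamma)\cap\sigma\ne\emptyset$. Note that the hypothesis $0\notin\gamma(\Gamma)$ is used exactly to keep $\Phi_\gamma(w)$ at finite distance, and the domain hypothesis will be used exactly to arrange that the home of $w$ is $\{0\}$.

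To construct $w$: since $R$ is a complete Noetherian local domain, Cohen's structure theorem (in its Noether-normalization form) provides a complete \emph{regular} local subring $A\subseteq R$ over which $R$ is a finite module, in particular integral. On $A$ I would take the $\mathfrak{m}_A$-adic order function $v=\ord_{\mathfrak{m}_A}$, $v(f)=\sup\{n\,:\,f\in\mathfrak{m}_A^{\,n}\}$: as $A$ is regular, $\mathrm{gr}_{\mathfrak{m}_A}(A)$ is a polynomial ring, so $v$ is a genuine valuation, and it is nonnegative on $A$, positive on $\mathfrak{m}_A$, with home $\{0\}$ since $\bigcap_n\mathfrak{m}_A^{\,n}=0$. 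By part a) of Theorem~\ref{T:extprinciple}, and its last sentence, any extension of $v$ to $R$ is a nonnegative, and in fact local, valuation $w$. Finally, the home of $w$ is a prime $\mathfrak{q}\subset R$ with $\mathfrak{q}\cap A$ equal to the home $\{0\}$ of $v$; because $R$ is a domain integral over $A$, a nonzero prime of $R$ contracts to a nonzero prime of $A$, so $\mathfrak{q}=\{0\}$. Thus $w$ has all the properties required by the reduction, which finishes the proof.

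After the reduction the argument is purely formal, so the real content is the existence of $w$, and the two points to watch are that $w$ be \emph{strictly} positive on all of $\mathfrak{m}$, not merely nonnegative (this is precisely the last assertion of Theorem~\ref{T:extprinciple}), and that its home be \emph{exactly} $\{0\}$ (this is where the domain hypothesis is essential). Completeness enters only through Cohen's theorem; for an arbitrary Noetherian local domain one could instead take for $w$ a Rees valuation of the ideal $\mathfrak{m}$, or the order of vanishing along a prime divisor dominating the closed point on the normalized blow-up of $\mathfrak{m}$ — each is a valuation of the fraction field $K(R)$, nonnegative on $R$, positive on $\mathfrak{m}$, and automatically with home $\{0\}$ because $K(R)$ is a field.
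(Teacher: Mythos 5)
Your proof is correct and follows essentially the same route as the paper's: both invoke Cohen's structure theorem to realize $R$ as a finite extension of a complete regular local ring $A$, construct a local valuation on $A$ with home $\{0\}$, and extend it to $R$ via Theorem~\ref{T:extprinciple}~a). The only variation is cosmetic — the paper uses a monomial valuation on $J[[x_1,\dots,x_k]]$ while you use the $\mathfrak{m}_A$-adic order valuation — and you make explicit (via the lying-over argument for integral extensions of domains) the point that the paper leaves implicit, namely that the extended valuation still has home $\{0\}$.
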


\begin{proof}
  By the Cohen structure theorem for complete local rings
  (\cite[Corollary~31.6, p. 109]{N 62}) we know that $R$ is a finite module over 
  a subring of the form $J[[x_1,\dots,x_k]]$, where $J$ is either a field or a discrete 
  valuation ring. In the first case, we choose $v$ to be the trivial valuation on 
  $J$. In the second case, let $v$ be the unique discrete valuation of $I$  
  such that $v(p)=1$ for the generator $p$ of the maximal ideal of $J$. Then,
  we can extend $v$ to $J[[x_1,\dots,x_k]]$ by assigning any positive values to
  $x_1$, $\dots$, $x_k$. By Theorem~\ref{T:extprinciple} a), this valuation can be
  extended to a valuation $w$ of $R$ with home $\{0\}$. This $w$ is a point 
  of $\ptrop{\gamma}$ contained in $\sigma$.
\end{proof}

In the next application of Theorem~\ref{T:extprinciple} we show, essentially, 
that the tropicalization does not change if we pass to the normalization.

\begin{lemma}\label{L:normal}
  Let $R$ be an integral domain, $\gamma\colon\Gamma\to R\setminus\{0\}$ a
  morphism from an affine semigroup $\Gamma$, and $S$ the integral closure of
  $R$ in its field of fractions $Q(R)$. Then there exists a unique extension
  $\bar{\gamma}\colon\sat(\Gamma)\to S$ of $\gamma$, and:
  $$\trop(\gamma,\V(R))=\trop(\bar{\gamma},\V(S)),$$
  where $\V(R)$ and $\V(S)$ denote the spaces of all valuations of $R$ and of $S$
  respectively. If $R$ and $S$ are both local, then also
  $\nntrop(\gamma)=\nntrop(\bar{\gamma})$ and $\ptrop(\gamma)=
  \ptrop(\bar{\gamma})$.
\end{lemma}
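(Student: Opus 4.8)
The plan is to handle first the existence and uniqueness of $\bar\gamma$, then the equality of the global tropicalizations, and finally the local statements. For the extension $\bar\gamma$: given $m \in \sat(\Gamma) \subseteq M(\Gamma)$, there is $n \in \N^*$ with $nm \in \Gamma$; inside the fraction field $Q(R)$ the element $\gamma(nm)^{1/n}$ makes sense only after we know it lies in $S$, so the cleaner route is to observe that $\gamma(nm) \in R \subseteq S$ is a unit times a well-defined element whose $n$-th root, if it exists in $Q(R)$, is automatically integral over $R$ (it satisfies $X^n - \gamma(nm) = 0$). The key point is that such an $n$-th root exists and is unique in $Q(R)$: uniqueness because $R$, hence $Q(R)$, has no zero divisors and, since we may pass to the case of interest where $\gamma$ lands in $R\setminus\{0\}$, the element is nonzero; existence is the subtle part — one must use that $S$ is the full integral closure and that $M(\Gamma)$ is torsion-free to see that the compatible system of roots (for varying $n$) is consistent and defines a semigroup morphism $\sat(\Gamma) \to S$. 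Concretely I would fix a finite generating set of $\sat(\Gamma)$, realize each generator as an appropriate root, check the relations among generators force the images to be compatible (using torsion-freeness of $M(\Gamma)$ and that $R$ is a domain so roots are unique up to nothing — they are literally unique), and thereby get a well-defined $\bar\gamma$ extending $\gamma$; uniqueness of $\bar\gamma$ follows since any extension must send $m$ to a root of $X^n - \gamma(nm)$ and there is only one such root in $Q(R)$.

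For the equality $\trop(\gamma, \V(R)) = \trop(\bar\gamma, \V(S))$: the two ambient linear varieties are $L(\sigma(\Gamma), N(\Gamma))$ and $L(\sigma(\sat(\Gamma)), N(\sat(\Gamma)))$, which are canonically identified because $\sat(\Gamma)$ generates the same lattice $M(\Gamma)$ and the same cone $\check\sigma(\Gamma)$, hence the same dual cone $\sigma$ and dual lattice. Under this identification $\Phi_{\bar\gamma}$ restricted along $\V(R) \hookrightarrow \V(S)$ — wait, there is no such inclusion; rather there is a restriction map $\V(S) \to \V(R)$, $w \mapsto w|_R$, coming from $R \subseteq S$ via Proposition~\ref{valfunc}. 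The plan is: (1) every valuation $w$ of $S$ restricts to a valuation $w|_R$ of $R$, and $\Phi_\gamma(w|_R) = \Phi_{\bar\gamma}(w)$ because $w(\bar\gamma(m)) = \tfrac1n w(\gamma(nm)) = \tfrac1n w|_R(\gamma(nm))$ and these values, as $m$ ranges over $\sat(\Gamma)$, are determined by the values on $\Gamma$ (this uses that $w$ is a semigroup morphism to $(\ri, +)$ and $\ri$ is divisible-ish in the needed sense — careful with $w(\gamma(nm)) = \infty$, but by hypothesis $\gamma$ avoids $0$, and for $\V(S)$ we might allow infinite values, which just corresponds to strata at infinity compatibly). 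So $\Phi_{\bar\gamma}(\V(S)) \subseteq \Phi_\gamma(\V(R))$, hence $\trop(\bar\gamma, \V(S)) \subseteq \trop(\gamma, \V(R))$. (2) Conversely, every valuation $v$ of $R$ extends to a valuation $w$ of $S$ with $w|_R = v$: this is exactly where Theorem~\ref{T:extprinciple} a) enters, since $S$ is integral over $R$; and then $\Phi_\gamma(v) = \Phi_{\bar\gamma}(w)$ lies in $\Phi_{\bar\gamma}(\V(S))$, giving the reverse inclusion of images and hence of closures.

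For the local statements, when $R$ and $S$ are both local (with $S$ integral over $R$, so automatically $\mathfrak{n} \cap R = \mathfrak{m}$ for the maximal ideals, by lying-over and integrality), I would repeat the above using the last sentence of Theorem~\ref{T:extprinciple} a): every $w \in \V(S)$ extending a $v \in \V(R,\mathfrak{m})$ is automatically local, and conversely every local $v$ on $R$ extends to a local $w$ on $S$; moreover nonnegativity is automatically preserved in both directions along an integral extension (the ``in fact'' clause of Theorem~\ref{T:extprinciple}). Hence the restriction map $\V(S,\mathfrak{n}) \to \V(R,\mathfrak{m})$ is surjective, and likewise for the spaces of all nonnegative valuations, so $\Phi_\gamma$ and $\Phi_{\bar\gamma}$ have the same image on these subspaces; taking closures in $\overline\sigma$ (resp. $\overline\sigma^\circ$) yields $\nntrop(\gamma) = \nntrop(\bar\gamma)$ and $\ptrop(\gamma) = \ptrop(\bar\gamma)$. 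The main obstacle I expect is the bookkeeping around infinite values and the home of the valuation: one must check that $w(\gamma(nm))$ being finite for all generators (guaranteed by $\gamma(\Gamma) \subseteq R \setminus\{0\}$ together with $w$ having home not containing those images — which may fail if $\V(R)$ includes valuations with large home) is handled uniformly, perhaps by invoking Lemma~\ref{homechar} and Lemma~\ref{redval} to reduce to the case where the home is $\{0\}$ and all elements of $\gamma(\Gamma)$ have finite value; this reduction, rather than any single computation, is the delicate bit.
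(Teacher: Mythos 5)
Your outline is right, and the parts about extending and restricting valuations via Theorem~\ref{T:extprinciple} a) (including the ``in fact'' clause for the local case) are essentially the paper's argument. But the construction of $\bar\gamma$ is where your proposal goes wrong, and it is not just a bookkeeping issue.

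You try to build $\bar\gamma$ by assigning to each $m\in\sat(\Gamma)$ an $n$-th root in $Q(R)$ of $\gamma(nm)$, and you assert (a) that such a root exists because $S$ is the full integral closure, and (b) that the root is ``literally unique'' because $Q(R)$ is a domain. Both claims are false. For (a): integrally closed rings do not contain $n$-th roots of all their elements (take $R=S=\Z$ and $a=2$), so the existence of the root needs a different justification. For (b): if $x^n=y^n$ with $x,y\neq 0$ in a domain, then $x/y$ is an $n$-th root of unity, of which there may be several in $Q(R)$; so $n$-th roots are not unique in a domain, only unique up to roots of unity. The correct and much shorter route, which the paper takes, is to observe that since $\gamma$ lands in $R\setminus\{0\}\subseteq Q(R)^{*}$, the universal property of the group of differences $M(\Gamma)$ yields a \emph{unique} group homomorphism $M(\Gamma)\to Q(R)^{*}$ extending $\gamma$. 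For $m\in\sat(\Gamma)$ with $nm\in\Gamma$, its image $\bar\gamma(m)\in Q(R)^{*}$ then automatically satisfies $\bar\gamma(m)^{n}=\gamma(nm)\in R$, i.e.\ the monic equation $X^{n}-\gamma(nm)=0$, hence $\bar\gamma(m)\in S$. Uniqueness of $\bar\gamma$ on $\sat(\Gamma)$ follows from uniqueness of the extension to $M(\Gamma)$, not from uniqueness of roots. This sidesteps both of your difficulties at once, and is what makes the lemma a one-paragraph statement rather than the finite-generating-set-and-relations argument you sketch. The ``delicate bit'' you flag at the end (infinite values, homes) is in fact harmless here precisely because $\gamma$ and $\bar\gamma$ land in $R\setminus\{0\}$ and $S\setminus\{0\}$: for any valuation $w$ on $S$, the relation $w(\bar\gamma(m))=\tfrac{1}{n}w(\gamma(nm))$ holds in $\ri$ whether or not the right side is $+\infty$.
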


\begin{proof}
  Since none of the elements of $\Gamma$ goes to $0$, the morphism $\gamma$
  extends uniquely to a homomorphism from $M(\Gamma)$ to $Q(R)$. But the
  images of elements of $\sat(\Gamma)$ are obviously integral over $R$, thus
  they belong to $S$. Any valuation of $R$ extends to a valuation of $S$, and
  its values on $\sat(\Gamma)$ are uniquely determined by its values on
  $\Gamma$. Moreover, by Theorem~\ref{T:extprinciple}, any nonnegative valuation 
  of $R$ extends to a nonnegative valuation of $S$. This implies all the 
  equalities of tropicalizations. 
\end{proof}

\begin{corollary}\label{C:normal}
  Under the conditions of Lemma~\ref{L:normal}, assume that $I$ is an
  ideal of $R$ and $I$ is disjoint from the semigroup $\Gamma$.  Let
  $p\colon R\to R/I$ and $q\colon S\to S/SI$ be the canonical projections.
  Then:
  $$\trop(p\circ\gamma,\V(R/I))=\trop(q\circ\bar{\gamma},V(S/SI)).$$
  If $R$ and $S$ are both local, then $\nntrop(p\circ\gamma)=
  \nntrop(q\circ\bar{\gamma})$, and similarly for the positive
  tropicalization.
\end{corollary}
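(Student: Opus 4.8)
The plan is to deduce the statement from Lemma~\ref{L:normal} by repeating its proof with the rings $R/I$ and $S/SI$ in place of $R$ and $S$. One cannot simply quote Lemma~\ref{L:normal} as a black box, because $R/I$ need not be a domain and $S/SI$ need not be the integral closure of $R/I$ in its fraction ring; but the \emph{proof} of Lemma~\ref{L:normal} only uses that valuations of fields always extend and (for the local refinement) the integral-extension case of Theorem~\ref{T:extprinciple}, and both survive the passage to quotients once we translate everything into statements about valuations of $R$ and $S$ with prescribed homes.

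The first step is to set up this translation. By Lemma~\ref{homechar}, composition with $p$ identifies $\V(R/I)$ with the subspace $\{w\in\V(R)\mid I\subseteq \mathrm{home}(w)\}$ of $\V(R)$, and under this identification $\Phi_{p\circ\gamma}$ is the restriction of $\Phi_\gamma$; likewise $\V(S/SI)$ is identified with $\{w'\in\V(S)\mid SI\subseteq\mathrm{home}(w')\}$ and $\Phi_{q\circ\bar\gamma}$ with the restriction of $\Phi_{\bar\gamma}$. Since $\sat(\Gamma)$ has the same associated lattice $M(\Gamma)$ and the same cone $\sigma(\Gamma)$ as $\Gamma$, the two ambient linear varieties are literally the same space $L(\sigma(\Gamma),N(\Gamma))=\Hom_{Sg}(\sat(\Gamma),\ri)$ (see \eqref{aftt}); and if $w'\in\V(S)$ restricts to $w\in\V(R)$, then $\Phi_{\bar\gamma}(w')=\Phi_\gamma(w)$, because $\bar\gamma$ extends $\gamma$ and, for $v\in\sat(\Gamma)$ with $nv\in\Gamma$, one has $n\,w'(\bar\gamma(v))=w'(\bar\gamma(nv))=w'(\gamma(nv))=w(\gamma(nv))$.

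Next I would match the two images. For one inclusion: given $w'\in\V(S)$ with $SI\subseteq\mathrm{home}(w')$, its restriction $w=w'|_R$ satisfies $I\subseteq SI\cap R\subseteq\mathrm{home}(w') \cap R=\mathrm{home}(w)$, and $\Phi_\gamma(w)=\Phi_{\bar\gamma}(w')$. For the reverse inclusion: given $w\in\V(R)$ with home $\mathfrak p\supseteq I$, I would choose (by integrality of $R\subseteq S$) a prime $\mathfrak q$ of $S$ lying over $\mathfrak p$, descend $w$ to a valuation of the domain $R/\mathfrak p$, extend that valuation to $S/\mathfrak q$ via the field-extension principle exactly as in the proof of Lemma~\ref{L:normal}, and pull it back to a valuation $w'\in\V(S)$; then $\mathrm{home}(w')=\mathfrak q\supseteq\mathfrak p S\supseteq SI$, so $w'$ lies in the relevant subspace, and $\Phi_{\bar\gamma}(w')=\Phi_\gamma(w)$. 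Hence $\Phi_{p\circ\gamma}(\V(R/I))$ and $\Phi_{q\circ\bar\gamma}(\V(S/SI))$ coincide in $L(\sigma(\Gamma),N(\Gamma))$, and taking closures yields the first asserted equality.

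For the local case one first checks that $R/I$ and $S/SI$ are genuine local rings with $p\circ\gamma$, $q\circ\bar\gamma$ local semigroup morphisms — here $\mathfrak n_S\cap R=\mathfrak m_R$ by integrality, and $SI\neq S$ since otherwise $S\mathfrak m_R=S$, contradicting $\mathfrak m_R\subseteq\mathfrak n_S$ — so that $\nntrop$ and $\ptrop$ are defined for them. Then the argument above applies verbatim, but restricted to nonnegative valuations, invoking Theorem~\ref{T:extprinciple}(a): a nonnegative valuation of $R$ extends to a nonnegative valuation of $S$, and a local one (positive on the maximal ideal) extends to a local one, while restriction obviously preserves nonnegativity and the center. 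Thus the images of the relevant $\Phi$ maps coincide already at the level of nonnegative (resp. local) valuations; this gives $\nntrop(p\circ\gamma)=\nntrop(q\circ\bar\gamma)$ directly, with no closure needed (Definition~\ref{D:nonnegtrop}), and the analogue for $\ptrop$ after taking closures in $\overline{\sigma}^\circ$ (Definition~\ref{D:loctrop}). I do not expect any serious obstacle: the care goes into checking that the home and center conditions transfer correctly under extension and restriction — in particular that an extension $w'$ of $w$ has $SI$ in its home as soon as $I$ lies in the home of $w$ — and into using case (a) of Theorem~\ref{T:extprinciple} to track nonnegativity and locality, which is precisely what compensates for the impossibility of quoting Lemma~\ref{L:normal} directly for $R/I$ and $S/SI$.
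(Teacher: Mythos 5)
Your proof is correct and follows essentially the same route as the paper: the paper's own argument is a one-line reduction — it suffices to check that if $I\subseteq\mathrm{home}(v)$ for $v\in\V(R)$, then $SI\subseteq\mathrm{home}(\bar v)$ for any extension $\bar v$ to $S$, which holds because $\bar v(af+bg)=\infty$ whenever $v(f)=v(g)=\infty$ — and then one reruns the proof of Lemma~\ref{L:normal} restricted to the subspaces of $\V(R)$ and $\V(S)$ singled out by these home conditions. You have spelled out both halves of this reduction (restriction and extension of valuations preserve the prescribed homes) and verified that $\Phi_{\bar\gamma}$ restricts to $\Phi_\gamma$, which the paper leaves implicit; one small economy you might note is that the paper's ``any extension has home containing $SI$'' makes the choice of a specific prime $\mathfrak q$ lying over $\mathfrak p$ unnecessary, since once an extension exists (Lemma~\ref{L:normal} or Theorem~\ref{T:extprinciple}(a)) its home automatically contains $SI$.
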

  
\begin{proof}
  It suffices to show that if $v$ is a valuation of $R$ such that the home of $v$
  contains $I$, then the home of any extension $\bar{v}$ of $v$ to $S$ contains 
  $SI$. But indeed, if $f$, $g\in R$ and $v(f)=v(g)=\infty$, then for any
  $a$, $b\in S$ we have $\bar{v}(af+bg)=\infty$.
\end{proof}

\begin{remark} \label{reasnn}
  In view of the previous results, the reader could wonder why we made the effort 
  to develop a general framework of tropicalization for non-necessarily saturated 
  affine semigroups. We see two reasons for this:
    \begin{itemize} 
      \item Even if we take a morphism 
  between two normal affine toric varieties (corresponding therefore to saturated 
  affine semigroups), the closure of its image is again toric, but it may be 
  non-normal. An example is given by the map from $\C$ to $\C^2$ defined by 
  $t \to (t^2, t^3)$, which is a parametrization of the cuspidal plane cubic. 
  As another example, consider the parametrization $\phi: \C^2 \to \C^3$ defined 
  by $(s,t) \to (x,y,z)=(st, s, t^2)$ of the Whitney umbrella, defined by the 
  equation $x^2 - y^2 z =0$ in $\C^3$.  
  
     \item  A morphism between two affine toric varieties does not necessarily 
  lift to a morphism between their normalizations. For instance, consider the 
  embedding of the singular locus of the Whitney umbrella $W$ (defined in the 
  previous example) into $W$. This singular locus $S$ is the $z$-axis, therefore 
  the embedding $S \hookrightarrow W$ may be described as a restriction of the 
  toric map $\C \to \C^3$ given by $u \to (x,y,z) = (0, 0, u)$. The morphism 
  $\phi$ of the previous example is a normalization map of $W$. The restriction 
  of $\phi$ to $\phi^{-1}(S)$ is a double covering $\C \to \C$, therefore the map 
  $S \to W$ does not lift to a map from $S$ (equal to its own normalization) to 
  the normalization $\C^2$ of $W$. 
   \end{itemize}
  \end{remark}

\medskip
\section{The formal toric rings $K[[\Gamma]]$}\label{S:powerseriesring}

In this section we explain basic properties of rings of formal power series 
over $K$ with exponents in pointed affine semigroups 
$\Gamma$. We call them ``{\em formal toric rings}'', as they are the completions 
of the rings of the affine toric varieties $\mbox{Spec} \ K[\Gamma]$ at the 
unique closed orbit. In fact,  till 
Corollary \ref{compldom} we deal with arbitrary affine semigroups (satisfying 
perhaps a technical condition, as in Lemma \ref{L:fc}). Then we restrict  
to the pointed ones.

\medskip

Let $K$ be a field and $\Gamma$ an affine  semigroup (see 
Definition~\ref{toricsg}). Whenever we want to use 
multiplicative notation for the elements of the semigroup $\Gamma$ (which happens 
when we look at them as {\em monomials}), we write $\chi^m$ instead of $m$. We will 
say that $m$ is the {\em exponent} of the {\em monomial} $\chi^m$.

Recall that $\Gamma^*$ denotes the  subgroup of invertible elements of $\Gamma$. 
They are related by the short semigroup exact sequence:
\begin{equation}\label{E:gammaseq}
0 \longrightarrow \Gamma^*\longrightarrow \Gamma\stackrel{p}{\longrightarrow} 
\Gamma'\longrightarrow 0,
\end{equation}
where $p$ is the quotient map of the semigroup $\Gamma$ by the subgroup
$\Gamma^*$. 

By Proposition \ref{quotaff}, if $\Gamma$ is saturated, then 
$\Gamma'$ is an affine semigroup. Since its
subgroup of units is trivial, $\Gamma'$ is pointed.

\begin{lemma} \label{splitting}
 Suppose that the affine semigroup $\Gamma$ is saturated. Then the morphism $p$ 
 admits a section and any section induces a splitting of ~\eqref{E:gammaseq}. 
\end{lemma}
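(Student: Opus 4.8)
The plan is to exploit the structure of $\Gamma$ as a saturated subsemigroup of its lattice $M(\Gamma)$, together with the fact (established in the proof of Proposition~\ref{quotaff}) that $\Gamma^*$ is a \emph{primitive} sublattice of $M(\Gamma)$, i.e.\ the quotient $M(\Gamma)/\Gamma^*$ is torsion-free. First I would recall that by Proposition~\ref{quotaff} the quotient $\Gamma'=\Gamma/\Gamma^*$ is a saturated affine semigroup, hence cancellative and torsion-free, so it embeds into its lattice $M(\Gamma') \cong M(\Gamma)/\Gamma^*$; moreover this lattice is torsion-free of finite type, that is, a genuine lattice. Because $\Gamma^*$ is primitive in $M(\Gamma)$, the short exact sequence of abelian groups
$$0 \longrightarrow \Gamma^* \longrightarrow M(\Gamma) \stackrel{\bar p}{\longrightarrow} M(\Gamma') \longrightarrow 0$$
is an exact sequence of free $\Z$-modules, hence splits: there is a group homomorphism $s\colon M(\Gamma') \to M(\Gamma)$ with $\bar p \circ s = \mathrm{id}$.

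The key step is then to show that such a linear section $s$ can be chosen so that it carries $\Gamma'$ \emph{into} $\Gamma$, thereby giving a section of $p$ as semigroups. Here is where I would use saturation of $\Gamma$. Pick any group-theoretic splitting $s$ as above. For a generator $g$ of $\Gamma'$, the element $s(g) \in M(\Gamma)$ maps to $g$ under $\bar p$, but need not lie in $\Gamma$; however, since $g \in \Gamma'$ there is some $a \in \Gamma$ with $p(a)=g$, and then $a - s(g) \in \ker \bar p = \Gamma^*$. Thus $s(g)$ differs from an element of $\Gamma$ by an element of the \emph{group} $\Gamma^* \subseteq \Gamma$, so in fact $s(g) = a - c$ for some $c \in \Gamma^*$; but $-c \in \Gamma^*\subseteq \Gamma$ as well, and $\Gamma$ is closed under addition, so $s(g) = a + (-c) \in \Gamma$. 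Hence \emph{every} group-theoretic splitting already sends the generators of $\Gamma'$ into $\Gamma$, and since $s$ is additive and these generators generate $\Gamma'$ as a semigroup, $s(\Gamma')\subseteq \Gamma$. This $s$ restricted to $\Gamma'$ is the desired section of the semigroup morphism $p$, and $s(0)=0$ since $s$ is a group homomorphism.

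For the last assertion — that any section induces a splitting of~\eqref{E:gammaseq} — I would argue that given a semigroup section $s\colon \Gamma' \to \Gamma$ of $p$, the map $\Gamma^* \times \Gamma' \to \Gamma$, $(c, g)\mapsto c + s(g)$, is an isomorphism of semigroups: it is a semigroup morphism because $s$ is; it is surjective because for $a\in\Gamma$ one has $a - s(p(a)) \in \ker p|_{\text{group level}} = \Gamma^*$ (by the same reasoning as above, using that $\Gamma^*$ is a group inside $\Gamma$), so $a = (a - s(p(a))) + s(p(a))$ is in the image; and it is injective because if $c + s(g) = c' + s(g')$ then applying $p$ gives $g = g'$ (as $p\circ s = \mathrm{id}$ and $p$ kills $\Gamma^*$), whence $c = c'$ by cancellativity of $\Gamma$. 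I expect the main obstacle to be purely bookkeeping: being careful that "section of $p$" at the semigroup level automatically respects origins, and that the identification $M(\Gamma') \cong M(\Gamma)/\Gamma^*$ together with primitivity of $\Gamma^*$ — both already extracted inside the proof of Proposition~\ref{quotaff} — is invoked correctly, so that the splitting of free $\Z$-modules is legitimate. No deep input beyond saturation and finite generation is needed.
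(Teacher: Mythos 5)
Your proposal is correct and follows essentially the same route as the paper: extract the group-theoretic splitting of the short exact sequence of lattices (using that $\Gamma^*$ is primitive in $M(\Gamma)$, a fact established inside the proof of Proposition~\ref{quotaff}), then observe that any such group section already carries $\Gamma'$ into $\Gamma$ because it differs from an actual lift by an element of the \emph{group} $\Gamma^*\subseteq\Gamma$, and finally produce the product decomposition. The only cosmetic differences are that you spell out what the paper dismisses as ``immediate'' and ``routine'', and you exhibit the isomorphism in the direction $\Gamma^*\times\Gamma'\to\Gamma$ rather than the paper's $\Phi\colon\Gamma\to\Gamma^*\times\Gamma'$.
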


\begin{proof}
  As explained in the proof of Proposition \ref{quotaff}, we have the following 
  commutative diagram in which the horizontal lines are short exact sequences:
    $$     \xymatrix
  {  0  \ar[r]  &  \Gamma^*  \ar@{^{(}->}[d] \ar[r] & \Gamma \ar@{^{(}->}[d] \ar[r]^p & 
         \Gamma' \ar@{^{(}->}[d]   \ar[r]  & 0   \\
     0 \ar[r]  & \Gamma^*   \ar[r] & M(\Gamma)   \ar[r]^{M(p)} & 
        M(\Gamma')   \ar[r]   & 0
  }.$$   
  As $M(\Gamma')$ is free, the surjective morphism of groups $M(p)$ admits a section 
  $\alpha$. This shows that the second exact sequence splits. Let us restrict 
  $\alpha$ to $\Gamma'$. We see immediately that $\alpha(\Gamma') \subset \Gamma$, 
  which shows that $\alpha$ is also a section of $p$. Define then the semigroup 
  morphism $\Gamma \stackrel{\Phi}{\to} \Gamma^* \times \Gamma'$ by the formula 
  $\Phi(a) := (a-\alpha(p(a)), p(a))$. It is a routine exercise to check that it 
  is an isomorphism of semigroups, and thus \eqref{E:gammaseq} splits indeed.
\end{proof}
  
The previous proof shows that  \eqref{E:gammaseq} splits once we have a section of 
$p$. This may happen also for non-saturated affine semigroups, as we see by 
starting from a product $\Gamma^* \times \Gamma'$ between a lattice $\Gamma^*$ and 
an arbitrary pointed affine semigroup $\Gamma'$. But such sections do not 
necessarily exist, as illustrated by Examples \ref{nonsatex} and 
\ref{nonsatexbis}.

\begin{definition}
The set of formal infinite sums:
\begin{equation}\label{E:gammaseries}
\sum_{m'\in\Gamma'} a_{m'} \chi^{m'},\quad a_{m'}\in K(\Gamma^*) 
\text{ for all } m', 
\end{equation}
with naturally defined addition and multiplication, is called 
{\bf the ring of formal power series over $\Gamma'$ with coefficients in} 
$K(\Gamma^*)$. We denote it by $K(\Gamma^*)[[\Gamma']]$. 
\end{definition}

\begin{remark}
 If $L$ is a field and $\Gamma$ is an affine semigroup, then the set $L[[\Gamma]]$ 
 of formal power series with exponents in $\Gamma$ is naturally a group by addition
 of coefficients, monomial-wise. But it becomes a ring by adding the intuitive 
 multiplication law if and only if each element of $\Gamma$ can be represented
 only \emph{in a finite number of ways} as a sum of two elements of $\Gamma$, which
 is equivalent to the fact that $\Gamma$ is pointed. This explains why we needed 
 to work only with exponents in $\Gamma'$ in the previous definition.
\end{remark}

In the particular case when $\Gamma$ is pointed, the sum~\eqref{E:gammaseries} 
takes the simpler form:
$$\sum_{m\in\Gamma} a_m \chi^m$$
with $a_m\in K$. In this case, we write $K[[\Gamma]]$ instead of 
$K(\Gamma^*)[[\Gamma']]$ and we call this ring \emph{the power series ring 
over} $\Gamma$.

\begin{example}
If $\Gamma=\Z_{\geq 0}^{n}$, then the ring $K[[\Gamma]]$ is isomorphic to the
ring $K[[x_1,\dots,x_n]]$ of formal power series in $n$ variables with coefficients
in $K$.
\end{example}

The semigroup $\Gamma$ embeds naturally into the multiplicative semigroups of 
the rings $K[\Gamma]$ and $K[[\Gamma]]$. Moreover, a section $\alpha\colon
\Gamma' \to \Gamma$ of $p$ induces an embedding $\tilde{\alpha}\colon \Gamma\to
K(\Gamma^*)[[\Gamma']]$: 
$$\Gamma\ni m\mapsto \chi^{(m - \alpha(p(m)))}\cdot \chi^{\alpha(p(m))}\in 
K(\Gamma^*)[[\Gamma']],$$
(the monomial counterpart of the isomorphism $\Phi$ from the end of the proof of 
Lemma \ref{splitting}). 

Notice that, if $\beta\colon\Gamma'\to \Gamma$ is another section of $p$, then
$\tilde{\alpha}$ and $\tilde{\beta}$ differ by a unit, i.e., for any $m\in\Gamma
\subset K(\Gamma^*)[[\Gamma']]$, there exists an element $u(m)\in\Gamma^*$ such 
that $\tilde{\beta}(m)=\chi^{u(m)} \tilde{\alpha}(m)$. In what follows, we consider 
also the localization $R=K[\Gamma]_{(\Gamma^+)}$ of the semigroup ring $K[\Gamma]$ 
at its ideal $(\Gamma^+)=(\{\chi^m\,|\,m\in\Gamma^+\})$. The semigroup $(\Gamma, +) 
\simeq (\chi^{\Gamma}, \cdot)$ is 
naturally also a subsemigroup of $(R, \cdot)$.

\begin{lemma}\label{L:fc}
  Assume that the pointed affine semigroup $\Gamma$ is such that $p$ 
  admits a section $\alpha\colon \Gamma'\to 
  \Gamma$. Then $\alpha$ induces a unique isomorphism 
  \linebreak $\overline{\alpha}\colon 
  K(\Gamma^*)[[\Gamma']]\to \widehat{R_\mathfrak{m}}$, where 
  $\widehat{R_\mathfrak{m}}$ is the formal completion of the ring $R$ at its 
  maximal ideal $\mathfrak{m}=(\Gamma^+)$, such that the following diagram 
  commutes:
  \begin{equation}\label{E:gammatriangle}
  \xymatrix
  {
    &  &\Gamma\ar[ld]_{\tilde{\alpha}}\ar[rd]  & \\
    &K(\Gamma^*)[[\Gamma']]\ar[rr]^{\overline{\alpha}}  &  &\widehat{R_\mathfrak{m}}
  }
  \end{equation}
\end{lemma}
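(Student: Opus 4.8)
The plan is to construct the map $\overline{\alpha}$ explicitly using the embedding $\tilde{\alpha}$ and the universal property of completion, then check it is a well-defined ring isomorphism making the triangle commute, and finally prove uniqueness. First I would recall that $R = K[\Gamma]_{(\Gamma^+)}$ and that its maximal ideal is $\mathfrak{m} = (\Gamma^+)$, so that $\widehat{R_{\mathfrak m}}$ is the $\mathfrak{m}$-adic completion. Since $\alpha\colon\Gamma'\to\Gamma$ is a section of $p$, Lemma~\ref{splitting} (or rather the computation in its proof) gives a semigroup isomorphism $\Phi\colon\Gamma\xrightarrow{\sim}\Gamma^*\times\Gamma'$, $a\mapsto(a-\alpha(p(a)),p(a))$, whose monomial incarnation is exactly $\tilde{\alpha}$. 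This already suggests the source ring: $K(\Gamma^*)[[\Gamma']] = K[\Gamma^*][[\Gamma']]$ localized so that the coefficients form the field $K(\Gamma^*)$, i.e.\ the ring of formal power series in the pointed semigroup $\Gamma'$ over the field of fractions of the group algebra $K[\Gamma^*]$.

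Next I would define $\overline{\alpha}$ on the dense subring $K[\Gamma]_{(\Gamma^+)}$ and extend by continuity. Concretely, the isomorphism $\Phi$ induces an isomorphism of group/semigroup algebras $K[\Gamma]\simeq K[\Gamma^*]\otimes_K K[\Gamma']$, and localizing at $(\Gamma^+)$ inverts precisely the monomials $\chi^{m'}$ with $m'\in\Gamma'\setminus\{0\}$ together with making $K[\Gamma^*]$ into $K(\Gamma^*)$ (here one uses that $(\Gamma^+)$ pulls back, under $\Phi$, to the ideal generated by $\chi^{m'}$, $m'\in(\Gamma')^+$, since $\Gamma$ being pointed forces $\Gamma^+ = \Gamma\setminus\Gamma^* $ to map onto $(\Gamma')^+$ with fibers $\Gamma^*$). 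Thus $R = K(\Gamma^*)[\Gamma']_{((\Gamma')^+)}$, and completing at the maximal ideal $((\Gamma')^+)$ of this ring over the field $K(\Gamma^*)$ yields exactly the formal power series ring $K(\Gamma^*)[[\Gamma']]$ — this is the standard fact that the completion of a localized polynomial-type (affine semigroup) ring at its "origin" ideal is the corresponding formal power series ring, valid because $\Gamma'$ is pointed and finitely generated. So $\overline{\alpha}$ is the composite of $\Phi$-induced isomorphism on $R$ with this completion identification, and it is a ring isomorphism. The commutativity of~\eqref{E:gammatriangle} is then a direct check: the right-hand arrow $\Gamma\to\widehat{R_{\mathfrak m}}$ sends $m$ to the class of $\chi^m$, while $\tilde\alpha$ sends $m$ to $\chi^{m-\alpha(p(m))}\chi^{\alpha(p(m))}$, and $\overline{\alpha}$ applied to the latter recovers $\chi^m$ by construction since $\overline\alpha$ is built from $\Phi^{-1}$; both paths agree on generators of $\Gamma$, hence everywhere.

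For uniqueness, I would argue that any continuous $K(\Gamma^*)$-algebra homomorphism $K(\Gamma^*)[[\Gamma']]\to\widehat{R_{\mathfrak m}}$ making the triangle commute is determined by its values on the monomials $\chi^{m'}$, $m'\in\Gamma'$ (these, together with $K(\Gamma^*)$, topologically generate the source), and those values are forced by the commutativity together with the requirement that $\chi^{\alpha(m')}$ and $\chi^{\alpha(p(m))}$ map correctly; since $\Gamma$ maps onto $\Gamma'$ via $p$ and $\tilde\alpha$ hits each $\chi^{m'}$ up to a unit in $K[\Gamma^*]$, the map is pinned down. I expect the main obstacle to be the bookkeeping in identifying the localized ring $K[\Gamma]_{(\Gamma^+)}$ with $K(\Gamma^*)[\Gamma']_{((\Gamma')^+)}$ and then its completion with $K(\Gamma^*)[[\Gamma']]$: one must check carefully that localizing at $(\Gamma^+)$ simultaneously inverts all of $K[\Gamma^*]\setminus\{0\}$ (so coefficients become the field $K(\Gamma^*)$) and that the resulting local ring is Noetherian with maximal ideal generated by the finitely many generators of $(\Gamma')^+$, so that the $\mathfrak{m}$-adic completion is genuinely the full formal power series ring over $\Gamma'$ — here the hypothesis that $\alpha$ exists is what lets us split off $\Gamma^*$ cleanly and reduce to the pointed case, and pointedness of $\Gamma'$ is what guarantees the multiplication on $K(\Gamma^*)[[\Gamma']]$ is well defined.
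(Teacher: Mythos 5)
Your route is essentially the paper's: use the splitting provided by $\alpha$ to identify $R = K[\Gamma]_{(\Gamma^+)}$ with $K(\Gamma^*)[\Gamma']_{((\Gamma')^+)}$, then identify the $\mathfrak{m}$-adic completion of the latter with $K(\Gamma^*)[[\Gamma']]$. The one place where you appeal to a ``standard fact'' is exactly where the paper does the real work: it builds the map $K(\Gamma^*)[[\Gamma']]\to\widehat{R_\mathfrak{m}}$ by sending a series to its sequence of truncations and then proves surjectivity explicitly, writing each $f_n$ as $b_n/c_n$ with $c_n=d_n+q_n$, $d_n\in K[\Gamma^*]\setminus\{0\}$, $q_n\in(\Gamma^+)$, and expanding $1/(1-q)\equiv 1+q+\cdots+q^{n-1}\pmod{\mathfrak{m}^n}$; your suggested monomial-basis argument for $R/\mathfrak{m}^n\simeq K(\Gamma^*)[\Gamma']/((\Gamma')^+)^n$ would also close this, and is worth spelling out, since for a general pointed affine $\Gamma'$ (as opposed to $\N^n$) the statement is not quite an off-the-shelf citation.
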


\begin{proof}
  First, notice that a monomial $\chi^m\in \chi^{\Gamma}$ is contained in the 
  ideal $\mathfrak{m}^n$ if and only if $p(m)\in\Gamma'$ is contained in 
  $n{\Gamma'}^+$. Next, diagram~\eqref{E:gammatriangle} shows that 
  $\tilde{\alpha}$ is defined on the monomials. Notice also that $R=
  K[\Gamma]_{(\Gamma^+)}\simeq K(\Gamma^*)[\Gamma']_{(\Gamma'^+)}$. 
  Recall that the ring $\widehat{R_\mathfrak{m}}$ is defined as the set of 
  sequences $\{f_n\}_{n=1}^{\infty}$, $f_n\in R/\mathfrak{m}^n$, compatible with
  respect to the natural maps $R/\mathfrak{m}^n\to R/\mathfrak{m}^{n-1}$. If
  $f=\sum a_{m'} \chi^{m'} \in K(\Gamma^*)[[\Gamma']]$, then the sequence of 
  its appropriate truncations defines a morphism of rings 
  $K(\Gamma^*)[[\Gamma']]\to \widehat{R_\mathfrak{m}}$ which is obviously 
  injective. To show surjectivity, write a representative for each $f_n$ in the 
  form $b_n/c_n$, where $b_n\in K[\Gamma]$, $c_n=d_n+q_n$, $d_n\in K[\Gamma^*]$, 
  $d_n\ne 0$, $q_n\in (\Gamma^+)$. Since any $m\in\Gamma$ can be written as 
  $u + \alpha(m')$ for some $u\in\Gamma^*$, $m'\in\Gamma'$, we may consider $b_n$ 
  and $c_n$ as polynomials in $K(\Gamma^*)[\Gamma']$. Using the standard identity:
  $$\frac{1}{1-q}\equiv 1+q+\dots+q^{n-1} \mod \mathfrak{m}^n$$
  for $q\in\mathfrak{m}$, we can rewrite:
  $$\frac{b_n}{c_n}\equiv h_0+h_1+\dots+h_{n-1} \mod \mathfrak{m}^n,$$
  where: 
  $$h_k=\sum_{m'\in  k {\Gamma'}^+\ \setminus \ (k+1) {\Gamma'}^+} 
  a_{m'} \chi^{m'} \in\mathfrak{m}^k\setminus \mathfrak{m}^{k+1}, \quad
  a_{m'}\in K(\Gamma^*) \text{ for all } m'.$$ 
  Compatibility of the sequence $\{f_n\}$ implies
  that $\sum_{n=0}^{\infty} h_n$ is a well defined series from 
  $K(\Gamma^*)[[\Gamma']]$. The last assertion of the lemma is obvious.
\end{proof}

From Lemma~\ref{L:fc}, we deduce: 

\begin{corollary} \label{compldom}
  The ring $K(\Gamma^*)[[\Gamma']]$ is a local Noetherian domain, complete with 
  respect to the $\hat{\mathfrak{m}}'$-adic topology, where $\hat{\mathfrak{m}}'$ 
  is its maximal ideal.
\end{corollary}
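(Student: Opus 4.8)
The plan is to derive Corollary~\ref{compldom} directly from the isomorphism constructed in Lemma~\ref{L:fc}, together with standard facts about completions of Noetherian rings. First I would observe that the semigroup ring $K[\Gamma]$ is a finitely generated $K$-algebra (since $\Gamma$ is of finite type), hence Noetherian by Hilbert's basis theorem; its localization $R = K[\Gamma]_{(\Gamma^+)}$ at the prime ideal $(\Gamma^+)$ is therefore a Noetherian local ring. Its $\mathfrak{m}$-adic completion $\widehat{R_\mathfrak{m}}$ is then automatically a Noetherian local ring, complete with respect to its maximal ideal; these are standard facts (see, e.g., \cite[Chapter~10]{AM 69}). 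Since by Lemma~\ref{L:fc} the ring $K(\Gamma^*)[[\Gamma']]$ is isomorphic to $\widehat{R_\mathfrak{m}}$ as a ring, it inherits all of these properties: it is local, Noetherian, and complete with respect to the topology defined by its maximal ideal $\hat{\mathfrak{m}}'$, which corresponds under $\overline{\alpha}$ to $\hat{\mathfrak{m}}$.

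It remains to check that $K(\Gamma^*)[[\Gamma']]$ is an integral domain. Here I would argue either on the completion side or intrinsically. On the completion side, one knows that the completion of a Noetherian local domain need \emph{not} be a domain in general, so this requires an argument; a clean way is to work intrinsically with formal power series. Since $\Gamma'$ is a pointed affine semigroup, it embeds in a lattice $M(\Gamma')$, and we may choose a linear functional $\ell$ on $M(\Gamma')_\R$ that is strictly positive on $\Gamma' \setminus \{0\}$ (such $\ell$ exists precisely because $\check\sigma(\Gamma')$ is a pointed cone). This $\ell$ induces an order function on monomials $\chi^{m'}$, and for any nonzero $f = \sum_{m'} a_{m'}\chi^{m'}$ in $K(\Gamma^*)[[\Gamma']]$ one defines its \emph{initial exponent} to be the $\ell$-minimal $m'$ with $a_{m'} \neq 0$; this minimum exists because the sublevel sets $\{m' \in \Gamma' \mid \ell(m') \leq c\}$ are finite. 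The key computation is that the initial form of a product is the product of the initial forms: if $m_1'$ and $m_2'$ are the initial exponents of $f$ and $g$, then the coefficient of $\chi^{m_1' + m_2'}$ in $fg$ equals $a_{m_1'} b_{m_2'}$ (all other contributions to that monomial come from strictly larger exponents), and this is nonzero because $K(\Gamma^*)$ is a field and $\Gamma'$ is cancellative. Hence $fg \neq 0$.

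I expect the main obstacle to be the integrality claim, specifically verifying carefully that the initial-exponent argument goes through: one must confirm that $\ell$-sublevel sets in $\Gamma'$ are finite (this uses that $\Gamma'$ is a finitely generated subsemigroup of a lattice and that $\ell$ is strictly positive on the generators), and that there are no ``cancellation from infinity'' issues when multiplying two power series — i.e., that the coefficient of each fixed monomial in $fg$ is a finite sum, which is exactly the pointedness condition already recorded in the Remark following the definition of $K(\Gamma^*)[[\Gamma']]$. Once these points are in place, the argument that the lowest-degree term of $fg$ is the product of the lowest-degree terms of $f$ and $g$ is routine, and it immediately shows $K(\Gamma^*)[[\Gamma']]$ has no zero divisors. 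Combining this with the transport of Noetherianity, locality, and completeness across the isomorphism $\overline{\alpha}$ of Lemma~\ref{L:fc} completes the proof of Corollary~\ref{compldom}.
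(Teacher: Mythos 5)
Your argument is correct in outline and is actually more careful than what the paper supplies. The paper simply writes "From Lemma~\ref{L:fc}, we deduce" before stating the corollary, so its official proof is pure transport of structure along $\overline{\alpha}$. You are right to observe that this transport is not enough for the integral-domain claim: the $\mathfrak{m}$-adic completion of a Noetherian local domain can fail to be a domain, so that part of the corollary genuinely requires a separate argument. Your intrinsic lowest-term argument is the right way to fill that gap and is essentially how one would naturally prove $K[[\Gamma]]$ (and more generally $K(\Gamma^*)[[\Gamma']]$) is a domain.

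One technical point needs tightening. You define the initial exponent as "the $\ell$-minimal $m'$ with $a_{m'}\neq 0$" and then treat it as a single well-defined monomial. For an arbitrary strictly positive linear functional $\ell$ (in particular a rational one), several exponents of $f$ can tie for the minimal $\ell$-value, in which case "the initial exponent" is not well defined and your claim that all other contributions to $\chi^{m_1'+m_2'}$ come from strictly larger exponents does not literally hold; cancellations in that $\ell$-level are in principle possible. Two standard fixes: either take $\ell$ generic/irrational so that $\ell$ is injective on the lattice $M(\Gamma')$ (then the minimal exponent is unique and your computation goes through verbatim), or work with the whole $\ell$-initial form $\init_\ell(f)$, which is a nonzero element of $K(\Gamma^*)[\Gamma']$, note that $\init_\ell(fg)=\init_\ell(f)\,\init_\ell(g)$, and conclude from the fact that $K(\Gamma^*)[\Gamma']$ is a domain (being a subring of the Laurent ring $K(\Gamma^*)[M(\Gamma')]$). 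Either repair is routine; with it in place, combined with the transport of Noetherianity, locality, and completeness across $\overline{\alpha}$, your proof is complete.
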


\emph{In the remaining of this section we suppose that $\Gamma$ is a pointed
semigroup.} Set $\sigma := \sigma(\Gamma)$. Consider a vector  $w\in\sigma$.  
If:
$$f = \sum_{m \in \Gamma} a_m \chi^m \in K[[\Gamma]]$$ 
is a power series over $\Gamma$, $f\ne 0$, the \emph{$w$-order} of $f$ is: 
\begin{equation}\label{E:ord}
w(f)=\min_{m:\;a_m\ne 0} \langle w,m\rangle,
\end{equation}
and the \emph{$w$-initial form} of $f$ is: 
$$\init_w(f)=\sum_{m:\;\langle w,m\rangle=w(f)} a_m \chi^m \in K [\Gamma].$$
Note that if $w\in\mathring{\sigma}$ (the interior of $\sigma$), then $\init_w(f)$ 
is a polynomial. If $I$ is an ideal of $K[[\Gamma]]$, the \emph{$w$-initial ideal} 
$\init_w(I)$ of $I$ is the ideal generated by $w$-initial forms of all the elements
of $I$. The same definitions can be given for the elements and the ideals of 
$K[\Gamma]$. 

\begin{definition}
The {\bf extended Newton diagram} of a series $f\in K[[\Gamma]]$ 
is the set
$$\Newton^+(f)=\text{Convex hull } (\bigcup_{m:\;a_m\ne 0} (m+\check\sigma)) \ 
\subseteq \ M(\Gamma)\otimes \R.$$
The {\bf Newton diagram} $\Newton(f)$ of $f$ is the union of all compact faces of 
$\Newton^+(f)$. 
\end{definition}

The extended Newton diagram of any series $f \in K[[\Gamma]]$ is a finite rational 
convex polyhedron, that is, it can be determined by finite number of linear 
inequalities of the form $\langle n,x\rangle \geq  a$, $n\in N(\Gamma)$, $a\in\Z$. 
Moreover, $\Newton^+(f)$ is contained in the cone $\check\sigma$ (the dual cone
of $\sigma$) and this last cone is equal to the {\em recession cone} of 
$\Newton^+(f)$ (which is defined 
as the maximal cone whose translation by any element of $\Newton^+(f)$ is 
contained in $\Newton^+(f)$). 

More generally, if $\tau$ is a face of $\sigma$ and 
if one takes $w \in \pi_\tau(\sigma)\subseteq\overline{\sigma}$
(see Section~\ref{troptor} and the formula \eqref{E:opensetoflv}), 
then $w$ defines a {\em preorder} (see the next section) on the monomials of the 
semigroup $\Gamma_\tau=\Gamma\cap\tau^\perp$. Thus, we can speak about
$w$-initial forms and $w$-initial ideals for arbitrary weights $w$ from 
$\overline{\sigma}$, but they should be applied to the $\tau$-truncations of 
elements of $K[[\Gamma]]$ and understood as elements or ideals of the 
corresponding ring $K[[\Gamma_\tau]]$:

\begin{definition} \label{Trunc}
  Let $\Gamma$ be a pointed affine semigroup. 
  Let $\tau$ be any face of $\sigma(\Gamma)$ and $\Gamma_{\tau} = \Gamma \cap 
  \tau^{\perp}$. If $f=
  \sum_{m\in\Gamma} a_m \chi^m \in K[[\Gamma]]$,  
  the $\tau$-{\bf truncation} $f_{\tau}$ of $f$ is defined by:
  $$f_\tau=\sum_{m\in\Gamma\cap\tau^\perp} a_m \chi^m \ \in \ K[[\Gamma_\tau]]. $$
  The $\tau$-{\bf truncation} $I_{\tau}$ of an ideal $I \subset K[[\Gamma]]$ 
  is defined as the ideal generated by the $\tau$-truncations of its elements. 
  If $w\in(N/N_{\tau})_\R$,  
  the $w$-{\bf initial form} $\init_w(f)$ of $f$ is defined as $\init_w(f_{\tau})$. 
  The $w$-{\bf initial ideal} $\init_w(I)$ of $I$ is the ideal of the 
  formal toric ring $K[[\Gamma_\tau]]$ which is generated by the $w$-\bf initial 
  forms of its elements. 
\end{definition}  
  
It is easy to check that $K[[\Gamma_\tau]]$ is the quotient of $K[[\Gamma]]$ by the 
prime ideal $\Gamma\setminus \Gamma_{\tau}$, and the assignement $f \mapsto f_\tau$ 
gives the natural quotient homomorphism of rings $K[[\Gamma]] 
\to K[[\Gamma_{\tau}]]$.

\section{Standard bases} \label{Standbas}

In this section we explain the notion of \emph{standard basis} of an ideal $I$ 
in a formal power series ring $K[[\Gamma]]$ with respect to a \emph{local monomial
ordering}, which is a local analog of the notion of \emph{Gr\"obner basis} of an 
ideal in a  polynomial ring with respect to a monomial ordering. We prove the 
existence of a \emph{universal standard basis}, that is, of a finite set of 
elements of $I$ which are a standard basis with respect to \emph{all} local 
monomial orderings. 
\medskip

As stated in Theorem \ref{T:3defs} point \ref{invp}), the global tropicalization of
a subvariety of a torus can be obtained also by looking at the initial ideals of 
the defining ideal of the subvariety with respect to all weight vectors. Such 
weight vectors define preorders on the lattice of monomials of the torus, 
compatible with the addition. Usually they are studied by also bringing into the 
game total orderings compatible with the addition. Those total orderings allow to 
define the notion of \emph{Gr\"obner basis} (see \cite{CLOS 97}, \cite{CLOS 05}, 
\cite{Eis 04}). We refer to \cite{BJSST 07} and \cite{FJT 07} for their 
application to the study of tropicalization of subvarieties of tori. 

Here, we develop an analogous theory of \emph{standard basis} in formal power 
series rings $K[[\Gamma]]$, where $\Gamma$ is an affine pointed semigroup.
In the next two sections we use it to study the local tropicalizations of ideals in 
$K[[\Gamma]]$.

A \emph{preorder} on a set is a binary relation which is both reflexive and 
transitive. A \emph{partial order} is a preorder which is antisymmetric. A 
\emph{total preorder} is a preorder such that any two elements of the set are 
comparable and a \emph{total order} is a total preorder which is also a partial 
order. A \emph{well ordered set} is a set endowed with a total order such that any 
nonempty subset has a minimum.

\begin{definition}\label{D:locord}
A {\bf local monomial ordering} on an affine pointed semigroup $\Gamma$  
is an order relation $\preceq$ on the set $\Gamma$ such that \\
(i) $0$ is the least element; \\
(ii) $\preceq$ is a total ordering; \\
(iii) $\preceq$ is compatible with addition on $\Gamma$, i.~e., if $m\preceq m'$, 
then $m + n \preceq m' + n$ for any $n\in\Gamma$.
\end{definition}

\begin{remark}
  If $m, n \in \Gamma$ and $m \preceq n$, we will also write 
  $\chi^m \preceq \chi^n$. This explains the name \emph{monomial ordering}: it is 
  an order on the monomials of $K[[\Gamma]]$. 
\end{remark}

In the sequel, by a \emph{monomial ordering} we shall always mean a local
monomial ordering.

The following proposition is standard for $\Gamma \simeq \N^n$ 
(see \cite[Chapter 2.4, Cor. 6]{CLOS 97}, where it is proved using the so-called 
{\em Dickson lemma} on finite generation by monomials of monomial ideals). 
We give here a proof  which does not pass through an analog of Dickson's lemma. 

\begin{lemma} \label{wellord}
  Under the axioms (ii) and (iii), condition (i) is equivalent to the fact that 
  $\preceq$ is a well-ordering of $\Gamma$. 
\end{lemma}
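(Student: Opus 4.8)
The plan is to prove the two implications separately, since the equivalence is stated under the standing hypotheses that $\preceq$ is a total order (ii) compatible with addition (iii). First I would show that if $\preceq$ is a well-ordering then $0$ is the least element. Suppose not: then there exists $m\in\Gamma$ with $m\prec 0$. By compatibility with addition (iii), adding $m$ repeatedly gives $\dotsb\prec 3m\prec 2m\prec m\prec 0$, so the set $\{km\mid k\in\N^*\}$ is a nonempty subset of $\Gamma$ with no minimum, contradicting well-ordering. (One must check the chain is strictly decreasing; this follows because if $km=(k{+}1)m$ for some $k$, cancellativity of the affine semigroup $\Gamma$ would force $m=0$, contrary to $m\prec 0$.) Hence $0$ is the least element.

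For the converse — the substantive direction — I would assume (i), (ii), (iii) and show every nonempty subset $A\subseteq\Gamma$ has a minimum. Here is where I would exploit the structure of affine semigroups that was not available in the classical $\N^n$ setting: $\Gamma$ is finitely generated, say by $g_1,\dotsc,g_r$, and embeds in its lattice $M(\Gamma)$. The key idea is to compare $\preceq$ with the natural partial order given by divisibility in $\Gamma$ (i.e. $m\leq_\Gamma n$ iff $n\in m+\Gamma$). By (i) and (iii), $m\leq_\Gamma n$ implies $m\preceq n$: indeed $n=m+p$ for some $p\in\Gamma$, and $0\preceq p$ gives $m=m+0\preceq m+p=n$. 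So any $\leq_\Gamma$-minimal element of $A$ that happens to be $\preceq$-minimal among the (few) remaining candidates will be the global minimum. The plan is therefore: (1) show the set of $\leq_\Gamma$-minimal elements of any $A\neq\emptyset$ is nonempty and finite; (2) among those finitely many elements pick the $\preceq$-least one (possible since $\preceq$ is total); (3) argue this element is $\preceq$-least in all of $A$, because every element of $A$ lies above some $\leq_\Gamma$-minimal element of $A$ in the $\leq_\Gamma$ order, hence above it in $\preceq$.

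The main obstacle is step (1): why does every nonempty subset of $\Gamma$ have only finitely many $\leq_\Gamma$-minimal elements, and why is that set nonempty? Nonemptiness would follow from the descending chain condition for $\leq_\Gamma$, and finiteness is essentially Dickson's lemma for the monoid $\Gamma$. Since the lemma statement explicitly says we want a proof \emph{not} passing through an analog of Dickson's lemma, I would instead argue directly with the order $\preceq$ itself: given $A\neq\emptyset$, if $A$ had no $\preceq$-minimum, build an infinite strictly $\preceq$-descending sequence $a_1\succ a_2\succ\dotsb$ in $A$ and derive a contradiction. To get the contradiction one uses that $\Gamma\hookrightarrow M(\Gamma)\cong\Z^d$ is finitely generated: write each $a_i=\sum_j c_{ij}g_j$ with $c_{ij}\in\N$; by a Dickson-type pigeonhole on the exponent vectors $(c_{i1},\dotsc,c_{ir})\in\N^r$ there are indices $i<k$ with $c_{ij}\leq c_{kj}$ for all $j$, whence $a_i\leq_\Gamma a_k$, so $a_i\preceq a_k$, contradicting $a_k\prec a_i$. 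This last move does invoke the combinatorial core of Dickson's lemma on $\N^r$ (which is elementary and independent of the semigroup axioms), but avoids formulating or using Dickson's lemma about monomial \emph{ideals} of $\Gamma$ — which I expect is the distinction the authors intend. I would present the argument in this streamlined form: assume a strictly $\preceq$-decreasing sequence, apply the pigeonhole on exponent tuples in $\N^r$, and conclude.
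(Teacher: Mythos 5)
Your proof is correct, and its final form is quite close to the paper's argument: both directions go the same way, and in the substantive (harder) direction you and the authors both fix a finite generating set, write the terms of a putative infinite $\preceq$-decreasing sequence as $\N$-combinations of the generators, and derive a contradiction from combinatorics on the exponent vectors. The addition of the cancellativity remark to justify strictness of the chain $0\succ m\succ 2m\succ\cdots$ in the easy direction is a nice touch; the paper leaves this implicit.

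The one genuine difference is how the combinatorial contradiction is extracted. You invoke Dickson's lemma on $\N^r$ as a black box (find $i<k$ with $c_{ij}\le c_{kj}$ for all $j$), then observe that coordinatewise domination gives $a_i\leq_\Gamma a_k$ and hence $a_i\preceq a_k$ — a clean separation of the purely combinatorial step from the order-theoretic one. The paper instead runs an interleaved pigeonhole: compare every $m_j$ to $m_1$ using the order to produce an index $i(j)$ where the coefficient strictly drops, extract an infinite subsequence on which $(i(j),a_{i(j),j})$ is constant, and iterate; after at most $|I|$ rounds all coefficients are forced constant, which is absurd. That iteration is, in effect, an inline proof of the very Dickson-type fact you quote, woven together with the order so that no external lemma is cited — which is precisely what the authors mean by ``a proof which does not pass through an analog of Dickson's lemma.'' You are honest that your version does ultimately lean on Dickson's lemma for $\N^r$, and your point that this is milder than a Dickson analog for the (non-free) semigroup $\Gamma$ is fair, but the paper's version is the more self-contained of the two. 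Your opening detour via $\leq_\Gamma$-minimal elements of an arbitrary subset $A$ is unnecessary and you rightly abandon it; once you have the infinite $\preceq$-descending sequence, the argument on exponent vectors suffices.
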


\begin{proof} 
  Assume that $\preceq$ is an ordering on $\Gamma$ which satisfies the axioms 
  (ii) and (iii). Suppose first that $\preceq$ is a well-ordering of $\Gamma$. 
  Arguing by contradiction, if (i) is not true, then there exists $m\in \Gamma$ 
  such that $m \prec 0$. Using axiom (ii), we get the following infinite chain of 
  inequalities: $0 \succ m \succ 2m \succ 3m \succ \cdots$. This implies that the 
  set $\{0, m, 2m, 3m ,\dotsc \}$ has no minimal element, which contradicts the 
  hypothesis that we have a well-ordering. 
   
  Suppose then that axiom (i) is satisfied, in addition to (ii) and (iii). Choose 
  a finite generating set $\{ \gamma_i \  | \ i \in I \}$ of non-zero elements 
  of $\Gamma$, which exists by the hypothesis that $\Gamma$ is an affine semigroup.
  Assume by contradiction that $\Gamma$ is not well-ordered. Then we get an 
  infinite decreasing sequence $m_1 \succ m_2 \succ m_3 \succ \cdots$ of elements 
  of $\Gamma$. Choose also an expression $m_j = \sum_{i\in I} a_{ij} \gamma_i$ 
  for each element of the sequence in terms of the chosen generating set. That is,
  $a_{ij} \in \N$ for all $i \in I, j \in \N^*$. Such expressions are in general 
  not unique, but this does not matter here. By axiom (i), as the $\gamma_i$ are 
  non-vanishing, we see that $\gamma_i \succ 0, \ \forall \ i \in I$. 
   
  Consider now an arbitrary $j \geq 2$. As $m_1 \succ m_j$, by axiom (ii) 
  there exists an index $i(j) \in I$ such that $a_{i(j),j} < a_{i(j),1}$. As 
  the sequence $(m_j)_{j \geq 2}$ is infinite, we may extract an infinite 
  subsequence in which $i(j)$ and $a_{i(j),j}$ are constant. Repeating 
  this argument a finite number of times, we arrive at an infinite strictly 
  decreasing sequence in which all the coefficients $a_{ij}$ are constant when $j$
  varies, which is a contradiction. 
\end{proof}

\begin{definition} \label{inipart}
For a given monomial ordering $\preceq$ on $\Gamma$, we define the {\bf initial
monomial} $\init_{\preceq}(f)$ of any element $f$ of $K[[\Gamma]]$ or $K[\Gamma]$ 
as the least monomial with non-zero coefficient in the 
expansion~\eqref{E:gammaseries} of $f$ and the {\bf initial ideal} 
$\init_{\preceq}(I)$ of an ideal $I$ of $K[[\Gamma]]$ or $K[\Gamma]$ as the ideal 
generated by the initial monomials of all the elements of $I$. 
\end{definition}

Consider then any vector $w \in \sigma(\Gamma)$. We define a 
preorder relation $\preceq_w$ on the elements of $\Gamma$ (in fact of the whole 
$M(\Gamma)$) depending on $w$:
$$m \preceq_w m' \text{ if and only if } \langle w,m\rangle \leq 
\langle w,m'\rangle.$$
Note that $0 \preceq_w m$ for any $m \in \Gamma$, according to this definition. 
We say that a monomial ordering $\preceq$ \emph{refines} a preorder $\preceq_w$, 
$w\in\sigma$, if $m\preceq m'$ implies $m\preceq_w m'$.

Note that a monomial $\chi^m \in K[[\Gamma]]$ is {\em divisible} by $\chi^n$ if and only
if $m = n + p$, where $p$ is again an element of $\Gamma$. This implies that 
$n \preceq n+p$ for any monomial ordering on $\Gamma$. 

The presence of a monomial ordering allows to extend the theory of divisibility 
from monomials to arbitrary series: 

\begin{proposition}\label{L:division} \emph{(Division algorithm)}  
  Let $\preceq$ be a fixed monomial ordering on $\Gamma$. 
  If $f \in K[[\Gamma]]$ and $(f_1,\dotsc,f_p)\in K[[\Gamma]]^p$ is an ordered 
  collection of series,  then there are series $g_1, \dotsc, g_p, r \in K[[\Gamma]]$ 
  such that: 
  $$f=g_1 f_1+\cdots+g_p f_p + r,$$
  where $\init_{\preceq}(f)\leq\init_{\preceq}(g_i f_i)$ for all $i$ such that 
  $g_i\ne 0$, $1\leq i\leq p$, and none of the monomials of $r$ is divisible by 
  any of the monomials $\init_{\preceq}(f_1) ,\dotsc, \init_{\preceq}(f_p)$.
\end{proposition}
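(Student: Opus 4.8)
The plan is to prove this by iterating a single \emph{reduction step} transfinitely often, in the spirit of the classical division algorithm, using crucially that by Lemma~\ref{wellord} the relation $\preceq$ is a well-ordering of $\Gamma$. We may assume $f\neq 0$ (the case $f=0$ being trivial) and, after discarding the $f_i$ that vanish, that each $f_i\neq 0$; write $\init_\preceq(f_i)=\chi^{n_i}$ and let $c_i\in K^*$ be the coefficient of $\chi^{n_i}$ in $f_i$. One maintains a tuple $(h;\,g_1,\dots,g_p,\,r)$ satisfying the invariant $f=g_1f_1+\cdots+g_pf_p+r+h$, starting from $(f;0,\dots,0,0)$, together with the property that no monomial occurring in $r$ is divisible by any $\chi^{n_j}$. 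The reduction step is as follows: if $h=0$, stop; otherwise let $\chi^m=\init_\preceq(h)$ and let $c\in K^*$ be its coefficient in $h$. If $m=n_i+q$ for some $i$ (take the least such $i$), replace $h$ by $h-c\,c_i^{-1}\chi^q f_i$ and add $c\,c_i^{-1}\chi^q$ to $g_i$; if no such $i$ exists, replace $h$ by $h-c\chi^m$ and add $c\chi^m$ to $r$. In either case the coefficient of $\chi^m$ in the new $h$ vanishes, so by compatibility of $\preceq$ with addition the new $h$ has a strictly larger $\preceq$-initial monomial. Iterating this step (possibly transfinitely), the initial monomials $m_0\prec m_1\prec\cdots$ of the successive tails form a strictly increasing sequence in $(\Gamma,\preceq)$.

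The heart of the argument, and the step I expect to be the main obstacle, is to make sense of the limit stages and to show the process terminates. The key observation is that the coefficient of any fixed monomial $\chi^\mu$ is modified only finitely many times over the \emph{entire} transfinite process. Indeed, at a step whose initial monomial $\chi^{m_\alpha}$ is divided by $f_i$ via $m_\alpha=n_i+q_\alpha$, the coefficient of $\chi^\mu$ in the tail changes only if $\mu-q_\alpha\in\supp(f_i)$; among the steps using a fixed index $i$ the monomials $q_\alpha$ strictly increase (since $m_\alpha=q_\alpha+n_i$ does, and $\Gamma$ is cancellative), so the elements $\mu-q_\alpha$ that lie in $\supp(f_i)\subseteq\Gamma$ form a strictly \emph{decreasing} sequence in $(\Gamma,\preceq)$, which must be finite because $\preceq$ is a well-ordering. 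Since there are finitely many indices $i$, and an ``$r$-step'' can affect $\chi^\mu$ at most once (precisely when $m_\alpha=\mu$), the coefficient of $\chi^\mu$ in the tail stabilizes; similarly each coefficient of each $g_i$ and of $r$ is modified at most once. This allows one to define the running tuple at every limit ordinal coefficientwise, and, since $\Gamma$ is pointed so that products in $K[[\Gamma]]$ — hence the coefficients of $g_1f_1+\cdots+g_pf_p$ — are computed by finite sums (cf.\ Corollary~\ref{compldom}), one checks that the invariant $f=g_1f_1+\cdots+g_pf_p+r+h$ persists through limit stages. Since $\alpha\mapsto m_\alpha$ injects the index set into $\Gamma$, the iteration cannot run through the class of all ordinals, so it must reach a stage with $h=0$; the resulting $g_1,\dots,g_p,r$ are then well-defined elements of $K[[\Gamma]]$ with $f=g_1f_1+\cdots+g_pf_p+r$.

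It then remains to verify the two asserted properties, both immediate from the construction. No monomial of $r$ is divisible by any $\init_\preceq(f_j)=\chi^{n_j}$, since that is exactly the condition under which a monomial is placed into $r$. For the inequality on initial monomials, every monomial $\chi^q$ occurring in $g_i$ arises at a step with $q+n_i=m_\alpha\succeq m_0=\init_\preceq(f)$, while every monomial of $f_i$ is $\succeq n_i$; by compatibility of $\preceq$ with addition, every monomial of $g_if_i$ is then $\succeq m_0$, so $\init_\preceq(g_if_i)\succeq\init_\preceq(f)$ whenever $g_i\neq 0$ (here $g_if_i\neq 0$ because $K[[\Gamma]]$ is a domain by Corollary~\ref{compldom}). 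The only delicate point is thus the bookkeeping of the transfinite iteration and its limit stages described in the second paragraph.
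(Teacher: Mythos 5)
Your proof is correct and is, at bottom, the same reduction strategy as the paper's, but you supply a termination and convergence argument that the paper's brief proof sketch leaves implicit. The paper builds $R_1(f), R_2(f),\dots$ by repeatedly cancelling the smallest divisible term of the current tail and then asserts that ``in the limit'' one obtains a remainder $R_\infty(f)$ with no divisible monomials; read literally as an $\omega$-step limit this is not quite enough, since a local monomial ordering on a pointed affine semigroup can have order type strictly larger than $\omega$ (take the lexicographic order on $\N^2$), in which case the coefficient-wise limit after $\omega$ reductions can still contain a divisible monomial sitting $\succ$ all of the ones treated so far. Your transfinite iteration handles exactly this, and your key stabilization observation --- that each coefficient of the tail is modified only finitely often, because for each fixed divisor index $i$ the quotients $q_\alpha$ strictly increase, so that the elements $\mu - q_\alpha$ landing in $\supp(f_i)\subseteq\Gamma$ strictly decrease and the well-ordering from Lemma~\ref{wellord} forbids an infinite such chain --- is the right ingredient to make limit stages well-defined; and since $\alpha\mapsto m_\alpha$ is strictly increasing, hence injective, into $\Gamma$, the process must halt. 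The one cosmetic difference from the paper is that you treat the overall initial term of the tail at each step and shunt non-divisible terms into $r$, whereas the paper only reduces divisible terms and lets the others accumulate implicitly into $R_\infty(f)$; the two bookkeepings are interchangeable, and both genuinely require the transfinite analysis you supply.
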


\begin{proof}
  We simply apply the analog for series of the division algorithm for Gr\"obner 
  basis (see  \cite{CLOS 97}, \cite{CLOS 05}, \cite{Eis 04}). Here this algorithm
  involves an infinite number of steps, which compute the coefficients of the 
  unknown series $g_1, \dotsc, g_p, r$. 
  
  First, we find the smallest term $c\ \chi^m$, $c\in K$, $m \in\Gamma$, 
  of $f$ which is divisible by some 
  $\init_{\preceq}(f_i)$, $1\leq i\leq p$. If $i_0$ is the first such $i$, we
  reduce $f$ by defining:  
  $$R_1(f) : = f-\frac{c\ \chi^m}{a\init_{\preceq}(f_{i_0})} f_{i_0},$$
  where $f_{i_0}=a\init_{\preceq}(f_{i_0})+\cdots$.  We repeat the same process 
  with $R_1(f)$ instead of $f$, defining $R_2(f)$, and continue in the same way. 

  In the limit, we get a reduction $R_{\infty}(f)$ which has the property that no
  monomial of it is divisible by any $\init_{\preceq}(f_i)$, $1\leq i\leq p$. This
  is the remainder. Looking at the way we compute the sequence of reductions of 
  $f$, we see that $f - R_{\infty}(f)$ is indeed of the form 
  $g_1 f_1+\cdots+g_p f_p$, with $\init_{\preceq}(f)\leq\init_{\preceq}(g_i f_i)$ 
  for all $i$ such that $g_i\ne 0$, $1 \leq i \leq p$. 
\end{proof}

\begin{remark} \label{divrem} 
  This division result is usually presented for the ring of formal 
  power series $K[[x_1 ,\dotsc, x_m]]$. An analogous (but more complicated) 
  result for the ring of \emph{convergent} power series $\C\{\{x_1 ,\dotsc, x_m \}\}$ 
  was proved by Grauert (see \cite{JP 00}), but we will not need it here.  
\end{remark}

Notice from the previous proof that the quotients $g_1, \dotsc, g_p$, 
as well as the remainder $r$, are uniquely determined by the process 
\emph{if we carefully respect the order of the collection} $f_1, \dotsc, f_p$. 
But, in general, even the remainder changes if we change this order, as we show 
in the next example. 

\begin{example}
  Take the ring $K[[x,y]]$ with the lexicographic ordering in which $x \prec y$, 
  and the series $f = x, f_1 = x - y, f_2 = x - y^2$. Then, 
  $\init_{\preceq}(f_1) = \init_{\preceq}(f_2) = x$, which shows that 
  $g_1 =1, g_2 =0, r = y$. If we permute $f_1$ and $f_2$, we get $r = y^2$. 
\end{example}

This non-uniqueness of the remainder is eliminated if we take a \emph{standard 
basis} instead of an arbitrary sequence (see Proposition \ref{P:basisquot}).    

\begin{definition}
Let $\preceq$ be a monomial ordering on $\Gamma$ and $I$ an ideal of $K[[\Gamma]]$.
A finite sequence $\mathcal{B}\in I^p$ for some $p \in \N$ or, by abuse of language, 
the underlying set  is called a {\bf standard basis} for $I$
with respect to the ordering $\preceq$ if the initial monomials of the elements of 
$\mathcal{B}$ generate the initial ideal $\init_{\preceq}(I)$. A finite set 
$\mathcal{U}\subset I$ is called a {\bf universal standard basis} for $I$ if 
$\mathcal{U}$ is a standard basis for $I$ for any local monomial ordering 
$\preceq$ on $\Gamma$.
\end{definition}

\begin{remark} \label{remstand} 
     The terminology \emph{standard basis} was introduced in 
     \cite[Chapter III.1]{H 64} for a slightly  different concept, not involving 
     any ordering.
\end{remark}

The existence of a standard basis for any ideal $I\subset K[[\Gamma]]$ and 
any monomial ordering $\preceq$ on $\Gamma$ follows from Noetherianness of
$K[[\Gamma]]$ by a standard argument of the theory of Gr{\"o}bner bases. The
following three propositions are also standard.

\begin{proposition}
If $\mathcal{B}$ is a standard basis for an ideal $I\subseteq K[[\Gamma]]$ with
respect to some monomial ordering, then $\mathcal{B}$ generates $I$.
\end{proposition}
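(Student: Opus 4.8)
The plan is to mimic the classical argument that shows a Gr\"obner basis generates its ideal, using the division algorithm of Proposition~\ref{L:division} and Noetherianness of $K[[\Gamma]]$ (Corollary~\ref{compldom}, via Lemma~\ref{L:fc}). First I would fix the monomial ordering $\preceq$ for which $\mathcal{B} = (f_1,\dotsc,f_p)$ is a standard basis of $I$, and let $J \subseteq I$ be the ideal of $K[[\Gamma]]$ generated by $\mathcal{B}$. The goal is to prove $J = I$.

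Take an arbitrary $f \in I$ and apply the division algorithm (Proposition~\ref{L:division}) to $f$ with respect to the ordered collection $(f_1,\dotsc,f_p)$. This produces $g_1,\dotsc,g_p,r \in K[[\Gamma]]$ with $f = g_1 f_1 + \cdots + g_p f_p + r$, so that $r = f - \sum_i g_i f_i \in I$, and no monomial of $r$ is divisible by any of the initial monomials $\init_{\preceq}(f_1),\dotsc,\init_{\preceq}(f_p)$. If $r \neq 0$, then its initial monomial $\init_{\preceq}(r)$ belongs to $\init_{\preceq}(I)$, which by the defining property of a standard basis is generated by the $\init_{\preceq}(f_i)$; hence $\init_{\preceq}(r)$ is divisible by some $\init_{\preceq}(f_i)$, contradicting the conclusion of the division algorithm. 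Therefore $r = 0$, so $f = \sum_i g_i f_i \in J$. Since $f \in I$ was arbitrary, $I \subseteq J$, and the reverse inclusion is clear since each $f_i \in I$; thus $\mathcal{B}$ generates $I$.

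The only point requiring a little care — and the closest thing to an obstacle — is the well-definedness of ``$\init_{\preceq}(r) \in \init_{\preceq}(I)$'' and the divisibility statement on monomials of $r$: here one uses that $\preceq$ is a well-ordering (Lemma~\ref{wellord}), so that every nonzero element of $K[[\Gamma]]$ has a genuine least monomial with nonzero coefficient, making $\init_{\preceq}$ and $\init_{\preceq}(I)$ meaningful, and that the division algorithm really does terminate (in the limit sense of its proof) in a remainder none of whose monomials is divisible by any $\init_{\preceq}(f_i)$. Granting Proposition~\ref{L:division} and Lemma~\ref{wellord}, which are available in the excerpt, the argument is otherwise entirely formal.
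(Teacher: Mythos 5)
Your proof is correct and is exactly the standard Gr\"obner/standard-basis argument that the paper implicitly invokes (the paper labels this proposition ``standard'' and omits a proof). You divide $f \in I$ by $\mathcal{B}$, observe the remainder $r$ lies in $I$, and derive a contradiction from $\init_{\preceq}(r)$ being both in $\init_{\preceq}(I)$ (hence divisible by some $\init_{\preceq}(f_i)$, since a monomial lies in a monomial ideal of $K[[\Gamma]]$ only if it is divisible by one of the generating monomials) and not divisible by any $\init_{\preceq}(f_i)$. The appeal to Lemma~\ref{wellord} for well-ordering and to Proposition~\ref{L:division} for the division algorithm is precisely what makes the formal-power-series version go through, so no gaps remain.
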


The next proposition shows that a standard basis induces a well-defined 
normal form for any element of $K[[\Gamma]] / I$. It corresponds to the remainder
of the division by this basis. 

\begin{proposition} \label{P:basisquot}
  Let $\preceq$ be a monomial ordering on $\Gamma$ and $I$ be an ideal of 
  $K[[\Gamma]]$. Suppose that $(f_1,\dotsc, f_p)$ is an associated standard basis.
  Consider the set $\Gamma_{\preceq}(I) \subset \Gamma$ of exponents of the 
  monomials belonging to the monomial ideal $\init_{\preceq}(I)$. Then,
  $\init_{\preceq}(I)$ is generated as a semigroup ideal by the exponents of the 
  initial monomials $\init_{\preceq}(f_1) ,\dotsc, \init_{\preceq}(f_p)$. Every
  element of $K[[\Gamma]] / I$ has a unique representative as a series whose 
  monomials have exponents in the complement $\Gamma \setminus 
  \Gamma_{\preceq}(I)$. This normal form is the remainder of the division 
  algorithm by $(f_1,\dotsc, f_p)$. 
\end{proposition}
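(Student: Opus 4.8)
The plan is to reduce everything to the division algorithm (Proposition~\ref{L:division}) and to the dictionary between monomial ideals of $K[[\Gamma]]$ and semigroup ideals of $\Gamma$. The first point to record is that $\init_\preceq(I)$ is a monomial ideal, being generated by the single monomials $\init_\preceq(f)$, $f\in I$; hence the set $\Gamma_\preceq(I)$ of exponents of its monomials is a semigroup ideal of $\Gamma$. Moreover, for any finite family of monomials $\chi^{m_1},\dots,\chi^{m_p}$ one has $\chi^m\in(\chi^{m_1},\dots,\chi^{m_p})$ if and only if $m\in\bigcup_{i}(m_i+\Gamma)$; that is, $\chi^m$ is divisible by some $\chi^{m_i}$ exactly when $m$ belongs to the semigroup ideal generated by $m_1,\dots,m_p$. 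Applying this to $\init_\preceq(f_1),\dots,\init_\preceq(f_p)$, which generate $\init_\preceq(I)$ as a ring ideal by the very definition of a standard basis, yields the first assertion of the proposition; in particular a monomial $\chi^m$ is divisible by some $\init_\preceq(f_i)$ precisely when $m\in\Gamma_\preceq(I)$.

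For existence of the normal form I would take any $f\in K[[\Gamma]]$ and apply Proposition~\ref{L:division} to $f$ and the ordered collection $(f_1,\dots,f_p)$, obtaining $f=g_1f_1+\cdots+g_pf_p+r$ with no monomial of $r$ divisible by any $\init_\preceq(f_i)$; by the previous paragraph this says exactly that every exponent occurring in $r$ lies in $\Gamma\setminus\Gamma_\preceq(I)$. Since a standard basis generates $I$ (proved in the preceding proposition), $f-r=g_1f_1+\cdots+g_pf_p\in I$, so $r$ represents the class of $f$ in $K[[\Gamma]]/I$ and has the required shape. This simultaneously proves the last sentence of the statement: the normal form is computed as the remainder of the division algorithm.

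Uniqueness is where a small argument is needed. Suppose $r_1,r_2$ are series with all exponents in $\Gamma\setminus\Gamma_\preceq(I)$ and $r_1-r_2\in I$; I want to conclude $r_1=r_2$. If not, $h:=r_1-r_2$ is a nonzero element of $I$, so $\init_\preceq(h)$ — which is well defined, because $\preceq$ is a well-ordering of $\Gamma$ by Lemma~\ref{wellord} — lies in $\init_\preceq(I)$, hence its exponent belongs to $\Gamma_\preceq(I)$. But $\supp(h)\subseteq\supp(r_1)\cup\supp(r_2)\subseteq\Gamma\setminus\Gamma_\preceq(I)$, so the exponent of $\init_\preceq(h)$ lies in $\Gamma\setminus\Gamma_\preceq(I)$, a contradiction. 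Together with existence this shows, as a bonus, that for a standard basis the remainder produced by the division algorithm is independent of the chosen order of $f_1,\dots,f_p$, unlike in the general situation of the example above.

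I do not expect any serious obstacle here; the only points requiring attention are the translation between divisibility by the $\init_\preceq(f_i)$ and membership of exponents in $\Gamma_\preceq(I)$, and the fact that, the support of a series being possibly infinite, one must use the well-ordering property to make sense of $\init_\preceq(r_1-r_2)$ and, more generally, to know that the reduction process of Proposition~\ref{L:division} converges.
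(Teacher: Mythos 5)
Your proof is correct and is exactly the standard Gröbner/standard-basis argument (dictionary between monomial ideals and semigroup ideals, existence via the division algorithm, uniqueness via the well-ordering and the definition of the initial ideal); the paper gives no proof for this proposition, describing it simply as standard, so your argument matches what the authors intend. The one point worth stressing — which you handled correctly — is that $\init_\preceq$ of a nonzero power series is well defined only because $\preceq$ well-orders $\Gamma$ (Lemma~\ref{wellord}), since the support of a series in $K[[\Gamma]]$ may be infinite.
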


In particular, the remainder of the division of any element of $I$ by a standard 
basis of $I$ is necessarily $0$. In fact, this characterizes standard basis: 

\begin{proposition} \label{charstand}
  Let $\preceq$ be a monomial ordering on $\Gamma$ and $I$ be an ideal of 
  $K[[\Gamma]]$. Take $\mathcal{B} = (f_1, \dotsc, f_p) \in K[[\Gamma]]^p$. Then 
  $\mathcal{B}$ is a standard basis of $I$ with respect to $\preceq$ if and only 
  if the remainder of the division of any element of $I$ by $\mathcal{B}$ is $0$.
\end{proposition}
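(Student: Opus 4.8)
The plan is to prove both implications of Proposition~\ref{charstand} using the division algorithm (Proposition~\ref{L:division}) and the characterization of standard bases via their initial monomials.

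\textbf{The forward direction.} Suppose $\mathcal{B} = (f_1,\dotsc,f_p)$ is a standard basis of $I$. Then by definition the monomials $\init_{\preceq}(f_1),\dotsc,\init_{\preceq}(f_p)$ generate $\init_{\preceq}(I)$ as a semigroup ideal (Proposition~\ref{P:basisquot}). Take any $g \in I$ and let $r$ be the remainder of dividing $g$ by $\mathcal{B}$, so $g = g_1 f_1 + \cdots + g_p f_p + r$ with no monomial of $r$ divisible by any $\init_{\preceq}(f_i)$. Since $g \in I$ and the $f_i \in I$, we get $r \in I$. If $r \neq 0$, then $\init_{\preceq}(r) \in \init_{\preceq}(I)$, hence $\init_{\preceq}(r)$ is divisible by some $\init_{\preceq}(f_i)$ (because these generate the semigroup ideal $\init_{\preceq}(I)$), contradicting the property of the remainder. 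Therefore $r = 0$.

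\textbf{The converse direction.} Suppose the remainder of dividing any element of $I$ by $\mathcal{B}$ is zero; we must show $\init_{\preceq}(f_1),\dotsc,\init_{\preceq}(f_p)$ generate $\init_{\preceq}(I)$. The inclusion of the ideal they generate into $\init_{\preceq}(I)$ is clear since each $f_i \in I$. For the reverse, take any nonzero $h \in I$; we want $\init_{\preceq}(h)$ to be divisible by some $\init_{\preceq}(f_i)$. Divide $h$ by $\mathcal{B}$: we obtain $h = g_1 f_1 + \cdots + g_p f_p + r$ with, by hypothesis, $r = 0$, and with $\init_{\preceq}(h) \leq \init_{\preceq}(g_i f_i)$ for all $i$ with $g_i \neq 0$. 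Since $h = \sum_i g_i f_i$, the initial monomial $\init_{\preceq}(h)$ equals the minimum of the $\init_{\preceq}(g_i f_i)$ over those $i$ with $g_i \neq 0$ — indeed this minimum is an upper bound for $\init_{\preceq}(h)$ (by the properties of the remainder) and also $\init_{\preceq}(h)$ cannot be strictly larger, since any term of $h$ lies among the terms of the $g_i f_i$. Pick an index $j$ achieving this minimum; then $\init_{\preceq}(h) = \init_{\preceq}(g_j f_j) = \init_{\preceq}(g_j)\cdot \init_{\preceq}(f_j)$, which is divisible by $\init_{\preceq}(f_j)$, as desired. Hence the initial monomials of $\mathcal{B}$ generate $\init_{\preceq}(I)$, so $\mathcal{B}$ is a standard basis.

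\textbf{Expected main obstacle.} The delicate point is the claim in the converse that $\init_{\preceq}(h) = \min_i \init_{\preceq}(g_i f_i)$ when $r = 0$. In general, for a sum $\sum_i g_i f_i$, the initial monomials could cancel, so one only gets $\init_{\preceq}(\sum_i g_i f_i) \geq \min_i \init_{\preceq}(g_i f_i)$. What saves us is that the division algorithm of Proposition~\ref{L:division} produces quotients satisfying $\init_{\preceq}(h) \leq \init_{\preceq}(g_i f_i)$ for all relevant $i$, so the minimum is at least $\init_{\preceq}(h)$; combined with the trivial inequality in the other direction (every monomial appearing in $h = \sum_i g_i f_i$ appears in some $g_i f_i$), we obtain equality. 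I would make sure this monotonicity property of the algorithm is invoked explicitly, since it is exactly what is needed to rule out the cancellation problem. The rest of the argument is a routine application of the definitions.
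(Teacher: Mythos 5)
Your proof is correct and is the standard textbook argument; the paper simply labels Proposition~\ref{charstand} as ``standard'' without writing it out, so there is no paper proof against which to compare. Both implications are handled properly. In the converse direction you correctly identify the one delicate point, namely that when $r=0$ and $h=\sum_i g_i f_i$ the equality $\init_{\preceq}(h)=\min_i\init_{\preceq}(g_i f_i)$ requires both the monotonicity guarantee from Proposition~\ref{L:division} (which gives $\init_{\preceq}(h)\preceq\init_{\preceq}(g_i f_i)$ for each relevant $i$, ruling out any unseen cancellation below $\init_{\preceq}(h)$) and the trivial observation that every monomial of $h$ must occur in some $g_i f_i$. One small implicit assumption worth making explicit: for $\mathcal{B}$ to qualify as a standard basis of $I$ at all, the $f_i$ must lie in $I$ (the paper's definition requires $\mathcal{B}\in I^p$), and you use $f_i\in I$ both to conclude $r\in I$ in the forward direction and to get the ``easy'' inclusion of ideals in the converse; without that hypothesis the converse can fail (e.g.\ $I=\{0\}$ and arbitrary $\mathcal{B}$). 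Stating this up front would tighten the argument, but the proof as written is essentially complete.
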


The previous result allows to prove the following \emph{stability} property of 
standard basis when we change the defining monomial ordering.

\begin{proposition} \label{stabmon}
  Suppose that $\mathcal{B} = (f_1, \dotsc, f_p) \in K[[\Gamma]]^p$ is a standard
  basis of $I$ with respect to the monomial ordering $\preceq$. If $\preceq'$ is a
  second monomial ordering such that $\init_{\preceq}(f_i) = \init_{\preceq'}(f_i)$
  for all $i \in \{1, \dotsc , p \}$, 
  then $\mathcal{B}$ is also a standard basis with respect to $\preceq'$. 
\end{proposition}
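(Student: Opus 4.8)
The plan is to reduce the statement to two tools already available: the characterization of standard bases in Proposition~\ref{charstand} (a finite sequence in $I$ is a standard basis for $\preceq'$ exactly when the remainder of the division of every element of $I$ by it is $0$), and the uniqueness of normal forms in Proposition~\ref{P:basisquot}. Concretely, I want to show that for an arbitrary $g\in I$ the remainder of the division of $g$ by $\mathcal{B}=(f_1,\dotsc,f_p)$, carried out with respect to $\preceq'$, vanishes.

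First I would fix $g\in I$ and apply the division algorithm (Proposition~\ref{L:division}) with respect to $\preceq'$ to the ordered datum $(g;f_1,\dotsc,f_p)$, obtaining a decomposition
$$g = g_1 f_1 + \dotsb + g_p f_p + r$$
in which no monomial of $r$ is divisible by any of $\init_{\preceq'}(f_1),\dotsc,\init_{\preceq'}(f_p)$. By hypothesis these monomials coincide with $\init_{\preceq}(f_1),\dotsc,\init_{\preceq}(f_p)$, and since $\mathcal{B}$ is a standard basis for $I$ with respect to $\preceq$, Proposition~\ref{P:basisquot} says that their exponents generate the semigroup ideal $\Gamma_{\preceq}(I)$ of exponents of monomials in $\init_{\preceq}(I)$. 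Hence the remainder condition says precisely that every monomial occurring in $r$ has exponent in the complement $\Gamma\setminus\Gamma_{\preceq}(I)$.

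Next I would observe that $r = g-(g_1 f_1+\dotsb+g_p f_p)$ lies in $I$, because $g\in I$ and each $f_i\in I$. Thus $r$ is an element of $I$ all of whose monomials have exponents outside $\Gamma_{\preceq}(I)$, so it is the normal-form representative, in the sense of Proposition~\ref{P:basisquot}, of the zero class of $K[[\Gamma]]/I$; since the zero series is another such representative, uniqueness of the normal form forces $r=0$. As $g\in I$ was arbitrary, Proposition~\ref{charstand} then yields that $\mathcal{B}$ is a standard basis for $I$ with respect to $\preceq'$.

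The argument is short once these two statements are put together; the only point that needs a word of care is that the division algorithm is being run with a \emph{different} ordering $\preceq'$, but its termination and the properties of the output are guaranteed for every local monomial ordering by Proposition~\ref{L:division} (which in turn relies on Lemma~\ref{wellord}). Note also that the conclusion is symmetric in $\preceq$ and $\preceq'$, as it should be, since the hypothesis $\init_{\preceq}(f_i)=\init_{\preceq'}(f_i)$ is symmetric.
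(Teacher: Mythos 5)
Your proof is correct and follows essentially the same route as the paper: divide an arbitrary element of $I$ by $\mathcal{B}$ with respect to $\preceq'$, observe that the remainder $r$ lies in $I$ while having no monomial in $\init_{\preceq}(I)$, conclude $r=0$, and finish via Proposition~\ref{charstand}. The only cosmetic difference is that you close the argument by invoking the uniqueness of normal forms from Proposition~\ref{P:basisquot}, whereas the paper argues directly by contradiction that $\init_{\preceq}(r)$ would have to be divisible by some $\init_{\preceq}(f_i)$; these are two phrasings of the same fact.
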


\begin{proof}
  Let $f\in I$. Divide 
  $f$ by $\mathcal{B}$ with respect to $\preceq'$. Denote by $r\in I$ the 
  remainder. By the previous proposition, it suffices to show that $r =0$. 
   
  Suppose by contradiction that this is not the case. We know by 
  Proposition~\ref{L:division} that no monomial of $r$ is divisible by any 
  monomial $\init_{\preceq'}(f_i) = \init_{\preceq}(f_i)$. But $r \in I$ and 
  $\mathcal{B}$ is a standard basis with respect to $\preceq$, 
  which implies that: 
  $$ \init_{\preceq}(r) = \sum_{i =1}^k  g_i \init_{\preceq}(f_i)$$
  for some $g_1 ,\dotsc,  g_k \in K [[ \Gamma ]]$. This shows that the monomial  
  $\init_{\preceq} (r)$ is divisible by one of the monomials 
  $\init_{\preceq}(f_i)$, which is a contradiction. 
  Thus $r=0$. It follows that $\init_{\preceq'}(f)$ is divisible by some 
  $\init_{\preceq}(f_i)$ and hence $\mathcal{B}$ is a standard basis with respect
  to $\preceq'$.
\end{proof}

The Newton polyhedron of $f$ constrains deeply the possible initial terms of 
$f \in K[[\Gamma]]$ with respect to arbitrary monomial orderings of $\Gamma$.

\begin{lemma}\label{ininewton}
  For any $f \in K[[\Gamma]]$, the exponent of the initial monomial 
  $\init_{\preceq}(f)$ is an element of the finite subset of $\Gamma$ consisting
  of the vertices of $\Newton(f)$. 
\end{lemma}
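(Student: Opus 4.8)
The plan is to show that the exponent $m_0$ of $\init_{\preceq}(f)$ must be a vertex of $\Newton(f)$, by arguing in two stages: first that $m_0 \in \Newton(f)$, and second that $m_0$ is in fact a vertex. For the first stage, I would argue by contradiction. Suppose $m_0$, the $\preceq$-least exponent occurring in $f$, is not on the Newton diagram $\Newton(f)$. Recall $\Newton(f)$ is the union of the compact faces of $\Newton^+(f) = \text{Convex hull}(\bigcup_{m:\,a_m\ne 0}(m+\check\sigma))$. Since $m_0$ appears in $f$, we have $m_0 \in \Newton^+(f)$; if $m_0$ is not on a compact face, then $m_0 = m' + c$ for some $m'$ belonging to a compact face and some nonzero $c \in \check\sigma$. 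Writing $m'$ as a convex combination of exponents $m_i$ occurring in $f$ (the vertices of $\Newton^+(f)$ are among these), one gets that $m_0$ "lies above" these $m_i$ in the direction of $\check\sigma$. The key point is then that each such $m_i$ satisfies $m_i \preceq m_0$: indeed, the relation $m_0 = (\text{convex combination of } m_i) + c$ with $c\in\check\sigma$, combined with the fact that $\preceq$ refines every weight-preorder $\preceq_w$ for $w\in\sigma$ (cf.\ the discussion before Lemma~\ref{wellord}) and with compatibility with addition, forces $m_i \preceq m_0$; since $m_0$ is the $\preceq$-minimum this gives $m_i = m_0$ for all $i$, contradicting that $m_0$ was supposed not to be on $\Newton(f)$ while the $m_i$ are vertices of it.

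More carefully, the cleanest route avoids convex combinations and uses only the semigroup structure. If $m_0$ is not a vertex of $\Newton(f)$, then either (a) $m_0$ is not on $\Newton(f)$ at all, i.e.\ $m_0 = p + c$ with $p\in\Newton^+(f)$ and $c$ in the relative interior of a ray of $\check\sigma$, so that there is an exponent $m_1$ occurring in $f$ with $m_0 \in m_1 + \check\sigma$, $m_0 \neq m_1$; or (b) $m_0$ lies on a positive-dimensional face of $\Newton(f)$, so $m_0$ is a proper convex combination of other lattice points of that face, each of which lies in $\bigcup_{a_{m}\ne 0}(m+\check\sigma)$, hence again $m_0 \in m_i + \check\sigma$ for some exponent $m_i\ne m_0$ occurring in $f$ (here I use that a point of $\Newton^+(f)$ other than a vertex can be written as $m + c$ with $a_m\ne 0$ and $c\in\check\sigma$). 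In either case we have produced an exponent $m'$ with $a_{m'}\ne 0$ and $m_0 = m' + c$, $c\in\check\sigma\cap M(\Gamma)\setminus\{0\} \subseteq \Gamma$ (using that $\sat(\Gamma) = \check\sigma\cap M(\Gamma)$; if $c$ is not itself in $\Gamma$ one replaces it by a positive multiple and argues with $Nm_0$, or one notes $m_0 - m' \in \Gamma$ directly since both are in $\Gamma$ and $\Gamma$ embeds in its lattice — in fact $c = m_0 - m'$ and since $\Newton$ reasoning only used $c\in\check\sigma$, and $m_0,m'\in\Gamma$, we need the extra input that $m' \preceq m_0$). The decisive fact: since $c\in\check\sigma$ and $\preceq$ is required to make $0$ the least element, axiom (i) of Definition~\ref{D:locord} together with compatibility with addition (axiom (iii)) gives $n \preceq n + q$ for every $n,q\in\Gamma$; applied with $n = m'$, $q = c$ (when $c\in\Gamma$) this yields $m' \preceq m' + c = m_0$. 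If $c\notin\Gamma$ but $c\in\check\sigma$, pick $N\ge 1$ with $Nc\in\Gamma$; then $Nm' \preceq Nm' + Nc = N m_0$, and by axiom (iii) and totality this forces $m'\preceq m_0$ as well (if $m_0\prec m'$ then $Nm_0 \prec Nm'$). Since $m_0$ is the $\preceq$-least exponent occurring in $f$, we get $m_0 \preceq m'$, hence $m_0 = m'$ by antisymmetry, contradicting $m'\ne m_0$. This proves $m_0$ is a vertex of $\Newton(f)$, and the set of vertices is finite because $\Newton^+(f)$ is a finite rational polyhedron (as noted in the text after the definition of the extended Newton diagram).

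The main obstacle I anticipate is handling the case $c\in\check\sigma\setminus\Gamma$ cleanly, i.e.\ passing from a weight/convexity statement about $\Newton^+(f)$ in $M(\Gamma)\otimes\R$ back to a statement about the monomial ordering $\preceq$, which is only defined on $\Gamma$. The resolution is exactly the "clear denominators" trick above: $\preceq$ extends to a total preorder on $M(\Gamma)$ compatible with addition (one checks $m\preceq m'$ iff $Nm\preceq Nm'$ for one, equivalently all, $N\ge 1$), and in that extended sense $n\preceq n+c$ for all $c\in\check\sigma\cap M(\Gamma)$, since $c$ is a nonnegative rational combination of elements of $\Gamma$. Everything else is routine polyhedral geometry: a point of a polyhedron $P$ with recession cone $\check\sigma$ that is not a vertex of the union of compact faces is expressible as $m + c$ with $m$ a point of $P$ occurring with a nonzero coefficient and $c\in\check\sigma$, nonzero unless the point is already a compact-face point, and in the latter case it is a convex combination of vertices of compact faces, which are among the exponents of $f$.
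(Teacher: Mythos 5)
There is a genuine gap: both branches of your case analysis rely on the claim that a non-vertex point $m_0$ of $\Newton^+(f)$ can always be written as $m' + c$ with $m'$ an exponent of $f$ (that is, $a_{m'}\neq 0$), $m' \neq m_0$, and $c \in \check\sigma$. This is false. Take $\Gamma = \N^2$ and $f = \chi^{(3,0)} + \chi^{(0,3)} + \chi^{(2,2)}$; the point $m_0=(2,2)$ lies in the interior of $\Newton^+(f)$ (write $(2,2) = \tfrac{1}{2}(3,0)+\tfrac{1}{2}(0,3)+(\tfrac{1}{2},\tfrac{1}{2})$), so it is not a vertex and not on $\Newton(f)$, yet $(2,2)\notin (3,0)+\check\sigma$ and $(2,2)\notin (0,3)+\check\sigma$ — neither coordinate of $(2,2)$ dominates $3$. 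So no single exponent $m'$ with $m_0 - m' \in \check\sigma\setminus\{0\}$ exists, and the inequality $m'\preceq m_0$ you want to derive from ``adding a nonnegative element'' is not available. The same defect appears in your case (b): a point in the relative interior of a positive-dimensional face of $\Newton(f)$ is a convex combination of its vertices, but it need not dominate any single vertex in the $\check\sigma$-partial order (e.g.\ $(1,1)$ on the segment between $(2,0)$ and $(0,2)$). The auxiliary claim in your first paragraph — that \emph{each} $m_i$ in the convex combination satisfies $m_i\preceq m_0$ — is also false; only some vertex need be $\preceq m_0$.

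The correct mechanism, which you gesture at but then deliberately abandon, is the averaging argument through the $\preceq$-minimal vertex. After extending $\preceq$ to a total order on $M(\Gamma)_\Q$ compatible with the $\Q$-vector space structure (exactly the clear-denominators construction you describe, which is fine), one first produces $n\in\Newton(f)\cap M(\Gamma)_\Q$ with $m_0-n\in\check\sigma$, hence $m_0\succeq n$; then one writes $n=\sum_j p_j v_j$ as a convex combination of vertices of the face of $\Newton(f)$ containing $n$ and observes that, with $v_0$ the $\preceq$-minimal vertex of that face, $n = \sum_j p_j v_j \succeq \sum_j p_j v_0 = v_0$ because each $v_j - v_0 \succeq 0$, hence $\sum_j p_j(v_j-v_0)\succeq 0$. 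Combining gives $m_0\succeq v_0$ and hence $m_0 = v_0$. Your plan to ``avoid convex combinations and use only the semigroup structure'' cannot succeed: there is no single exponent that plays the role of $v_0$ via a $\check\sigma$-domination, so the convex-combination comparison is essential.
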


\begin{proof}
  Denote by $V(f)$ the set of vertices of $\Newton^+(f)$ and by $m_0$ the exponent
  of $\init_{\preceq}(f)$. There exists $n \in \Newton(f) \cap M(\Gamma)_{\Q}$ with 
  $m_0 - n \in \check{\sigma}(\Gamma)$. Indeed, take a half-line starting from $m_0$
  and going to infinity inside $\Newton^+(f)$ in a rational direction (that is, 
  in direction of an element of $\Gamma$). Define then $n$ as the intersection 
  of the boundary of $\Newton^+(f)$ with the opposite half-line. We have
  $n = m_0$ if and only if $m_0$ belongs to $\Newton(f)$.
    
  Consider now the canonical extension of $\preceq$ to the whole rational 
  vector space $M(\Gamma)_{\Q}$. We denote this extension by the same symbol
  $\preceq$. It can be constructed in the same way as we construct the extension to 
  $\Q$ of the usual order on $\N$: extend it first to $M(\Gamma)$ by setting  
  $m_1 - m_2 \succ 0 \Leftrightarrow m_1 \succ m_2$ for any $m_1, m_2 \in \Gamma$,
  then to $M(\Gamma)_{\Q}$ by setting $\lambda \cdot m \succ 0$ for any 
  $\lambda \in \Q_+^*$ and any $m \in M(\Gamma)$ such that $m \succ 0$. It is a 
  routine exercise to verify that we get like this a well-defined total order on 
  $M(\Gamma)_{\Q}$. 
    
  Let us come back to the exponents $m_0 \in \Gamma$ and to 
  $n\in  \check{\sigma}(\Gamma) \cap \Gamma_{\Q}$. As, by construction, $m_0 - n$ 
  is positively proportional to an element of $\Gamma$, we get the inequality 
  $m_0 \succeq n$. 
    
  Choose now an arbitrary face $P$ of the Newton diagram $\Newton(f)$ containing 
  $n$. It is a compact convex polyhedron in $M(\Gamma)_{\R}$, with vertices in 
  $\Gamma$ and with dimension at most $\mbox{rk} (M(\Gamma)) -1$. If 
  $(v_j)_{\in J}$ is the set of its vertices, we have therefore a convex 
  expression of $n$ in terms of those vertices:
  $$ n = \sum_{j \in J} p_j \cdot v_j,  \mbox{ with } \sum_{j \in J} p_j =1 
  \mbox{ and } p_j \in [0, 1]  \mbox{ for all }  j \in J.$$
  Let $v_0$ be the minimal vertex of $P$ with respect to $\preceq$. Then, as all 
  the coefficients $p_j$ are non-negative, we deduce from the compatibility of 
  $\preceq$ with the $\Q$-vector space structure of $M(\Gamma)$ that  
  $n = \sum_{j \in J} p_j \cdot v_j \  \succeq \   
  \sum_{j \in J} p_j \cdot v_0 = v_0.$ Combining this inequality with the 
  inequality $m_0 \succeq n$ obtained before, we get $m_0 \succeq v_0$. As $m_0$ 
  is by definition the exponent of $\init_{\preceq}(f)$, we deduce that 
  $m_0 = v_0$, which proves the lemma. 
\end{proof}

\begin{corollary}  \label{L:twoorders}
  Let $I$ be an ideal of $K[[\Gamma]]$, $\preceq$ a monomial ordering on $\Gamma$,
  and $\mathcal{B}=\{f_1,\dotsc,f_k\}$ a standard basis of $I$ with respect to 
  $\preceq$. Let $\preceq'$ be a second monomial ordering which coincides with
  $\preceq$ when restricted to the finite set $\{m\in\Gamma\,|\, \ 
  \exists \ i=1,\dotsc,m \colon m\in\Newton(f_i)\}$. Then $\mathcal{B}$ is also a 
  standard basis with respect to $\preceq'$.
\end{corollary}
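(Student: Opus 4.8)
The plan is to deduce the statement directly from Proposition~\ref{stabmon}: by that proposition it suffices to show that $\init_{\preceq}(f_i)=\init_{\preceq'}(f_i)$ for every $i\in\{1,\dotsc,k\}$. Write $S$ for the finite subset of $\Gamma$ on which $\preceq$ and $\preceq'$ are assumed to coincide, i.e. $S=\bigcup_{i=1}^{k}\bigl(\Newton(f_i)\cap\Gamma\bigr)$.

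First I would fix $i$ and denote by $v_i\in\Gamma$ the exponent of $\init_{\preceq}(f_i)$ and by $v_i'\in\Gamma$ the exponent of $\init_{\preceq'}(f_i)$. By definition of the initial monomial, both $v_i$ and $v_i'$ are exponents occurring with nonzero coefficient in $f_i$. The key input is Lemma~\ref{ininewton}: applied to $f_i$ and to the orderings $\preceq$ and $\preceq'$ respectively, it shows that $v_i$ and $v_i'$ are vertices of $\Newton(f_i)$; in particular $v_i,v_i'\in S$.

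The comparison is then immediate. Since $v_i'$ is an exponent of $f_i$ and $v_i$ is, by definition, the $\preceq$-smallest exponent of $f_i$, we have $v_i\preceq v_i'$. Because $v_i,v_i'\in S$ and $\preceq,\preceq'$ agree on $S$, this gives $v_i\preceq' v_i'$. Symmetrically, $v_i'$ being the $\preceq'$-smallest exponent of $f_i$ forces $v_i'\preceq' v_i$. As $\preceq'$ is a total order, hence antisymmetric, we conclude $v_i=v_i'$, that is $\init_{\preceq}(f_i)=\init_{\preceq'}(f_i)$. Applying Proposition~\ref{stabmon} then shows that $\mathcal{B}$ is a standard basis with respect to $\preceq'$ as well.

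I do not expect a genuine obstacle here: the only step carrying any content is the appeal to Lemma~\ref{ininewton}, which confines the initial exponent of each $f_i$ to the Newton \emph{diagram} (the union of compact faces) rather than merely to the Newton polyhedron $\Newton^+(f_i)$, and this is precisely what makes $v_i$ and $v_i'$ land in the finite set $S$ where the two orderings agree. Everything else is the definition of an initial monomial together with antisymmetry of a total order.
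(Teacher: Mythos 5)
Your proof is correct and follows exactly the route taken in the paper: invoke Lemma~\ref{ininewton} to confine each $\init_{\preceq}(f_i)$ and $\init_{\preceq'}(f_i)$ to the vertices of $\Newton(f_i)$, deduce that the two initial monomials coincide, and then conclude via Proposition~\ref{stabmon}. You merely make explicit the short minimum-versus-minimum comparison that the paper leaves to the reader.
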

\begin{proof}    
  By Lemma~\ref{ininewton}, we have $\init_{\preceq}(f_i) = 
  \init_{\preceq'}(f_i)$ for all $i \in \{ 1 ,\dotsc,  p\}$, which implies the desired
  assertion by Proposition \ref{stabmon}. 
\end{proof}

As a consequence, a standard basis for a monomial order remains standard for 
conveniently defined \emph{neighboring} orders. Following Sikora \cite{S 04}, 
Boldini \cite{Bol 09} and \cite{Bol 10}, we see now that there is indeed a notion
of topology on the space of monomial orders such that standard bases are
locally constant.

Let $S$ be any set. Denote by: $$TO(S)$$ the set of all total orderings of $S$. 
One has a natural topology on it. Intuitively, given two elements $a, b \in S$ 
such that $a \prec b$ for some ordering $\preceq \in TO(S)$, then this 
strict inequality should also hold in a neighborhood of $\preceq$. Therefore, 
one is forced to declare the subsets:
 $$U_{(a,b)}: = \{ \preceq \in TO(S) \ \mid \  a\preceq b  \}$$ 
{\em open}, for all $a,b\in S$. Therefore, we endow $TO(S)$ with the topology 
generated by them. 
  
In the case when $S$ is a semigroup and we only take the orderings that are 
compatible with the semigroup law, this topology was defined by Sikora 
\cite{S 04}. The extension to arbitrary sets was done by Boldini \cite{Bol 09}.
Sikora proved that under the additional hypothesis that $S$ is countable the 
associated topology is compact. Boldini proved the analogous fact 
for an arbitrary countable set:

\begin{proposition}[{\cite[Teorem~1.4]{Bol 09}}]
  If the set $S$ is countable, then the space $TO(S)$ is compact.
\end{proposition}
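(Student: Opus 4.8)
The plan is to embed $TO(S)$ into a product space $\{0,1\}^{S\times S}$, recognize it as a closed subset, and apply Tychonoff's theorem — exactly in the spirit of the earlier compactness proofs in this paper (Proposition~\ref{compactness} for valuation spaces). First I would encode a total ordering $\preceq$ on $S$ by its characteristic function $\chi_\preceq \colon S\times S \to \{0,1\}$, where $\chi_\preceq(a,b) = 1$ if and only if $a\preceq b$. This gives an injection $TO(S)\hookrightarrow \{0,1\}^{S\times S}$. The key observation is that the topology on $TO(S)$ generated by the subbasic open sets $U_{(a,b)} = \{\preceq \,|\, a\preceq b\}$ coincides with the subspace topology coming from the product topology on $\{0,1\}^{S\times S}$ (where $\{0,1\}$ carries the discrete topology): indeed $U_{(a,b)}$ is the preimage of $\{1\}$ under the $(a,b)$-coordinate projection, and conversely the preimage of $\{0\}$ under that projection is the complement of $U_{(a,b)}$ — but this complement is also open, since $a\not\preceq b$ is equivalent to $b\preceq a$ together with $a\neq b$, i.e.\ $\{0,1\}^{S\times S}\setminus U_{(a,b)} = U_{(b,a)}$ whenever $a\neq b$ (and is empty when $a=b$). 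So the two topologies agree on the nose.

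Next I would verify that the image of $TO(S)$ is \emph{closed} in $\{0,1\}^{S\times S}$. A function $f\colon S\times S\to\{0,1\}$ lies in the image precisely when it is the characteristic function of a relation that is reflexive, antisymmetric, transitive, and total. Each of these is a conjunction of conditions each involving only finitely many (at most three) elements of $S$: reflexivity says $f(a,a)=1$ for all $a$; antisymmetry says that for all $a\neq b$ not both $f(a,b)=1$ and $f(b,a)=1$; totality says that for all $a,b$ at least one of $f(a,b), f(b,a)$ equals $1$; transitivity says that for all $a,b,c$, if $f(a,b)=1$ and $f(b,c)=1$ then $f(a,c)=1$. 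Since $\{0,1\}$ is discrete (hence Hausdorff) and each coordinate projection is continuous, each such elementary condition defines a closed subset of $\{0,1\}^{S\times S}$, and the image of $TO(S)$ is the intersection of all of them, hence closed.

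Finally, when $S$ is countable, the index set $S\times S$ is countable, so $\{0,1\}^{S\times S}$ is a countable product of the compact space $\{0,1\}$, hence compact by Tychonoff's theorem (in this countable setting one does not even need the full axiom of choice). A closed subset of a compact space is compact, so $TO(S)$ is compact. This completes the argument.

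**The main obstacle** — really the only subtle point — is making sure the subspace topology inherited from the product genuinely matches the topology defined via the subbasis $\{U_{(a,b)}\}$; the slightly delicate part is that the product topology on $\{0,1\}^{S\times S}$ a priori also has the preimages of $\{0\}$ as subbasic opens, and one must check (as I did above, using antisymmetry and totality of the orderings being parametrized) that these add nothing new. Everything else is a routine "each axiom is a closed condition" verification plus an invocation of Tychonoff's theorem, so I would not belabor those steps. Since this is a cited result of Boldini, I would likely state it and refer the reader to \cite{Bol 09} for full details, giving only this sketch.
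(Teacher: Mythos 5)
The paper does not give its own proof of this proposition: it is imported verbatim from Boldini \cite{Bol 09} (who in turn adapts Sikora's argument from \cite{S 04}), so there is no internal proof to compare against. Your argument is correct and is exactly the expected one: it mirrors the paper's own proof of compactness of $\mathcal{V}_{\geq 0}(R)$ (embed into a product, check closedness, invoke Tychonoff) and the paper's remark comparing that proof to Zariski's compactness argument for the Riemann--Zariski space. One small notational slip: the identity you write as $\{0,1\}^{S\times S}\setminus U_{(a,b)} = U_{(b,a)}$ should be read as a relative complement inside $TO(S)$ (or its image); as a literal equality in the full product space it is false, since $U_{(b,a)}$ is a subset of $TO(S)$, not of $\{0,1\}^{S\times S}$. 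The intended meaning --- that the coordinate-$\{0\}$ half of each subbasic cylinder set restricts on $TO(S)$ to an already-available subbasic open $U_{(b,a)}$, so the two topologies agree --- is correct. One may also note that the Tychonoff step does not actually require countability of $S$; the hypothesis is inherited from the context in \cite{S 04} and \cite{Bol 09}, and your parenthetical remark that the countable case avoids full choice is a nice but optional observation.
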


Given an element $a\in S$, let $SO_a(S)$ be the subspace of $TO(S)$ consisting
of all total orderings for which the element $a$ is minimal, i.e., $a\leq b$
$\forall b\in S$.
  
\begin{proposition}[{\cite[Theorem~1.5]{Bol 09}}]
  The subspace $SO_a(S)$ is closed in $TO(S)$ for each $a\in S$. Hence, if $S$
  is countable, $SO_a(S)$ is compact.
\end{proposition}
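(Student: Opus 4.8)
The plan is to prove that $SO_a(S)$ is closed by exhibiting it as an intersection of closed sets, and then to deduce compactness directly from the previous proposition. The crucial preliminary observation will be that, although the topology on $TO(S)$ is generated by the ``one-sided'' subbasic sets $U_{(a,b)}$, each of these is in fact \emph{clopen} whenever $a\ne b$. To see this I would argue that $U_{(a,b)}$ is open by definition, while its complement in $TO(S)$ is exactly $U_{(b,a)}$: if an ordering $\preceq$ does not satisfy $a\preceq b$, then by totality $b\preceq a$, so $\preceq\in U_{(b,a)}$; conversely, if $\preceq\in U_{(b,a)}$ and $a\preceq b$ also held, antisymmetry would give $a=b$, contradicting $a\ne b$. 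Hence $U_{(a,b)}=TO(S)\setminus U_{(b,a)}$ is closed as well.

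With this in hand, closedness of $SO_a(S)$ is a formality: by definition $SO_a(S)=\{\preceq\in TO(S)\mid a\preceq b \text{ for every } b\in S\}=\bigcap_{b\in S} U_{(a,b)}$, and since $U_{(a,a)}=TO(S)$ by reflexivity this is the same as $\bigcap_{b\in S,\,b\ne a} U_{(a,b)}$, an intersection of closed subsets of $TO(S)$, hence closed. For the last sentence of the statement, if $S$ is countable then $TO(S)$ is compact by the previous proposition, and a closed subspace of a compact space is compact, so $SO_a(S)$ is compact.

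The only point requiring any care is the clopenness of $U_{(a,b)}$ for $a\ne b$, which uses totality (to cover $TO(S)$ by $U_{(a,b)}\cup U_{(b,a)}$) and antisymmetry together with the hypothesis $a\ne b$ (to make this union disjoint); once that is noted, the rest is routine.
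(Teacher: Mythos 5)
Your proof is correct and is the natural argument. The paper does not reproduce a proof here (it cites Boldini's Theorem~1.5), but the expected argument is precisely what you wrote: for $a\ne b$, totality gives $TO(S)=U_{(a,b)}\cup U_{(b,a)}$ and antisymmetry gives $U_{(a,b)}\cap U_{(b,a)}=\varnothing$, so each subbasic set $U_{(a,b)}$ is clopen; hence $SO_a(S)=\bigcap_{b\in S}U_{(a,b)}$ is closed, and compactness follows from the previous proposition together with the fact that a closed subspace of a compact space is compact.
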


Now we let $S=\Gamma$ be an affine pointed semigroup. We denote by: $$MO(\Gamma)$$ the 
set of all monomial orderings on $\Gamma$.

\begin{proposition}[{\cite[Theorem~2.4]{Bol 09}}]
  $MO(\Gamma)$ is a closed compact subset of $SO_0(\Gamma)$.
\end{proposition}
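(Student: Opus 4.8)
The plan is to realize $MO(\Gamma)$ as an intersection of closed subsets of $TO(\Gamma)$ and then to invoke the compactness results of Sikora \cite{S 04} and Boldini \cite{Bol 09} quoted above. First I would note that, being an affine semigroup, $\Gamma$ is finitely generated, hence a quotient of some $\N^k$, and in particular \emph{countable}; therefore $TO(\Gamma)$ is compact and $SO_0(\Gamma)$ is a closed, hence compact, subspace of it. Since a closed subset of a compact space is compact, and since a subset of $SO_0(\Gamma)$ which happens to be closed in the ambient space $TO(\Gamma)$ is a fortiori closed in $SO_0(\Gamma)$, it suffices to prove that $MO(\Gamma)$ is closed in $TO(\Gamma)$.

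The key elementary observation is that every subbasic set $U_{(a,b)} = \{\preceq \in TO(\Gamma) \mid a \preceq b\}$ is in fact \emph{clopen}. Indeed, if $a = b$ then $U_{(a,b)} = TO(\Gamma)$; and if $a \neq b$ then, since each $\preceq$ in question is a total order, $\neg(a \preceq b)$ is equivalent to $b \preceq a$, so the complement of $U_{(a,b)}$ is exactly $U_{(b,a)}$, which is open. Consequently any finite union or finite intersection of sets of the form $U_{(a,b)}$ is clopen, and an arbitrary intersection of such finite combinations is closed.

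Next I would unwind Definition~\ref{D:locord}: a total order $\preceq$ on $\Gamma$ belongs to $MO(\Gamma)$ if and only if (i) $0 \preceq m$ for all $m \in \Gamma$, i.e. $\preceq \in SO_0(\Gamma)$, and (iii) for every triple $m, m', n \in \Gamma$ the implication ``$m \preceq m' \Rightarrow m + n \preceq m' + n$'' holds. For a fixed triple, the set of total orders satisfying this implication equals $\bigl(TO(\Gamma) \setminus U_{(m,m')}\bigr) \cup U_{(m+n,m'+n)}$, which --- using the previous paragraph to rewrite $TO(\Gamma) \setminus U_{(m,m')}$ as $U_{(m',m)}$ when $m \neq m'$, and which is simply all of $TO(\Gamma)$ when $m = m'$ --- is clopen in $TO(\Gamma)$. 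Hence
\[
MO(\Gamma) \;=\; SO_0(\Gamma) \,\cap \bigcap_{m,m',n \in \Gamma} \Bigl( \bigl(TO(\Gamma)\setminus U_{(m,m')}\bigr) \cup U_{(m+n,m'+n)} \Bigr)
\]
is an intersection of closed subsets of $TO(\Gamma)$, hence closed, which completes the argument.

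I do not anticipate any serious obstacle; the only points requiring care are the verification that the subbasic sets $U_{(a,b)}$ are clopen --- which rests squarely on totality of the orderings --- and the bookkeeping of the degenerate case $m = m'$ in condition (iii), which is harmless since then the corresponding factor is the whole space.
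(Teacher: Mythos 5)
Your proof is correct. The paper states this proposition as a citation of Theorem~2.4 of Boldini \cite{Bol 09} without reproducing the argument, so there is no internal proof to compare against; your reasoning — that each subbasic set $U_{(a,b)}$ is clopen because totality and antisymmetry make $TO(\Gamma) \setminus U_{(a,b)} = U_{(b,a)}$ for $a \neq b$, so that the compatibility-with-addition axiom cuts out a closed subset, which combined with the closedness of $SO_0(\Gamma)$ and its compactness (via countability of the finitely generated semigroup $\Gamma$ and Tychonoff) yields the claim — is the natural route and evidently the intended one.
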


\begin{lemma}[cf. {\cite[Lemma~2.10]{Bol 09}}]
  Let $I$ be an ideal of $K[[\Gamma]]$, and $\mathcal{B}$ a finite subset of $I$.
  Then, the set of all monomial orderings $\preceq$ such that 
  $\mathcal{B}$ is a standard basis with respect to $\preceq$ is open in 
  $MO(\Gamma)$.
\end{lemma}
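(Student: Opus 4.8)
The plan is to show that if $\mathcal{B}$ is a standard basis for $I$ with respect to some $\preceq_0 \in MO(\Gamma)$, then every monomial ordering in a suitable basic neighborhood of $\preceq_0$ also makes $\mathcal{B}$ a standard basis; this exhibits the set in question as a union of open sets. The neighborhood will be cut out by finitely many of the subbasic open sets $U_{(a,b)}$, and the choice of which pairs $(a,b)$ to use is dictated by Corollary~\ref{L:twoorders}.

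First I would invoke Lemma~\ref{ininewton}: for each $f_i$ in $\mathcal{B} = \{f_1,\dotsc,f_k\}$, the exponent of $\init_{\preceq}(f_i)$ lies in the finite set $V(f_i)$ of vertices of $\Newton(f_i)$, \emph{for every} monomial ordering $\preceq$. Let $E := \bigcup_{i=1}^k V(f_i) \subseteq \Gamma$, a finite set. The key observation is that the initial monomials $\init_{\preceq}(f_1),\dotsc,\init_{\preceq}(f_k)$ depend only on the restriction of $\preceq$ to $E$ — more precisely, for each $i$, $\init_{\preceq}(f_i)$ is the $\preceq$-minimal element among the (finitely many) exponents of monomials of $f_i$ lying in $E$, and these exponents are exactly the vertices of $\Newton(f_i)$. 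So if $\preceq$ and $\preceq_0$ agree on $E$, then $\init_{\preceq}(f_i) = \init_{\preceq_0}(f_i)$ for all $i$, and by Proposition~\ref{stabmon} (or directly by Corollary~\ref{L:twoorders}), $\mathcal{B}$ is a standard basis with respect to $\preceq$.

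Next I would exhibit the set of orderings agreeing with $\preceq_0$ on $E$ as an open neighborhood of $\preceq_0$ in $MO(\Gamma)$. Since $E$ is finite, for each ordered pair $(a,b) \in E \times E$ with $a \preceq_0 b$ we have $\preceq_0 \in U_{(a,b)}$, and the finite intersection
$$ \mathcal{N} := \bigcap_{\substack{(a,b)\in E\times E \\ a \preceq_0 b}} U_{(a,b)} $$
is open in $TO(\Gamma)$, hence open in $MO(\Gamma)$. Any $\preceq \in \mathcal{N}$ satisfies $a \preceq b$ whenever $a \preceq_0 b$ (with $a,b \in E$), and since $\preceq$ is a total order this forces $\preceq$ and $\preceq_0$ to coincide on $E$. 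By the previous paragraph, $\mathcal{B}$ is then a standard basis with respect to every $\preceq \in \mathcal{N}$. Therefore the set $\{\preceq \in MO(\Gamma) \mid \mathcal{B} \text{ is a standard basis for } I \text{ w.r.t. } \preceq\}$ contains, together with each of its points $\preceq_0$, the open neighborhood $\mathcal{N}$ of $\preceq_0$, so it is open.

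**The main obstacle** is really just making precise that the initial monomials of the $f_i$ are determined by the restriction of the ordering to the finite set $E$ of Newton vertices, and that this in turn — via Proposition~\ref{charstand} and Proposition~\ref{stabmon} — suffices to conclude $\mathcal{B}$ remains a standard basis; this is exactly the content of Corollary~\ref{L:twoorders}, so the proof is essentially a repackaging of that corollary together with the description of basic open sets in $TO(\Gamma)$. No genuinely hard estimate is needed; the only care required is bookkeeping with the finitely many pairs from $E$.
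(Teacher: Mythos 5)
Your proof is correct and follows the same approach the paper indicates (it cites Boldini's argument and says to replace the support of $\mathcal{B}$ by the finite set of monomials on the Newton diagrams, then apply Corollary~\ref{L:twoorders}); you simply carry out that argument explicitly, using the slightly smaller finite set of Newton vertices guaranteed by Lemma~\ref{ininewton} together with Proposition~\ref{stabmon}, and verify directly that the orderings agreeing with a given one on a finite set form a basic open neighborhood.
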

  
\begin{proof}
  The proof is essentially the same as in \cite{Bol 09}. In view of 
  Lemma~\ref{L:twoorders}, we must only replace the support of $\mathcal{B}$
  with the set of monomials of the series from $\mathcal{B}$ lying on the union of
  Newton diagrams of elements of $\mathcal{B}$.
\end{proof}

It is not obvious from the definition that universal standard basis 
indeed exist. Nevertheless, it is an immediate consequence of the compactness of 
the space of monomial orderings:

\begin{theorem}
  Any ideal of the ring $K[[\Gamma]]$ has a universal standard basis.
\end{theorem}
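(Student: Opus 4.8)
The plan is to run a compactness argument over the space $MO(\Gamma)$ of local monomial orderings, using the two facts already established: that $MO(\Gamma)$ is compact, and that for a fixed finite subset $\mathcal{B}\subset I$ the set of orderings for which $\mathcal{B}$ is a standard basis of $I$ is open in $MO(\Gamma)$. I also use that, $K[[\Gamma]]$ being Noetherian, $I$ admits a (finite) standard basis with respect to \emph{each} given monomial ordering.

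First, for every $\preceq\,\in MO(\Gamma)$ I would fix a finite standard basis $\mathcal{B}_{\preceq}$ of $I$ with respect to $\preceq$, and set
$$ U_{\preceq} := \{\, \preceq'\ \in MO(\Gamma)\ \mid\ \mathcal{B}_{\preceq}\ \text{is a standard basis of}\ I\ \text{with respect to}\ \preceq'\,\}. $$
By the preceding lemma this set is open in $MO(\Gamma)$, and it obviously contains $\preceq$ itself. Hence $\{U_{\preceq}\}_{\preceq\,\in MO(\Gamma)}$ is an open cover of $MO(\Gamma)$, and by compactness there are finitely many orderings $\preceq_1,\dotsc,\preceq_n$ with $MO(\Gamma)=U_{\preceq_1}\cup\dotsb\cup U_{\preceq_n}$.

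Now I would set $\mathcal{U} := \mathcal{B}_{\preceq_1}\cup\dotsb\cup\mathcal{B}_{\preceq_n}$, a finite subset of $I$, and claim it is a universal standard basis. Indeed, given any $\preceq\,\in MO(\Gamma)$, it lies in some $U_{\preceq_i}$, so $\mathcal{B}_{\preceq_i}$ is a standard basis of $I$ with respect to $\preceq$; that is, the monomials $\init_{\preceq}(f)$, $f\in\mathcal{B}_{\preceq_i}$, generate the ideal $\init_{\preceq}(I)$. Since $\mathcal{U}\supseteq\mathcal{B}_{\preceq_i}$ while every $f\in\mathcal{U}$ lies in $I$ (so that $\init_{\preceq}(f)\in\init_{\preceq}(I)$), the initial monomials of the elements of $\mathcal{U}$ still generate $\init_{\preceq}(I)$. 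Thus $\mathcal{U}$ is a standard basis of $I$ with respect to $\preceq$, and as $\preceq$ was arbitrary, $\mathcal{U}$ is a universal standard basis.

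There is no genuine obstacle here beyond correctly invoking the cited facts; the only point to keep in mind is the monotonicity used in the last step, namely that enlarging a standard basis of $I$ by further elements of $I$ keeps it a standard basis, which is immediate from the definition of the initial ideal as the ideal generated by the initial monomials of \emph{all} elements of $I$.
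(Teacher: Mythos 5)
Your proof is correct and follows essentially the same route as the paper: both argue by covering the compact space $MO(\Gamma)$ with the open sets $U_{\mathcal{B}}$ of orderings for which a given finite $\mathcal{B}\subset I$ is a standard basis, extracting a finite subcover, and taking the union of the corresponding bases. You merely spell out two details the paper leaves implicit, namely that each ordering lies in some $U_{\mathcal{B}}$ because Noetherianity guarantees a finite standard basis, and that enlarging a standard basis by further elements of $I$ preserves the standard-basis property.
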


\begin{proof}
  Our argument is similar to that of \cite[Theorem~2.14]{Bol 09}. 
  The family $U_\mathcal{B}$, where $\mathcal{B}$
  runs over all finite subsets of $I$, forms an open covering of the space 
  $MO(\Gamma)$. Since $MO(\Gamma)$ is compact, we can choose a finite subcovering
  $U_{\mathcal{B}_1} ,\dotsc, U_{\mathcal{B}_k}$. We conclude that the union 
  $\cup_{i=1}^{k} \mathcal{B}_i$ is a universal standard basis of $I$.
\end{proof}

The following proposition will be used in the proof of Theorem~\ref{Existrop}:

\begin{proposition}\label{P:ugb}
  If $\ \mathcal{U}=\{f_1,\dotsc,f_p\}$ is a universal standard basis for an ideal
  $I\subset K[[\Gamma]]$ and $w\in\sigma(\Gamma)$, then $\init_w(\mathcal{U})=
  \{\init_w(f_1) ,\dotsc, \init_w(f_p)\}$ is a universal standard basis for the 
  initial ideal $\init_w(I)$.
\end{proposition}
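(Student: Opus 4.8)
The plan is to show two things simultaneously: that $\init_w(\mathcal{U})$ generates $\init_w(I)$, and that for any local monomial ordering $\preceq$ refining $\preceq_w$, the set $\init_w(\mathcal{U})$ is a standard basis of $\init_w(I)$ with respect to $\preceq$; together with the observation (to be checked first) that \emph{every} local monomial ordering on $\Gamma_\tau$ refines some $\preceq_w$ with $w\in\sigma$, and that the initial ideal $\init_{\preceq}(\init_w(I))$ depends only on the restriction of $\preceq$ to the preorder classes of $\preceq_w$, which reduces the universal statement to the collection of orderings refining $\preceq_w$. The starting point is the following compatibility: if $\preceq$ is a monomial ordering refining $\preceq_w$, then for any $f\in K[[\Gamma]]$ one has $\init_\preceq(f)=\init_\preceq(\init_w(f))$, because the $\preceq$-initial monomial of $f$ lies among the monomials of minimal $\langle w,\cdot\rangle$-value, which are exactly the monomials of $\init_w(f)$. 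Consequently $\init_\preceq(I)=\init_\preceq(\init_w(I))$: the inclusion ``$\subseteq$'' follows monomial by monomial from the previous identity applied to elements of $I$, and ``$\supseteq$'' holds since each generator $\init_w(f)$ of $\init_w(I)$ is itself a $K[\Gamma_\tau]$-combination of elements obtained from $I$, whose $\preceq$-initial monomials lie in $\init_\preceq(I)$.

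Next I would use the hypothesis that $\mathcal{U}$ is a universal standard basis of $I$: for the ordering $\preceq$ refining $\preceq_w$, the initial monomials $\init_\preceq(f_1),\dots,\init_\preceq(f_p)$ generate $\init_\preceq(I)$. By the compatibility identity, $\init_\preceq(f_i)=\init_\preceq(\init_w(f_i))$, so the set $\{\init_\preceq(\init_w(f_i))\}$ generates $\init_\preceq(\init_w(I))$. That is precisely the statement that $\init_w(\mathcal{U})=\{\init_w(f_1),\dots,\init_w(f_p)\}$ is a standard basis of $\init_w(I)$ with respect to $\preceq$. Since $\preceq$ was an arbitrary monomial ordering refining $\preceq_w$, and since (by the reduction in the first paragraph) every monomial ordering on $\Gamma_\tau$ gives the same initial ideal of $\init_w(I)$ as some ordering refining $\preceq_w$, it follows that $\init_w(\mathcal{U})$ is a standard basis with respect to \emph{every} local monomial ordering, i.e. a universal standard basis. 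One also needs, as a preliminary, that $\init_w(I)$ really is an ideal of the correct ring $K[[\Gamma_\tau]]$ (for $w$ in a face stratum) or $K[\Gamma]$ (for $w\in\mathring\sigma$); this is already recorded in Definition~\ref{Trunc} and the surrounding discussion, so I would just invoke it.

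The main obstacle I anticipate is the bookkeeping around \emph{which} ordering refinements to use and the claim that it suffices to test universality only on orderings refining $\preceq_w$. One must argue that a monomial ordering $\preceq'$ on $\Gamma_\tau$ that does \emph{not} refine $\preceq_w$ still produces, for the \emph{already homogeneous-in-}$w$ ideal $\init_w(I)$, the same initial ideal as some refining ordering: the point is that all monomials appearing in any element of $\init_w(I)$ lie in a single $\langle w,\cdot\rangle$-level set (when we work inside $K[[\Gamma_\tau]]$ the relevant level is $0$, since $w\in\tau^\perp$ on that stratum), so the $w$-preorder is trivial on the relevant monomials and any monomial ordering vacuously ``refines'' it there. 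Spelling this out carefully — distinguishing the case $w\in\mathring\sigma$ (where $\init_w(f)\in K[\Gamma]$ is a polynomial and one works with Gr\"obner rather than standard bases, but the same argument applies) from the case $w$ in a proper face stratum — is the one place where some care is needed; the algebra of initial forms is then routine. A clean alternative, which I would mention, is to invoke Corollary~\ref{L:twoorders}: since the Newton diagrams of the $\init_w(f_i)$ are finite, only finitely many monomial comparisons matter, and the refining orderings already realize all possible such comparisons.
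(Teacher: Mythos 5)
Your overall route matches the paper's: form the composite ordering $\preceq_{w,\alpha}$ refining $\preceq_w$, use the identity $\init_{\preceq_{w,\alpha}}(f)=\init_{\preceq_\alpha}(\init_w(f))$, invoke universality of $\mathcal{U}$, and (implicitly, in your ``$\supseteq$'' inclusion) Lemma~\ref{L:initforms}, which the paper cites explicitly. The gap is in your third paragraph. You claim that ``all monomials appearing in any element of $\init_w(I)$ lie in a single $\langle w,\cdot\rangle$-level set,'' locating that level at $0$ because $w\in\tau^\perp$. But Proposition~\ref{P:ugb} concerns a \emph{finite} weight $w\in\sigma(\Gamma)$: there is no stratum $\tau$ in play, $\init_w(I)$ lives in $K[[\Gamma]]$, and a general element of it is an arbitrary $K[[\Gamma]]$-linear combination of the $w$-homogeneous generators $\init_w(f)$, hence has monomials at many $w$-levels. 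So the $w$-preorder is not trivial on $\init_w(I)$, the claimed vacuous refinement fails, and the appeal to Corollary~\ref{L:twoorders} as stated also does not close the gap, since $\preceq'$ and the refining ordering $\preceq_{w,\preceq'}$ need not agree on the \emph{union} of the Newton diagrams of the $\init_w(f_i)$ — they agree within each one, but not across different $w$-levels.

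The correct fix is close to your alternative but should use Proposition~\ref{stabmon} rather than its Newton-diagram Corollary~\ref{L:twoorders}. Each \emph{generator} $\init_w(f_i)$ is $w$-homogeneous, so its monomials are mutually tied under $\preceq_w$; consequently $\init_{\preceq'}(\init_w(f_i))=\init_{\preceq_{w,\preceq'}}(\init_w(f_i))$ for \emph{any} monomial ordering $\preceq'$. Once your first two paragraphs establish that $\init_w(\mathcal{U})$ is a standard basis of $\init_w(I)$ with respect to the refining ordering $\preceq_{w,\preceq'}$, Proposition~\ref{stabmon} — which only requires equality of the initial terms of the basis elements, not agreement on whole Newton diagrams — transfers the standard-basis property to $\preceq'$. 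Drop the $\Gamma_\tau$ discussion entirely, cite Lemma~\ref{L:initforms} where it is used, and replace the level-set claim by this homogeneity-of-generators argument.
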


\begin{proof}
  We have to show that $\init_w(\mathcal{U})$ is a standard basis of $\init_w(I)$
  for any monomial ordering. Let $g\in\init_w(I)$, and $\preceq_\alpha$ be a monomial
  ordering on $\Gamma$. Consider the ordering $\preceq_{w,\alpha}$ which is 
  defined by comparing the monomials first by $\preceq_w$, and then by 
  $\preceq_\alpha$. Clearly the initial terms of $\init_w(f_1)  ,\dotsc,
  \init_w(f_p)$, $g$ with respect to $\preceq_\alpha$ and with respect to 
  $\preceq_{w,\alpha}$ coincide. On the other hand, since $\mathcal{U}$ is 
  universal, $\init_{w,\alpha}(g)$ is divisible by at least one of 
  $\init_{w,\alpha}(f_i)$ (we use here also the fact that for each 
  $g\in\init_w(I)$ there exists $f\in I$ such that $\init_w(g)=\init_w(f)$, see 
  Lemma~\ref{L:initforms} below). This concludes our proof.
\end{proof}

\begin{lemma}\label{L:initforms}
  Suppose that $w \in \sigma(\Gamma)$. Then, for all $h \in \init_w I$, 
  there exists $f \in I$ with $\init_w h = \init_w f$. 
\end{lemma}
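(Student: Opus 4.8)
The plan is to lift a given element $h$ of $\init_w I$ to an element $f \in I$ with the same $w$-initial form, by a bookkeeping argument with the grading of $K[[\Gamma]]$ by the linear functional $\langle w,\cdot\rangle$. Call a non-zero series $g \in K[[\Gamma]]$ \emph{$w$-homogeneous of degree $e$} if $\langle w,m\rangle = e$ for every monomial $\chi^m$ occurring in $g$, and for arbitrary $g$ write $g^{(e)}$ for the sub-sum of the monomials of $g$ of $w$-degree $e$, so that $g^{(w(g))} = \init_w(g)$. I would first reduce to a finite situation: since $K[[\Gamma]]$ is a Noetherian domain (Corollary~\ref{compldom}) and $\init_w I$ is generated by $\{\init_w g \mid g \in I\}$, there are $f_1,\dots,f_k \in I$, which we may take non-zero, with $\init_w I = (\init_w f_1,\dots,\init_w f_k)$; each $\init_w f_i$ is $w$-homogeneous, of degree $d_i := w(f_i)$.

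Two elementary facts drive the argument. Because $\langle w,\cdot\rangle$ is additive and $K[[\Gamma]]$ is a domain, the $w$-order behaves like a valuation: for non-zero $a,b$ one has $w(ab) = w(a)+w(b)$ and $\init_w(ab) = \init_w(a)\,\init_w(b)$. And if $g$ is $w$-homogeneous of degree $e$, then $g\cdot\init_w(f_i)$ is $w$-homogeneous of degree $e+d_i$, since every monomial of this product is a sum of a monomial of $g$ with one of $\init_w(f_i)$ and monomials of distinct $w$-degree cannot cancel one another.

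Now, assuming $h \neq 0$ (the case $h = 0$ being trivial), I would write $h = \sum_{i=1}^{k} g_i\,\init_w(f_i)$ with $g_i \in K[[\Gamma]]$ and replace each $g_i$ by its homogeneous component $g_i' := g_i^{(w(h)-d_i)}$. The error term $(g_i - g_i')\,\init_w(f_i)$ carries no monomial of $w$-degree $w(h)$: such a monomial would force a monomial of $w$-degree $w(h)-d_i$ in $g_i - g_i'$, and there is none. Hence, taking the homogeneous part of $w$-degree $w(h)$ of the identity $h = \sum_i g_i'\,\init_w(f_i) + \sum_i (g_i-g_i')\,\init_w(f_i)$ yields $\init_w(h) = \sum_i g_i'\,\init_w(f_i)$. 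Finally I would set $f := \sum_{i} g_i'\,f_i \in I$: for each $i$ with $g_i'\neq 0$ the valuation property gives $w(g_i'f_i) = (w(h)-d_i)+d_i = w(h)$ and $\init_w(g_i'f_i) = g_i'\,\init_w(f_i)$, so all non-zero summands of $f$ have $w$-order $w(h)$, whence $w(f)\geq w(h)$ and the homogeneous part of $f$ of $w$-degree $w(h)$ equals $\sum_i g_i'\,\init_w(f_i) = \init_w(h) \neq 0$; therefore $w(f) = w(h)$ and $\init_w(f) = \init_w(h)$.

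The step requiring the most care is the passage to $w$-homogeneous coefficients: it is precisely this reduction that avoids the cancellation-of-leading-terms phenomenon (the ``$S$-polynomial'' obstruction) that would arise if one tried to lift the relation $h = \sum g_i\,\init_w(f_i)$ directly, so that what remains is genuinely just careful bookkeeping with the $w$-grading of $K[[\Gamma]]$ rather than any Gr\"obner-theoretic difficulty. One should also double-check the harmless technical points that $w(g)$ is attained for every non-zero $g \in K[[\Gamma]]$ (because $\langle w,\cdot\rangle$ takes only finitely many values below any bound on the finitely generated semigroup $\Gamma$) and that the homogeneous decomposition of a series is unique.
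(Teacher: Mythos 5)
Your argument is correct and is essentially the paper's own proof: both write $h$ as a finite combination $\sum h_j\,\init_w(f_j)$ with $f_j\in I$, extract the $w$-degree-$w(h)$ homogeneous component of each coefficient, and lift to $f=\sum h_{j,w(h)-w(f_j)}\,f_j\in I$, checking that the degree-$w(h)$ part of $f$ is exactly $\init_w(h)$ because each $f_j$ differs from $\init_w(f_j)$ only by terms of higher $w$-order. The only cosmetic difference is your detour through Noetherianness to fix a finite generating set $\{\init_w f_1,\dots,\init_w f_k\}$ of $\init_w I$, which is harmless but unnecessary since $h$ is automatically a finite combination of generators; the paper instead emphasizes (as you also note at the end) that $\im(w)\subset\R_{\geq 0}$ is discrete with $\infty$ as its only accumulation point, which is what makes the $w$-homogeneous decomposition of a series well defined.
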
   

\begin{proof}
  Since $h\in \init_w I$, there exist $h_1 ,\dotsc, h_n\in K[[\Gamma]]$ and 
  $f_1 ,\dotsc, f_n\in I$ such that:
  \begin{equation}\label{E:h}
  h=h_1\init_w(f_1)+\cdots+h_n\init_w(f_n).
  \end{equation}
  Notice that $w$, considered as a morphism of semigroups from $\Gamma$ to 
  $\R_{\geq 0}$, has a countable image with infinity as the single accumulation 
  point. Thus we may write: 
  $$\im(w)=\{\mu_0=0,\mu_1,\mu_2,\dots\},$$
  where $\mu_0<\mu_1<\mu_2<\cdots$. Now every series $g\in K[[\Gamma]]$ can be 
  decomposed into its weighted homogeneous components $g_{\mu_i}$, $w(g_{\mu_i})=
  \mu_i$:
  $$g=\sum_{i=0}^{\infty} g_{\mu_i}.$$
  Each $g_{\mu_i}$ is a $w$-weighted homogeneous series. 
  Applying such a decomposition to \eqref{E:h} and 
  comparing forms of $w$-order $w(h)$, we get: 
  $$\init_w(h)=\sum_{j=1}^{n} h_{j,w(h)-w(f_j)} \init_w(f_j),$$
  where $h_{j,w(h)-w(f_j)}$ is the $w$-homogeneous component of
  $h_j$ of order $w(h)-w(f_j)$ (it is $0$ by definition if $w(h)-w(f_j)<0$).

  Now consider the following element of $I$:  
          $$f=\sum_{j=1}^{n} h_{j,w(h)-w(f_j)} f_j.$$
  Each $f_j$, $1\leq j\leq n$, has the form: 
       $$f_j=\init_w(f_j)+(\text{terms of order }>w(f_j)).$$
  It follows that $\init_w(f)=\init_w(h)$.
\end{proof}  

\section{Tropical bases}\label{S:tbases}

As in the case of subvarieties of tori \cite[Theorem 11]{BJSST 07}, in this 
section we prove the existence of tropical bases of ideals of $K[[\Gamma]]$. These 
bases are particular systems of generators of $I$ that allow us to compute
$\ptrop(I)$ as the intersection of local tropicalizations of hypersurfaces
defined by these generators. 
\medskip

\begin{definition}\label{D:tbasis}
  Let $\Gamma$ be a pointed affine semigroup. 
  A {\bf tropical basis} of an ideal $I$ of the ring $K[[\Gamma]]$  
  is a universal standard basis 
  $\{f_1,\dotsc,f_p\}$ of $I$ such that for any $w \in \sigma(\Gamma)$, the ideal 
  $\init_w(I)$ contains a monomial if and only if one of the initial terms 
  $\{\init_w(f_1) ,\dotsc, \init_w(f_p)\}$ is a monomial. 
\end{definition}  

It is not always true that a universal standard basis is tropical:

\begin{example}
  We consider the same polynomials as the ones chosen in 
  \cite[Example 10]{BJSST 07}, but this time seen as generators of an 
  ideal of the ring $\C[[x,y,z]]$ of formal power series in three variables. 
  Namely, we take:
    $$I = (x + y + z, xy (x + y), xz (x + z), yz (y + z)).$$
  Any two of the last three polynomials are redundant as generators, but they 
  are needed in order to get a universal standard basis. We show that the four
  generators of $I$ form a universal standard basis of $I$. 
   
  Consider an arbitrary monomial ordering $\preceq$. We have to show 
  that the initial ideal $\init_{\preceq} (I)$ is generated by the initial terms
  of these four polynomials. Since the set is symmetric in $x, y, z$, it is 
  enough to study the case when  $x \prec y \prec z$. Therefore, we have to show
  that $\init_{\preceq} (I)$ is generated by the monomials $x$ and $y^2 z$   
  ($y \prec z$ implies $y^2 z \prec y z^2$).

  Which monomials are not in the ideal generated by $x$ and $y^2 z$? Only those 
  of the form $y^k, z^k$, and $y z^k$. Let us show that none of these belongs to 
  $\init_{\preceq} (I)$. 
   
  Consider first the case of $y z^k$.  If $y z^k \in \init_{\preceq} (I)$, then 
  there exist two series $f, g \in \C[[x,y,z]]$ such that $y z^k$ is the 
  $\preceq$-initial term of:
    \begin{equation}\label{speceq}
          f \cdot (x + y + z) + g \cdot (y^2 z + y z^2),   
    \end{equation}
  since $x+y+z$ and $y^2 z + y z^2$ generate $I$. 
  First, let us substitute $x=0$ (take the quotient $k[[x,y,z]]/(x)$ and 
  consider the induced monomial order on it). We get: 
  $$y z^k = \init_{\preceq}(f_0 \cdot (y + z) + g_0 \cdot (y^2 z + y z^2) )= $$
  $$= \init_{\preceq}((y + z)(f_0 + y z g_0)) = y \init_{\preceq}(f_0 + y z g_0).$$
  It follows that the initial term of $f_0$ is $z^k$,
  and hence $z^k$ has a non-zero coefficient in $f$ too.

  Therefore, when we distribute the product $f \cdot (x + y + z)$, we get the 
  monomial $x z^k$ as a term in this expansion. But $x z^k \prec y z^k$. 
  Therefore, it must cancel in \eqref{speceq}. As this monomial does not appear 
  in $g \cdot (y^2 z + y z^2)$, we see that $x z^k$ cancels only if $f$ contains 
  also the monomial $x^2 z^{k-1}$. Again, then the product $f \cdot (x + y + z)$ 
  contains $x^2 z^{k-1}$ which is less than $x z^k$ and $y z^k$. We conclude that 
  $f$ contains also $x^3 z^{k-2}$, and so on. But then we come to a contradiction,  
  because the series $f$ does not have any negative powers of $z$.

  The argument for $y^k$ and $z^k$ is similar and even easier, because we do 
  not need to pass to the quotient $k[[x,y,z]]/(x)$.

  The fact that the four polynomials are not a tropical basis is proved now exactly
  as in \cite[Example 10]{BJSST 07}. Namely, consider the weight $w = (1,1,1)$. 
  The four polynomials are equal to their initial terms with 
  respect to $w$ (they are homogeneous), therefore these initial terms are not 
  monomials. But $xyz \in I$, therefore $\init_w(I)$ contains the monomial $xyz$. 
  This shows that the four polynomials do not form a tropical basis of $I$.  
\end{example}

Therefore, we are led to ask whether tropical bases for ideals of rings of the 
form $K[[\Gamma]]$ exist necessarily. This is indeed the case:

\begin{theorem} \label{Existrop}
  Any ideal of the ring $K[[\Gamma]]$ has a tropical basis.
\end{theorem}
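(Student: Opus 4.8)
The plan is to follow the strategy used for ideals in polynomial rings (as in \cite{BJSST 07, FJT 07}) and adapt it to the formal setting, exploiting the compactness of the space of monomial orderings $MO(\Gamma)$ and the existence of universal standard bases already established. First I would start with a universal standard basis $\mathcal{U} = \{f_1,\dotsc,f_p\}$ of $I$, which exists by the previous theorem. The obstruction to $\mathcal{U}$ being a tropical basis is exactly the set of ``bad'' weights $w \in \sigma(\Gamma)$ for which $\init_w(I)$ contains a monomial but no $\init_w(f_i)$ is a monomial. The goal is to enlarge $\mathcal{U}$, using extra elements of $I$, so as to kill all such bad weights, and to argue that finitely many enlargements suffice.

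The key steps, in order, would be: (1) Fix a bad weight $w_0$. Since $\init_{w_0}(I)$ contains a monomial $\chi^{m}$, there is an element $g \in I$ with $\init_{w_0}(g) = \chi^m$ a monomial (using Lemma~\ref{L:initforms} and the structure of $\init_{w_0}(I)$ as a semigroup ideal generated by initial monomials of a standard basis of $\init_{w_0}(I)$, which by Proposition~\ref{P:ugb} is $\init_{w_0}(\mathcal{U})$ — so in fact the monomial lies in the ideal these generate; one extracts a genuine element $g\in I$ with monomial $w_0$-initial form). (2) Form the enlarged finite set $\mathcal{U} \cup \{g\}$ and pass, using the theorem on universal standard bases again (or simply completing $\mathcal{U}\cup\{g\}$ to a universal standard basis), to a universal standard basis $\mathcal{U}'$ containing $g$. (3) Observe that the set $V_{g} = \{ w \in \sigma(\Gamma) \mid \init_w(g) \text{ is a monomial} \}$ is a relatively open subset of $\sigma(\Gamma)$ (it is a union of relative interiors of faces of the Newton polyhedron fan data of $g$, by the discussion around $w$-initial forms and Lemma~\ref{ininewton}), and it contains $w_0$. (4) Cover the set $B(\mathcal{U})$ of bad weights of the current basis by such open sets $V_g$ as $w_0$ ranges over $B(\mathcal{U})$. (5) Invoke compactness: the bad set $B(\mathcal{U})$ — or more precisely a closed subset of $\sigma(\Gamma)$ containing it, obtained after restricting to a large compact slice of the cone using the observation that being bad and the values of $\init_w$ depend only on the direction of $w$ and through finitely many linear inequalities from Newton polyhedra — is compact, so finitely many $V_{g_1},\dotsc,V_{g_q}$ cover it. (6) Set $\mathcal{B}$ to be a universal standard basis containing $\mathcal{U} \cup \{g_1,\dotsc,g_q\}$; then for every $w \in \sigma(\Gamma)$ either $w$ was never bad for $\mathcal{U}$ (so some $\init_w(f_i)$ is already a monomial, hence so is an initial form in $\mathcal{B}$ or the corresponding ideal still contains it), or $w \in V_{g_j}$ for some $j$, so $\init_w(g_j)$ is a monomial in $\mathcal{B}$; conversely if some $\init_w$ of a basis element is a monomial then trivially $\init_w(I)$ contains a monomial. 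Thus $\mathcal{B}$ is a tropical basis.

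The main obstacle I expect is step (5): making precise that the ``bad locus'' is compact, and more generally organizing the topology of $\sigma(\Gamma)$ carefully enough. The subtlety is that $\sigma(\Gamma)$ is a cone, not compact, and whether $\init_w(I)$ contains a monomial is a scale-invariant condition, so one should really work on the link of the cone (a compact simplex-like set) or intersect with a compact transversal; and one must check that the sets $V_g$ are genuinely open there and that the complementary ``bad'' set is closed. Here the key technical input is that for a fixed finite set of series, the combinatorial type of $\init_w(\cdot)$ is locally constant off a finite union of rational hyperplanes (this is essentially Lemma~\ref{ininewton} plus the fan structure on $\sigma$ induced by the Newton polyhedra), together with Corollary~\ref{L:twoorders} which shows that passing between neighboring weights does not destroy the universal-standard-basis property. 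A secondary point to handle cleanly is step (1): extracting from the fact that $\init_{w_0}(I)$ contains \emph{a} monomial an actual element $g\in I$ whose $w_0$-initial form \emph{is} a monomial, which follows because $\init_{w_0}(I)$ is an ideal in a formal toric ring $K[[\Gamma_\tau]]$ generated by $\init_{w_0}(\mathcal{U})$ and one can lift a monomial in this ideal back along the construction of Lemma~\ref{L:initforms}.

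Once these points are in place the proof is short. I would end by noting that $\mathcal{B}$ is automatically a universal standard basis (being a superset of one, after re-completing if necessary, or by construction), so it satisfies both requirements of Definition~\ref{D:tbasis}, completing the proof.
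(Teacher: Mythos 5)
The overall strategy—start from a universal standard basis and enlarge it by elements of $I$ whose $w$-initial forms are monomials at bad weights—is the same as the paper's. Steps (1) and (3) are correct: Lemma~\ref{L:initforms} does give, for each $w_0$ with $\chi^m \in \init_{w_0}(I)$, an element $g \in I$ with $\init_{w_0}(g) = \chi^m$, and the set $V_g = \sigma \setminus \Newton^\perp(g)$ is indeed open in $\sigma$.

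However, step (5) has a genuine gap, and you yourself flagged it as the likely obstacle. The bad locus $B = \bigl(\bigcap_i \Newton^\perp(f_i)\bigr) \setminus \trop(I)$ is the \emph{relative complement} of the closed set $\trop(I)$ inside the closed set $\bigcap_i \Newton^\perp(f_i)$. It is therefore open in that closed set and in general \emph{not closed in $\sigma$}: its closure can contain points of $\trop(I)$. After slicing $\sigma$ by a compact transversal, $B$ is still not compact, so an open cover by the sets $V_g$ need not admit a finite subcover. Passing to the closure $\overline{B}$, as you suggest, does not help: any point of $\overline{B} \cap \trop(I)$ has $\init_w(I)$ monomial-free, so it lies in $\Newton^\perp(g)$ for \emph{every} $g \in I$ and is covered by \emph{no} $V_g$. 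Nor can you cover such a point by an open set disjoint from $B$, since it is a limit of bad points. So the open cover of $\overline{B}$ you would need simply does not exist, and the compactness of the compactified bad locus is of no use.

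The paper avoids this by a finer structural input than the cover by $V_g$'s. Using the universal standard basis $\mathcal{U}$, it builds the finite rational fan $\Sigma_{\mathcal{U}}$ (a refinement of the local Gr\"obner fan), on whose relatively open cones $\mathring{\rho}$ the data $\init_w(f_i)$ and $\init_w(I)$ are constant. For each bad cone $\rho$, a \emph{single} auxiliary element $f_\rho = \chi^m - r$ is produced, where $r$ is the remainder of division of $\chi^m$ by $\mathcal{U}$ with respect to the ordering $\preceq_{w,w'}$. The crucial extra fact, which the compactness cover does not supply, is that this remainder $r$ is the same for all $w \in \mathring{\rho}$ (because by Proposition~\ref{P:basisquot} it depends only on $\init_{\preceq_{w,w'}}(I)$, and that monomial ideal is constant on $\mathring{\rho}$), so $\init_w(f_\rho) = \chi^m$ simultaneously for all $w \in \mathring{\rho}$. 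Finiteness then follows because $\Sigma_{\mathcal{U}}$ has finitely many cones, with no compactness needed on the weight cone. In your argument the element $g$ you extract at a bad $w_0$ is only guaranteed to have a monomial initial form at $w_0$; its Newton cone is not adapted to $\Sigma_{\mathcal{U}}$, so $V_g$ may fail to cover the cone of $\Sigma_{\mathcal{U}}$ through $w_0$, and you have no mechanism to reduce to a finite list. Supplying the paper's ``constant remainder on a Gr\"obner cone'' argument is exactly the missing ingredient.
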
  

\begin{proof}
  Starting from any universal standard basis $\mathcal{U}=\{f_1,\dotsc,f_k\}$ 
  of a given ideal $I$, 
  we shall construct a tropical basis of $I$ by adding new series to 
  $\mathcal{U}$.

  Note that the cone $\sigma= \sigma(\Gamma)$ is 
  naturally stratified by the relative interiors of its faces:
  $$\sigma=\bigsqcup_{\tau\leq\sigma} \mathring{\tau}.$$
  Furthermore, if $f\in K[[\Gamma]]$, each $w\in\sigma$ can be considered as a 
  function on the extended Newton diagram $\Newton^+(f)$ of $f$. This function 
  takes its minimal value on some face of $\Newton^+(f)$. We say 
  that this face is \emph{cut by the function} $w$. Now, we define an equivalence 
  relation on the set of vectors of the cone $\sigma$: $w\sim w'$ if and only if
  $w$ and $w'$ are contained in the same stratum $\mathring{\tau}$ and for all 
  $i$, $1\leq i\leq k$, $w$ and $w'$ cut the same face of $\Newton^+(f_i)$. The 
  reader can easily check that there are only finite number of equivalence classes 
  of $\sim$, that they give a new stratification of $\sigma$ refining the one 
  described above, and the closure of each equivalence class is a rational 
  polyhedral cone. The set of these cones is a fan that we denote by 
  $\Sigma_{\mathcal{U}}$. Moreover, it follows from Proposition~\ref{P:ugb} that 
  if $w\sim w'$, then $\init_w(I)=\init_{w'}(I)$. 
  
  Thus, $\Sigma_{\mathcal{U}}$ is a refinement of the {\em local Gr{\"o}bner fan of 
  the ideal} $I$. This notion was introduced by Bahloul and Takayama in 
  \cite{BT 04}, \cite{BT 07} for ideals of formal power series rings 
  $K[[X_1 ,\dotsc, X_n]]$ as a local 
  analog of the notion of {\em Gr\"obner fan of an ideal} of a polynomial ring 
  introduced by Mora and Robbiano \cite{MR 88}. It may be immediately extended in
  our context.

  Let $\rho$ be a cone of $\Sigma_{\mathcal{U}}$ such that for some (and thus for
  any) $w\in\mathring{\rho}$ the initial ideal $\init_w(I)$ contains a
  monomial. If $\init_w(f_i)$ is a monomial for some $f_i\in\mathcal{U}$, we do not
  add any series to $\mathcal{U}$. Assume then that none of $\init_w(f_i)$ is a 
  monomial and let $\chi^m\in\init_w(I)$ be a monomial. Choose an irrational point
  $w'\in\sigma$, so that the preoder determined by the vector $w'$ is actually a 
  monomial ordering. Let $\preceq_{w,w'}$ be a monomial ordering defined by 
  comparing the monomials first by $\preceq_w$ and then by $\preceq_{w'}$. Now, 
  divide the monomial $\chi^m$ by $\mathcal{B}$ with respect to $\preceq_{w,w'}$ 
  (see Proposition \ref{L:division}). We get an expression: 
  $$\chi^m=\sum_i g_i f_i +r.$$
  Notice that the initial monomials of $f_1 ,\dotsc, f_k$ with respect to 
  $\preceq_{w,w'}$ are independent of $w$ whenever $w\in\mathring{\rho}$. It 
  follows that the remainder $r$ is also independent of $w$ (the reason is that 
  the monomials not contained in the initial ideal $\init_{\preceq_{w,w'}}(I)$ 
  form a ``basis'' of 
  the quotient ring $K[[\Gamma]]/I$, see the proof of Lemma~\ref{L:flat}). 
  Also, since $\chi^m$ can be represented as a combination of $w$-initial forms of 
  $f_1 ,\dotsc, f_k$, the value $w(r)$ of $r$ is strictly
  greater than $w(\chi^m)=\langle w,m\rangle$, and this also holds for all
  $w\in\mathring{\rho}$. The element $f_\rho=\chi^m-r$ lives in $I$, and by 
  construction the $w$-initial form of $f_\rho$ is the monomial $\chi^m$, for any 
  $w\in\mathring{\rho}$. Adding to $\mathcal{U}$ all the series of the form $f_\rho$, 
  $\rho\in\Sigma_{\mathcal{U}}$, as described above, we get a tropical basis 
  for $I$.
\end{proof}

Finally, we generalize the notion of tropical basis so that it allows also to study 
the initial ideals corresponding to arbitrary, not necessarily finite, vectors
$w\in\overline{\sigma}(\Gamma)$. If $\tau$ is a face of $\sigma(\Gamma)$, 
we consider the \emph{$\tau$-truncation} $I_\tau$ of the ideal $I$ 
(see Definition~\ref{Trunc}). It is easy to see that any element of $I_\tau$ 
is a truncation of some element of $I$. For each $\tau$, let us choose a finite 
set $\mathcal{B}_\tau$ of elements of $I$ such that the set of truncations of 
$\mathcal{B}_\tau$ is a tropical basis for $I_\tau$. Setting:
$$\mathcal{B}=\bigcup_{\tau\leq\sigma} \mathcal{B}_\tau,$$
we get a finite subset of $I$ such that its truncation in every 
$K[[\Gamma_{\tau}]]$ is a tropical basis of $I_\tau$. We turn this property 
into a definition:

\begin{definition} \label{extropbas}
A finite subset of an ideal $I$ of $K[[\Gamma]]$, such that its truncation in every 
ring $K[[\Gamma_{\tau}]]$ for varying faces $\tau$ of $\sigma(\Gamma)$ 
is a tropical basis of $I_\tau$
is called an {\bf extended tropical basis} of $I$. 
\end{definition}

An extended tropical basis $\mathcal{B}$ may be characterized also by the 
property that for any face $\tau$ of $\sigma$ and any 
$w\in\overline{\sigma} \cap (N/ N_{\tau})_{\R}$, the initial ideal $\init_w(I)$ 
(considered as an ideal of the ring $K[[\Gamma_{\tau}]]$) contains a 
monomial if and only if one of the initial forms $\init_w(f)$, $f\in\mathcal{B}$, 
is a monomial.

\medskip
\section{The local finiteness theorem}\label{S:lstruct}

Our main goal here is to describe the piecewise-linear structure of the local 
tropicalization. We were not able to prove this fact in full generality. 
We could do this only for quotient rings of the ring of formal
power series $K[[\Gamma]]$ over a pointed affine semigroup $\Gamma$ (see Theorem 
\ref{T:main}) and for another related class of morphisms (see Theorem \ref{T:moregen}).

\medskip
In this section we keep the assumption that $\Gamma$ is an affine pointed semigroup.
As usual, $\sigma = \sigma(\Gamma)$. 
Recall from Remark \ref{specnotloc} that we denote:
$$\ptrop(I) := \ptrop(\gamma) \mbox{ and }\nntrop(I) : = \nntrop(\gamma)$$
if  $I$ is an ideal of the ring $K[[\Gamma]]$ and 
$\gamma\colon\Gamma\to K[[\Gamma]]/I$ is the natural semigroup morphism.
We start proving that the definitions through extensions of valuations and 
initial ideals lead to the same concept of local tropicalization for the canonical 
morphism of semigroups $(\Gamma, +) \to (K[[\Gamma]], \cdot)$. The following
result plays an essential role in the proof of Theorem~\ref{Tropinit}.

\begin{theorem}[{\cite[Corollary 1]{Bm 71}}]\label{T:Bergman}
Let $R$ be a commutative ring with unit and $v$ a valuation on $R$. 
Let $I$ be an ideal of $R$ and $S$ a multiplicative subsemigroup of $(R, \cdot)$ 
such that there is no $g\in S$, $f\in I$ satisfying $v(g)=v(f)< v(f- g)$. 
Then, there exists a valuation $v'\geq v$ on $R$ such that 
$v'|_I=+\infty$, $v'|_S=v|_S$.
\end{theorem}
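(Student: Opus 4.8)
The plan is to obtain $v'$ as a maximal element, for the pointwise order, of the poset $\mathcal{P}$ of all valuations $w$ on $R$ with $w \geq v$, $w|_S = v|_S$, and such that the separation hypothesis persists for $w$ (no $g \in S$, $f \in I$ with $w(g) = w(f) < w(f-g)$), and then to show that maximality forces $v'|_I = +\infty$. The hypothesis says precisely $v \in \mathcal{P}$. First I would check that $\mathcal{P}$ is closed under suprema of chains: each valuation axiom constrains at most two elements at a time, and suprema along a directed family commute with $\min$ and $+$, so the pointwise supremum $w$ of a chain is again a valuation, clearly with $w \geq v$ and $w|_S = v|_S$; and a short computation — using Lemma~\ref{imcomp} to evaluate $w_\alpha(f - g)$ for the indices $\alpha$ with $w_\alpha(f)$ close to $w(f) = w(g)$ — shows the separation hypothesis survives the passage to $w$. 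Zorn's Lemma then yields a maximal $v' \in \mathcal{P}$.

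The heart of the argument is to show $v'|_I = +\infty$. Suppose $f_0 \in I$ has $v'(f_0) < \infty$; I would try to produce $v'' \in \mathcal{P}$ with $v'' > v'$, contradicting maximality. The first-guess candidate is $v''(x) := \sup\{v'(x + a) : a \in I\}$, which satisfies $v'' \geq v'$, $v''|_I = +\infty$ (take $a = -x$, as $I = -I$), $v''(0) = \infty$, $v''(x + y) \geq \min(v''(x), v''(y))$, and $v''(xy) \geq v''(x) + v''(y)$; from the latter and $v''(1) \geq v'(1) = 0$ one gets $v''(1) = 0$ (at worst $v''(1) \le 0$, and actually $=0$ once $1 \in S$, which holds in the cases of interest, e.g. $S = \chi^{\Gamma}$). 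Crucially, $v''|_S = v|_S$: if some $g \in S$ satisfied $v'(g + a) > v'(g)$ for an $a \in I$, then by Lemma~\ref{imcomp} applied with $f := -a$ we would get $v'(g) = v'(f) < v'(f - g)$, the configuration ruled out by $v' \in \mathcal{P}$. The only axiom that can fail for $v''$ is the reverse inequality $v''(xy) \leq v''(x) + v''(y)$, so the real work is to repair this: to find an honest valuation in $\mathcal{P}$ that still swallows some element of $I$ missed by $v'$.

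For the repair I would first localize $R$ at the multiplicative set generated by $1$ and $\{s \in S : v(s) < \infty\}$; this turns every finite-value element of $S$ into a unit, making the constraint $w|_S = v|_S$ automatic for every $w \geq v$, and so reduces the problem to finding $v'' \geq v'$ with $I \subseteq \mathrm{home}(v'')$. The separation hypothesis for $v'$ now guarantees that the image of $f_0$ is not a unit modulo $\mathrm{home}(v')$ (a unit relation $h f_0 \equiv 1$ would, after multiplying by any $s \in S$ of finite value, yield the forbidden configuration), so $I + \mathrm{home}(v')$ extends to a proper prime $\mathfrak{q} \supsetneq \mathrm{home}(v')$ avoiding that multiplicative set; and on the domain $R/\mathfrak{q}$ one produces a valuation dominating the function $\bar x \mapsto \sup\{v'(x') : x' \equiv x\}$ by the classical device, recalled at the start of Section~\ref{S:extval}, of dominating a subring of a field by a valuation ring. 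I expect the genuine difficulty to lie exactly here: arranging — presumably via a preliminary ``$v'$-adic saturation'' of $\mathfrak{q}$, itself built by Zorn's Lemma — that this dominating valuation does not raise the value of any unit, so that its pullback lies in $\mathcal{P}$ and is strictly larger than $v'$. Once that step is secured, the rest is routine bookkeeping with the valuation axioms and Lemma~\ref{imcomp}.
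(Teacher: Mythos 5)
This theorem is not proved in the paper; it is quoted as Corollary~1 of Bergman's article \cite{Bm 71}, so there is no internal proof to compare your attempt against, and I will assess it on its own merits. Your Zorn setup is correct as far as it goes: $\mathcal{P}$ is closed under suprema of chains by the Lemma~\ref{imcomp} argument you indicate (if a violating pair $(g,f)$ appears for the supremum $w$, then since $w(g)=v(g)$ is fixed and $w(f-g)>v(g)$, some $w_\alpha$ already has $w_\alpha(f-g)>v(g)$, whence Lemma~\ref{imcomp} forces $w_\alpha(f)=v(g)=w_\alpha(g)$ and $w_\alpha$ itself violates separation, a contradiction). Your verification that the naive push-forward $v''(x)=\sup\{v'(x+a):a\in I\}$ agrees with $v$ on $S$ is likewise a correct use of the separation hypothesis together with Lemma~\ref{imcomp}.

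The gap is exactly where you flag it, and the repair you sketch does not close it. After choosing a prime $\mathfrak{q}\supseteq I+\mathrm{home}(v')$ disjoint from $S'$, the equality $\varphi|_{S'}=v|_{S'}$ for $\varphi(\bar x)=\sup\{v'(x'):x'\equiv x\ (\mathrm{mod}\ \mathfrak{q})\}$ requires a separation property for the \emph{pair} $(\mathfrak{q},S')$, whereas the hypothesis is only given for $(I,S)$ and does not propagate to the strictly larger ideal $\mathfrak{q}$: there may well be $q\in\mathfrak{q}\setminus I$ and $s\in S'$ with $v'(q)=v'(s)<v'(q+s)$, which inflates $\varphi$ on $S'$. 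Moreover, a ``saturation'' that \emph{enlarges} $\mathfrak{q}$ can only make this worse, since the separation condition is anti-monotone in the ideal. Finally, the ``classical device'' of dominating a subring of a field by a valuation ring produces a priori a Krull valuation into some totally ordered group and gives only a one-sided inequality on $S$; so neither $\ri$-valuedness (required by Definition~\ref{ringval}) nor the equality $v''|_S=v|_S$ comes for free, and it is unclear that you would end up with a strict improvement of $v'$ \emph{inside} $\mathcal{P}$. Handling these issues simultaneously is precisely the content of Bergman's argument; rather than reconstruct it, you should read Theorem~1 and its Corollary~1 in \cite{Bm 71}, on which the paper relies without proof.
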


We now apply Theorem~\ref{T:Bergman} to local tropicalizations.

\begin{theorem} \label{Tropinit}
Suppose that $\Gamma$ is an affine pointed semigroup. 
Let $I$ be an ideal of the ring $K[[\Gamma]]$ and 
denote by $\gamma\colon\Gamma\to K[[\Gamma]]/I$ the natural semigroup morphism. 
Then: 
\begin{itemize}
  \item[(i)] $\ptrop(I)=\nntrop(I)\cap \overline{\sigma}^\circ$.
  \item[(ii)] 
  The following two subsets of the linear variety $L(\sigma,N)$ coincide:
  \begin{enumerate}
    \item the local nonnegative tropicalization $\nntrop(I)$;
    \item the set $T$ of those $w\in\overline{\sigma}\subset L(\sigma,N)$ 
    such that the initial ideal $\init_w(I)$ is monomial free.
  \end{enumerate}
\end{itemize}
\end{theorem}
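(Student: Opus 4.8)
The plan is to dispatch part (i) immediately and prove part (ii) by two inclusions after reducing to \emph{finite} weights. For (i): since $\Gamma$ is pointed, the maximal ideal of $K[[\Gamma]]$ is generated by the monomials $\chi^m$, $m\in\Gamma^+$, so their images $\gamma(\Gamma^+)$ generate the maximal ideal of $R=K[[\Gamma]]/I$; this is exactly the hypothesis of Proposition~\ref{P:closedtrop}(ii), which yields $\ptrop(I)=\nntrop(I)\cap\overline{\sigma}^\circ$ (and, as a bonus, that $\Phi_\gamma(\V(R,\mathfrak{m}))$ is closed in $\overline{\sigma}^\circ$). For (ii) I first reduce to finite weights: by Lemma~\ref{setdescraff} a point $w\in\overline{\sigma}$ lies in a stratum $(N/N_\tau)_\R$, is finite exactly on $\Gamma_\tau=\Gamma\cap\tau^\perp$ and infinite on the rest; a valuation $\nu$ of $R$ with $\Phi_\gamma(\nu)=w$ must then have $\overline{\chi^m}$ in its home for $m\notin\Gamma_\tau$, so by Lemma~\ref{homechar} it factors through $K[[\Gamma_\tau]]/I_\tau$ (and conversely every valuation of the latter pulls back), while $\init_w(I)=\init_w(I_\tau)$ by Definition~\ref{Trunc}. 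Hence, stratum by stratum, the equality $\nntrop(I)=T$ follows from the same equality for the pointed affine semigroup $\Gamma_\tau$, the ideal $I_\tau$ and a finite weight $w\in\sigma(\Gamma_\tau)$. So one may assume $w\in\sigma(\Gamma)$ is finite.

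The inclusion $T\subseteq\nntrop(I)$ is the heart of the matter, and I would obtain it from Bergman's extension theorem (Theorem~\ref{T:Bergman}). Fix a finite $w\in\sigma(\Gamma)$ with $\init_w(I)$ monomial-free. First one checks that the $w$-order $v_w(f)=\min\{\langle w,m\rangle:a_m\neq 0\}$ is a genuine real valuation on $K[[\Gamma]]$: the minimum is attained because $w(\Gamma)$ is a finitely generated, hence well-ordered, subsemigroup of $\R_{\geq 0}$, and $v_w$ is multiplicative because $\init_w(fg)=\init_w(f)\,\init_w(g)\neq 0$, using that $K[[\Gamma]]$ is a domain (Corollary~\ref{compldom}). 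Now apply Theorem~\ref{T:Bergman} to the ring $K[[\Gamma]]$, the valuation $v_w$, the ideal $I$, and the multiplicative semigroup $S$ of nonzero scalar multiples of monomials. A pair $g=c\chi^n\in S$, $f\in I$ with $v_w(g)=v_w(f)<v_w(f-g)$ is precisely an $f\in I$ whose $w$-initial form is the monomial $c\chi^n$, and by Lemma~\ref{L:initforms} such an $f$ exists if and only if $\init_w(I)$ contains a monomial; so our assumption says exactly that Bergman's hypothesis holds. The theorem produces $v'\geq v_w$ on $K[[\Gamma]]$ with $v'|_I=+\infty$ and $v'|_S=v_w|_S$; being $\geq v_w$ it is nonnegative, its home contains $I$, so by Lemma~\ref{homechar} it descends to a nonnegative valuation $\nu$ of $R$ with $\nu(\overline{\chi^m})=v'(\chi^m)=\langle w,m\rangle$ for all $m\in\Gamma$, i.e. $\Phi_\gamma(\nu)=w$. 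Hence $w\in\nntrop(I)$.

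For $\nntrop(I)\subseteq T$, let $w=\Phi_\gamma(\nu)$ with $\nu$ a nonnegative valuation of $R$, and suppose for contradiction that $\init_w(I)$ contains a monomial. By Lemma~\ref{L:initforms} there is $f\in I$ with $\init_w(f)=c\chi^n$, $c\in K^*$; write $f=c\chi^n+r$, where $\mu':=v_w(r)>\langle w,n\rangle$. Since $\overline{f}=0$ in $R$ we get $\overline{r}=-c\,\overline{\chi^n}$, hence $\nu(\overline{r})=\nu(\overline{\chi^n})=w(n)=\langle w,n\rangle$. On the other hand $\{m\in\Gamma:\langle w,m\rangle\geq\mu'\}$ is a semigroup ideal of the affine semigroup $\Gamma$, hence finitely generated, say by $m_1,\dots,m_s$; every monomial of $r$ has its exponent in this semigroup ideal, so $\overline{r}$ lies in the \emph{finitely generated} ring ideal $(\overline{\chi^{m_1}},\dots,\overline{\chi^{m_s}})$ of $R$ and is therefore a \emph{finite} $R$-combination of the $\overline{\chi^{m_j}}$. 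By Lemma~\ref{valmin} this forces $\nu(\overline{r})\geq\min_j\nu(\overline{\chi^{m_j}})=\min_j\langle w,m_j\rangle\geq\mu'>\langle w,n\rangle$, contradicting $\nu(\overline{r})=\langle w,n\rangle$. Hence $\init_w(I)$ is monomial-free and $w\in T$.

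The step I expect to be the main obstacle is making precise, in the Bergman direction, that ``Bergman's hypothesis holds'' is equivalent to ``$\init_w(I)$ is monomial-free'': a priori $\init_w(I)$ could contain a monomial only as a combination $\sum h_i\,\init_w(f_i)$ in which no individual $\init_w(f_i)$ is a monomial, and it is precisely Lemma~\ref{L:initforms} that excludes this. A subsidiary technical point, needed both to define $v_w$ and to carry out the truncation reduction uniformly, is that $v_w$ is a bona fide real valuation even when $w$ lies on the boundary of $\sigma$ (so that its initial forms are genuine, possibly infinite, series rather than polynomials); this rests on $K[[\Gamma]]$ being a domain and on $w(\Gamma)$ being well-ordered.
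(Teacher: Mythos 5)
Your proof is correct and follows essentially the same route as the paper: Proposition~\ref{P:closedtrop}(ii) for part (i), and Bergman's Theorem~\ref{T:Bergman} for the inclusion $T\subseteq\nntrop(I)$, with a direct valuation argument for the reverse inclusion. You are somewhat more careful than the paper in two places---reducing explicitly to finite weights via Lemma~\ref{homechar}, and handling the ultrametric inequality for infinite series through the finitely generated semigroup ideal together with Lemma~\ref{valmin} (where the paper passes over this quickly, implicitly relying on Lemma~\ref{extcompl} and Lemma~\ref{L:initforms})---but these fill in implicit steps rather than constituting a different approach.
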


\begin{proof}
  Part (i) follows from Proposition~\ref{P:closedtrop} (ii), so let us prove
  the second part. First we show that $\nntrop(\gamma)\subseteq T$. Any 
  valuation $v$ of $K[[\Gamma]]/I$ lifts to a valuation $\overline{v}$ of 
  $K[[\Gamma]]$ such that $\overline{v}|_I=+\infty$. Consider the vector 
  $w\in L(\sigma,N)$ determined by the valuation $\overline{v}$. If $v$ is 
  nonnegative on $K[[\Gamma]]/I$, then the vector $w$ is contained in 
  $\overline{\sigma}$. Consider an arbitrary $f\in I$. Since 
  $\overline{v}(f)=+\infty$, $w$ takes its minimal value on at least two
  monomials in $f$. Therefore, $\init_w(f)$ is not a monomial. As $f$ was chosen 
  arbitrarily inside $I$, we see that indeed $\init_w(I)$ is monomial free.

  Now let us show that $T\subseteq\nntrop(\gamma)$. Choose any $w\in
  \overline{\sigma}$ such that $\init_w(I)$ is monomial free. The extended 
  weight vector $w$ defines a monomial valuation on $K[[\Gamma]]$ or on 
  $K[[\Gamma_{\tau}]]$ if $w$ belongs to a stratum at 
  infinity of $L(\sigma,N)$: $f\mapsto w(f)$ (see \eqref{E:ord}). In the latter 
  case we also want to consider $w$ as a valuation on the whole of $K[[\Gamma]]$. 
  For this, if $w\in(N/N_{\tau})_\R$ and $f= \sum_{m\in\Gamma} a_m \chi^m$, first 
  take the $\tau$-truncation $f_\tau$ (see Definition \ref{Trunc}), 
  and then apply the valuation $w$. Note that for any $f\in I$ and 
  $m\in\Gamma$, it is impossible to have simultaneously 
  $w(\chi^m)=w(f)<w(f-\chi^m)$ because $f$ has at least two monomials $\chi^m$ and 
  $\chi^n$ of minimal value (as $\init_w(I)$ is supposed monomial free). Thus, by 
  Theorem~\ref{T:Bergman}, there exists a valuation on $K[[\Gamma]]/I$ giving 
  exactly the point $w$ under the tropicalization map.
\end{proof}

\begin{remark}\label{R:locstructoftrop}
Consider any point $w\in \ptrop(I)\cap N$. Then, by Theorem~\ref{Existrop} and
Theorem~\ref{Tropinit} (ii), in a neighborhood of $w$ the positive tropicalization 
of an ideal $I$ coincides with that of the initial ideal $\init_w(I)$.
\end{remark}

In the sequel we want to make a clear distinction between a {\em fan} and a set 
which is the {\em support of a fan}, but without fixed fan structure. That is why 
we introduce the following definition:

\begin{definition}\label{D:conicalset}
  A {\bf PL cone} in a vector space $L$ is a subset 
  $\Sigma\subseteq L$ that can be represented as a finite union of convex 
  polyhedral cones. It is called {\bf rational}, if it can be represented as a 
  finite union of rational convex polyhedral cones, that is, if it is the support 
  of a fan. A subset $\Sigma$ of a linear variety $L(\sigma,N)$ is called a 
  {\bf (rational) PL conical subspace} if for each stratum $(N/N_\tau)_\R$ the 
  intersection $\Sigma\cap (N/N_\tau)_\R$ is empty or a (rational) PL cone. 
\end{definition}

The following are examples of PL cones and PL conical subspaces:

\begin{definition}
  The {\bf Newton cone} of a series $f\in K[[\Gamma]]$, denoted  
  $\Newton^\perp(f)$, is the set of vectors $w\in\sigma$ such that, seen as a 
  function on the Newton diagram $\Newton(f)$, $w$ attains its minimum on a face 
  of positive dimension. If $f=0$, we set $\Newton^\perp(f)=\sigma$ by 
  definition. The {\bf extended Newton cone} of $f$ is the disjoint
  union $\widetilde{\Newton^\perp(f)} := 
  \bigsqcup_{\tau\leq\sigma}\Newton^\perp(f_\tau)$, where $f_\tau$ is the
  $\tau$-truncation of $f$. 
\end{definition}

If $f\ne 0$ and $f\ne u\cdot\chi^m$, where $m\in \Gamma$
and $u$ is a unit in $K[[\Gamma]]$, then the Newton cone is indeed a 
rational PL conical subspace of pure dimension $n-1$, where $n$ is the rank of 
the lattice $N=N(\Gamma)$. Notice that our Newton cone is different from 
what is usually called the \emph{normal fan} of $f$. The Newton cone is the 
support of the $(n-1)$-skeleton of the standard normal fan. 

\begin{example}
Consider $\Gamma = \N^2 \subset \Z^2 = M$ and the reducible polynomial 
$f = x(x + y)(x + y^2) =  x^3 + x^2 y + x^2 y^2 + x y^3 \in K[[\Gamma]] = K[[x,y]]$.
Here we set $x = \chi^{(1,0)}$ and $y = \chi^{(0,1)}$, where 
$e_1 = (1,0), e_2=(0,1)$ form the canonical basis of $\Z^2$. In Figure 
\ref{fig:Extnorm} are represented its associated extended Newton diagram, Newton 
cone and extended Newton cone. The black discs in the drawing of the extended 
Newton diagram represent the exponents of the monomials of $f$. 
The Newton diagram $\Newton(f)$ has two edges, denoted $AB$ and $BC$  
in the figure, where $A = (3,0), B=(2,1), C = (1,3)$. The Newton cone 
$\Newton^\perp(f)$ lives in the dual plane $\R^2 = N_{\R}$, endowed with the dual 
basis $(v_1, v_2)$ of $(e_1, e_2)$. It is contained in the cone 
$\sigma = \R_+ v_1 + \R_+ v_2$, whose edges are $\tau_i = \R_+ v_i$ for $i = 1,2$.  
It is the union of two closed half-lines, $H_{AB}$ normal 
to $AB$ and $H_{BC}$ normal to $BC$. The extended Newton cone 
$\widetilde{\Newton^\perp(f)}$ lives in the affine linear 
variety  $L(\sigma, N)$. In addition to $H_{AB}$ and $H_{BC}$ it contains at 
infinity the half-line $H_1$, projection of $\sigma$ to $L_1 = 
(N/N_{\tau_1})_{\R}$, and the point $L_{12} = (N/N_{\sigma})_{\R}$. 
Note that the closures of the three half-lines $H_{AB}, H_{BC}, H_1$ 
contain the point $L_{12}$ at infinity. 
\end{example}

\bigskip
\begin{figure}[h!]
\labellist
\small\hair 2pt
\pinlabel  {$A$} at 276 250
\pinlabel  {$B$} at 275 297
\pinlabel  {$C$} at 260 335
\pinlabel  {$H_{AB}$} at 132 180
\pinlabel  {$H_{BC}$} at 177 140
\pinlabel  {$H_{AB}$} at 387 150
\pinlabel  {$H_{BC}$} at 442 90
\pinlabel  {$0$} at 0 36
\pinlabel  {$0$} at 330 36
\pinlabel  {$L_{12}$} at 510 208
\pinlabel  {$H_1$} at 530 123
\pinlabel  {$L_1$} at 493 20
\pinlabel  {$L_2$} at 293 199
\pinlabel  {$\tau_1$} at 108 44
\pinlabel  {$\tau_2$} at -10 125
\pinlabel  {$\tau_1$} at 420 44
\pinlabel  {$\tau_2$} at 310 125
\endlabellist
\centering
\includegraphics[scale=0.50]{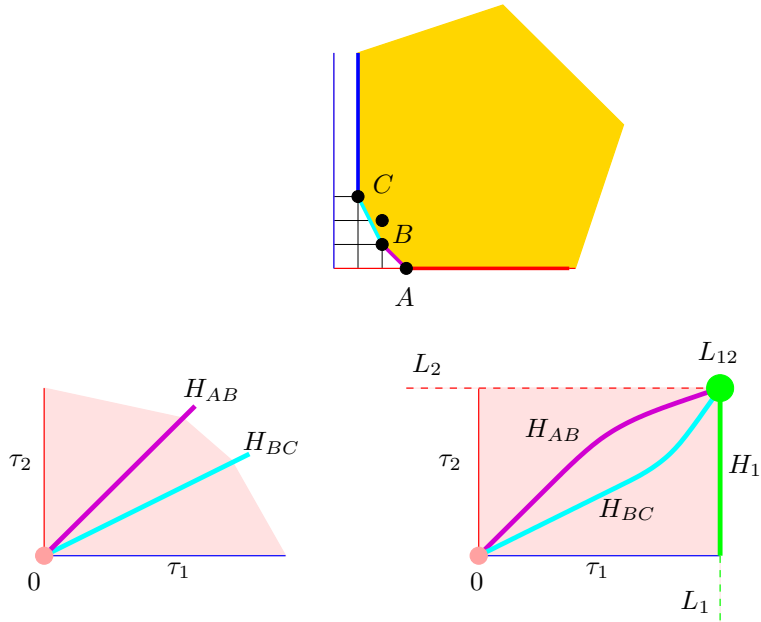}
\caption{An extended Newton diagram and its associated Newton cone and 
    extended Newton cone in dimension $2$}
\label{fig:Extnorm}
\end{figure}
\medskip

The extended Newton cone of $f$ can be connected to the closure of the 
Newton cone $\Newton^\perp(f)$: 

\begin{lemma}\label{L:closofnf}
Let $f\in K[[\Gamma]]$, and let $\mathfrak{m}_\tau$ denote the ideal of 
$K[[\Gamma]]$ generated by all the monomials with exponents outside $\tau^\perp$, 
where $\tau$ is a face of $\sigma$. Then:
$$\overline{\Newton^\perp(f)}\cap (N/N_{\tau})_\R \subseteq 
\Newton^\perp(f_\tau)$$
with equality if $f$ is not contained in $\mathfrak{m}_\tau$. Here, 
$\overline{(\cdot)}$ denotes the closure in $\overline{\sigma}$ and each 
$\Newton^\perp(f_\tau)$ is naturally embedded in the stratum $(N/N_{\tau})_\R$. 
The only cases when $f$ is contained in $\mathfrak{m}_\tau$ but still there is an
equality in the formula above is when $f=0$, $\tau=\{0\}$ or $\tau=\sigma$. 
In particular, we have: 
$$\overline{\Newton^\perp(f)}\subseteq \bigsqcup_{\tau\leq\sigma} 
\Newton^\perp(f_\tau)$$
with equality if and only if $f$ is not contained in any of $\mathfrak{m}_\tau$, 
$\tau\leq\sigma$. 
\end{lemma}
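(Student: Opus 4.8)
The plan is to establish the statement in four steps: a couple of reductions, then the inclusion on each stratum, then the equality when $f_\tau\neq 0$, and finally the remaining cases together with the global comparison. Throughout I would use that $L(\sigma,N)$ is metrizable (it embeds in the countable product $\overline{\R}^{\,\check\sigma(\Gamma)\cap M}$), so closures can be computed with sequences, and the reformulation: for $g\in K[[\Gamma_{\tau'}]]$ and $w\in\sigma(\Gamma_{\tau'})$, one has $w\in\Newton^\perp(g)$ iff the face of $\Newton^+(g)$ cut out by $w$ has at least two vertices, iff $\init_w(g)$ is not a unit times a monomial, iff the principal ideal $(\init_w(g))$ is monomial free (the last step uses that $\Newton^+$ turns products into Minkowski sums and that $\check\sigma(\Gamma_{\tau'})$ is pointed). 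First I would dispose of the degenerate cases: if $f=0$ then $\Newton^\perp(f)=\sigma$, whose closure in $\overline{\sigma}$ is $\overline{\sigma}$, and $\overline{\sigma}\cap (N/N_\tau)_\R=\sigma(\Gamma_\tau)=\Newton^\perp(f_\tau)$; if $f$ is a unit times a monomial, $\Newton^\perp(f)=\varnothing$ and everything is checked directly. So assume $f\neq 0$ and $f$ is not a unit times a monomial. The case $\tau=\{0\}$ is the tautology $\overline{\Newton^\perp(f)}\cap N_\R=\Newton^\perp(f)=\Newton^\perp(f_{\{0\}})$, since $\Newton^\perp(f)$ is a finite union of closed cones in $N_\R$. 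I would also record the structural identity $\Newton^+(f_\tau)=\Newton^+(f)\cap\tau^\perp$, proved by pairing a point of the right-hand side with $w_0$ in the relative interior of $\tau$ (on which $\langle w_0,\cdot\rangle\geq 0$ on $\check\sigma(\Gamma)$ with equality exactly on $\tau^\perp$); this makes visible that $\Newton^\perp(f_\tau)$ sits inside the stratum $\pi_\tau(\sigma)=\sigma(\Gamma_\tau)$ of $\overline{\sigma}$.

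For the inclusion I would invoke Theorem~\ref{Tropinit} for the principal ideal $I=(f)$: its part (ii) identifies $\nntrop(f)$ with the set of $w\in\overline{\sigma}$ for which $\init_w((f))$ is monomial free; since truncation is a ring map and $\init_w$ is multiplicative on the relevant polynomial subrings, $\init_w((f))=(\init_w(f_\tau))$ in the ring attached to the stratum of $w$, so by the reformulation $\nntrop(f)=\bigsqcup_{\tau\leq\sigma}\Newton^\perp(f_\tau)=\widetilde{\Newton^\perp(f)}$. As $\nntrop(f)$ is closed (Proposition~\ref{P:closedtrop}(i)) and meets $N_\R$ in $\Newton^\perp(f)$, we get $\overline{\Newton^\perp(f)}\subseteq\widetilde{\Newton^\perp(f)}$; intersecting with $(N/N_\tau)_\R$ gives $\overline{\Newton^\perp(f)}\cap(N/N_\tau)_\R\subseteq\Newton^\perp(f_\tau)$, which also yields the final displayed inclusion of the lemma.

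For the equality when $f_\tau\neq 0$, given $w^\tau\in\Newton^\perp(f_\tau)$ I would construct a sequence in $\Newton^\perp(f)$ converging to it: lift $w^\tau$ to $\hat w\in\sigma$, fix $w_0$ in the relative interior of $\tau$, and put $v_t=\hat w+tw_0$. As $t\to\infty$, $\pi_\tau(v_t)=w^\tau$ is constant and $\psi_\tau(v_t)=\psi_\tau(\hat w)+tw_0$ tends to infinity inside $\tau$, so $v_t\to w^\tau$ in $L(\sigma,N)$. Because $\Gamma$ is finitely generated there is $c>0$ with $\langle w_0,m\rangle\geq c$ for all $m\in\Gamma\setminus\tau^\perp$, while $\langle w_0,m\rangle=0$ on $\tau^\perp$; since $\hat w\geq 0$ on $\Gamma$, for $t\gg 0$ the minimum of $\langle v_t,\cdot\rangle$ over $\supp(f)$ is attained only at exponents in $\tau^\perp$ and equals $\langle w^\tau,\cdot\rangle$ there, so $\init_{v_t}(f)=\init_{w^\tau}(f_\tau)$ (using $\supp(f_\tau)\neq\varnothing$), which is not a unit times a monomial; hence $v_t\in\Newton^\perp(f)$ and $w^\tau\in\overline{\Newton^\perp(f)}$. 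For $\tau=\sigma$: if $f$ is a unit then both sides are empty; if $f\in\mathfrak m$ then $\Newton^\perp(f_\sigma)$ is the single deepest-stratum point, and since $\Newton(f)$ has a compact edge its $(n-1)$-dimensional normal cone lies in $\Newton^\perp(f)$ and its closure reaches that point, so equality holds.

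It remains to analyze $f\in\mathfrak m_\tau$ (i.e. $f_\tau=0$) with $\{0\}\neq\tau\neq\sigma$, where $\Newton^\perp(f_\tau)=\sigma(\Gamma_\tau)$ by convention: one must determine which points of $\sigma(\Gamma_\tau)$ are limits of points of $\Newton^\perp(f)$. This is a finite combinatorial problem — a sequence inside a normal cone $\mathcal N(E)$ (over a positive-dimensional compact face $E$ of $\Newton^+(f)$) limits onto the stratum $\pi_\tau(\sigma)$ exactly along $\pi_\tau(\mathcal N(E))$, and only when its asymptotic directions lie in $\tau$; comparing $\bigcup_E\pi_\tau(\mathcal N(E))$ with $\sigma(\Gamma_\tau)$ then gives either equality or strict inclusion (so the list of ``exceptional'' equalities has to be handled with care — e.g.\ $f=x+y$ with $\tau$ a two-dimensional face shows that equality can also occur here). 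Taking the union over all faces $\tau\leq\sigma$ of the stratumwise statements yields the global comparison $\overline{\Newton^\perp(f)}\subseteq\widetilde{\Newton^\perp(f)}$ with the stated description of equality. The main obstacle, concentrated in this last step, is keeping exact track of the topology of $L(\sigma,N)$ at the strata at infinity — precisely, describing when and how the closure of a polyhedral subcone of $\sigma$ meets a given stratum $\pi_\tau(\sigma)$.
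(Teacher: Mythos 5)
Your inclusion argument is correct but takes a genuinely different route from the paper. The paper proves $\overline{\Newton^\perp(f)}\cap(N/N_\tau)_\R\subseteq\Newton^\perp(f_\tau)$ directly: given a sequence $(v_n)\subset\Newton^\perp(f)$ converging to $v^\tau$, it exhibits a basic open set $\overline{U}_\tau$ of $v^\tau$ in $L(\sigma,N)$ which forces the faces of $\Newton(f)$ cut by the $v_n$ to eventually be faces of the face cut by $v^\tau$ inside $\tau^\perp$. You instead invoke Theorem~\ref{Tropinit}(ii) for the principal ideal $(f)$, identify $\nntrop(f)$ with $\widetilde{\Newton^\perp(f)}$ via the multiplicativity of $\init_w$ and the Minkowski-sum argument, and then use closedness of $\nntrop(f)$ from Proposition~\ref{P:closedtrop}(i). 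There is no circularity, since Theorem~\ref{Tropinit} is proved from Bergman's theorem alone, independently of Newton cones; but note that your route effectively establishes Proposition~\ref{P:hypersurface} first, whereas the paper deduces that proposition \emph{from} Lemma~\ref{L:closofnf}, so adopting your route would require reordering the logical dependencies of the section. Your construction for the equality when $f_\tau\neq 0$ — lift $w^\tau$ to $\hat w\in\sigma$, translate by $tw_0$ with $w_0\in\mathring{\tau}$, and observe that for $t\gg 0$ one has $\init_{v_t}(f)=\init_{w^\tau}(f_\tau)$ — is the same sequence the paper uses.

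The most substantial content of your proposal is the scepticism about the ``only cases'' assertion, and you are right to be sceptical: the assertion is in fact false as stated, and your example is a genuine counterexample. Take $\Gamma=\N^3$, $f=\chi^{e_1}+\chi^{e_2}$, and $\tau$ the two-dimensional face $\{w_3=0\}$ of $\sigma=\R^3_{\geq 0}$. Then $f\in\mathfrak{m}_\tau$ and $f_\tau=0$, so $\Newton^\perp(f_\tau)=\sigma(\Gamma_\tau)\cong[0,\infty)$. On the other hand $\Newton^\perp(f)=\{(a,a,b)\,|\,a,b\geq 0\}$ contains the ray $\{(a,a,0)\}$ inside $\mathring{\tau}$, and for each fixed $p\geq 0$ the sequence $v_n=(n,n,p)$ lies in $\Newton^\perp(f)$ and converges pointwise (as semigroup morphisms $\N^3\to\ri$) to the point $p$ of the stratum $(N/N_\tau)_\R$. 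Hence $\overline{\Newton^\perp(f)}\cap(N/N_\tau)_\R=[0,\infty)=\Newton^\perp(f_\tau)$, an equality with $f\neq 0$ and $\tau\notin\{\{0\},\sigma\}$, contradicting the lemma. The paper's one-line justification — that a proper PL cone in $\sigma$ cannot have limit set equal to the whole projection $\pi_\tau(\sigma)$ unless $\tau\in\{\{0\},\sigma\}$ — is a dimension count that fails whenever $\Newton^\perp(f)$ contains a direction in $\mathring{\tau}$ together with a transversal piece whose $\pi_\tau$-image is already all of $\pi_\tau(\sigma)$. So the remaining step in your plan is not a gap in your proof but an error in the statement; a correct formulation of the exceptional equalities would have to refer to the relative position of $\Newton^\perp(f)$ and $\tau$, not merely to $f=0$ or $\tau\in\{\{0\},\sigma\}$. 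Fortunately, the ``only cases'' sentence and the ``if and only if'' in the global comparison are never used downstream (Theorem~\ref{T:main} and Proposition~\ref{P:hypersurface} need only the inclusion and the equality when $f_\tau\neq 0$), so the error is isolated.
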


\begin{proof}
  If $f_\tau=0$, the inclusion is trivial since $\Newton^\perp(f_\tau)$ is the 
  projection of the cone $\sigma$ to $(N/N_{\tau})_\R$. The equality in the 
  cases $f=0$, $\tau=\{0\}$ or $\tau=\sigma$ can be checked directly. The 
  reason why these are the only exceptions is that if $f\ne 0$, then 
  $\Newton^\perp(f)$ is a \emph{proper} PL cone in $\sigma$, and its limit
  points in $(N/N_{\tau})_\R$ cannot be the whole projection of $\sigma$
  unless $\tau=\{0\}$ or $\tau=\sigma$.

  Now assume that $f\notin\mathfrak{m}_\tau$, so that $f_\tau\ne 0$. Let $v^\tau$ 
  be an element of $\overline{\sigma}\cap(N/N_{\tau})_\R$ and $(v_n)_{n\in \N}$ 
  a sequence of elements of $\Newton^\perp(f)$ converging to $v^\tau$. This 
  implies that the corresponding sequence $(v^{\tau}_{n})_{n\in N}$ of projections 
  of $(v_n)_{n\in N}$ to $(N/N_{\tau})_\R$ converges to $v^\tau$ in the usual 
  sense. Let $\mu$ be the minimal value of $v^\tau$ considered as a function on 
  the Newton diagram $\Newton(f_\tau)$. Note that if $\rho$ is the face of 
  $\Newton(f_\tau)$ where $v^\tau$ attains its minimum, then starting from some 
  number $n_0 \in \N$, for $i\geq n_0$ each function $v^{\tau}_{i}$ reaches its 
  minimal value on some face of $\rho$. 
  
  Let $U$ be the open subset of $N_\R$ formed by all functions $v$ that take 
  values strictly greater than $\mu$ on all monomials of $f$ lying on 
  $\Newton(f)$, except possibly those lying on $\tau^\perp$. Then 
  $\overline{U}_\tau$ (see~\eqref{E:opensetoflv}) is an open neighborhood of 
  $v^\tau$ and all elements $v_i$ are contained in $\overline{U}_\tau$ for 
  $i\geq n_1$, for sufficiently large number $n_1$. It follows that all $v_i$ 
  attain their minimal values on the faces of $\rho\subseteq\Newton(f_\tau)=
  \Newton(f)\cap\tau^\perp$ for $i\geq\max\{n_0,n_1\}$. Thus, the face $\rho$ has 
  positive dimension and $v_\tau\in\Newton^\perp(f_\tau)$. This shows that:
  $$\overline{\Newton^\perp(f)}\cap (N/N_{\tau})_\R \subseteq 
  \Newton^\perp(f_\tau).$$

  If $\Newton^\perp(f_\tau)=\varnothing$, then the equality holds. Assume 
  that $\Newton^\perp(f_\tau)$ is nonempty. It follows that there are 
  at least $2$ vertices of $\Newton(f)$ lying on the linear subspace $\tau^\perp$. 
  Let $\overline{v}\in\Newton^\perp(f_\tau)$ and $\rho$ a face of 
  $\Newton(f_\tau)\subseteq\Newton(f)$ where $\overline{v}$ attains its minimum. 
  Let $v\in\sigma$ be any element that projects to $\overline{v}$ under the 
  canonical projection
  $N_\R \stackrel{\pi_{\tau}}\to (N/N_{\tau})_\R.$ 
  Now let $u\in\tau\setminus\{0\}$ be any vector and consider the sequence 
  $v_n=v+nu$, $n\geq 0$. For $n$ big enough $v_n$ attains its minimum on some face 
  of $\Newton(f)\cap\tau^\perp$. This face must be $\rho$, 
  $v_n\in \Newton^\perp(f)$ for $n\gg0$, and $v_n\to \overline{v}$ 
  in $L(N,\sigma)$.
\end{proof}

The following proposition describes the local tropicalization in the
hypersurface case. It should be compared with {\cite[Theorem~2.1.1]{EKL 06}} and 
{\cite[Section~3]{BG 84}}, where a description of the global tropicalization of
a hypersurface is given. The languages used in these papers are different from
ours, still we can interpret them in the language of general tropicalization
developed by us in Section~\ref{S:defoftrop}. To get the setting of \cite{BG 84}, 
and \cite{EKL 06}, take $\Gamma$ to be a free finitely generated abelian group
$G$, $R$ the coordinate ring of a subvariety of the torus $\spec K[G]$, and
$\gamma\colon G\to R$ the natural morphism. The field $K$ is supposed to be
endowed with an arbitrary (not necessarily trivial) real valuation. This leads
to a global tropicalization that is a PL (but not necessarily  
conical) subspace. Notice that in the case of local tropicalization we must restrict
to trivially valued fields. Indeed, if $v$ is a nonnegative valuation on a
ring $R$, then $v$ must be trivial on any subfield $K\subseteq R$.

\begin{proposition}[Local tropicalization in the hypersurface case]
\label{P:hypersurface}
Let $f\in K[[\Gamma]]$ be a non-invertible series. Then, the nonnegative 
tropicalization $\nntrop(f)$ of the natural semigroup morphism 
$\Gamma\to K[[\Gamma]]/(f)$ coincides with the extended Newton cone of $f$.
The closure of the positive tropicalization $\ptrop(f)\subseteq 
\overline{\sigma}^\circ$ in $\overline{\sigma}$ is $\nntrop(f)$. 
\end{proposition}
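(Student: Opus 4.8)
The plan is to reduce everything to the initial-ideal description of nonnegative tropicalization proven in Theorem~\ref{Tropinit}, applied to the principal ideal $I=(f)$, and then to match the resulting set with the extended Newton cone combinatorially. First I would invoke Theorem~\ref{Tropinit}(ii): since $K[[\Gamma]]$ is a local ring and $f$ is non-invertible, the natural morphism $\gamma\colon\Gamma\to K[[\Gamma]]/(f)$ is local, so $\nntrop(f)=\nntrop(\gamma)$ equals the set of $w\in\overline{\sigma}$ such that $\init_w((f))$ is monomial free. Here I need the elementary fact that, for a principal ideal, $\init_w((f))=(\init_w(f))$: indeed $\init_w(gf)=\init_w(g)\init_w(f)$ by the multiplicativity of $w$-initial forms (valid for $w$ in the interior of a face, after $\tau$-truncation, using that $K[[\Gamma_\tau]]$ is a domain by Corollary~\ref{compldom}), so the initial ideal is generated by $\init_w(f)$. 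Then $(\init_w(f))$ is monomial-free precisely when $\init_w(f)$ is not a monomial (times a unit), because a principal monomial ideal in a domain contains no element of smaller "monomial support", and conversely if $\init_w(f)$ is not a monomial then no product $g\cdot\init_w(f)$ can be a monomial.

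Next I would translate the condition "$\init_w(f)$ is not a monomial" into the Newton-cone language. For $w$ in the open cone $\mathring{\sigma}$ (or in the interior $\mathring\tau$ of a face, applied to the truncation $f_\tau$), the exponents of $\init_w(f)$ are exactly the lattice points of the face of $\Newton(f_\tau)$ on which $w$, viewed as a linear function, attains its minimum. This face has positive dimension (equivalently $\init_w(f_\tau)$ has at least two monomials, equivalently $\init_w(f_\tau)$ is not a monomial) if and only if $w\in\Newton^\perp(f_\tau)$, by the very definition of the Newton cone. Running over all faces $\tau\le\sigma$ and using that the strata of $\overline\sigma$ are the $\pi_\tau(\sigma)$ — and that $w$ on such a stratum acts on $K[[\Gamma_\tau]]$ via the $\tau$-truncation, as recorded in Definition~\ref{Trunc} and used in the proof of Theorem~\ref{Tropinit} — this gives exactly
$$\nntrop(f)=\bigsqcup_{\tau\le\sigma}\Newton^\perp(f_\tau)=\widetilde{\Newton^\perp(f)},$$
the extended Newton cone. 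One must take a little care with the degenerate cases $f_\tau=0$, where $\Newton^\perp(f_\tau)$ is by convention the whole projected cone $\pi_\tau(\sigma)$; this is consistent because if $f_\tau=0$ then $\chi^m\mid f$ for no $m\in\Gamma_\tau$, hence $\init_w(f_\tau)$ is the zero series, which is certainly not a monomial, so the whole stratum belongs to $\nntrop(f)$.

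For the second assertion — that $\overline{\ptrop(f)}=\nntrop(f)$ inside $\overline\sigma$ — I would combine two facts. By Theorem~\ref{Tropinit}(i), $\ptrop(f)=\nntrop(f)\cap\overline\sigma^\circ$, which consists of those points of $\widetilde{\Newton^\perp(f)}$ lying over the (relative) interiors $\mathring\tau$, i.e. $\bigsqcup_\tau(\Newton^\perp(f_\tau)\cap\mathring{(\pi_\tau\sigma)})$. It then suffices to show that every point of $\widetilde{\Newton^\perp(f)}$ is a limit of such interior points. For this I would use Lemma~\ref{L:closofnf}: it gives $\overline{\Newton^\perp(f)}\cap(N/N_\tau)_\R\subseteq\Newton^\perp(f_\tau)$ always, and the construction in its proof (moving a point $v$ projecting to $\overline v\in\Newton^\perp(f_\tau)$ along $v_n=v+nu$ with $u\in\tau\setminus\{0\}$) shows the reverse inclusion whenever the relevant truncation is nonzero, hence $\overline{\Newton^\perp(f)}=\widetilde{\Newton^\perp(f)}$ as long as $f$ is not a monomial times a unit; the excluded case $f=u\chi^m$ cannot occur here since then $(f)=K[[\Gamma_?]]$-type degeneracies are handled by the non-invertibility hypothesis together with a direct check. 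The same interior-approximation argument, applied one more time within each stratum to push from the boundary of $\pi_\tau(\sigma)$ into its interior, yields $\overline{\ptrop(f)}=\nntrop(f)$.

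The main obstacle I expect is the careful bookkeeping of strata and truncations: one must verify that the multiplicativity $\init_w((f))=(\init_w(f))$ and the identification of $\init_w(f)$'s exponents with a minimal face of $\Newton(f_\tau)$ hold uniformly as $w$ ranges over all strata $\pi_\tau(\sigma)$ of $\overline\sigma$, not merely over the open cone, and that the limit arguments of Lemma~\ref{L:closofnf} interface correctly with the topology of the linear variety $L(\sigma,N)$ described in Section~\ref{troptor}. The valuation-theoretic input (Theorem~\ref{Tropinit}, itself resting on Bergman's Theorem~\ref{T:Bergman}) does the real work; what remains is essentially the combinatorics of Newton polyhedra and a compactness-free closure computation.
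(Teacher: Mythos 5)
You follow the paper's stated approach exactly (apply Theorem~\ref{Tropinit} and Lemma~\ref{L:closofnf} to the principal ideal $(f)$), and most of the bookkeeping is right, but the key translation step contains a gap. Your claim that $(\init_w(f_\tau))$ is monomial-free precisely when $\init_w(f_\tau)$ is not a monomial times a unit fails when $\Gamma$ is not saturated: taking $\Gamma\subset\Z^2$ generated by $(2,0),(3,0),(0,2),(0,3),(1,1)$ and $f=\chi^{(2,0)}+\chi^{(3,2)}$, the series $f$ is not associate to any monomial of $K[[\Gamma]]$ (since $(3,2)-(2,0)=(1,2)\notin\Gamma$), yet $\chi^{(4,2)}=f\cdot\sum_{k\ge 0}(-1)^k\chi^{(2+k,\,2+2k)}$ lies in $(f)$. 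Compounding this, your chain ``positive-dimensional face $\Leftrightarrow$ $\ge 2$ monomials $\Leftrightarrow$ not a monomial'' breaks for $w$ on the \emph{boundary} of the stratum cone $\pi_\tau(\sigma)$, where $\init_w(f_\tau)$ is in general a series and may have infinitely many terms while still generating a monomial ideal (e.g.\ $f=x+y+y^2\in K[[x,y]]$, $w=(1,0)$, $\init_w(f)=y+y^2=y(1+y)$). As written, your argument only establishes $\nntrop(f)\cap\overline{\sigma}^\circ=\widetilde{\Newton^\perp(f)}\cap\overline{\sigma}^\circ$, whereas the proposition (and in particular its second assertion about the closure of $\ptrop(f)$) requires the equality on all of $\overline{\sigma}$.

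The invariant that actually lines up the monomial-free test of Theorem~\ref{Tropinit} with the definition of $\Newton^\perp(f_\tau)$ is not ``$\init_w(f_\tau)$ is (or is not) associate to a monomial'' but ``the minimizing face of $\Newton^+(f_\tau)$ for $w$ is a single vertex''. One proves directly that $(\init_w(f_\tau))$ contains a monomial if and only if $\Newton^+(\init_w(f_\tau))$ is a translated cone $m+\check{\sigma}(\Gamma_\tau)$: if $\chi^m=gh$, the concave piecewise-linear support functions $w\mapsto\min_{\Newton^+(g)}w$ and $w\mapsto\min_{\Newton^+(h)}w$ add to the linear function $\langle w,m\rangle$, forcing both to be affine and hence both Newton polyhedra to be translated cones; conversely, if $\Newton^+(\init_w(f_\tau))=m+\check{\sigma}(\Gamma_\tau)$, write $\init_w(f_\tau)=\chi^m u$ with $u$ a unit of $K[[\sat(\Gamma_\tau)]]$ and choose a conductor element $m_0\in\Gamma_\tau$ with $m_0+\sat(\Gamma_\tau)\subseteq\Gamma_\tau$, so that $\chi^{m_0}u^{-1}\in K[[\Gamma_\tau]]$ and $\chi^{m+m_0}\in(\init_w(f_\tau))$. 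This corrected criterion is equivalent by definition to $w\notin\Newton^\perp(f_\tau)$ on every point of $\pi_\tau(\sigma)$, not only its relative interior, and then your closure argument via Lemma~\ref{L:closofnf} finishes the proof.
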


\begin{proof}
Apply Theorem~\ref{Tropinit} and Lemma~\ref{L:closofnf} to the principal ideal
$I=(f)$. We leave the details to the reader.
\end{proof}

We are ready to state the finiteness theorem of local tropicalization.

\begin{theorem}[The local finiteness theorem]\label{T:main}
  Let $I$ be a prime ideal of the ring $K[[\Gamma]]$ of formal power series over
  an affine pointed semigroup $\Gamma$, and assume that the Krull dimension of the 
  quotient ring $K[[\Gamma]]/I$ equals $d$. Then, the local positive tropicalization 
  $\ptrop(I)\subseteq\overline{\sigma}^\circ$ and the local nonnegative 
  tropicalization $\nntrop(I)\subseteq\overline{\sigma}$ are rational PL 
  conical subspaces of pure dimension $d$ and the closure of $\ptrop(I)$ in the 
  space $\overline{\sigma}$ is $\nntrop(I)$.
\end{theorem}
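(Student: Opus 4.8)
The plan is to build everything on the results of Sections~\ref{S:tbases} and~\ref{S:lstruct}, reducing first to a description of $\nntrop(I)$ as an intersection of extended Newton cones. I would fix an extended tropical basis $\mathcal{B}=\{f_1,\dotsc,f_p\}$ of $I$, which exists by Theorem~\ref{Existrop} together with the construction preceding Definition~\ref{extropbas}. Combining Theorem~\ref{Tropinit}(ii) with the characterization of extended tropical bases, a point $w\in\overline{\sigma}$, say $w\in(N/N_\tau)_\R$, belongs to $\nntrop(I)$ exactly when the initial ideal $\init_w(I)\subseteq K[[\Gamma_\tau]]$ is monomial free, which by the defining property of $\mathcal{B}$ happens if and only if none of the $\init_w(f_i)$ is a monomial, i.e.\ if and only if $w\in\widetilde{\Newton^\perp(f_i)}$ for every $i$. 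By Proposition~\ref{P:hypersurface} each $\widetilde{\Newton^\perp(f_i)}$ equals $\nntrop(f_i)$, so
$$\nntrop(I)=\bigcap_{i=1}^{p}\widetilde{\Newton^\perp(f_i)},$$
and since each $\widetilde{\Newton^\perp(f_i)}$ is a rational PL conical subspace (a disjoint union over the faces $\tau$ of the rational PL cones $\Newton^\perp(f_{i,\tau})$), so is the finite intersection. By Theorem~\ref{Tropinit}(i) we get $\ptrop(I)=\nntrop(I)\cap\overline{\sigma}^\circ$, which takes care of the PL-conical structure of both sets.

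To go further I would fix an actual fan structure, namely the common refinement $\Sigma$ in $\overline{\sigma}$ of the fans cutting out the $\widetilde{\Newton^\perp(f_i)}$ --- equivalently, a refinement of the local Gr\"obner fan $\Sigma_{\mathcal{U}}$ from the proof of Theorem~\ref{Existrop} --- so that $\init_w(I)$ is constant as $w$ ranges over the relative interior of any cone of $\Sigma$. For the pure-dimensionality, let $\rho$ be a cone of $\Sigma$ maximal among those contained in $\nntrop(I)$, and pick $w$ in its relative interior. Then $\init_w(I)$ is a monomial-free, $w$-homogeneous ideal of the appropriate ring $K[[\Gamma_\tau]]$, and by the flatness of the Gr\"obner degeneration (Lemma~\ref{L:flat}) one has $\dim K[[\Gamma_\tau]]/\init_w(I)=\dim K[[\Gamma]]/I=d$; using primality of $I$ and Lemma~\ref{L:initforms} one checks that $\init_w(I)$ has a minimal prime of the same dimension. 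By Remark~\ref{R:locstructoftrop}, near $w$ the set $\nntrop(I)$ coincides with $\nntrop(\init_w(I))$, so it suffices to show that the latter has pure dimension $d$; here a balancing argument in the spirit of the Bieri--Groves theorem, exploiting the one-parameter family of automorphisms preserving the homogeneous ideal $\init_w(I)$, yields $\dim\rho=d$.

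It remains to prove that the closure of $\ptrop(I)$ in $\overline{\sigma}$ is $\nntrop(I)$. Since $\nntrop(I)$ is closed by Proposition~\ref{P:closedtrop}(i) and $\ptrop(I)=\nntrop(I)\cap\overline{\sigma}^\circ$, this is equivalent to the density of $\ptrop(I)$ in $\nntrop(I)$, i.e.\ to the statement that the relative interior of each maximal cone $\rho\subseteq\nntrop(I)$, lying in some stratum $(N/N_\tau)_\R$, is not contained in the boundary of the cone $\pi_\tau(\sigma)=\Star(\tau)$. For a point $w$ in the relative interior of $\rho$ lying on that boundary, the valuation producing $w$ is centered on a proper face; applying the Cohen structure theorem to $K[[\Gamma_\tau]]/\init_w(I)$ and the extension principle of Theorem~\ref{T:extprinciple}(a), one can perturb this valuation, staying inside $\nntrop(I)$, into genuinely local valuations whose images converge to $w$ --- a uniform version of the argument of Lemma~\ref{L:finitepart}. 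Maximality of $\rho$ then forbids $\rho$ from lying in $\partial\,\pi_\tau(\sigma)$, giving the density.

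I expect the pure-dimensionality to be the main obstacle: the PL-conical structure follows almost formally from tropical bases and the hypersurface case, but showing that every maximal cone has dimension exactly $d$ needs the local counterpart of the balancing phenomenon from classical tropical geometry, controlled through the Krull dimension of $K[[\Gamma]]/\init_w(I)$ and the flat degeneration, and this is where the real work lies.
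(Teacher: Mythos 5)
Your reduction of the PL-conical structure to the extended tropical basis and the hypersurface case is exactly the paper's argument: you recover the identity
$$\nntrop(I)=\bigcap_{i=1}^{p}\Bigl(\bigsqcup_{\tau\leq\sigma}\Newton^\perp((f_i)_\tau)\Bigr)$$
via Theorem~\ref{Tropinit}, Definition~\ref{extropbas} and Proposition~\ref{P:hypersurface}, and then apply Theorem~\ref{Tropinit}(i). One small omission: you silently assume $I$ is monomial free; the paper first treats the case $I\cap\Gamma\neq\emptyset$ by replacing $I$ with its $\tau$-truncation, and the case $I=\{0\}$ separately, before invoking a tropical basis.

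The genuine gap is in the pure-dimensionality step. You correctly identify the use of the Gr\"obner degeneration, but Lemma~\ref{L:flat} alone does not give $\dim K[[\Gamma_\tau]]/\init_w(I)=d$; one also needs the total space $\spec(K[t][[\Gamma]]/I_t)$ to be equidimensional, which in turn requires $I_t$ to be prime — these are exactly Lemmas~\ref{L:prime} and~\ref{L:equidim}, and then Matsumura \cite[21.B Theorem~50]{M 70} yields that \emph{every} minimal prime of $\init_w(I)$ has dimension $d$. More seriously, the sentence ``a balancing argument in the spirit of the Bieri--Groves theorem \dots\ yields $\dim\rho=d$'' is not an argument; it is the whole content of this step. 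The paper does \emph{not} prove such a local balancing statement. Instead it exploits the fact that $\init_w(I)$ is generated by polynomials, so one may descend to an ideal $J\subset K[\Gamma]$, identify $\ptrop(\init_w(I))$ with $\trop(J)\cap\overline{\sigma}^\circ$ via Proposition~\ref{P:localglobal}, compare dimensions through $\widehat{\mathfrak{p}}\subseteq\mathfrak{p}^*$ for a well-chosen minimal prime $\mathfrak{p}^*$, and finally invoke the \emph{known} pure-dimensionality of the global tropicalization of a prime ideal of $K[\Gamma]$. That dependence is not an accident: the last section of the paper explicitly poses it as an open problem to give a proof of Theorem~\ref{T:main} that does not appeal to the global theory, which is precisely what your ``balancing argument'' would amount to. As it stands, your proof of pure dimension is missing its core step.

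For the closure statement, the paper argues directly from the formulas \eqref{E:nntropasintersection}, \eqref{E:ltropasintersection} together with $\overline{\Newton^\perp(f_i)\cap\mathring{\sigma}}=\Newton^\perp(f_i)$; your alternative route via perturbation of centers using Theorem~\ref{T:extprinciple}(a) and a uniform version of Lemma~\ref{L:finitepart} is plausible in spirit but again not carried out. It is worth noting that once pure dimension $d$ is established for both $\ptrop(I)$ and $\nntrop(I)$ and you have $\ptrop(I)=\nntrop(I)\cap\overline{\sigma}^\circ$ with $\overline{\sigma}^\circ$ dense in $\overline{\sigma}$, a cleaner argument is available; but in your write-up the density claim still rests on the unfinished dimension estimate.
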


\begin{proof}
  If the set $I\cap\Gamma$ is nonempty, it is a prime ideal of the semigroup
  $\Gamma$. Therefore, it must equal $\Gamma\setminus\tau^\perp$ for some 
  face $\tau$ of the cone $\sigma$. In this case:
  $$\nntrop(I)=\nntrop(I_\tau),$$
  where $I_\tau$ is the ideal of $K[[\Gamma_{\tau}]]$ generated by all
  $\tau$-truncations $f_\tau$ of $f\in I$, and the same for the positive 
  tropicalization. The right hand side of the equality above is a subset of
  $L(N/N_\tau,\pi_\tau(\sigma))$, which in turn is naturally a subset of
  $L(N,\sigma)$. The equality itself follows from the commutative diagram:
  $$\xymatrix{
  & K[[\Gamma]]/(\Gamma\setminus\tau^\perp) \ar[r] \ar@{=}[d] 
  & K[[\Gamma]]/I \ar@{=}[d] \\
  & K[[ \Gamma_{\tau} ]] \ar[r] & K[[ \Gamma_{\tau} ]]/I_\tau.
  }
  $$
  Also notice that our theorem is obvious if $I=\{0\}$. 
  
  Thus, \emph{in the 
  rest of the proof we assume that $I$ is monomial free and nonzero}. 
  Let $\mathcal{B}=\{f_1,\dotsc,f_m\}$ be an extended tropical basis for $I$ 
  (see Definition \ref{extropbas}). 
  From Theorem~\ref{Tropinit}, Definition~\ref{D:tbasis}, and 
  Proposition~\ref{P:hypersurface} we deduce that:
  \begin{equation}\label{E:nntropasintersection}
  \nntrop(I)=\bigcap_{i=1}^{m}\left(\bigsqcup_{\tau\leq\sigma}
  \Newton^\perp((f_i)_\tau)\right).
  \end{equation}
  We conclude that $\nntrop(I)$ is a rational PL conical subspace. By 
  Theorem~\ref{Tropinit} (ii), the same holds for the positive tropicalization.
  We have:
  \begin{equation}\label{E:ltropasintersection}
  \ptrop(I)=\left(\bigcap_{i=1}^{m} \left(\bigsqcup_{\tau\leq\sigma}
  \Newton^\perp((f_i)_\tau)\right)\right) \cap \overline{\sigma}^\circ.
  \end{equation}
  The last assertion of the theorem also follows directly from 
  \eqref{E:nntropasintersection} and \eqref{E:ltropasintersection} and the 
  equality $\overline{\Newton^\perp(f_i) \cap\mathring{\sigma}}=
  \Newton^\perp(f_i)$, where the closure is taken in $\sigma$. Thus, it remains 
  only to prove the assertion about the dimension of the local positive 
  tropicalization.

  For any $f=\sum a_m \chi^m\in K[[\Gamma]]$ and $w\in N\cap\mathring{\sigma}$,  
  consider a deformation: 
  $$f_t=t^{-w(f)}\sum a_m t^{\langle w,m\rangle} \chi^m\in K[t][[\Gamma]]$$
  of $f$, where $K[t][[\Gamma]]$ is the ring of formal power series over $\Gamma$ 
  with coefficients in the polynomial ring $K[t]$, and let 
  $I_t\subseteq K[t][[\Gamma]]$ be the ideal generated by all $f_t$, $f\in I$. 
  Write $S=K[t][[\Gamma]]/I_t$. Notice that for any fixed $t_0\in K$, $t_0\ne 0$, 
  the assignment $\chi^m\mapsto t_{0}^{\langle w,m\rangle} \chi^m$ defines an 
  automorphism of the ring $K[[\Gamma]]$.

  Let us show that for every $w\in\ptrop(I)$, there exist a maximal face of 
  $\ptrop(I)$ of dimension $d$ containing $w$. We already know that $\ptrop(I)$ is 
  a rational PL conical subspace, thus its rational points (with respect to the 
  lattice $N=N(\Gamma)$) are dense in it. Thus, it is sufficient to prove our claim
  for an integer point $w\in N$. Furthermore, in a neighborhood of the point $w$ 
  the local positive tropicalization of the ideal $I$ coincides with that of the 
  initial ideal $\init_w(I)$ (Remark~\ref{R:locstructoftrop}).

  Consider the deformation $I_t\in K[t][[\Gamma]]$ of the ideal $I$. By 
  Lemma~\ref{L:flat} below, we may view $K[t][[\Gamma]]/I_t$ as a flat 
  family of schemes over $\spec K[t]$. The special fiber over $0$ of this 
  family is $\spec K[[\Gamma]]/\init_w(I)$. Also, the ring $K[t][[\Gamma]]/I_t$ 
  is equidimensional (Lemma~\ref{L:equidim}). Now from 
  \cite[21.B Theorem~50]{M 70} it follows that all irreducible components of the 
  special fiber have the same dimension as a general fiber $\spec K[[\Gamma]]/I$, 
  namely $d$. In other words, every minimal associated prime of 
  $\init_w(I)\subset K[[\Gamma]]$ has depth $d$. If $\mathfrak{p}_1 ,\dotsc, 
  \mathfrak{p}_k$ are all these minimal primes, then we have the decomposition 
  $\ptrop(\init_w(I))=
  \cup_{1 \leq i \leq k} \ptrop(\mathfrak{p}_i)$ by Lemma~\ref{redval}. Choose 
  $\mathfrak{p}^*$ such that $w\in\ptrop(\mathfrak{p}^*)$.

  Notice that the ideal $J^*=\init_w(I)\subset K[[\Gamma]]$ is generated by
  power series which are in fact polynomials. Thus we can also consider the ideal $J$
  generated by the same polynomials inside the ring $K[\Gamma]$. It is contained 
  in the maximal ideal $\Gamma^+ = \Gamma\setminus\{0\}$, thus by standard
  theory of completions (\cite[Proposition~10.13, Corollary~11.19]{AM 69}
  \cite[Corollary~17.9, 17.12]{N 62}) we conclude:
  $$K[[\Gamma]]/J^*\simeq \widehat{K[\Gamma]/J},\quad \dim K[\Gamma]/J=
  \dim K[[\Gamma]]/J^*=d.$$
  The positive tropicalization of $J^*$ is just the part of the usual 
  tropicalization of $J$ contained in $\overline{\sigma}^\circ$ (see 
  Proposition~\ref{P:localglobal}). Therefore, since $\dim K[\Gamma]/J=d$, we 
  already see that the faces of $\ptrop(J^*)$ passing through $w$ have dimension 
  not greater than $d$. On the other hand, $\mathfrak{p}=\mathfrak{p}^*\cap 
  K[\Gamma]$ is a prime containing the ideal $J$, thus 
  $\dim K[\Gamma]/\mathfrak{p}\leq d$. Now, let 
  $\widehat{\mathfrak{p}}=\mathfrak{p} K[[\Gamma]]$. We have again:
  $$\dim K[[\Gamma]]/\widehat{\mathfrak{p}}=\dim K[\Gamma]/\mathfrak{p}.$$
  But $\widehat{\mathfrak{p}}\subseteq \mathfrak{p}^*$, hence 
  $\dim K[[\Gamma]]/\widehat{\mathfrak{p}}\geq d$. We conclude that
  $\dim K[\Gamma]/\mathfrak{p}=d.$ 
  By properties of the usual tropicalization, $\trop(\mathfrak{p})$ is purely 
  $d$-dimensional. Clearly, $w\in\trop(\mathfrak{p})$. This implies our claim and 
  the theorem.
\end{proof}

In the following three lemmas we keep the notations of the proof of 
Theorem~\ref{T:main}.

\begin{lemma}[cf. {\cite[Theorem~15.17]{Eis 04}}]\label{L:flat}
  The $K[t]$-algebra $S$ is flat.
\end{lemma}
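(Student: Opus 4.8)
The plan is to realize $S$ as a direct product of copies of $K[t]$, whereupon flatness over $K[t]$ is immediate. Fix an auxiliary vector $w'$ in the interior $\mathring{\sigma}$ with coordinates linearly independent over $\Q$, and let $\preceq$ be the local monomial ordering (Definition~\ref{D:locord}) on $\Gamma$ which compares exponents first by the value of $\langle w,\cdot\rangle$ and breaks ties by $\langle w',\cdot\rangle$; by construction $\preceq$ refines the preorder $\preceq_w$. Choose a standard basis $\{f_1,\dotsc,f_p\}$ of $I$ with respect to $\preceq$ (it exists by Noetherianity of $K[[\Gamma]]$), put $\chi^{m_i}=\init_\preceq(f_i)$, and let $a_i\in K^*$ be the coefficient of $\chi^{m_i}$ in $f_i$. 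The key observation is that in the deformed series $f_{i,t}=t^{-w(f_i)}\sum a_m t^{\langle w,m\rangle}\chi^m$ the monomial $\chi^{m_i}$ still occurs with coefficient $a_i$: since $\preceq$ refines $\preceq_w$, the $\preceq$-smallest exponent $m_i$ of $f_i$ satisfies $\langle w,m_i\rangle=w(f_i)$, so this coefficient is $a_i t^{\langle w,m_i\rangle-w(f_i)}=a_i$, a \emph{unit} of $K[t]$. As $f_{i,t}$ and $f_i$ have the same monomial support, $\chi^{m_i}$ is still the $\preceq$-least monomial of $f_{i,t}$.

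Since the $f_{i,t}$ have unit leading coefficients, the division algorithm of Proposition~\ref{L:division} applies verbatim over the base ring $K[t]$ --- it is well defined because $\preceq$ well-orders $\Gamma$ (Lemma~\ref{wellord}) --- and produces, for every $g\in K[t][[\Gamma]]$, a decomposition $g=\sum_i q_i f_{i,t}+r$ in which $r$ is supported on the set of exponents divisible by none of $m_1,\dotsc,m_p$; this set is exactly $\Gamma\setminus\Gamma_\preceq(I)$, because $\{f_1,\dotsc,f_p\}$ is a standard basis (Proposition~\ref{P:basisquot}). As each $f_{i,t}$ lies in $I_t$, every class of $S$ is represented by a series supported on $\Gamma\setminus\Gamma_\preceq(I)$. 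For uniqueness, suppose $r=\sum_{m\notin\Gamma_\preceq(I)}b_m(t)\chi^m$ belongs to $I_t$. For each $c\in K^*$, the substitution $t\mapsto c$ sends the generators $f_t$ of $I_t$ to $f_t|_{t=c}=c^{-w(f)}\phi_c(f)$, where $\phi_c$ is the automorphism $\chi^m\mapsto c^{\langle w,m\rangle}\chi^m$ of $K[[\Gamma]]$; hence $I_t|_{t=c}=\phi_c(I)$, so $\phi_c^{-1}(r|_{t=c})\in I$ is supported on $\Gamma\setminus\Gamma_\preceq(I)$ and therefore vanishes by the uniqueness clause of Proposition~\ref{P:basisquot}. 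Thus each polynomial $b_m$ vanishes at every $c\in K^*$; replacing $K$ by an algebraic closure if $K$ is finite --- a standard basis remains one under field extension, so $\Gamma_\preceq(I)$ is unchanged --- the $b_m$ have infinitely many roots and hence vanish, i.e. $r=0$.

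Consequently $r\mapsto (b_m)_m$ is an isomorphism of $K[t]$-modules $S\;\overset{\sim}{\longrightarrow}\;\prod_{m\notin\Gamma_\preceq(I)}K[t]$ (it is $K[t]$-linear because multiplication by $t$ does not alter monomial supports). A direct product of copies of $K[t]$ is torsion-free, and a torsion-free module over the principal ideal domain $K[t]$ is flat, being the filtered union of its finitely generated submodules, which are free. Hence $S$ is flat over $K[t]$. The delicate step --- and the main obstacle --- is the uniqueness argument: one must verify that the specialization $t\mapsto c$ genuinely identifies $I_t|_{t=c}$ with $\phi_c(I)$, so that Proposition~\ref{P:basisquot} applies, and take care of finite ground fields by base change; everything else is routine standard-basis bookkeeping.
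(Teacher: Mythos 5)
Your proof follows the same overall strategy as the paper's: reduce every element of $S$ to a canonical representative supported on $\Gamma\setminus\Gamma_\preceq(I)$ via the division algorithm (using that the deformed generators $f_{i,t}$ still have unit leading coefficients in $K[t]$), and then conclude flatness from torsion-freeness over the PID $K[t]$.

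The genuine difference is in the uniqueness step. The paper's proof only argues \emph{existence} of the normal form (the reduction terminates and lands in $K[t][[\Gamma]]$), and then asserts torsion-freeness ``in particular'' without explaining why a nonzero series supported on $\Gamma\setminus\Gamma_\preceq(I)$ cannot lie in $I_t$ --- i.e.\ why the deformed generators behave like a standard basis for $I_t$ over $K[t]$. You fill this in with a specialization argument: for each $c\in K^*$, the evaluation $t\mapsto c$ sends $I_t$ into $\phi_c(I)$, so if $r\in I_t$ is in normal form then $\phi_c^{-1}(r|_{t=c})\in I$ is a normal form of $0$ and vanishes by Proposition~\ref{P:basisquot}; hence each coefficient $b_m\in K[t]$ vanishes on $K^*$, and so is zero (after base change if $K$ is finite). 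This is a concrete and self-contained way to obtain the uniqueness, and it is cleaner than attempting to develop a standard-basis theory over $K[t]$. Once uniqueness is in hand, the explicit isomorphism $S\cong\prod_{m\notin\Gamma_\preceq(I)}K[t]$ you write down is a slightly more explicit package of the same torsion-free-over-PID conclusion that the paper reaches, and the two proofs close the argument in the same way.

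One small point worth recording: the evaluation $t\mapsto c$ is a well-defined ring homomorphism $K[t][[\Gamma]]\to K[[\Gamma]]$ precisely because $\Gamma$ is pointed (so products have finitely many contributing terms), and an element of $I_t$ is a \emph{finite} $K[t][[\Gamma]]$-combination of generators $f_t$, which is what makes $r|_{t=c}\in\phi_c(I)$ legitimate; you use this implicitly but it is worth spelling out.
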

  
\begin{proof}
  Fix a monomial ordering $\preceq$ refining the preorder $\preceq_w$. 
  By Proposition~\ref{P:basisquot}, any class of $K[[\Gamma]]\mod I$ 
  has a unique representative of the form:
  $$\sum_{m\in\Gamma} a_m \chi^m,$$
  where $a_m=0$ for each $\chi^m \in\init_\preceq(I)$. 
  We claim that $\Gamma\setminus\init_\preceq(I)$ plays a similar role for $S$ over 
  $K[t]$. Indeed, if $g\in S$ and $\sum a_m(t) \chi^m$ is a representative of $g$ 
  in $K[t][[\Gamma]]$, let $a_m(t) \chi^m$ be its leading term (with respect to 
  $\preceq$) with $m\in\init_\preceq(I)$. Then, we can find an element $f\in I_t$ 
  with leading term $c\chi^m$, $c\in K^*$. Take the reduction $g-(1/c)a(t)f$. In 
  this way, we delete from $g$ all monomials contained in the initial ideal 
  $\init_\preceq(I)$. Moreover, under such a reduction the terms of $g$ that are 
  less than $\chi^m$ remain unchanged. Thus, despite the reduction process is 
  infinite, the terms that are less than a given monomial $\chi^m$ can change only
  a finite number of times. This shows that the result of the reduction is
  an element of $K[t][[\Gamma]]$.

  We see, in particular, that $S$ has no torsion as a $K[t]$-module. Over a 
  principal ideal domain this is equivalent to being flat 
  (\cite[Corollary~6.3]{Eis 04}).
\end{proof}

\begin{lemma}\label{L:prime}
  If $I\subset K[[\Gamma]]$ is a prime ideal, then $I_t\subset K[t][[\Gamma]]$
  is also prime.
\end{lemma}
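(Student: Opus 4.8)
The plan is to deduce primeness of $I_t$ from flatness over $K[t]$ together with the rescaling automorphism $\chi^m\mapsto t^{\langle w,m\rangle}\chi^m$, reducing everything to the statement that a suitable completed polynomial ring over the integral domain $A:=K[[\Gamma]]/I$ is again a domain.

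First I would record that $A=K[[\Gamma]]/I$ is a complete Noetherian local domain (Corollary~\ref{compldom}, together with the hypothesis that $I$ is prime). By Lemma~\ref{L:flat} the ring $S=K[t][[\Gamma]]/I_t$ is flat, hence torsion-free, over the principal ideal domain $K[t]$; therefore every nonzero element of $K[t]$ is a non-zerodivisor on $S$, and the localization map $S\to S\otimes_{K[t]}K(t)$ is injective. So it suffices to prove that the generic fibre $S\otimes_{K[t]}K(t)$ is an integral domain. Writing $S':=K[t][[\Gamma]]\otimes_{K[t]}K(t)$, the localization of $K[t][[\Gamma]]$ at $K[t]\setminus\{0\}$, we have $S\otimes_{K[t]}K(t)=S'/I_tS'$.

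Next comes the rescaling step. Since $w\in N\cap\mathring\sigma$, the integer $\langle w,m\rangle$ is nonnegative for every $m\in\Gamma$, so $\chi^m\mapsto t^{\langle w,m\rangle}\chi^m$ defines a ring endomorphism of $K[t][[\Gamma]]$, which extends --- because $t$ is invertible in $S'$ --- to a $K(t)$-algebra automorphism $\Psi$ of $S'$. For $f=\sum_m a_m\chi^m\in I$ one computes $\Psi(f)=t^{w(f)}f_t$, and since $t$ is a unit in $S'$ the automorphism $\Psi$ carries the extended ideal $IS'$ onto $I_tS'$; hence $S'/I_tS'\cong S'/IS'$, and it remains to show $S'/IS'$ is a domain. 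Now $S'/IS'=\bigl(K[t][[\Gamma]]/I\,K[t][[\Gamma]]\bigr)\otimes_{K[t]}K(t)$ is a localization of $K[t][[\Gamma]]/I\,K[t][[\Gamma]]$, so it is enough that this last ring be a domain. I would check --- just as one checks the corresponding properties of $K[[\Gamma]]$ itself --- that $K[t][[\Gamma]]$ is Noetherian (it is a quotient of $K[t][[x_1,\dots,x_n]]$, where $\gamma_1,\dots,\gamma_n$ generate $\Gamma$) and separated and complete for the $(\Gamma^+)$-adic topology, with $K[t][[\Gamma]]/(\Gamma^+)^kK[t][[\Gamma]]=\bigl(K[[\Gamma]]/(\Gamma^+)^k\bigr)[t]$. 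Passing to the quotient by $I\,K[t][[\Gamma]]$ and invoking the Artin--Rees lemma, one identifies $K[t][[\Gamma]]/I\,K[t][[\Gamma]]$ with the $\mathfrak m_A$-adic completion of the polynomial ring $A[t]$ (where $\mathfrak m_A$ is the maximal ideal of $A$), that is, with the ring of series $\sum_{i\geq 0}a_it^i$, $a_i\in A$, with $a_i\to 0$ $\mathfrak m_A$-adically. Such a ring is an integral domain: a nonzero series has a least index $i_0$ with $a_{i_0}\neq 0$, and if $f,g$ are two nonzero series with least nonzero coefficients $a_{i_0},b_{j_0}$, then the coefficient of $t^{i_0+j_0}$ in $fg$ is $a_{i_0}b_{j_0}\neq 0$ because $A$ is a domain.

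Combining the three steps gives that $S\otimes_{K[t]}K(t)$, hence $S$, is a domain, i.e.\ $I_t$ is prime. The step I expect to be the main obstacle is the last identification: one must verify carefully that $K[t][[\Gamma]]$ really is Noetherian and $(\Gamma^+)$-adically separated and complete, so that the topology induced on the quotient $K[t][[\Gamma]]/I\,K[t][[\Gamma]]$ is separated (by Artin--Rees) and the quotient coincides precisely with the asserted completion of $A[t]$. The remaining ingredients --- the passage to the generic fibre via flatness, the rescaling automorphism $\Psi$, and the elementary fact that a completed polynomial ring over a domain is a domain --- are routine.
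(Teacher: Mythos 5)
The gap is in the rescaling step: the map $\Psi\colon\chi^m\mapsto t^{\langle w,m\rangle}\chi^m$ is an injective ring endomorphism of $K[t][[\Gamma]]$ and of $S'=K[t][[\Gamma]]\otimes_{K[t]}K(t)$, but it is \emph{not surjective} on $S'$, hence not an automorphism. The would-be inverse $\chi^m\mapsto t^{-\langle w,m\rangle}\chi^m$ introduces negative powers of $t$ that are unbounded as $m$ ranges over $\Gamma$, whereas every element of $S'$ has the form $h/d$ with $h\in K[t][[\Gamma]]$ and a \emph{single} denominator $d\in K[t]$, so its $t$-content is bounded below. Concretely, for $\Gamma=\N$ and $w=1$, the element $\sum_{m\geq0}\chi^m\in K[[\chi]]\subset S'$ has no $\Psi$-preimage: a preimage $h/d$ would require $a_m(t)\,t^m=d(t)$ for all $m$, forcing $t^m\mid d(t)$ for every $m$, which is impossible. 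So the claimed isomorphism $S'/IS'\simeq S'/I_tS'$ is not established; all one gets from $\Psi$ is a map between the two quotients with no a priori control on its kernel or cokernel. (You flagged the completion-theoretic identification $K[t][[\Gamma]]/IK[t][[\Gamma]]\simeq\widehat{A[t]}$ as the likely main obstacle, but that step is in fact fine, via Noetherianness of $K[t][[\Gamma]]$ and flatness of the $\mathfrak m_\Gamma$-adic completion $K[[\Gamma]][t]\to K[t][[\Gamma]]$; it is the automorphism claim that fails, even in the trivial case $I=0$.)

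The natural repair --- replace $S'$ by $B:=K[t,t^{-1}][[\Gamma]]$, the $\mathfrak m_\Gamma$-adic completion of $K[t][[\Gamma]][t^{-1}]$, on which $\Psi$ \emph{is} an automorphism carrying $IB$ onto $I_tB$ --- runs into two new obstacles that are not routine. First, one must show $S\hookrightarrow B/I_tB$; but $B$ is only flat, not faithfully flat, over $K[t][[\Gamma]]$ (the ideal $\mathfrak m_\Gamma$ is not contained in the Jacobson radical of $K[t][[\Gamma]][t^{-1}]$), so injectivity needs a separate Artin--Rees-type argument. Second, $B/IB$ is the $\mathfrak m_A$-adic completion of the \emph{Laurent} polynomial ring $A[t,t^{-1}]$, and for two-sided series there is no lowest degree, so the ``lowest nonzero coefficient'' argument you use for $\widehat{A[t]}$ breaks down; showing that this completed Laurent ring is a domain requires more, for instance a real valuation on $A$ centered at $\mathfrak m_A$ (which exists by Cohen's theorem together with Theorem~\ref{T:extprinciple}) plus a Gauss-lemma computation. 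The paper avoids all of this by a completely different route: it fixes a monomial ordering refining $\preceq_w$, specializes a relation $ab\in I_t$ at $t=1$, and runs an explicit reduction of $a$ against deformations $(a(1))_t$ with $a(1)\in I$, treating divisibility by $t-1$ separately; so your approach, even if completed, would be genuinely distinct, but as written it does not establish the lemma.
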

\begin{proof}
  Assume that $ab\in I_t$, with $a,b\in K[t][[\Gamma]]$:
  $$a=\sum a_m(t) \chi^m,\quad b=\sum b_m(t) \chi^m.$$
  Fix a monomial ordering refining the $w$-partial ordering.    
  Substituting $t=1$ to $a$ and $b$ we get $a(1)b(1)\in I_{1}=I$. First assume 
  that $a(1)\ne 0$, $b(1)\ne 0$. Since the ideal $I$ is prime, one of $a(1)$, 
  $b(1)$, say $a(1)$, is contained in $I$. Consider the deformation $g=(a(1))_t$ 
  of $a(1)$. After a choice of an appropriate coefficient $c(t)$ the first monomial 
  of the the reduction $a-c(t)g$ which is not $0$ at $t=1$ is less than that of 
  $a$. Notice that $(a-c(t)g)b\in I_t$, thus we may take $a-c(t)g$ instead of $a$. 
  Repeating this argument, we come to a situation when either one of the series in 
  the product is $0$, and thus $a$ or $b\in I_t$, or every coefficient of every 
  term of $a$ or $b$ takes value $0$ under the substitution $t=1$. In this case 
  every coefficient of $a$ (or $b$) is divisible by $t-1$. Then we have a relation of the form:
  $$(t-1)^k a'b'\in I_t,$$
  where $a'(1)$ and $b'(1)\ne 0$. But since the algebra $S=K[t][[\Gamma]]/I_t$ has 
  a basis consisting of monomials, it follows that $a'b'\in I_t$. Notice that 
  after a finite number of the previous two steps the initial monomial $\chi^m$ of 
  $a$ or $b$ will drop with respect to the chosen monomial ordering. After this, 
  new reductions involve only the monomials strictly greater than $\chi^m$. This 
  implies the convergence of the process of reduction of $a$ and $b$. Thus, 
  $a\in I_t$ or $b\in I_t$, as we wanted to show.
\end{proof}

\begin{lemma}\label{L:equidim}
  The ring $K[t][[\Gamma]]/I_t$ is equidimensional, that is, if $\mathfrak{m}_1$ 
  and $\mathfrak{m}_2$ are any two maximal ideals in this ring, then the height of 
  $\mathfrak{m}_1$ equals the height of $\mathfrak{m}_2$.
\end{lemma}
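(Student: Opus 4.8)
The plan is to pin down the maximal ideals of $S:=K[t][[\Gamma]]/I_t$ completely, reduce their heights to the dimensions of the fibres of $\spec S\to\spec K[t]$ via flatness, and then show that all these fibre dimensions equal $d$. First I would identify the maximal ideals. Since $\Gamma$ is pointed, an element of $A:=K[t][[\Gamma]]$ with non-zero constant term is a unit, so $1+g$ is a unit for every $g$ in the ideal $\mathfrak{m}_0:=(\chi^m\mid m\in\Gamma^+)$; hence $\mathfrak{m}_0 A$ lies in the Jacobson radical of $A$ and every maximal ideal of $A$ contains it. As each generator $f_t=t^{-w(f)}\sum_m a_m t^{\langle w,m\rangle}\chi^m$ of $I_t$ has zero constant term (because $f\in I\subseteq(\Gamma^+)$), we get $I_t\subseteq\mathfrak{m}_0 A$; therefore $K[t]\hookrightarrow S$, $S/\mathfrak{m}_0 S\simeq A/\mathfrak{m}_0 A\simeq K[t]$, and the maximal ideals of $S$ are exactly the ideals $\mathfrak{m}_p$ obtained by pulling back $(p)\subset K[t]$ along $S\twoheadrightarrow S/\mathfrak{m}_0 S=K[t]$, for $p\in K[t]$ monic irreducible; moreover $\mathfrak{m}_p\cap K[t]=(p)$.

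The ring $S$ is Noetherian (it is a quotient of $K[t][[y_1,\dots,y_n]]$) and flat over $K[t]$ by Lemma~\ref{L:flat}, so $S_{\mathfrak{m}_p}$ is flat over the discrete valuation ring $K[t]_{(p)}$, and the dimension formula for flat local homomorphisms (\cite[Theorem~19~(2)]{M 70}) gives
$$\dim S_{\mathfrak{m}_p}=\dim K[t]_{(p)}+\dim\bigl(S_{\mathfrak{m}_p}/pS_{\mathfrak{m}_p}\bigr)=1+\dim(S/pS).$$
Here $S/pS\simeq L[[\Gamma]]/\bar I^{(p)}$ with $L:=K[t]/(p)$ — using $A/pA\simeq L[[\Gamma]]$, which is valid because $p$ is a non-zerodivisor — and this ring is \emph{local}, since $L[[\Gamma]]$ is local ($\Gamma$ being pointed); hence $(S/pS)_{\mathfrak{m}_p}=S/pS$. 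Thus the lemma becomes the assertion $\dim(S/pS)=d$ for every monic irreducible $p$.

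If $p\ne t$, the image $\bar t$ of $t$ in $L$ is a unit, and the ring automorphism $\psi_p$ of $L[[\Gamma]]$ determined by $\chi^m\mapsto\bar t^{-\langle w,m\rangle}\chi^m$ sends the reduction modulo $p$ of $f_t$ to $\bar t^{-w(f)}\sum_m a_m\chi^m=\bar t^{-w(f)}f$; hence $\psi_p(\bar I^{(p)})=I\cdot L[[\Gamma]]$ and $S/pS\simeq L[[\Gamma]]/I\,L[[\Gamma]]\simeq L\otimes_K\bigl(K[[\Gamma]]/I\bigr)$, which is finite over the $d$-dimensional ring $K[[\Gamma]]/I$, so $\dim(S/pS)=d$. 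The remaining case $p=t$, where $S/tS\simeq K[[\Gamma]]/\init_w(I)$, is the delicate one and is where I expect the real work. Recalling that $w$ is integral and lies in $\mathring{\sigma}$, the $w$-order defines an $\N$-indexed filtration $F^j=\{g\mid w(g)\ge j\}$ of $K[[\Gamma]]$ whose associated graded ring $\mathrm{gr}_F(K[[\Gamma]])$ is the toric ring $K[\Gamma]$ with the positive grading $\deg\chi^m=\langle w,m\rangle$; the leading-form ideal $\init_w(I)$ is generated by polynomials, $K[[\Gamma]]/\init_w(I)$ is the $\mathfrak{m}_0$-adic completion of $K[\Gamma]/J$ where $J\subset K[\Gamma]$ is the homogeneous ideal generated by those leading forms, and $K[\Gamma]/J=\mathrm{gr}_F\bigl(K[[\Gamma]]/I\bigr)$. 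Because $\Gamma$ is finitely generated, $F$ is a good filtration (its Rees algebra is a finitely generated $K[[\Gamma]]$-algebra), so passing to the associated graded preserves Krull dimension: comparing the polynomial growth of degree $d$ of $j\mapsto\dim_K\bigl((K[[\Gamma]]/I)/F^j\bigr)$ with its expression as a partial sum of the Hilbert function of $K[\Gamma]/J$ forces $\dim K[\Gamma]/J=d$, and since $J$ is homogeneous for a positive grading, completion at the vertex does not change the dimension, whence $\dim(S/tS)=d$.

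Combining the three cases, $\dim S_{\mathfrak{m}_p}=1+d$ for every maximal ideal $\mathfrak{m}_p$ of $S$, which is precisely the equidimensionality of $S$. The only genuinely hard point is the dimension equality $\dim K[[\Gamma]]/\init_w(I)=d$ for the special fibre over $t=0$; alternatively one may deduce it from the general fact that the (extended) Rees algebra of a complete local domain along a good filtration is equidimensional of the same dimension, which is essentially what $S$ is for the domain $K[[\Gamma]]/I$.
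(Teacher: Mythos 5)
Your proof is correct, but it follows a genuinely different route from the paper's. After classifying the maximal ideals of $S=K[t][[\Gamma]]/I_t$ via their contractions to $K[t]$ (which both arguments do), you stay inside $S$ and use flatness of $S$ over $K[t]$ (Lemma~\ref{L:flat}) plus the dimension formula to reduce the height computation to $\dim S/pS$, which you then work out fibre by fibre; the special fibre $S/tS\simeq K[[\Gamma]]/\init_w(I)$ requires the Hilbert--Samuel comparison between the $w$-filtration and the $\mathfrak m$-adic filtration, sandwiched as $\mathfrak m^{j}\subseteq F^{j}\subseteq\mathfrak m^{\lceil j/C\rceil}$, to see that $\mathrm{gr}_F(K[[\Gamma]]/I)=K[\Gamma]/J$ still has dimension $d$. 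The paper avoids all of this by invoking Lemma~\ref{L:prime} (primality of $I_t$) together with the catenarity of $R=K[t][[\Gamma]]$, which reduces the lemma to the equidimensionality of the \emph{ambient} ring $R$; that is then immediate from $R/Rf\simeq L[[\Gamma]]$ and Krull's principal ideal theorem, with no Hilbert-function argument at all. Your version buys independence from Lemma~\ref{L:prime}, gives the explicit common height $d+1$, and in fact proves directly that $\dim K[[\Gamma]]/\init_w(I)=d$ — a statement the paper instead \emph{derives} afterward (in the proof of Theorem~\ref{T:main}, using Lemma~\ref{L:equidim} together with Matsumura's theorem on flat families) — but at the cost of the graded-ring computation for the $t=0$ fibre, which is exactly the case the paper's reduction was designed to sidestep. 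Two minor points worth tightening: your passing claim that ``$A/pA\simeq L[[\Gamma]]$ because $p$ is a non-zerodivisor'' is not the real reason (it holds because formal power series over $\Gamma$ commute with the quotient $K[t]\to K[t]/(p)$, as $\Gamma$ is pointed), and the remark that the Rees algebra of $F$ is finitely generated is not actually used — the sandwich of filtrations is what carries the argument.
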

\begin{proof}
  Since the ideal $I_t$ is prime (Lemma~\ref{L:prime}), it suffices to show that
  the ring $R=K[t][[\Gamma]]$ is equidimensional. Let $\mathfrak{m}\subset R$ 
  be a maximal ideal. The crucial observation is that any series
  $\sum_{m\in\Gamma} a_m(t)\chi^m$ that begins with a non-zero constant $a_0(t)=
  a_0\in K$, $a_0\ne 0$, is invertible. It follows that:
  $$\mathfrak{m}_0=\{a_0(t)\,|\,a_0(t)+\sum_{m\in\Gamma^+} a_m(t)\chi^m \in
  \mathfrak{m}\}$$
  is a \emph{proper} ideal of $K[t]$. Moreover, $\mathfrak{m}_0\ne \{0\}$ because
  otherwise $\mathfrak{m}$ would not be maximal (a bigger ideal would be, e.g.,
  $\mathfrak{m}+(t)$). Let $\mathfrak{m}'=\mathfrak{m}\cap K[t]$.
  If $\mathfrak{m}'=\{0\}$, consider the ideal $\mathfrak{m}+(f(t))$, where
  $f$ generates $\mathfrak{m}_0$. It contains $\mathfrak{m}$ as a proper subset. On 
  the other hand, $\mathfrak{m}+(f(t))=(1)$ is impossible because this would imply 
  that $f$ is invertible. This contradicts the maximality of $\mathfrak{m}$.
  Thus, $\mathfrak{m}'=\mathfrak{m}_0=(f)$, and $f$ is irreducible in $K[t]$.
  Furthermore, we have $R/Rf\simeq L[[\Gamma]]$, where $L$ is a finite algebraic
  extension of $K$. The ideal $\mathfrak{m}$ maps to the maximal ideal of
  $L[[\Gamma]]$ under the canonical projection $R\to R/Rf$. Since $L[[\Gamma]]$
  is a finite $K[[\Gamma]]$ module, both algebras have the same Krull
  dimension, equal to the height of their maximal ideals. Let $\dim K[[\Gamma]]=d$.
  The height of $\mathfrak{m}$ equals the dimension of the localization 
  $R_\mathfrak{m}$, and, since $f\in\mathfrak{m}$, $R/Rf\simeq 
  R_\mathfrak{m}/R_\mathfrak{m}f$. By \cite[Corollary~11.18]{AM 69}, we get
  $\dim R_\mathfrak{m}/R_\mathfrak{m}f = d+1$. This number is independent of 
  $\mathfrak{m}$.
\end{proof}

\begin{corollary}
  Let $I$ be an ideal of the formal power series ring $K[[\Gamma]]$. Then 
  $\nntrop(I)$ and $\ptrop(I)$ are rational PL conical subspaces in 
  $\overline{\sigma}$ and $\overline{\sigma}^\circ$ respectively. 
\end{corollary}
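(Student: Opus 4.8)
The plan is to reduce the general case to the prime case already settled in Theorem~\ref{T:main}, by decomposing the valuation space of the quotient ring along the minimal primes over $I$.

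First I would use Lemma~\ref{redval} to pass to a reduced situation and then to a finite union of domains. Set $R:=K[[\Gamma]]/I$. Since $\nntrop(I)$ and $\ptrop(I)$ depend on $R$ only through $\V(R)$, through the space $\V_{\geq 0}(R)$ of nonnegatively centered valuations, and through $\V(R,\mathfrak m)$, and since by Lemma~\ref{redval} the reduction morphism induces a homeomorphism $\V(R)\cong\V(R_{\mathrm{red}})$ carrying these subspaces onto the corresponding subspaces of $R_{\mathrm{red}}$, I may replace $I$ by $\sqrt I$. As $K[[\Gamma]]$ is Noetherian (Corollary~\ref{compldom}), we may write $\sqrt I=P_1\cap\dots\cap P_r$ with $P_1,\dots,P_r$ the minimal primes of $K[[\Gamma]]$ over $I$; these are exactly the associated primes of the zero ideal of the reduced ring $R_{\mathrm{red}}=K[[\Gamma]]/\sqrt I$, each quotient $K[[\Gamma]]/P_i$ being again a local domain. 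By the last two assertions of Lemma~\ref{redval},
\[
\V_{\geq 0}(R_{\mathrm{red}})=\bigcup_{i=1}^r \V_{\geq 0}\!\bigl(K[[\Gamma]]/P_i\bigr),
\qquad
\V(R_{\mathrm{red}},\mathfrak m)=\bigcup_{i=1}^r \V\!\bigl(K[[\Gamma]]/P_i,\;\mathfrak m_i\bigr),
\]
where $\mathfrak m_i$ is the maximal ideal of $K[[\Gamma]]/P_i$; the second equality is compatible with centers at the closed point because the home of any valuation of $R$ centered at $\mathfrak m$ contains one of the $P_i$, and the contraction of the center of the induced valuation recovers $\mathfrak m$.

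Next I would transport these decompositions through the tropicalization map. Writing $\gamma_i\colon\Gamma\to K[[\Gamma]]/P_i$ for the natural semigroup morphism and $\Phi_{\gamma_i}$ for the associated continuous map (Lemma~\ref{L:contoftrop}), the diagram $\Gamma\to K[[\Gamma]]\to K[[\Gamma]]/P_i$ commutes, so for a valuation $w$ of $R$ that descends to $\bar w$ on $K[[\Gamma]]/P_i$ one has $w\circ\gamma=\bar w\circ\gamma_i$; hence $\Phi_\gamma$ restricts on each $\V(K[[\Gamma]]/P_i)$ to $\Phi_{\gamma_i}$. Combining this with the set-theoretic decompositions above and using that a finite union of subsets has closure equal to the union of the closures (the closures being taken in $\overline\sigma^\circ$, and $\nntrop$ being defined directly as an image), we obtain
\[
\nntrop(I)=\bigcup_{i=1}^r \nntrop(P_i),
\qquad
\ptrop(I)=\bigcup_{i=1}^r \ptrop(P_i).
\]

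Finally I would invoke Theorem~\ref{T:main} for each prime $P_i$: the sets $\nntrop(P_i)\subseteq\overline\sigma$ and $\ptrop(P_i)\subseteq\overline\sigma^\circ$ are rational PL conical subspaces (their purity of dimension $\dim K[[\Gamma]]/P_i$ is not needed here). It then suffices to note that a finite union of rational PL conical subspaces of a linear variety is again one: by Definition~\ref{D:conicalset} this is the statement that on each stratum $(N/N_\tau)_\R$ a finite union of finite unions of rational convex polyhedral cones is a finite union of rational convex polyhedral cones, which is immediate. This gives the corollary. The only delicate point — the closest thing to an obstacle — is checking that $\V_{\geq0}(R)$ and $\V(R,\mathfrak m)$ genuinely split along the minimal primes of $I$ and that this split is compatible with the centers of valuations; but this is exactly what the proof of Lemma~\ref{redval} provides, via the fact that nilpotents, and then some $P_i$, lie in the home of any valuation of $R$.
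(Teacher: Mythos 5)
Your proposal is correct and follows the same route as the paper: decompose along the minimal primes of $I$, use Lemma~\ref{redval} to split the valuation space (and hence the tropicalization) as the corresponding union, and invoke Theorem~\ref{T:main} for each prime. The paper's own proof is terser, omitting the explicit passage to the reduced ring and the remark that finite unions of rational PL conical subspaces are again such, but the substance is identical.
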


\begin{proof}
  It suffices to consider the nonnegative tropicalization. Let $\mathfrak{p}_1$, 
  $\dots$, $\mathfrak{p}_k$ be the minimal associated primes of the ideal $I$. It 
  follows from Lemma~\ref{redval} that:
  $$\ptrop(I)=\bigcup_{i=1}^{k} \ptrop(\mathfrak{p}_i).$$
  But each of $\ptrop(\mathfrak{p}_i)$ is a rational PL conical subspace by
  Theorem~\ref{T:main}.
\end{proof}

Now, let us pass to a more general setting, which applies when we aim to
tropicalize a family of schemes or varieties over a field $K$, as explained 
after the proof of the next theorem: 

\begin{theorem} \label{T:moregen}
  Let $\Gamma$ be an arbitrary affine pointed semigroup. Let $\gamma\colon
  \Gamma\to (R, \cdot)$ be a local morphism, where $(R, \mathfrak{m})$ is  
  a complete local ring. Assume that $R$ contains a field $K$ and consider the 
  induced local morphism of rings $\overline{\gamma}\colon K[[\Gamma]]\to R$. 
  If $R$ is either:
  \begin{itemize}
    \item[a)] integral over $\overline{\gamma}(K[[\Gamma]])$, or
    \item[b)] Noetherian, flat over $K[[\Gamma]]$, and the ideal 
    $(\Gamma^+)R$ is prime,
  \end{itemize}
  then: 
  $$\ptrop(\gamma)=\ptrop(\ker\overline{\gamma}) \text{ and }
  \nntrop(\gamma)=\nntrop(\ker\overline{\gamma}).$$
  In particular, the positive tropicalization $\ptrop(\gamma)$ is a rational 
  PL conical subspace in $\overline{\sigma}^\circ$, and similarly
  the nonnegative tropicalization $\nntrop(\gamma)$ is a rational PL 
  conical subspace in $\overline{\sigma}$.
\end{theorem}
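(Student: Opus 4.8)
The plan is to reduce Theorem~\ref{T:moregen} to the local finiteness theorem (Theorem~\ref{T:main}) together with the extension principle for valuations (Theorem~\ref{T:extprinciple}), by showing that the valuation spaces which are tropicalized on the two sides of each claimed equality have the same image under $\Phi_{\gamma}$. The key point is that $\overline{\gamma}\colon K[[\Gamma]]\to R$ factors as $K[[\Gamma]]\twoheadrightarrow K[[\Gamma]]/\ker\overline{\gamma}\hookrightarrow R$, so by Lemma~\ref{homechar} every valuation $w\in\V(R,\mathfrak{m})$ restricts to a valuation on $K[[\Gamma]]/\ker\overline{\gamma}$ whose composite with the induced morphism $\Gamma\to K[[\Gamma]]/\ker\overline{\gamma}$ equals $w\circ\gamma$; this shows immediately the inclusion $\ptrop(\gamma)\subseteq\ptrop(\ker\overline{\gamma})$ (and the nonnegative analog). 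For the reverse inclusion I would invoke Theorem~\ref{T:extprinciple}: under hypothesis a) the ring $R$ is integral over $\overline{\gamma}(K[[\Gamma]])$, which (after modding out by the home of a given valuation, exactly as in the proof of that theorem) places us in case a) of the extension principle, so every local (resp. nonnegative) valuation on $K[[\Gamma]]/\ker\overline{\gamma}$ extends to one on $R$ with the same value on the image of $\Gamma$; under hypothesis b) the conditions of case b) of Theorem~\ref{T:extprinciple} are arranged to hold after checking that $R$ is complete, Noetherian, flat over $K[[\Gamma]]/\ker\overline{\gamma}$, the residue fields agree (both contain $K$ and are residue fields of the maximal ideal, which by the primeness of $(\Gamma^+)R$ and the structure of $K[[\Gamma]]$ forces equality with $K$'s closure, i.e. the residue field of $K[[\Gamma]]$), and the fiber $\spec(R/(\Gamma^+)R)$ is reduced and irreducible, which is precisely the hypothesis that $(\Gamma^+)R$ is prime once one notes that $(\Gamma^+)K[[\Gamma]]=\mathfrak{m}_{K[[\Gamma]]}$ generates this ideal.

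The second half of the plan handles the ``in particular'' clause. Once the equalities $\ptrop(\gamma)=\ptrop(\ker\overline{\gamma})$ and $\nntrop(\gamma)=\nntrop(\ker\overline{\gamma})$ are established, I would like to simply cite Theorem~\ref{T:main} and its corollary, which assert that for \emph{any} ideal $J$ of $K[[\Gamma]]$ the sets $\ptrop(J)$ and $\nntrop(J)$ are rational PL conical subspaces of $\overline{\sigma}^\circ$ and $\overline{\sigma}$ respectively. Applying this with $J=\ker\overline{\gamma}$ finishes the proof. One should double-check that $\ker\overline{\gamma}$ is genuinely an ideal of $K[[\Gamma]]$ (it is, being the kernel of a ring morphism) and that $\overline{\gamma}$ is a \emph{local} morphism of rings, so that the resulting semigroup morphism $\Gamma\to K[[\Gamma]]/\ker\overline{\gamma}$ is local in the semigroup sense --- this follows from $\gamma$ being a local semigroup morphism together with the fact that $\overline{\gamma}^{-1}(\mathfrak{m})$ contains $\mathfrak{m}_{K[[\Gamma]]}$ and is proper.

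The step I expect to be the main obstacle is the verification of hypothesis (iii) of case b) of Theorem~\ref{T:extprinciple} --- that the special fiber is reduced and irreducible, i.e. that the extended ideal $\mathfrak{m}_{K[[\Gamma]]/\ker\overline{\gamma}}\cdot R$ is prime. What we are \emph{given} is that $(\Gamma^+)R$ is prime; I would need to argue that $(\Gamma^+)$ generates the maximal ideal of $K[[\Gamma]]/\ker\overline{\gamma}$ (true since $\Gamma$ is pointed, so $(\Gamma^+)$ generates $\mathfrak{m}_{K[[\Gamma]]}$, and the image of a generating set of an ideal generates the image ideal), and hence $(\Gamma^+)R=\mathfrak{m}_{K[[\Gamma]]/\ker\overline{\gamma}}\cdot R$, so the two notions of ``special fiber'' coincide. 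A subtler point is checking flatness of $R$ over $K[[\Gamma]]/\ker\overline{\gamma}$ given only flatness over $K[[\Gamma]]$: since $K[[\Gamma]]\to K[[\Gamma]]/\ker\overline{\gamma}$ is surjective and $R$ is an algebra over the quotient, flatness over $K[[\Gamma]]$ does \emph{not} automatically descend, so one may instead want to run the extension-of-valuations argument directly with $K[[\Gamma]]$ in the role of the base ring $R$ of Theorem~\ref{T:extprinciple} and only afterwards pass to the quotient by the home of the valuation (mirroring how case a) is reduced in the proof of Theorem~\ref{T:extprinciple}); I would adopt whichever of these two routes keeps the flatness bookkeeping cleanest, most likely the latter.
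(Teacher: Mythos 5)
Your proposal matches the paper's argument: both directions --- restriction of a valuation on $R$ to $K[[\Gamma]]/\ker\overline{\gamma}$, and extension of a valuation on $K[[\Gamma]]/\ker\overline{\gamma}$ to $R$ via Theorem~\ref{T:extprinciple} --- combine with Theorem~\ref{T:main} exactly as in the paper. The flatness worry you flag as ``the main obstacle'' in case b) is in fact a non-issue: since $K[[\Gamma]]$ is a domain and $R\ne 0$ (as $\gamma(0)=1$), flatness of $R$ over $K[[\Gamma]]$ already forces $\ker\overline{\gamma}=0$. Indeed, if $0\ne f\in\ker\overline{\gamma}$, then tensoring the injection $K[[\Gamma]]\stackrel{\cdot f}{\longrightarrow}K[[\Gamma]]$ with the flat module $R$ would make multiplication by $\overline{\gamma}(f)=0$ injective on $R$, forcing $R=0$. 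Hence in case b) one has $K[[\Gamma]]/\ker\overline{\gamma}=K[[\Gamma]]$ and the hypotheses of Theorem~\ref{T:extprinciple} b) apply directly, with no descent of flatness needed; your fallback of running the extension argument with base $K[[\Gamma]]$ is also correct but becomes unnecessary once this is noticed.
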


\begin{proof}
  The theorem is a consequence of Theorems~\ref{T:extprinciple} and \ref{T:main}. 
  Indeed, by Theorem~\ref{T:extprinciple} any local valuation
  on $K[[\Gamma]]/\ker\overline{\gamma}$ extends to a local valuation on $R$. On
  the other hand, any local valuation on $R$ obviously restricts to a local 
  valuation on $K[[\Gamma]]/\ker\overline{\gamma}$. Thus, we have the equality
  $\ptrop(\gamma)=\ptrop(\ker\overline{\gamma})$. The proof for the nonnegative
  tropicalization is similar.
\end{proof}

We explain now how tropicalization of families can be studied in the 
framework of \emph{relative tropicalization}. Let $\Gamma$ be an affine pointed
semigroup and $I$ be an ideal of the ring $K[[\Gamma]]$.
Consider also the semigroup $\langle t\rangle=\Z_{\geq 0}$, which will be treated 
as a multiplicative semigroup generated by $t$. The corresponding semigroup 
power series ring with coefficients in the field $K$ is isomorphic to the formal 
power series ring $K[[t]]$ in one variable $t$. If $\lambda\colon 
\langle t\rangle\to \Gamma$ is a local semigroup morphism, we get an induced
morphism of complete local rings $K[[t]]\to K[[\Gamma]]/I$ and a linear map 
$\trop(\lambda)$ of the positive tropicalizations: 
$$\trop(\lambda)\colon \ptrop(I)\to \ptrop(\langle t\rangle)=\ri_{>0}.$$

Let $\varphi\colon L(\sigma,N(\Gamma))\to L(\R_{>0},\Z)$ be the linear map 
inducing $\trop(\lambda)$. Since $\ptrop(I)$ is a rational PL conical subspace, 
for any $a \in \Q_{> 0}$ the fiber $(\trop(\lambda))^{-1}(a)$ is 
a finite rational polyhedral complex in the linear variety $\varphi^{-1}(a)$. 
Notice that a valuation on $K[[t]]$ is completely determined by its value on the 
generator $t$. Thus, the fiber $(\trop(\lambda))^{-1}(a)$ admits the following 
interpretation: it is the tropicalization of the valuations on $K[[\Gamma]]/I$ 
extending the valuation $v$ on $K[[t]]$ and such that $v(t)=a$. 

The fiber $(\trop(\lambda))^{-1}(+\infty)$ is the local tropicalization of the 
special fiber of the map $\spec(K[[\Gamma]]/I)\to \spec K[[t]]$ over the unique 
closed point of $\spec K[[t]]$. With the notation of 
Definition~\ref{defloctropter}, we can write: 
$$(\trop(\lambda))^{-1}(a)=
\ptrop(\mathcal{V}_{(S,v_a)}(K[[\Gamma]]/I,\mathfrak{m}),\gamma),$$
where $S$ is the image of the ring $K[[t]]$ in $K[[\Gamma]]/I$ under the 
homomorphism $\lambda$, $v_a$ is the valuation of $S$ determined by the condition
$v_a(t)=a$, $\mathfrak{m}$ is the maximal ideal of $K[[\Gamma]]/I$, and $\gamma$
is the natural morphism of semigroups $\gamma\colon\Gamma\to K[[\Gamma]]/I$. 
We conclude that: 
$$\ptrop(\mathcal{V}_{(S,v_a)}(K[[\Gamma]]/I,\mathfrak{m}),\gamma)$$ 
is a finite rational polyhedral complex, and it has pure dimension $d-1$ if $I$ is 
a prime ideal of depth $d$.

\medskip
\section{Comparison between local and global tropicalization}\label{S:localglobal}

The aim of this section is to explain that the local tropicalization of the germ 
at a closed orbit of a subvariety of a toric variety can be obtained as the 
intersection of the global tropicalization with the linear variety associated to 
the cone describing the closed orbit. 
\medskip

We start with a subscheme $X$ of an affine toric variety $\spec(K[\Gamma])$. If the
toric variety is not normal, we can always pass to its normalization and
lift $X$ to it. By Corollary~\ref{C:normal} and 
Lemma~\ref{L:normalgamma} below, this does not change the tropicalization of $X$.

\begin{lemma}\label{L:normalgamma}
  Let $\Gamma$ be an affine semigroup and $K$ be a field. Then the integral closure
  of $K[\Gamma]_{(\Gamma^+)}$ in its field of fractions is 
  $K[\sat(\Gamma)]_{(\sat(\Gamma)^+)}$.
\end{lemma}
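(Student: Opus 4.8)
The plan is to combine the standard description of the normalisation of a semigroup ring with the fact that normalisation commutes with localisation, and then to determine the primes of $K[\sat(\Gamma)]$ lying over the prime $(\Gamma^+)$ of $K[\Gamma]$. Write $\Lambda:=\sat(\Gamma)$; by \eqref{geoint} and Gordan's lemma this is again an affine semigroup, with group of units $\Lambda^*$ and maximal ideal $\Lambda^+=\Lambda\setminus\Lambda^*$ (a prime ideal of $\Lambda$ by Lemma~\ref{primid}). Recall that $K[\Lambda]$ is the integral closure of $K[\Gamma]$ in $\mathrm{Frac}(K[\Gamma])=\mathrm{Frac}(K[M(\Gamma)])$ (standard for toric rings, see \cite{CLS 11}). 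Since the formation of integral closure commutes with localisation \cite[Prop.~5.12]{AM 69} and $\mathrm{Frac}$ is unchanged by localisation, the integral closure of $R:=K[\Gamma]_{(\Gamma^+)}$ in $\mathrm{Frac}(R)$ is $B:=\bigl(K[\Gamma]\setminus(\Gamma^+)\bigr)^{-1}K[\Lambda]$. The task therefore reduces to proving $B=K[\Lambda]_{(\Lambda^+)}$. Note that $(\Gamma^+)$ and $(\Lambda^+)$, the monomial ideals generated by the non-units, are prime, since their quotients are the Laurent polynomial rings $K[\Gamma^*]$ and $K[\Lambda^*]$.

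Next I would record two elementary facts. First, $\Lambda^*\cap\Gamma=\Gamma^*$: if $m\in\Gamma$ and $-m\in\Lambda$, choose $k\geq1$ with $-km\in\Gamma$; then $-m=(k-1)m+(-km)\in\Gamma$, so $m\in\Gamma^*$. Hence $\Lambda^+\cap\Gamma=\Gamma^+$, which gives $(\Lambda^+)\cap K[\Gamma]=(\Gamma^+)$ — in particular an inclusion of residue rings $K[\Gamma^*]\hookrightarrow K[\Lambda^*]$ — and also the inclusion of multiplicative sets $K[\Gamma]\setminus(\Gamma^+)\subseteq K[\Lambda]\setminus(\Lambda^+)$ (a series of $K[\Gamma]$ outside $(\Gamma^+)$ carries a monomial with exponent in $\Gamma^*\subseteq\Lambda^*$), whence $B\subseteq K[\Lambda]_{(\Lambda^+)}$. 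Second, $K[\Lambda]$ is integral over $K[\Gamma]$, and $K[\Lambda^*]$ is module-finite over $K[\Gamma^*]$: each $\chi^m$ with $m\in\Lambda^*$ satisfies $(\chi^m)^k=\chi^{km}\in K[\Gamma^*]$ for suitable $k\geq1$ (one has $km\in\Gamma^*$ for $k$ large), and these monomials generate $K[\Lambda^*]$ as a $K$-algebra.

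The heart of the argument is the claim that $(\Lambda^+)$ is \emph{the only} prime of $K[\Lambda]$ lying over $(\Gamma^+)$. Let $\mathfrak Q$ be such a prime. For $v\in\Lambda^+$ pick $k\geq1$ with $kv\in\Gamma$; then $kv\notin\Gamma^*$ (otherwise $kv\in\Gamma^*\subseteq\Lambda^*$ would force $v\in\Lambda^*$, as $\Lambda^*$ is a saturated subgroup of $M(\Gamma)$), so $\chi^{kv}=(\chi^v)^k\in(\Gamma^+)\subseteq\mathfrak Q$, giving $\chi^v\in\mathfrak Q$; thus $(\Lambda^+)\subseteq\mathfrak Q$. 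Passing to the quotient, $\mathfrak Q/(\Lambda^+)$ is a prime of $K[\Lambda^*]$ contracting to $(0)$ in $K[\Gamma^*]$. But $K[\Lambda^*]$ is a domain, module-finite over $K[\Gamma^*]$, so $K[\Lambda^*]\otimes_{K[\Gamma^*]}\mathrm{Frac}(K[\Gamma^*])$ is a domain which is finite-dimensional over a field, hence itself a field; therefore $(0)$ is the unique prime of $K[\Lambda^*]$ lying over $(0)$, and $\mathfrak Q=(\Lambda^+)$.

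To finish, observe that the primes of $B$ are exactly the primes $\mathfrak Q$ of $K[\Lambda]$ with $\mathfrak Q\cap K[\Gamma]\subseteq(\Gamma^+)$; by Going-Up for the integral extension $K[\Gamma]\subseteq K[\Lambda]$ (\cite[Th.~5.11]{AM 69}) each such $\mathfrak Q$ lies inside a prime over $(\Gamma^+)$, i.e.\ inside $(\Lambda^+)$ by the previous step. Hence $B$ is local with maximal ideal $(\Lambda^+)B$, whose contraction to $K[\Lambda]$ is $(\Lambda^+)$. Consequently every $s\in K[\Lambda]\setminus(\Lambda^+)$ avoids the maximal ideal of $B$ and is a unit of $B$, so $K[\Lambda]_{(\Lambda^+)}\subseteq B$; combined with the reverse inclusion this gives $B=K[\Lambda]_{(\Lambda^+)}$, as claimed. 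The main obstacle is the uniqueness of the prime over $(\Gamma^+)$: one must exclude that the normalisation $\spec K[\sat(\Gamma)]\to\spec K[\Gamma]$ has several points over the generic point of the orbit closure $V((\Gamma^+))$, and since the relevant fibre ring can be non-reduced (already for $\Gamma=\langle(2,0),(1,1)\rangle\subset\Z^2$) the argument cannot be carried out with reduced structures but must pass through the units $\Gamma^*\subseteq\Lambda^*$ and the remark that a finite domain extension of a field is a field.
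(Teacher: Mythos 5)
Your proof is correct, and it takes a genuinely different route from the one in the paper. Both proofs begin with the same reduction: using \cite[Prop.~5.12]{AM 69} and the fact that $K[\sat(\Gamma)]$ is the integral closure of $K[\Gamma]$, the claim is reduced to the equality $S^{-1}K[\sat(\Gamma)] = K[\sat(\Gamma)]_{(\sat(\Gamma)^+)}$ with $S = K[\Gamma]\setminus(\Gamma^+)$. From there the two arguments diverge. The paper proves the nontrivial inclusion \emph{constructively}: given $g\in K[\sat(\Gamma)]\setminus(\sat(\Gamma)^+)$ it exhibits an explicit $h\in K[\sat(\Gamma)]$ with $gh\in S$, using a Galois-type identity $\prod_{\underline{c}}(\sum_j c_{j}X_j)=Q(X_1^n,\dotsc,X_l^n)$ over $n$-th roots of unity and then a careful Newton-polytope argument to show the product retains a unit monomial. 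You instead prove the equality \emph{structurally}: you establish that $(\sat(\Gamma)^+)$ is the unique prime of $K[\sat(\Gamma)]$ over $(\Gamma^+)$ — the crux being that $K[\sat(\Gamma)^*]$ is a module-finite domain extension of $K[\Gamma^*]$, so only $(0)$ lies over $(0)$ — and then invoke Going-Up to conclude that $S^{-1}K[\sat(\Gamma)]$ is already local with maximal ideal $(\sat(\Gamma)^+)$, which forces the two localizations to coincide. Your approach is shorter and uses only standard machinery of integral extensions; the paper's buys an explicit multiplier $h$, which some readers may find more transparent. One small slip worth fixing: your parenthetical example $\Gamma=\langle(2,0),(1,1)\rangle\subset\Z^2$ does not illustrate a non-reduced fibre ring, since this semigroup is already saturated in the lattice $M(\Gamma)=\{(a,b): a\equiv b\pmod 2\}$ that it generates; the point you are making is correct, but a working example is $\Gamma=\langle 2,3\rangle\subset\Z$, where the fibre ring is $K[t]/(t^2)$.
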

\begin{proof} 
  It is standard that the integral closure of $K[\Gamma]$ in its field of 
  fractions is $K[\sat(\Gamma)]$ (see, e.g., \cite{F 93}).
  By \cite[Proposition 5.12]{AM 69}, the integral closure of 
  $K[\Gamma]_{(\Gamma^+)}$ in its field of fractions is the ring 
  of fractions $S^{-1}K[\sat(\Gamma)]$ of the ring $K[\sat(\Gamma)]$ with respect 
  to the multiplicative subsemigroup $S := K[\Gamma] \setminus (\Gamma^+)$. 
  Let us show that {\em this ring of fractions is equal to the localization}
  $K[\sat(\Gamma)]_{(\sat(\Gamma)^+)}$.

  Consider an arbitrary fraction $f/g\in K[\sat(\Gamma)]_{(\sat(\Gamma)^+)}$, with 
  the property that 
  $f \in K[\sat(\Gamma)]$ and $g \in K[\sat(\Gamma)] \setminus (\sat(\Gamma)^+)$. 
  We want to prove that there exists $h \in K[\sat(\Gamma)]$ such that 
  $g \cdot h \in S$. We use the following classical fact: if $X_1 ,\dotsc, X_l$ 
  are independent variables and $n \in \N^*$, then:
  \begin{equation} \label{prodlin}
       \prod_{\underline{c}_i}(\sum_{j=1}^l \ c_{ij}X_j) = Q(X_1^n ,\dotsc, X_l^n)
  \end{equation}
  where the $l$-uples $\underline{c}_i=(c_{i1} ,\dotsc,c_{il})$ vary among all 
  possible choices of $n$-th roots of unity in $\C^*$, and where 
  $Q \in \Z[X_1 ,\dotsc, X_l]$. This can be proven by elementary Galois-type 
  arguments. More precisely, we get a polynomial in the $n$-th powers of the 
  variables because the left-hand side is invariant under any substitution 
  $X_i\mapsto\eta X_i$, where $\eta$ is an arbitrary $n$-th root of unity. The 
  coefficients are integers because we work in an integral extension of $\Z$, 
  obtained by adjoining the $n$-th roots of unity, and because 
  the left-hand-side is invariant by all the automorphisms of this extension. 
  Moreover, equation \eqref{prodlin} shows that $Q$ is a homogeneous 
  polynomial (of degree $D = n^l$) and that it contains one power $X_i^D$ of each 
  variable among its monomials. 
  
  Denote by $U(X_1 ,\dotsc,  X_l) \in \C[X_1 ,\dotsc, X_l]$ the product of all 
  linear forms of the left-hand side of \eqref{prodlin} which are distinct from 
  $X_1 + \cdots + X_l$. Since the ring $\Z[X_1 ,\dotsc,  X_l]$ is factorial, 
  we see that $U(X_1 ,\dotsc, X_l) \in \Z[X_1 ,\dotsc, X_l]$. 
  Let us rewrite \eqref{prodlin} in the form:
        \begin{equation} \label{prodlinbis}
       (X_1 + \cdots + X_l)\cdot U(X_1 ,\dotsc, X_l) = Q(X_1^n ,\dotsc, X_l^n).
  \end{equation}
  
  Return now to our polynomial $g \in K[\sat(\Gamma)] \setminus (\sat(\Gamma)^+)$. 
  Suppose that there are $l \in \N^*$ non-zero terms in $g$. Choose an order 
  $t_1 ,\dotsc, t_l$  of them, and denote $m_i\in \Gamma$ the exponent of $t_i$. 
  Replace the variables $X_i$ of (\ref{prodlinbis}) by the 
  terms $t_i$. If we choose $n\in \N$ so that $n \cdot m_i \in \Gamma$ for all the 
  exponents $m_i$ of the monomials of $g$ (which is possible by the definition of 
  the saturation), then $Q(t_1^n ,\dotsc,  t_l^n) \in K[\Gamma]$. Moreover, 
  we claim that 
  $Q(t_1^n ,\dotsc,  t_l^n) \in S  = K[\Gamma] \setminus (\Gamma^+)$. 
  If this holds, the proof is finished, as $h = U(t_1^n ,\dotsc,  t_l^n)$ 
  satisfies the desired property $g \cdot h \in S$. 
  
  Let us explain why $Q(t_1^n ,\dotsc,  t_l^n) \in S$. Consider the Newton 
  polyhedron $\mathcal{N}(g) \subset M(\Gamma)_{\R}$ of $g$, i.e., the convex hull 
  of the exponents  of its monomials. The hypothesis that $g \in S$ shows that 
  $\mathcal{N}(g)$ has at least one vertex in $\Gamma^*$. Since $\Gamma^*$ is a 
  face of $\Gamma$, there exists $v \in N(\Gamma)$ which, when seen as a function 
  on the vertices of $\mathcal{N}(g)$, attains its minimum on exactly one vertex, 
  which is moreover contained in $\Gamma^*$. Assume that it is the vertex $m_1$. 
  {\em Then, the exponent $D m_1$ appears in $Q(t_1^n ,\dotsc,  t_l^n)$ only once, 
  coming from the monomial $X_1^D$ of $Q(X_1^n ,\dotsc,  X_l^n)$}. Indeed, suppose 
  that $X_1^{a_1} \cdots X_l^{a_l}$ is any other monomial of 
  $Q(X_1^n ,\dotsc,  X_l^n)$. The exponent of the term 
  $t_1^{a_1} \cdots t_l^{a_l}$ of $K[\Gamma]$ is $a_1 m_1+ \cdots + a_l m_l$. 
  As $Q$ is homogeneous of degree $D$, we have $\sum_{i=1}^l a_i =D$. Therefore:  
   $$\langle v ,   a_1 m_1+ \cdots + a_l m_l - D m_1 \rangle 
  = \sum_{i =2}^l a_i \langle v ,  m_i - m_1 \rangle.$$ 
  Our hypothesis that the new monomial is distinct from $X_1^D$ shows that at 
  least one of the nonnegative integers $a_2 ,\dotsc,  a_l$ is positive. Choose 
  such an $a_k >0$. Since also $\langle v , m_k - m_1 \rangle>0$ and all the other 
  members $a_i$ and $\langle v  ,  m_i - m_1 \rangle$ in this formula are 
  nonnegative, we conclude that the exponent of $t_1^{a_1} \cdots t_l^{a_l}$ is 
  indeed different from the exponent of $t_1^D$. Therefore, 
  $Q(t_1^n ,\dotsc,t_l^n)\in S$, as it contains the monomial $t_1^D$.
  \end{proof}

Thus, there is no loss in generality if we assume in this section that $\Gamma$ 
is a {\em saturated} affine semigroup. Denote $\Gamma=\sat(\Gamma)=
\check{\sigma}\cap M(\Gamma)$. If $R$ is a ring and $\mathfrak{p}$ a 
prime ideal, let $\psi_{\mathfrak{p}}$ denote the associated morphism of 
localization $\psi_{\mathfrak{p}}\colon R \rightarrow R_{\mathfrak{p}}$. 
The proofs of the following results are easy and left to the reader.

\begin{lemma}
  Let $\Gamma \overset{\gamma}{\longrightarrow} (R, \cdot)$ be a
  morphism of semigroups and let $I(\gamma)\subset R$ be the ideal generated by
  the image $\gamma(\Gamma^+)$. Let $\mathfrak{p}$ be a prime ideal of $R$
  containing $I(\gamma)$ (that is, a point of the subscheme of $\spec
  R$ defined by the ideal $I(\gamma)$). Then the morphism of
  semigroups  $\Gamma \overset{\psi_{\mathfrak{p}} \circ
      \gamma}{\longrightarrow} (R_{\mathfrak{p}}, \cdot)$ satisfies
      $\psi_{\mathfrak{p}} \circ \gamma(\Gamma^+) \subset \mathfrak{p}
      R_{\mathfrak{p}}$. 
\end{lemma}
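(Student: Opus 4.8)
The plan is to unwind the definitions: the statement is purely set-theoretic and follows from tracking two elementary inclusions, so no real work is required.

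First I would note that, by the very definition of the ideal $I(\gamma)$ as the ideal of $R$ generated by $\gamma(\Gamma^+)$, one has $\gamma(\Gamma^+)\subseteq I(\gamma)$; here $\Gamma^+$ denotes the maximal ideal of the semigroup $\Gamma$, i.e.\ the complement of $\Gamma^*$, which is prime by Lemma~\ref{primid}. Combining this with the hypothesis $I(\gamma)\subseteq\mathfrak{p}$ gives $\gamma(\Gamma^+)\subseteq\mathfrak{p}$.

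Second, I would recall how the localization morphism behaves on the ideal $\mathfrak{p}$: for $f\in R$ the element $\psi_{\mathfrak{p}}(f)$ is $f/1\in R_{\mathfrak{p}}$, and if $f\in\mathfrak{p}$ then $f/1\in\mathfrak{p}R_{\mathfrak{p}}$ by the definition of the extended ideal; thus $\psi_{\mathfrak{p}}(\mathfrak{p})\subseteq\mathfrak{p}R_{\mathfrak{p}}$, the maximal ideal of the local ring $R_{\mathfrak{p}}$. Applying $\psi_{\mathfrak{p}}$ to the inclusion of the previous paragraph yields
\[
\psi_{\mathfrak{p}}\circ\gamma(\Gamma^+)\ \subseteq\ \psi_{\mathfrak{p}}(\mathfrak{p})\ \subseteq\ \mathfrak{p}R_{\mathfrak{p}},
\]
which is exactly the asserted conclusion.

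There is essentially no obstacle here: the only point deserving a moment's care is the bookkeeping of which ``$\Gamma^+$'' is meant, namely that the generating set of $I(\gamma)$ is the image of the semigroup maximal ideal, so that the first inclusion holds by construction rather than by any computation. As a remark, this lemma records precisely the fact that $\psi_{\mathfrak{p}}\circ\gamma$ sends $\Gamma^+$ into the maximal ideal of $(R_{\mathfrak{p}},\mathfrak{p}R_{\mathfrak{p}})$; upgrading this to the statement that $\psi_{\mathfrak{p}}\circ\gamma$ is a \emph{local} morphism would additionally require the reverse inclusion $(\psi_{\mathfrak{p}}\circ\gamma)^{-1}(\mathfrak{p}R_{\mathfrak{p}})\subseteq\Gamma^+$, which is not claimed here and which I would not attempt.
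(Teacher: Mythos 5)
Your proof is correct, and since the paper explicitly leaves this lemma (and the following proposition) ``easy and left to the reader,'' your elementary unwinding of the definitions is exactly the argument intended. The two-step inclusion $\gamma(\Gamma^+)\subseteq I(\gamma)\subseteq\mathfrak{p}$ followed by pushing through $\psi_{\mathfrak{p}}$ is complete, and your closing remark correctly identifies that the lemma only establishes one direction of locality, not that $\psi_{\mathfrak{p}}\circ\gamma$ is a local morphism of semigroups.
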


\begin{proposition} \label{globloc}
  Let $R$ be a ring, $\mathfrak{p}$ be one of its prime ideals and
  $(\Gamma,+) \overset{\gamma}{\longrightarrow} (R, \cdot)$ a
  morphism of semigroups such that 
  $\gamma(\Gamma^+)\subset  \mathfrak{p}$. Then, for any subspace $\W
  \subset \V(R)$, we have:  
  $$\trop(\W, \gamma) \cap \overline{\sigma}^\circ(\Gamma) =
  \ptrop(\V(\psi_{\mathfrak{p}})^{-1}(\W), \gamma).$$ 
\end{proposition}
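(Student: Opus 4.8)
The plan is to prove both inclusions by unwinding the definitions and using the functorial compatibility of $\Phi_\gamma$ with the localization morphism $\psi_{\mathfrak p}\colon R \to R_{\mathfrak p}$. First I would set up notation: write $\sigma := \sigma(\Gamma)$, $N := N(\Gamma)$, and recall that $\V(\psi_{\mathfrak p})\colon \V(R_{\mathfrak p})\to \V(R)$ is the map $v \mapsto v\circ \psi_{\mathfrak p}$. By Lemma~\ref{homechar}, a valuation $w$ of $R$ lies in the image of $\V(\psi_{\mathfrak p})$ precisely when $w$ extends to $R_{\mathfrak p}$, i.e.\ when $w$ is nonnegative on $R$ and its center is contained in $\mathfrak p$ (equivalently, the complement $R\setminus\mathfrak p$ maps into $\R_{\geq 0}\setminus\{+\infty\}$, so that $w(f/g) := w(f)-w(g)$ makes sense for $f\in R$, $g\notin\mathfrak p$); indeed we get a homeomorphism $\V(R_{\mathfrak p},\mathfrak p R_{\mathfrak p})\simeq \V(R,\mathfrak p)$, as recalled after Definition~\ref{relideal}. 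Thus $\V(\psi_{\mathfrak p})^{-1}(\W)$ consists of the valuations of $R_{\mathfrak p}$ whose restriction to $R$ lies in $\W$, and by the hypothesis $\gamma(\Gamma^+)\subset\mathfrak p$ the composed morphism $\psi_{\mathfrak p}\circ\gamma$ is a local morphism of $\Gamma$ into $(R_{\mathfrak p},\mathfrak p R_{\mathfrak p})$, so the right-hand side $\ptrop(\V(\psi_{\mathfrak p})^{-1}(\W),\gamma)$ is defined (it is really $\ptrop$ of the morphism $\psi_{\mathfrak p}\circ\gamma$ applied to that subset of $\V(R_{\mathfrak p},\mathfrak p R_{\mathfrak p})$).

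Next I would establish the key commutativity: for $v\in\V(R_{\mathfrak p})$ with restriction $w=\V(\psi_{\mathfrak p})(v)\in\V(R)$, one has $v\circ(\psi_{\mathfrak p}\circ\gamma) = w\circ\gamma$, so $\Phi_{\psi_{\mathfrak p}\circ\gamma}(v) = \Phi_\gamma(w)$ inside $L(\sigma,N)$. Moreover, when $v$ is centered at $\mathfrak p R_{\mathfrak p}$ (i.e.\ $v$ is a \emph{local} valuation of $R_{\mathfrak p}$), the vector $\Phi_\gamma(w)$ lies in $\overline{\sigma}^\circ$: indeed $v$ is positive on $\mathfrak p R_{\mathfrak p}$, and since $\gamma(\Gamma^+)\subset\mathfrak p$ we get $w\circ\gamma>0$ on $\Gamma^+$, which is exactly the condition defining the relative interior $\overline{\sigma}^\circ$. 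Conversely, a valuation $w\in\W$ with $\Phi_\gamma(w)\in\overline{\sigma}^\circ$ is automatically nonnegative on the subring generated by $\gamma(\Gamma)$ and positive on $\gamma(\Gamma^+)$; but this does \emph{not} by itself force $w$ to be nonnegative on all of $R$ with center inside $\mathfrak p$ — this is the one point that needs care. Here is where I would use the hypothesis $\gamma(\Gamma^+)\subset\mathfrak p$ together with the fact that $w\in\W\subset\V(R)$ already \emph{is} a valuation of $R$: the content of the statement is that intersecting $\trop(\W,\gamma)$ with $\overline{\sigma}^\circ$ picks out exactly those images coming from valuations of $R$ that happen to be nonnegative with center in $\mathfrak p$, because only such valuations can produce a point of $\overline{\sigma}^\circ$. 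So I would argue: if $w\in\W$ and $\Phi_\gamma(w)\in\overline{\sigma}^\circ$, then $w(\gamma(m))\geq 0$ for all $m\in\Gamma$ and $>0$ for $m\in\Gamma^+$; this does not a priori give nonnegativity of $w$ on $R$, so in fact the correct reading of both sides is after taking closures, and one must compare the closures of $\Phi_\gamma(\W)$ and of $\Phi_{\psi_{\mathfrak p}\circ\gamma}(\V(\psi_{\mathfrak p})^{-1}(\W))$ inside $L(\sigma,N)$ — the former intersected with $\overline\sigma^\circ$, the latter closed inside $\overline\sigma^\circ$.

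Concretely the two inclusions go as follows. For "$\supseteq$": the continuous map $\V(\psi_{\mathfrak p})$ sends $\V(\psi_{\mathfrak p})^{-1}(\W)$ into $\W$, and by the commutativity above $\Phi_{\psi_{\mathfrak p}\circ\gamma}$ on the former equals $\Phi_\gamma\circ\V(\psi_{\mathfrak p})$, with image landing in $\overline{\sigma}^\circ$; taking closures, $\ptrop(\V(\psi_{\mathfrak p})^{-1}(\W),\gamma)\subseteq \overline{\trop(\W,\gamma)}\cap\overline{\sigma}^\circ = \trop(\W,\gamma)\cap\overline{\sigma}^\circ$ (recall $\trop(\W,\gamma)$ is by Definition~\ref{deftrop} already the closure of $\Phi_\gamma(\W)$). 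For "$\subseteq$": take $x\in\trop(\W,\gamma)\cap\overline{\sigma}^\circ$, so $x$ is a limit of points $\Phi_\gamma(w_i)$, $w_i\in\W$; since $x\in\overline{\sigma}^\circ$ and $\overline{\sigma}^\circ$ is open in $\overline\sigma$, the $\Phi_\gamma(w_i)$ eventually lie in $\overline{\sigma}^\circ$, which by the displayed computation forces the corresponding $w_i$ to be nonnegative on the image $\gamma(\Gamma)$ and, after passing to a further approximation using that $\gamma(\Gamma^+)$ generates an ideal contained in $\mathfrak p$, one extends $w_i$ (or replaces it by a nearby valuation in the same $\W$) to $R_{\mathfrak p}$; hence $x$ lies in the closure of $\Phi_{\psi_{\mathfrak p}\circ\gamma}(\V(\psi_{\mathfrak p})^{-1}(\W))$ inside $\overline\sigma^\circ$, i.e.\ $x\in\ptrop(\V(\psi_{\mathfrak p})^{-1}(\W),\gamma)$. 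The main obstacle, and the step deserving the most care, is precisely this last extension/approximation: showing that membership of $\Phi_\gamma(w)$ in $\overline{\sigma}^\circ$ is enough to guarantee that $w$ (possibly after a small perturbation within $\W$, or after invoking that $\W$ is one of the relative valuation spaces of Definitions~\ref{valrel}–\ref{relideal} for which such extensions are available) does extend to a local valuation of $R_{\mathfrak p}$. Since the proposition is asserted with a general $\W$, I expect the clean argument is the purely formal one above — the closure on the right-hand side absorbs exactly the discrepancy — so the proof is essentially a diagram chase plus the observation that $\overline{\sigma}^\circ$ is open; the author indeed says "the proofs of the following results are easy and left to the reader."
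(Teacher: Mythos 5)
Your commutativity $\Phi_{\psi_{\mathfrak{p}}\circ\gamma}=\Phi_\gamma\circ\V(\psi_{\mathfrak{p}})$ and the ``$\supseteq$'' inclusion are correct, and you have rightly identified the ``$\subseteq$'' inclusion as the crux. However, the way you propose to discharge it --- that ``the closure on the right-hand side absorbs exactly the discrepancy,'' so that the whole thing is a formal diagram chase plus openness of $\overline{\sigma}^\circ$ --- does not work, and the inclusion actually fails for a general $\W$. Unwinding the definitions via your commutativity, the right-hand side is the closure in $\overline{\sigma}^\circ$ of $\Phi_\gamma\bigl(\W\cap\im\V(\psi_{\mathfrak{p}})\bigr)$, and $\im\V(\psi_{\mathfrak{p}})$ is exactly the set of $w\in\V(R)$ with $w(s)<\infty$ for every $s\in R\setminus\mathfrak{p}$, equivalently whose home $\{f\in R\,|\,w(f)=\infty\}$ lies inside $\mathfrak{p}$. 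But $\Phi_\gamma(w)\in\overline{\sigma}^\circ$ only constrains $w$ on the image $\gamma(\Gamma)$ and imposes nothing on where $w$ is infinite elsewhere in $R$, so it neither forces $w\in\im\V(\psi_{\mathfrak{p}})$ nor guarantees that $\Phi_\gamma(w)$ is a limit of images of extendable valuations.

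A concrete failure of the literal statement: take $R=K[x,y]$, $\Gamma=\Z_{\geq 0}$ with $\gamma(1)=x$, $\mathfrak{p}=(x)$, and $\W=\{w\}$ where $w$ is the pullback to $R$ of the $x$-adic valuation on $R/(y)\simeq K[x]$, so $w(x)=1$ and $w(y)=\infty$. Here $\gamma(\Gamma^+)\subset\mathfrak{p}$, and $\Phi_\gamma(w)=1\in\overline{\sigma}^\circ$, so the left-hand side is $\{1\}$; but $y\notin\mathfrak{p}$ with $w(y)=\infty$ means $w$ does not extend to $R_{\mathfrak{p}}$, so $\V(\psi_{\mathfrak{p}})^{-1}(\W)=\varnothing$ and the right-hand side is empty. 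Thus the statement as phrased (``for any subspace $\W$'') is genuinely stronger than what formal manipulation delivers. The missing ingredient is an implication of the form ``$\Phi_\gamma(w)\in\overline{\sigma}^\circ$ (or approximability by such images) forces $w$ to extend to $R_{\mathfrak{p}}$,'' and this depends on the interplay of $R$, $\gamma$, $\mathfrak{p}$ and $\W$. It does hold in the situations where the paper applies the proposition --- $R$ a quotient of $K[\Gamma]$ or its localization, $\W$ the full valuation space or a relative one, and $\mathfrak{p}$ the maximal ideal $(\gamma(\Gamma^+))$ --- but your proof should isolate and invoke that extension criterion explicitly, or state it as a hypothesis, rather than expect it to come for free from topology.
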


In particular, we get the following property of subschemes of toric varieties, 
comparing local and global tropicalization:

\begin{corollary}
  Let $X$ be a subscheme of a toric variety $\zcal(\Delta, N)$. Let 
  $A\in X$ be a closed point which is an orbit $O_{\sigma}$ of 
  $\zcal(\Delta, N)$, where $\sigma$ is a cone of $\Delta$ with non-empty 
  interior. Then: $\ptrop(X,A)=\trop(X) \cap \overline{\sigma}^\circ$. 
\end{corollary}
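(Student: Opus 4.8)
The plan is to deduce this corollary from Proposition~\ref{globloc} by unwinding the definitions of $\trop(X)$, $\ptrop(X,A)$ and of the toric data around the closed orbit $O_\sigma$. First I would observe that since $A = O_\sigma$ is a \emph{closed} point of $\zcal(\Delta,N)$, the cone $\sigma$ must be maximal in $\Delta$ and of dimension equal to the rank of $N$, so that $\sigma$ has non-empty interior (this is exactly the hypothesis) and $\overline{\sigma}^\circ$ is the relative interior of the extended cone $\overline{\sigma}$ sitting inside $L(\sigma,N) \subseteq L(\Delta,N)$. I would then localize at the affine toric chart $\zcal_K(\sigma,N) = \spec K[\check\sigma\cap M]$ containing $A$; by Remark~\ref{specnot} and the discussion there, the global tropicalization $\trop(X)$ is computed chart by chart, so on this chart it is $\trop(\V(R),\gamma)$ where $R = K[\check\sigma\cap M]/I_X$ (with $I_X$ the ideal of $X$ in the chart), $\Gamma = \check\sigma\cap M$ is the relevant affine (saturated) semigroup, and $\gamma\colon\Gamma\to R$ is the natural semigroup morphism. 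Here $\sigma(\Gamma) = \sigma$, so $\overline{\sigma}^\circ(\Gamma)$ is precisely the piece cut out in the statement.

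Next I would identify the prime ideal $\mathfrak{p}$ of $R$ corresponding to the point $A$: since $A = O_\sigma$ is the closed orbit of the affine chart, $\mathfrak{p}$ is the image in $R$ of the ideal $(\Gamma^+) = (\{\chi^m : m\in\Gamma^+\})$ of $K[\Gamma]$, and in particular $\gamma(\Gamma^+)\subseteq\mathfrak{p}$, so the hypothesis of Proposition~\ref{globloc} is satisfied. Taking $\W = \V(R)$ (the full valuation space, as in the definition of $\trop$ of a subscheme), Proposition~\ref{globloc} gives
$$\trop(\V(R),\gamma)\cap\overline{\sigma}^\circ(\Gamma) = \ptrop(\V(\psi_{\mathfrak p})^{-1}(\V(R)),\gamma),$$
and $\V(\psi_{\mathfrak p})^{-1}(\V(R)) = \V(R_{\mathfrak p})$ is the space of all valuations of the localization $R_{\mathfrak p}$, which by the homeomorphism recorded after Definition~\ref{relideal} is $\V(R_{\mathfrak p}, \mathfrak p R_{\mathfrak p}) = \V(R,\mathfrak{p})$ once we restrict to the relevant interior. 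Composing $\gamma$ with $\psi_{\mathfrak p}$ gives the natural local semigroup morphism $\Gamma\to R_{\mathfrak p}$, whose local positive tropicalization is by definition $\ptrop$ of the germ of $X$ at $A$, i.e.\ $\ptrop(X,A)$ — here I would invoke that local tropicalization depends only on the completion (Corollary~\ref{C:completion}), so working with $R_{\mathfrak p}$ or its completion $\widehat{R_{\mathfrak p}} \cong K[[\Gamma]]/\widehat{I_X}$ gives the same answer. Combining these identifications yields exactly $\ptrop(X,A) = \trop(X)\cap\overline{\sigma}^\circ$.

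The routine but slightly delicate point is the non-normal case: the statement allows $X$ to sit in a general toric variety $\zcal(\Delta,N)$, so the chart around $A$ need not correspond to a saturated semigroup. As the opening paragraph of the section indicates, one first passes to the normalization and lifts $X$; by Corollary~\ref{C:normal} this changes neither the global nor the local tropicalization, and by Lemma~\ref{L:normalgamma} the localization $K[\Gamma]_{(\Gamma^+)}$ has integral closure $K[\sat(\Gamma)]_{(\sat(\Gamma)^+)}$, so after this reduction $\Gamma = \check\sigma\cap M$ is saturated and $\sigma(\Gamma) = \sigma$ as needed. I expect the main obstacle to be purely bookkeeping: checking that the ideal $\mathfrak{p}$ defining the closed point $A = O_\sigma$ really is the one generated by $\gamma(\Gamma^+)$ (equivalently, that the closed orbit of the affine chart corresponds to the maximal ideal $(\Gamma^+)$ of the semigroup), and that the various identifications of valuation spaces — the localization homeomorphism, the completion isomorphism, and the chart-wise description of $\trop$ from Remark~\ref{specnot} — are all compatible with the maps $\Phi_\gamma$. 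None of these is deep; the content is entirely in Proposition~\ref{globloc}, and the corollary is its geometric translation in the toric setting.
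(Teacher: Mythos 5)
Your proof is correct and matches what the paper intends: the corollary is stated immediately after Proposition~\ref{globloc} as a geometric translation of it, with no further proof supplied, and your unwinding of the definitions is precisely the intended deduction. Two small remarks. First, the caveat about non-normal toric varieties is unnecessary here: the notation $\zcal(\Delta,N)$ in Section~3 is reserved for \emph{normal} toric varieties (the generalization to fans of semigroups appears later as Theorem~\ref{Locglob}), so the chart around $A$ is already $\spec K[\check\sigma\cap M]$ with $\Gamma=\check\sigma\cap M$ saturated and no normalization step is needed. Second, when you apply Proposition~\ref{globloc} with $\W=\V(R)$, note that $\V(\psi_{\mathfrak{p}})^{-1}(\V(R))=\V(R_{\mathfrak{p}})$ is the full valuation space, but $\ptrop$ of that set by Definition~\ref{D:loctrop} only sees the subset of valuations centered at $\mathfrak{p}R_{\mathfrak{p}}$ (i.e., $\V(R_{\mathfrak{p}},\mathfrak{p}R_{\mathfrak{p}})\cong\V(R,\mathfrak{p})$), and the morphism in play is $\psi_{\mathfrak{p}}\circ\gamma$; you gesture at both facts, but it is worth stating them explicitly since otherwise the right-hand side of Proposition~\ref{globloc} could be misread. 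With those clarifications the argument goes through exactly as you describe.
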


We would like to emphasize the special case used in the proof of 
Theorem~\ref{T:main} (which holds for arbitrary, not necessarily saturated, 
pointed affine semigroups):

\begin{proposition}\label{P:localglobal}
  Let $\Gamma$ be an affine pointed semigroup, let $I$ be an ideal of the ring 
  $K[\Gamma]$ contained in the maximal ideal $(\Gamma^+)$, and let $\widehat{I}$ 
  be the extension of $I$ in the power series ring $K[[\Gamma]]$. Then:
  $\ptrop(\widehat{I})=\trop(I)\cap\overline{\sigma}^{\circ}.$
\end{proposition}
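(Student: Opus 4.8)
The plan is to compare the two tropicalizations via their valuation-theoretic descriptions and the extension-of-valuations technology already established. Recall that $\trop(I)$ is the closure in $L(\sigma,N)$ of the image under $\Phi_\gamma$ of all valuations on $R := K[\Gamma]/I$, while $\ptrop(\widehat I)$ is the closure in $\overline\sigma^\circ$ of the image under $\Phi_{\widehat\gamma}$ of the space $\V(\widehat R,\widehat{\mathfrak m})$ of local valuations on $\widehat R := K[[\Gamma]]/\widehat I$. The first observation is that one inclusion is essentially formal: a local valuation $w$ on $\widehat R$ restricts (via $K[\Gamma]\hookrightarrow K[[\Gamma]]$, which factors through the localization at $(\Gamma^+)$) to a valuation on $R$ that is nonnegative on $R$, positive on $(\Gamma^+)R$, and has the same value on each monomial $\chi^m$; hence $\Phi_{\widehat\gamma}(w) = \Phi_\gamma(w|_R)$ lies in $\trop(I)$, and since it also lies in $\overline\sigma^\circ$ by the remarks preceding Definition~\ref{D:loctrop}, we get $\ptrop(\widehat I)\subseteq \trop(I)\cap\overline\sigma^\circ$ after taking closures (the right-hand side being closed in $\overline\sigma^\circ$).

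For the reverse inclusion, first I would reduce to integral points. By Theorem~\ref{T:main} (and the corollary following Lemma~\ref{L:equidim}) we already know $\ptrop(\widehat I)$ is a rational PL conical subspace of $\overline\sigma^\circ$; likewise $\trop(I)$ is a rational polyhedral set by Theorem~\ref{T:3defs}, so $\trop(I)\cap\overline\sigma^\circ$ is a PL conical subspace whose rational points are dense. Thus it suffices to show that every point $w$ of $\trop(I)\cap\mathring\sigma$ lying in $N=N(\Gamma)$ belongs to $\ptrop(\widehat I)$ (points on the strata at infinity being handled by the truncation formalism, exactly as in the proof of Theorem~\ref{T:main}: replace $\Gamma$ by $\Gamma_\tau$ and $I$ by its $\tau$-truncation). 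So fix $w\in N\cap\mathring\sigma$ with $w\in\trop(I)$. By Theorem~\ref{T:3defs}(2), $\init_w(I)\subseteq K[\Gamma]$ is monomial-free, where here $\init_w$ is taken in the polynomial-ring sense. The key point is that, because $w\in\mathring\sigma$, the $w$-initial form of any element of $K[[\Gamma]]$ is already a polynomial (noted after \eqref{E:ord}), and $\init_w(\widehat I)$ as an ideal of $K[\Gamma]$ coincides with $\init_w(I)$: indeed a standard-basis / division argument (Proposition~\ref{L:division}, applied with a monomial ordering refining $\preceq_w$, together with the flatness computation of Lemma~\ref{L:flat}) shows that the $w$-initial forms of elements of $\widehat I$ and of $I$ generate the same ideal. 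Hence $\init_w(\widehat I)$ is monomial-free.

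Now apply Theorem~\ref{Tropinit}(ii): since $\init_w(\widehat I)$ is monomial-free, $w$ lies in $\nntrop(\widehat I)$, and since $w\in\mathring\sigma\subseteq\overline\sigma^\circ$, part (i) of that theorem gives $w\in\ptrop(\widehat I)$. Taking closures in $\overline\sigma^\circ$ yields $\trop(I)\cap\overline\sigma^\circ\subseteq\ptrop(\widehat I)$, completing the proof. The main obstacle is the identification $\init_w(\widehat I) = \init_w(I)$ (as ideals of $K[\Gamma]$) for $w\in\mathring\sigma$: one inclusion is clear since $I\subseteq\widehat I$, while the other requires that taking $w$-initial forms commutes with the completion $K[\Gamma]\to K[[\Gamma]]$, which is where the hypothesis $I\subseteq(\Gamma^+)$ and the flatness of the Rees-type deformation $K[t][[\Gamma]]/I_t$ over $K[t]$ enter; concretely, if $g\in\widehat I$ one divides $g$ by a standard basis of $I$ with respect to an ordering refining $\preceq_w$ and checks that the remainder has strictly larger $w$-order, so $\init_w(g)$ is a combination of the $\init_w$ of standard-basis elements of $I$. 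Everything else is bookkeeping with the truncation formalism of Definition~\ref{Trunc} and density of rational points.
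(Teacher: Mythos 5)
Your proof takes a genuinely different route from the paper's, and while the route can probably be made to work, it is both more complicated and carries a circularity concern that needs to be addressed explicitly.

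The paper's proof is a direct valuation-matching argument requiring no polyhedral machinery at all. It observes that a valuation $v$ of $K[\Gamma]/I$ whose $\Phi_\gamma$-image lies in $\overline{\sigma}^\circ$ is nonnegative on all of $K[\Gamma]/I$ and positive on the image of $(\Gamma^+)$; hence $v$ localizes at $(\Gamma^+)$ and then extends by continuity (Lemma~\ref{extcompl}, Corollary~\ref{C:completion}) to a local valuation of $K[[\Gamma]]/\widehat{I}$ with the same monomial values. Conversely, any local valuation of $K[[\Gamma]]/\widehat{I}$ restricts to a valuation of $K[\Gamma]/I$ positive on $(\Gamma^+)$. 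These two operations match up the underlying points of $\trop(I)\cap\overline{\sigma}^\circ$ and $\ptrop(\widehat{I})$ directly, with no need to reduce to integral points, no density argument, and no invocation of the PL structure of either side.

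By contrast you invoke Theorem~\ref{T:main} (and the corollary after Lemma~\ref{L:equidim}) to get rationality and density of lattice points, then argue pointwise via initial ideals using Theorem~\ref{Tropinit}(ii) on the formal side and Theorem~\ref{T:3defs}(2) on the polynomial side, and finally hinge everything on the identification $\init_w(\widehat{I})=\init_w(I)$ for $w\in N\cap\mathring{\sigma}$. Two comments. First, there is a latent circularity: the proof of Theorem~\ref{T:main} in the paper explicitly cites Proposition~\ref{P:localglobal} (for the pure-dimension claim). The PL-conical statement of Theorem~\ref{T:main}, and hence the corollary after Lemma~\ref{L:equidim}, are established independently of Proposition~\ref{P:localglobal}, so your argument can be made non-circular --- but only if you are careful to cite \emph{only} the PL-conical part and say so; a bare reference to Theorem~\ref{T:main} is not admissible here. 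Second, your appeal to Theorem~\ref{T:3defs} is not quite on point since that theorem is stated for subvarieties of tori, whereas $I$ lives in $K[\Gamma]$ for a possibly non-group $\Gamma$; one needs the Payne-style generalization (Remark~\ref{specnot}) and a little bookkeeping to pass between the two. The identification $\init_w(\widehat{I})=\init_w(I)$ that you flag as the main obstacle is indeed true and is the kind of statement the division/standard-basis machinery handles, but it is a nontrivial step that the paper's valuation-theoretic proof entirely sidesteps. Given all this, the direct extension/restriction argument is clearly preferable here: it is shorter, self-contained, and does not rely on results whose proofs might lean on the very proposition being established.
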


\begin{proof}
  If a ring valuation $v$ of $K[\Gamma]$ is nonnegative on $\Gamma$ and positive
  on $\Gamma^+$, then it is nonnegative on the whole ring $K[\Gamma]$ and positive 
  on the maximal ideal $(\Gamma^+)$. Thus, the valuation $v$ canonically extends 
  to a local valuation of the ring $K[[\Gamma]]$. Conversely, any local
  valuation $w$ of $K[[\Gamma]]$ restricts to a nonnegative valuation of 
  $K[\Gamma]$, which is positive on the maximal ideal $(\Gamma^+)$.
\end{proof}

In fact, we can reconstruct the global tropicalization of a subvariety or a 
subscheme $X$ over a field $K$ of a toric variety $\zcal(\Delta, N)$ from 
the local tropicalizations of the germs of this subscheme at the orbits of some
birational modification of $\zcal(\Delta, N)$. If $X$ does not pass through any
such orbit (e.g., $X=1\in\torus$), then the global tropicalization of $X$
consists of one point and there is essentially nothing to reconstruct. So, let us
suppose that this is not the case. 

\begin{notation}\label{N:orbits}
Let $\sigma$ be a cone of $\Delta$, 
and $O_\sigma$ the corresponding orbit of the big torus in $\zcal(\Delta, N)$. 
$O_\sigma$ is the unique closed orbit of the affine toric variety $\zcal(\sigma,N)=
\spec K[\check{\sigma}\cap M]$. We denote the semigroup
$\check{\sigma}\cap M$ by $\Gamma$. Assume that the orbit $O_\sigma$ is contained 
in the subscheme $X$. Let $I_{X,\sigma}$ denote the ideal of 
$X$ in the local ring $K[\Gamma]_{(\Gamma^+)}$, and $\widehat{I_{X,\sigma}}$ the 
corresponding ideal in the completion $K(\Gamma^*)[[\Gamma']]$ of 
$K[\Gamma]_{(\Gamma^+)}$ at its maximal ideal (see 
Section~\ref{S:powerseriesring}). We have the positive tropicalization
$\ptrop(\widehat{I_{X,\sigma}})=\ptrop(X,\sigma)$, which is a PL conical subspace in 
$\overline{\sigma}^\circ$, and the nonnegative tropicalization 
$\nntrop(\widehat{I_{X,\sigma}})=\nntrop(X,\sigma)$, which is a PL conical subspace 
in $\overline{\sigma}$. These tropicalizations are well defined due to the 
following result:
\end{notation}

\begin{proposition}\label{P:toroidalinvariance}
  Let $I$ be an ideal of the power series ring $K[[\Gamma]]$, where $K$ is an
  arbitrary field. Let $\Phi$ be an 
  automorphism of $K[[ \Gamma]]$ sending each element of $\Gamma$ to a product of 
  itself by a unit of $K[[ \Gamma]]$. Then, the positive and the nonnegative
  tropicalizations of $I$ and of $\Phi(I)$ coincide. 
\end{proposition}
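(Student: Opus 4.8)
The plan is to reduce the statement to the functoriality of local tropicalization under isomorphisms, exploiting that $\Phi$ moves each monomial only by a unit. Since $\Phi$ is a ring automorphism of $K[[\Gamma]]$ carrying $I$ onto $\Phi(I)$, it descends to a ring isomorphism $\overline{\Phi}\colon R:=K[[\Gamma]]/I \to R':=K[[\Gamma]]/\Phi(I)$. Both $R$ and $R'$ are local (being quotients of the local ring $K[[\Gamma]]$, cf. Corollary~\ref{compldom}, $\Gamma$ being pointed), so $\overline{\Phi}$ is automatically a local isomorphism. By Propositions~\ref{valfunc} and \ref{valfuncloc}, applied to $\overline{\Phi}$ and to $\overline{\Phi}^{-1}$, it induces a homeomorphism $\V(\overline{\Phi})\colon\V(R')\to \V(R)$, $\nu\mapsto\nu\circ\overline{\Phi}$, which restricts to bijections between the spaces $\V_{\geq 0}(R')$ and $\V_{\geq 0}(R)$ of nonnegative valuations and between the spaces $\V(R',\mathfrak{m})$ and $\V(R,\mathfrak{m})$ of local valuations.

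Next I would compare the natural semigroup morphisms $\gamma\colon\Gamma\to R$ and $\gamma'\colon\Gamma\to R'$, for which $\ptrop(I)=\ptrop(\gamma)$, $\ptrop(\Phi(I))=\ptrop(\gamma')$ and likewise for $\nntrop$ (Remark~\ref{specnotloc}). Writing $\Phi(\chi^m)=u_m\chi^m$ with $u_m$ a unit of $K[[\Gamma]]$ (here the hypothesis on $\Phi$ enters), one gets $\overline{\Phi}(\gamma(m))=\overline{u_m}\cdot\gamma'(m)$ in $R'$, where $\overline{u_m}$ is a unit of $R'$. The elementary observation making everything work is that any valuation $\nu$ on $R'$ which is nonnegative on $R'$ --- in particular every element of $\V_{\geq 0}(R')$ and of $\V(R',\mathfrak{m})$ --- kills units, since $\nu(u)+\nu(u^{-1})=\nu(1)=0$ with both summands $\geq 0$ forces $\nu(u)=0$. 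Hence $\nu\circ\overline{\Phi}\circ\gamma$ and $\nu\circ\gamma'$ take the same value on every $m\in\Gamma$, so they coincide as points of $L(\sigma(\Gamma),N(\Gamma))$ (both equal to the semigroup morphism $m\mapsto\nu(\gamma'(m))$). In the notation of \eqref{deffi} this says exactly $\Phi_{\gamma}\circ\V(\overline{\Phi})=\Phi_{\gamma'}$ on $\V_{\geq 0}(R')$, and similarly on $\V(R',\mathfrak{m})$.

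Finally, since $\V(\overline{\Phi})$ carries $\V(R',\mathfrak{m})$ bijectively onto $\V(R,\mathfrak{m})$ and $\V_{\geq 0}(R')$ bijectively onto $\V_{\geq 0}(R)$, the identity above shows that $\Phi_{\gamma'}$ and $\Phi_{\gamma}$ have the same image on the corresponding spaces; taking closures inside $\overline{\sigma}^{\circ}$ then yields $\ptrop(\Phi(I))=\ptrop(I)$, and, the image being already closed by Proposition~\ref{P:closedtrop}(i), $\nntrop(\Phi(I))=\nntrop(I)$. I do not expect any serious obstacle: the whole argument is a formal consequence of the functoriality of the valuation-space construction together with the fact that centered valuations are insensitive to multiplication by units, the hypothesis on $\Phi$ being used only to identify $\overline{\Phi}\circ\gamma$ with $\gamma'$ up to units. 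The single point deserving a little care is keeping track, throughout, of which subspace of the valuation space ($\V_{\geq 0}$ versus $\V(\,\cdot\,,\mathfrak{m})$) one is working in, so as to land in $\overline{\sigma}$ or in $\overline{\sigma}^{\circ}$ as appropriate.
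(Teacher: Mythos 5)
Your proof is correct and rests on the same central observation as the paper's: a valuation that is nonnegative on a ring vanishes on every unit, so its values on the images $\gamma(m)$ and $\gamma'(m)$ of the monomials coincide, and passing through the induced isomorphism $\overline{\Phi}$ of quotient rings converts this into the stated equality of tropicalizations. You are, if anything, more careful than the printed proof, which loosely asserts $v(\Phi(f))=v(f)$ for \emph{all} $f\in K[[\Gamma]]$ --- a claim that can actually fail for non-monomial $f$ when $v$ is not a monomial valuation, because the unit factors $u_m$ vary with $m$ and can create unexpected cancellations --- whereas you correctly isolate that only the agreement of $\nu\circ\overline{\Phi}\circ\gamma$ and $\nu\circ\gamma'$ on the elements $m\in\Gamma$ is needed.
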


\begin{proof}
  If $v$ is any nonnegative ring valuation of $K[[\Gamma]]$, then $v(u)=0$ for any
  unit $u$ of $K[[\Gamma]]$. It follows that $v(\Phi(f))=v(f)$ for all $f\in
  K[[\Gamma]]$.
\end{proof}

Proposition~\ref{P:toroidalinvariance} shows that the local tropicalization 
of a germ of subvariety of an affine toric variety at the unique closed orbit 
depends only on the \emph{toroidal structure} in the neighborhood of that orbit. 
In Section~\ref{Toroidal} we will use this fact to define tropicalization of 
subvarieties of algebraic toroidal embeddings. As a first application of the 
previous proposition, we generalize Proposition~\ref{P:localglobal}.

\begin{proposition}
  Let $\Gamma$ be a saturated affine semigroup, and $I$ an ideal of $K[\Gamma]$ 
  contained in the ideal $(\Gamma^+)$. Fix an isomorphism 
  $\widehat{K[\Gamma]_{(\Gamma^+)}}\simeq K(\Gamma^*)[[\Gamma']]$ and let 
  $\widehat{I}$ be the extension of $I$ in the ring $K(\Gamma^*)[[\Gamma']]$.
  Then:
  $$\ptrop(\widehat{I})=\trop(I)\cap\overline{\sigma}^{\circ}.$$
\end{proposition}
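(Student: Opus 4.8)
The plan is to reduce this statement to the already-proved Proposition~\ref{P:localglobal} by passing to a suitable toric \emph{splitting} of the semigroup $\Gamma$. Since $\Gamma$ is saturated, Lemma~\ref{splitting} gives a section $\alpha\colon\Gamma'\to\Gamma$ of the quotient map $p$ in~\eqref{E:gammaseq}, hence an isomorphism of semigroups $\Phi\colon\Gamma\stackrel{\sim}{\to}\Gamma^*\times\Gamma'$, where $\Gamma'$ is a pointed affine semigroup (Proposition~\ref{quotaff}). This induces an isomorphism of rings $K[\Gamma]\simeq K[\Gamma^*][\Gamma']=K(\Gamma^*)[\Gamma']$ after inverting the monomials of $\Gamma^*$, and correspondingly $\widehat{K[\Gamma]_{(\Gamma^+)}}\simeq K(\Gamma^*)[[\Gamma']]$, matching the isomorphism fixed in the statement (Lemma~\ref{L:fc}).

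First I would note that the tropicalization spaces involved only depend on the cone $\sigma=\sigma(\Gamma)$ and the lattice $N=N(\Gamma)$, and that $\sigma(\Gamma')$ is the image of $\check\sigma$ under the projection $M(\Gamma)_\R\to M(\Gamma')_\R=(M(\Gamma)/\Gamma^*)_\R$; dually, the relevant ambient linear variety $L(\sigma,N)$ and extended cone $\overline\sigma$ are the same whether we view $\Gamma$ or its pointed quotient $\Gamma'$ as the source semigroup of the tropicalization map, because the semigroup morphisms to $\ri$ factor through $\Gamma^*\to 0$ on the invertible part (any valuation kills units of $K(\Gamma^*)$). Concretely, $\sigma(\Gamma)$ as a cone in $N(\Gamma)_\R$ is already pointed since $\Gamma$ is saturated-and-we-may-assume-pointed-after-quotienting, so $\overline\sigma^\circ$ and $L(\sigma,N)$ are literally the objects attached to the pointed semigroup $\Gamma'$ with its lattice.

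Next I would apply Proposition~\ref{P:localglobal} to the ideal $I$ rewritten as an ideal of $K(\Gamma^*)[\Gamma']$ contained in $({\Gamma'}^+)$: it yields $\ptrop(\widehat I)=\trop(I)\cap\overline\sigma^\circ$, where now $\widehat I$ is the extension of $I$ to $K(\Gamma^*)[[\Gamma']]$. The one point that needs care is that Proposition~\ref{P:localglobal} is stated for power series over a pointed affine semigroup with coefficients in a \emph{field}, whereas here the coefficient field is $K(\Gamma^*)$, a purely transcendental extension of $K$; but the proof of Proposition~\ref{P:localglobal} only uses that a nonnegative-on-$\Gamma'$ valuation of $K(\Gamma^*)[\Gamma']$ which is positive on ${\Gamma'}^+$ extends canonically to the completion, and conversely—and this argument is insensitive to which field sits as coefficients. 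Finally, I would invoke Proposition~\ref{P:toroidalinvariance} to see that the result does not depend on the chosen section $\alpha$: two sections differ by a monomial unit, the resulting isomorphisms $\overline\alpha$ of $K(\Gamma^*)[[\Gamma']]$ send each element of $\Gamma\subset K(\Gamma^*)[[\Gamma']]$ to itself times a unit, and hence carry $\widehat I$ to an ideal with the same positive and nonnegative tropicalizations. Combining, $\ptrop(\widehat I)=\trop(I)\cap\overline\sigma^\circ$, as claimed.

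The main obstacle I anticipate is bookkeeping: making precise that ``$\trop(I)$'' computed for the morphism $\Gamma\to K[\Gamma]/I$ coincides with ``$\trop(I)$'' computed for ${\Gamma'}\to K(\Gamma^*)[\Gamma']/I$ under the identification $L(\sigma,N(\Gamma))\simeq L(\sigma(\Gamma'),N(\Gamma'))$. This is a routine but slightly delicate check that the extra torus factor $T_{K(\Gamma^*)}$ contributes nothing to the tropicalization because its coordinate functions are invertible, hence have valuation $0$; once this identification is in hand the rest is a direct citation of Propositions~\ref{P:localglobal} and~\ref{P:toroidalinvariance}.
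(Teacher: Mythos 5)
Your strategy is sound and close in spirit to the paper's, but it takes a noticeably more circuitous route: the paper simply invokes Proposition~\ref{P:toroidalinvariance} to make $\ptrop(\widehat I)$ well-defined and then re-runs the two-line extension/restriction argument of Proposition~\ref{P:localglobal} directly on the chain $K[\Gamma]/I \hookrightarrow K[\Gamma]_{(\Gamma^+)}/I_{(\Gamma^+)} \hookrightarrow K(\Gamma^*)[[\Gamma']]/\widehat I$, whereas you pass through the splitting $\Gamma\simeq\Gamma^*\times\Gamma'$, introduce the intermediate ring $K(\Gamma^*)[\Gamma']$, apply Proposition~\ref{P:localglobal} as a black box with coefficient field $K(\Gamma^*)$, and then quotient out the torus factor. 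Your observation that the proof of Proposition~\ref{P:localglobal} is insensitive to the coefficient field is correct and needed, and the final appeal to Proposition~\ref{P:toroidalinvariance} matches the paper.

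Two points to flag. First, a small inaccuracy: you say $K[\Gamma]\simeq K[\Gamma^*][\Gamma']=K(\Gamma^*)[\Gamma']$ ``after inverting the monomials of $\Gamma^*$'', but the monomials of $\Gamma^*$ are already invertible in $K[\Gamma^*]$ since $\Gamma^*$ is a group; to get $K(\Gamma^*)$ one must invert all of $K[\Gamma^*]\setminus\{0\}$, i.e.\ pass to the fraction field (this is exactly what Lemma~\ref{L:fc} arranges). Second, and more substantively: the ``bookkeeping'' step you flag — that $\trop(I)\cap\overline\sigma^\circ$ computed from $\Gamma\to K[\Gamma]/I$ agrees with $\trop(I')\cap\overline{\sigma'}^\circ$ computed from $\Gamma'\to K(\Gamma^*)[\Gamma']/I'$ — is not quite as routine as your justification suggests. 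Landing in $\overline\sigma^\circ$ only forces a valuation $v$ of $K[\Gamma]/I$ to vanish on the \emph{monomials} of $\Gamma^*$, not on all nonzero elements of $K[\Gamma^*]$; such a $v$ may have center strictly larger than $(\Gamma^+)/I$ and hence need not itself extend to a local valuation of the completion. One must still argue that its image under $\Phi_\gamma$ is nevertheless attained by a local valuation (using the closure in the definition of $\trop$, or the initial-ideal characterization of Theorem~\ref{Tropinit}). That argument is exactly what the paper's direct adaptation of Proposition~\ref{P:localglobal} supplies; by routing through $K(\Gamma^*)[\Gamma']$ you have not eliminated it, only relocated it to the identification step. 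So the proposal is correct in outline but the part labeled ``bookkeeping'' is where the mathematical content actually lives, and it should be carried out rather than asserted.
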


\begin{proof}
  Recall that an isomorphism between the completion of $K[\Gamma]_{(\Gamma^+)}$ 
  and $K(\Gamma^*)[[\Gamma']]$ is defined up to a unit. By 
  Proposition~\ref{P:toroidalinvariance}, the positive tropicalization
  $\ptrop(\widehat{I})$ does not depend on the isomorphism between 
  $\widehat{K[\Gamma]_{(\Gamma^+)}}$ and $K(\Gamma^*)[[\Gamma']]$. Then the proof 
  goes along the same lines as the proof of Proposition~\ref{P:localglobal}.
\end{proof}

Now let $X$ be a subscheme of a toric variety $\zcal(\Delta, N)$. We 
use Notation~\ref{N:orbits}.

\begin{lemma}\label{L:compatibilityofltrop}
  Let $\tau$ be a face of $\sigma$. Assume that $O_\tau\subseteq X$. Then: 
  $$\nntrop(X,\sigma)\cap(\overline{\tau}^\circ)=\ptrop(X,\tau),$$
  or, equivalently:
  $$\overline{\ptrop(X,\sigma)}\cap(\overline{\tau}^\circ)=\ptrop(X,\tau).$$
\end{lemma}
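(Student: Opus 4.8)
The plan is to reduce everything to the affine-local situation and then invoke the comparison statement already established in this section. Recall that $O_\tau$ is the unique closed orbit of the affine toric variety $\zcal(\tau, N) = \spec K[\check\tau\cap M]$, and that $\zcal(\tau,N)$ is an open toric subvariety of $\zcal(\sigma,N)$; moreover $\check\tau = \check\sigma + \tau^\perp$, so that $\Gamma_\tau := \check\tau\cap M$ is obtained from $\Gamma=\check\sigma\cap M$ by localizing at the face $\Gamma\cap\tau^\perp$ (equivalently, inverting the monomials lying in $\tau^\perp$). First I would spell out the relation between the local rings: the local ring of $X$ at $O_\tau$ is a localization of the local ring of $X$ at $O_\sigma$, namely at the prime $\mathfrak p_\tau$ corresponding to the orbit closure $\overline{O_\tau}$ through $O_\sigma$. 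Passing to completions, by Lemma~\ref{L:fc} we have $\widehat{K[\Gamma]_{(\Gamma^+)}}\simeq K(\Gamma^*)[[\Gamma']]$ and $\widehat{K[\Gamma_\tau]_{(\Gamma_\tau^+)}}\simeq K(\Gamma_\tau^*)[[\Gamma'_\tau]]$, and the $\tau$-truncation homomorphism $K[[\Gamma]]\to K[[\Gamma_\tau]]$ (respectively its analog for the $K(\cdot)$-coefficient rings) realizes this localization-then-completion; its kernel is the prime generated by the monomials of $\Gamma$ not lying in $\tau^\perp$. Consequently the ideal defining $X$ at $O_\tau$ is the $\tau$-truncation of the ideal $\widehat{I_{X,\sigma}}$ defining $X$ at $O_\sigma$.

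Next I would identify the two sides of the claimed equality in terms of the local tropicalizations of these ideals. By definition (Notation~\ref{N:orbits}), $\nntrop(X,\sigma) = \nntrop(\widehat{I_{X,\sigma}})$, a PL conical subspace of the extended cone $\overline\sigma$, and $\ptrop(X,\tau)=\ptrop(\widehat{I_{X,\tau}})$, a PL conical subspace of $\overline\tau^\circ\subseteq(N/N_\tau)_\R$. The stratum $(N/N_\tau)_\R$ of the linear variety $L(\sigma,N)$ is exactly the one indexed by the face $\tau$, and $\overline\tau^\circ$ is its relative interior inside $\overline\sigma$ (defined just before Definition~\ref{Trunc} and recalled after the Proposition following Definition~\ref{linvarglob}). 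So the content of the lemma is: the part of $\nntrop(\widehat{I_{X,\sigma}})$ lying in the open stratum $\overline\tau^\circ\subset(N/N_\tau)_\R$ equals $\ptrop(\widehat{I_{X,\tau}})$. Now apply Theorem~\ref{Tropinit}: $\nntrop(\widehat{I_{X,\sigma}})$ is the set of $w\in\overline\sigma$ with $\init_w(\widehat{I_{X,\sigma}})$ monomial-free, where for $w\in(N/N_\tau)_\R$ the initial ideal is, by Definition~\ref{Trunc}, $\init_w$ of the $\tau$-truncation $(\widehat{I_{X,\sigma}})_\tau = \widehat{I_{X,\tau}}$, viewed inside $K[[\Gamma_\tau]]$. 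Hence for $w$ ranging over the stratum $(N/N_\tau)_\R$, membership in $\nntrop(\widehat{I_{X,\sigma}})$ is literally the condition that $\init_w(\widehat{I_{X,\tau}})$ is monomial-free, i.e.\ membership in $\nntrop(\widehat{I_{X,\tau}})$; intersecting with the relatively open piece $\overline\tau^\circ$ and using Theorem~\ref{Tropinit}(i) (or Proposition~\ref{P:closedtrop}(ii)) in the stratum, $\nntrop(\widehat{I_{X,\tau}})\cap\overline\tau^\circ = \ptrop(\widehat{I_{X,\tau}})$, which is all of $\ptrop(\widehat{I_{X,\tau}})$ since the latter already sits in $\overline\tau^\circ$. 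This gives the first displayed equality.

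For the second, equivalent, displayed equality I would use the last assertion of Theorem~\ref{T:main} (together with the Corollary after Lemma~\ref{L:equidim} for general, not necessarily prime, ideals): the closure of $\ptrop(\widehat{I_{X,\sigma}})$ in $\overline\sigma$ is $\nntrop(\widehat{I_{X,\sigma}})$. Intersecting with $\overline\tau^\circ$ and invoking the first equality gives $\overline{\ptrop(X,\sigma)}\cap\overline\tau^\circ=\ptrop(X,\tau)$. One technical point to check carefully — and the main obstacle — is the identification of truncation with localization-then-completion in the non-saturated-coefficient setting $K(\Gamma^*)[[\Gamma']]$ of Notation~\ref{N:orbits}: one must verify that $\tau$-truncation of $K(\Gamma^*)[[\Gamma']]$ corresponds to completion of $K[\Gamma_\tau]_{(\Gamma_\tau^+)}$ compatibly with the embeddings of $\Gamma$ into both rings, and that this does not depend on the chosen section/isomorphism, which is exactly the toroidal-invariance Proposition~\ref{P:toroidalinvariance}. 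Once that compatibility is in place, everything else is a direct application of Theorem~\ref{Tropinit}, Theorem~\ref{T:main}, and the definition of the topology and stratification of $L(\sigma,N)$; so I would state the compatibility as a short preliminary observation and then write the two equalities as above.
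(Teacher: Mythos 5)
There is a genuine gap, and it is exactly the point you yourself flag as "the main obstacle" at the end: the $\tau$-truncation and the localization-then-completion are \emph{not} the same operation, and no amount of checking will reconcile them, because they go in opposite directions. The $\tau$-truncation of Definition~\ref{Trunc} is the \emph{quotient} map $K(\Gamma^*)[[\Gamma']]\to K(\Gamma^*)[[\Gamma'_\tau]]$ (here $\Gamma_\tau=\Gamma\cap\tau^\perp$ in the paper's sense), which kills all monomials with exponents outside $\tau^\perp$ and lands in a ring of strictly smaller dimension. Localizing $K[\Gamma]_{(\Gamma^+)}$ at the prime $\mathfrak p_\tau$ and completing instead produces the ring $K(\Gamma(\tau)^*)[[\Gamma(\tau)']]$ (where $\Gamma(\tau)=\check\tau\cap M$, which you also denote $\Gamma_\tau$, clashing with the paper's notation), a ring of the \emph{same} dimension whose coefficient field has grown. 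Concretely, for $\Gamma=\N^2$ and $\tau$ the ray generated by $(1,0)$, truncation sends $K[[x,y]]\to K[[y]]$, while localization-completion produces $K(y)[[x]]$; these are not isomorphic, and there is not even a natural ring homomorphism $K[[x,y]]\to K(y)[[x]]$ (because $K[[y]]\not\subseteq K(y)$). So the claimed identity $(\widehat{I_{X,\sigma}})_\tau=\widehat{I_{X,\tau}}$ does not typecheck, and Theorem~\ref{Tropinit} plus Definition~\ref{Trunc} cannot be fed $\widehat{I_{X,\tau}}$ as if it were a truncation of $\widehat{I_{X,\sigma}}$.

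A second, smaller error compounds the first: you assert that $\overline\tau^\circ\subseteq(N/N_\tau)_\R$, i.e.\ that the relative interior of $\tau$ in $\overline\sigma$ is contained in a single stratum at infinity. By the definition given just after Definition~\ref{linvarglob}, $\overline\tau^\circ$ is the union of $\mathring\tau\subset N_\R$ together with the projections $\pi_\rho(\mathring\tau)$ for all faces $\rho\leq\tau$, so it spans \emph{every} stratum indexed by a face of $\tau$ (its top-dimensional piece is $\mathring\tau$, sitting in the finite part $N_\R$, not at infinity). Thus even if one wanted to pursue an initial-ideal argument, it would have to treat all strata $\rho\leq\tau$, and for each $\rho$ compare the $\rho$-truncated initial ideal of $\widehat{I_{X,\sigma}}$ (in $K(\Gamma^*)[[\Gamma'_\rho]]$) with the $\rho$-truncated initial ideal of $\widehat{I_{X,\tau}}$ (in $K(\Gamma(\tau)^*)[[\Gamma(\tau)'_\rho]]$) --- two rings with different coefficient fields. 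That comparison is precisely what the paper \emph{avoids}: its proof is purely valuation-theoretic. Given $w\in\nntrop(X,\sigma)\cap\overline\tau^\circ$ coming from a valuation $\overline v$ on $K(\Gamma^*)[[\Gamma']]$, one restricts $\overline v$ to $K[\Gamma]_{(\Gamma^+)}$, notes that it vanishes on $\chi^{\Gamma_\tau}$ so it pushes forward to the localization $K[\Gamma]_{(\Gamma\setminus\Gamma_\tau)}$, and then extends by continuity to the completion $K(\Gamma(\tau)^*)[[\Gamma(\tau)']]$, yielding a local valuation witnessing $w\in\ptrop(X,\tau)$; the converse direction runs the same chain backwards. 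Because valuations restrict, localize (when they vanish on the denominators), and extend to completions, no ring map $K(\Gamma^*)[[\Gamma']]\to K(\Gamma(\tau)^*)[[\Gamma(\tau)']]$ is ever needed. Your handling of the ``second, equivalent'' form via the last assertion of Theorem~\ref{T:main} is fine, but the first equality --- the substantive one --- needs the valuation-theoretic route, or else a genuine compatibility theorem relating initial ideals across the two completions that you have not supplied.
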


\begin{proof}
  Let $\Gamma=\check{\sigma}\cap M$,
  $\Gamma(\tau)=\check{\tau}\cap M(\Gamma)$. We have the following diagram of 
  rings and ideals:
  $$
  \xymatrix{
  & K(\Gamma^*)[[\Gamma']]  & K[\Gamma]_{(\Gamma^+)}\ar[l]_a \ar[r]^c
  & K[\Gamma]_{(\Gamma\setminus\Gamma_\tau)}\ar[r]^(.4){b}
  & K(\Gamma(\tau)^*)[[\Gamma(\tau)']] \\
  & \widehat{I_{X,\sigma}}\ar@{^{(}->}[u]  
  & I_{X,\sigma}\ar@{^{(}->}[u]\ar[l]\ar[r] & I_{X,\tau}\ar@{^{(}->}[u]\ar[r]  
  & \widehat{I_{X,\tau}}\ar@{^{(}->}[u]
  }
  $$
  where $c$ is the morphism of localization, $a$ is the composition of
  the natural morphism of a local ring to its completion with the fixed 
  isomorphism $\widehat{K[\Gamma]_{(\Gamma^+)}}\simeq K(\Gamma^*)[[\Gamma']]$.
  $b$ is defined similarly to $a$, and the arrows in the second row are 
  induced by the arrows in the first. 

  Now, let $\overline{v}$ be a valuation of the ring $K(\Gamma^*)[[\Gamma']]$
  (infinite on the ideal $\widehat{I_{X,\sigma}}$) inducing an element 
  $w\in\nntrop(X,\sigma)\cap(\overline{\tau}^\circ)$.
  Let $v$ be the restriction of $\overline{v}$ to $K[\Gamma]_{(\Gamma^+)}$. 
  Since $v$ takes only value $0$ on the subsemigroup $\chi^{\Gamma_\tau}$, we can push
  it forward to the localization $K[\Gamma]_{(\Gamma\setminus\Gamma_\tau)}$ and, 
  since $v$ is positive on the ideal $\chi^{(\Gamma\setminus\Gamma_\tau)}$, we can 
  further push it forward to a local valuation of 
  $K(\Gamma(\tau)^*)[[\Gamma(\tau)']]$, thus producing an element of 
  $\ptrop(X,\tau)$. Going to the opposite direction, we can easily show
  that any local valuation of $K(\Gamma(\tau)^*)[[\Gamma(\tau)']]$ (infinite on
  the ideal $\widehat{I_{X,\tau}}$) defines a nonnegative valuation on
  $K(\Gamma^*)[[\Gamma']]$, positive on the ideal $(\Gamma\setminus\Gamma_\tau)$
  and trivial on the subsemigroup $\chi^{\Gamma_\tau}$.
\end{proof}

As a consequence of the results of this section, we get the following theorem
describing the connection between the global tropicalization of a subvariety or
a subscheme $X$ of a normal toric variety $\mathcal{Z}(\Delta, N)$ and the local 
tropicalizations of germs of $X$ at the orbits of $\mathcal{Z}(\Delta, N)$.

\begin{theorem} \label{normLocglob}
  Let $\Delta$ be a fan. Let $X$ be a subscheme of the toric variety 
  $\mathcal{Z}(\Delta, N)$ and $\trop(X)\subseteq L(\Delta,N)$ be the 
  tropicalization of $X\subseteq\zcal(\Delta, N)$ in the sense of 
  Remark \ref{specnot}. If $\sigma$ is a cone of $\Delta$ such that 
  $O_\sigma\subseteq X$, then:
  $$\trop(X)\cap \mathring{\sigma}=\ptrop(X,\sigma).$$
\end{theorem}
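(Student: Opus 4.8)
The plan is to deduce the theorem from the generalized form of Proposition~\ref{P:localglobal} (the one stated just before Lemma~\ref{L:compatibilityofltrop}) by a routine reduction to an affine chart. Throughout I read $\mathring{\sigma}$ in the statement as $\overline{\sigma}^{\circ}$, the relative interior of the extended cone $\overline{\sigma}$, in accordance with the Corollary preceding Proposition~\ref{P:localglobal} and with Lemma~\ref{L:compatibilityofltrop}.

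First I would record two facts about the ambient spaces. Since $\sigma$ is a cone of $\Delta$, the linear variety $L(\sigma, N)=\bigsqcup_{\tau\leq\sigma}(N/N_{\tau})_{\R}$ is an \emph{open} subspace of $L(\Delta, N)$: it equals the basic open set $\overline{(N_{\R})}_{\sigma}$ of \eqref{E:opensetoflv}. Moreover the extended cone $\overline{\sigma}$, and a fortiori $\overline{\sigma}^{\circ}$, lies inside $L(\sigma, N)$, being the union of $\mathring{\sigma}$ with its projections $\pi_{\tau}(\mathring{\sigma})$ to the strata indexed by faces $\tau\leq\sigma$. Then I would pass to the affine chart $\zcal(\sigma, N)=\spec K[\Gamma]$, $\Gamma=\check{\sigma}\cap M$ (a saturated, but in general not pointed, affine semigroup), whose unique closed orbit is $O_{\sigma}$; I set $X_{0}:=X\cap\zcal(\sigma, N)$ and let $I\subseteq K[\Gamma]$ be its defining ideal. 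Because the tropicalization of a subscheme of a toric variety is, by construction (Remark~\ref{specnot}), obtained by gluing the tropicalizations of its traces on the affine charts, compatibly along the overlaps, one has $\trop(X)\cap L(\sigma, N)=\trop(I)$; intersecting with $\overline{\sigma}^{\circ}\subseteq L(\sigma, N)$ gives $\trop(X)\cap\overline{\sigma}^{\circ}=\trop(I)\cap\overline{\sigma}^{\circ}$.

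Next I would observe that $I\subseteq(\Gamma^{+})$: the orbit $O_{\sigma}$ is closed in $\zcal(\sigma, N)$, so $O_{\sigma}\subseteq X$ forces $\overline{O_{\sigma}}\subseteq X_{0}$, and since $\overline{O_{\sigma}}=V((\Gamma^{+}))$ with $(\Gamma^{+})$ prime (because $K[\Gamma]/(\Gamma^{+})\cong K[\Gamma^{*}]$ is a domain), this yields $I\subseteq(\Gamma^{+})$. Hence the generalized Proposition~\ref{P:localglobal} applies to the saturated semigroup $\Gamma$ and the ideal $I$: fixing an isomorphism $\widehat{K[\Gamma]_{(\Gamma^{+})}}\simeq K(\Gamma^{*})[[\Gamma']]$ and letting $\widehat{I}$ denote the extension of $I$ there, it gives $\ptrop(\widehat{I})=\trop(I)\cap\overline{\sigma}^{\circ}$, with $\ptrop(\widehat{I})$ independent of the chosen isomorphism by Proposition~\ref{P:toroidalinvariance}. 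Since $\widehat{I}$ is exactly the ideal $\widehat{I_{X,\sigma}}$ of Notation~\ref{N:orbits}, for which $\ptrop(\widehat{I_{X,\sigma}})=\ptrop(X,\sigma)$ by definition, I can chain the equalities to get $\ptrop(X,\sigma)=\trop(I)\cap\overline{\sigma}^{\circ}=\trop(X)\cap\overline{\sigma}^{\circ}$, which is the assertion.

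The only step that is not purely formal is the first reduction, that is, the identity $\trop(X)\cap L(\sigma, N)=\trop(X_{0})$. To make it self-contained I would need to check that the gluing maps $L(\sigma_{1}\cap\sigma_{2}, N)\hookrightarrow L(\sigma_{i}, N)$ defining $L(\Delta, N)$ are compatible with the gluing maps $\zcal(\sigma_{1}\cap\sigma_{2}, N)\hookrightarrow\zcal(\sigma_{i}, N)$ defining $\zcal(\Delta, N)$, and that passing from $K[\Gamma]$ to a localization at a face does not alter the image of the tropicalization map on the valuations involved; both are implicit in Sections~\ref{troptor} and \ref{S:localglobal}. Everything else follows formally from the generalized Proposition~\ref{P:localglobal}, Proposition~\ref{P:toroidalinvariance}, and Notation~\ref{N:orbits}.
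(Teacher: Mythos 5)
Your proposal is correct and reconstructs exactly the chain of deductions the paper intends when it states the theorem as ``a consequence of the results of this section'' without giving a separate proof: pass to the affine chart $\zcal(\sigma,N)$ (whose linear variety $L(\sigma,N)$ is the basic open set $\overline{(N_\R)}_\sigma$ of $L(\Delta,N)$), observe that $O_\sigma\subseteq X$ forces the defining ideal $I\subseteq(\Gamma^+)$, and then apply the generalized form of Proposition~\ref{P:localglobal} together with Proposition~\ref{P:toroidalinvariance} and Notation~\ref{N:orbits}. Your reading of $\mathring{\sigma}$ as $\overline{\sigma}^{\circ}$ is the right one: it is what the preceding Corollary to Proposition~\ref{globloc} uses, what the generalized Proposition~\ref{P:localglobal} proves, and what makes the two sides live in the same ambient space (recall $\ptrop(X,\sigma)$ may include strata at infinity, so a literal reading of $\mathring{\sigma}\subset N_\R$ would make the claimed equality ill-posed).
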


By Corollary \ref{C:normal} and Lemma \ref{L:normalgamma},   
we get the following generalization of the previous theorem 
to subschemes of arbitrary, not necessarily normal, toric varieties:

\begin{theorem} \label{Locglob}
  Let  $\mathcal{S}$  be a fan of semigroups, with associated fan $\Delta$. 
  Let $X$ be a subscheme of the toric variety 
  $\zcal(\mathcal{S})$ and $\trop(X)\subseteq L(\Delta,N)$ be the 
  tropicalization of $X\subseteq\zcal(\mathcal{S})$ in the sense of 
  Remark \ref{specnot}. If $\sigma$ is a cone of $\Delta$ such that 
  $O_\sigma\subseteq X$, then:
  $$\trop(X)\cap \mathring{\sigma}=\ptrop(X,\sigma).$$
\end{theorem}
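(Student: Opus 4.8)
The plan is to deduce Theorem~\ref{Locglob} from Theorem~\ref{normLocglob} by reduction to the normal case, exactly as the preceding remark suggests. First I would invoke Lemma~\ref{L:normalgamma}, which identifies the integral closure of the local ring $K[\Gamma]_{(\Gamma^+)}$ with $K[\sat(\Gamma)]_{(\sat(\Gamma)^+)}$; passing to the normalization of $\zcal(\mathcal{S})$ replaces each affine chart $\spec K[\Gamma]$ by $\spec K[\sat(\Gamma)]$, and a fan of semigroups $\mathcal{S}$ with underlying fan $\Delta$ is replaced by the genuine (normal) fan $\Delta$ itself, whose associated toric variety is $\zcal(\Delta,N)$. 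Since we are dealing with a subscheme $X$ of $\zcal(\mathcal{S})$, I would lift $X$ along the normalization morphism $\zcal(\Delta,N)\to\zcal(\mathcal{S})$ to obtain a subscheme $\tilde X$; the point is that this normalization is an isomorphism over the big torus, so over each orbit the relevant local data is the pullback of the data on $\zcal(\mathcal{S})$.

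Next I would check that neither side of the claimed equality changes under this normalization. For the global tropicalization, Corollary~\ref{C:normal} (together with Lemma~\ref{L:normalgamma}, which gives the hypothesis that the ideal is disjoint from the semigroup after localizing away from the orbit) shows that $\trop(X)=\trop(\tilde X)$ as subsets of $L(\Delta,N)$ — indeed the combinatorial target $L(\Delta,N)$ is literally the same for $\mathcal{S}$ and for $\Delta$, since by condition i) of Definition~\ref{sgfan} the lattice $M$ and hence $N$ and the underlying fan $\Delta$ are unchanged. For the local tropicalization, Lemma~\ref{L:normal} (in its local incarnation, applied to $R=K[\Gamma]_{(\Gamma^+)}$, $S=K[\sat(\Gamma)]_{(\sat(\Gamma)^+)}$) together with Corollary~\ref{C:normal} gives $\ptrop(X,\sigma)=\ptrop(\tilde X,\sigma)$: every nonnegative, resp.\ local, valuation of the non-normal local ring extends to one of its normalization, and restriction in the other direction is immediate, so the two tropicalizations coincide inside $\overline{\sigma}^\circ$. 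Here I must be a little careful that the orbit $O_\sigma$ of $\zcal(\mathcal{S})$ is covered by the orbit $O_\sigma$ of $\zcal(\Delta,N)$ and that $\tilde X$ does contain the latter; this follows because normalization is finite and surjective, hence maps the closed orbit onto the closed orbit, and the ideal of $\tilde X$ is the extension of the ideal of $X$.

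Having reduced to the normal situation, I would simply apply Theorem~\ref{normLocglob} to $\tilde X\subseteq\zcal(\Delta,N)$ and the cone $\sigma\in\Delta$, obtaining
$$\trop(\tilde X)\cap\mathring{\sigma}=\ptrop(\tilde X,\sigma),$$
and then rewrite both sides using the two identifications just established to conclude
$$\trop(X)\cap\mathring{\sigma}=\ptrop(X,\sigma).$$
I expect the only genuinely delicate point to be the bookkeeping in the reduction step: one must confirm that the hypotheses of Corollary~\ref{C:normal} and of the local part of Lemma~\ref{L:normal} are met for the \emph{localized} rings at the closed orbit (in particular that the defining ideal of $X$ is disjoint from $\Gamma$ once we have localized away from $\Gamma^+$, which is precisely the content of working with $I_{X,\sigma}\subset K[\Gamma]_{(\Gamma^+)}$ as in Notation~\ref{N:orbits}), and that the normalization map really does restrict to an isomorphism over the torus so that $\trop(X)$ and $\trop(\tilde X)$ live in the same space and agree. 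Once this is in place, everything else is a direct citation of the normal case. The remaining verifications are routine and I would leave them to the reader, as the authors do elsewhere in this section.
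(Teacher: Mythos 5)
Your proposal matches the paper's approach: the authors derive Theorem~\ref{Locglob} from Theorem~\ref{normLocglob} precisely by invoking Corollary~\ref{C:normal} and Lemma~\ref{L:normalgamma} to show that both the global and local tropicalizations are unchanged under the normalization $\zcal(\Delta,N)\to\zcal(\mathcal{S})$, which is exactly the reduction you carry out. Your version fills in some bookkeeping (the orbit lifting, the invariance of the target $L(\Delta,N)$) that the paper compresses into a single sentence, but the structure and the cited ingredients are the same.
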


If the orbit $O_\sigma$ is not contained in $X$, then it is natural to set by
definition $\nntrop(X,\sigma)=\ptrop(X,\sigma)=\varnothing$. Let us consider a 
particular case when $\zcal(\Delta, N)=\torus$ is simply a torus.
For any subvariety $X\subseteq \torus$ we have the familiar tropicalization
$\trop(X)$. In addition $\Gamma=M(\Gamma)$, $\Gamma^+ =(0)$, $\torus=
\Hom(\Gamma,K^*)$, and $\Delta=(0)$. Then, $K(\Gamma^*)[[\Gamma']]=K(\Gamma)$ is 
the field of rational functions on $\torus$. If $X=\torus$, then $I_X=\{0\}$ and 
the positive and the nonnegative tropicalization consist of the point $\{0\}$ 
corresponding to the trivial valuation on $K(\Gamma)$. If $X$ is a proper 
subvariety, then $\nntrop(X,0)=\ptrop(X,0)=\varnothing$. Still, the 
tropicalization $\trop(X)$ can be reconstructed from local tropicalizations with a 
help of an auxiliary fan.

Some new terminology and notation is in order. Let $\Sigma$ be a PL 
cone (Definition~\ref{D:conicalset}) in an $\R$-vector space 
$V$ and $v$ a point of $\Sigma$. If $\Sigma=\cup\sigma$ is a fan structure on 
$\Sigma$, let $\sigma(v)$ be the unique cone that contains $v$ in its relative 
interior. For the point $v\in \Sigma$ there is a unique subspace 
$T_v\Sigma\subseteq V$ with the following property: $T_v\Sigma$ is the minimal 
(with respect to inclusion) subspace of $V$ such that for any fan structure 
$\Sigma=\cup\sigma$, $T_v\Sigma$ contains the cone $\sigma(v)$. We say that 
$T_v\Sigma$ is the \emph{tangent space} to $\Sigma$ at the point $v$. Now, let 
$\Delta$ be a fan in $V$. Again, for a point $v\in\supp\Delta$, we let 
$\delta(v)$ be the unique cone of $\Delta$ such that $v$ is contained in the 
relative interior of $\delta(v)$. We say that a PL cone  $\Sigma$ and a fan 
$\Delta$ are \emph{transversal} at a point $v\in \supp\Delta\cap\Sigma$ if 
$T_v\Sigma+\langle\delta(v)\rangle=V$, where $\langle\delta(v)\rangle$ is the 
subspace of $V$ spanned by $\delta(v)$. We say that $\Sigma$ and $\Delta$ are 
transversal if they are transversal at each point $v\in\supp\Delta\cap\Sigma$.

\begin{corollary}
Let $X$ be a subvariety of a torus $\torus=\Hom(\Gamma,K^*)$. Let $\Delta$ be
a rational polyhedral fan in $N(\Gamma)_{\R}$ that is transversal to the 
tropicalization $\trop(X)$ of $X$ and such that $\trop(X)$ is contained in
$\supp\Delta$. Then, $\trop(X)$ is a disjoint union of the real parts of all
local positive tropicalizations $\ptrop(X,\sigma)$, $\sigma\in\Delta$.
\end{corollary}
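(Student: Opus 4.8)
The statement follows by re-embedding the torus $\torus$ inside the toric variety $\zcal(\Delta, N)$ and applying Theorem~\ref{normLocglob} stratum by stratum. First I would take the (non-compact) toric variety $\zcal(\Delta,N)$ associated to the fan $\Delta$ and let $\overline{X}$ be the closure of $X$ inside it. Since $\Delta$ is transversal to $\trop(X)$ and $\supp\Delta$ contains $\trop(X)$, for each cone $\sigma\in\Delta$ with $\mathring\sigma\cap\trop(X)\ne\varnothing$ one knows (by the intersection-theoretic description of tropicalizations of closures, or directly from Tevelev-type arguments, using transversality to guarantee that the closure meets the orbit $O_\sigma$ properly and nontrivially) that $O_\sigma\subseteq \overline{X}$; conversely if $\mathring\sigma\cap\trop(X)=\varnothing$ then $O_\sigma\not\subseteq\overline{X}$. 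The key compatibility is that $\trop(\overline X)\cap N_\R = \trop(X)$, i.e.\ adding the boundary strata does not change the finite part of the tropicalization.

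Next I would invoke Theorem~\ref{normLocglob}: for every cone $\sigma\in\Delta$ with $O_\sigma\subseteq\overline X$, we have $\trop(\overline X)\cap\mathring\sigma=\ptrop(\overline X,\sigma)=\ptrop(X,\sigma)$, where the last equality holds because in a neighbourhood of the closed orbit $O_\sigma$ the subschemes $X$ and $\overline X$ agree (the germ of $\overline X$ at $O_\sigma$ is exactly what $\ptrop(X,\sigma)$ is defined from, via Notation~\ref{N:orbits}). For cones $\sigma$ with $O_\sigma\not\subseteq\overline X$ one sets $\ptrop(X,\sigma)=\varnothing$ by convention, and correspondingly $\trop(\overline X)\cap\mathring\sigma=\varnothing$. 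Then I would decompose $N_\R=\supp\Delta$ as the disjoint union $\bigsqcup_{\sigma\in\Delta}\mathring\sigma$ of relative interiors of its cones, intersect with $\trop(X)=\trop(\overline X)\cap N_\R$, and conclude
$$\trop(X)=\bigsqcup_{\sigma\in\Delta}\bigl(\trop(X)\cap\mathring\sigma\bigr)
=\bigsqcup_{\sigma\in\Delta}\ptrop(X,\sigma),$$
which is exactly the claimed decomposition into the real parts of the local positive tropicalizations.

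The main obstacle is the first step: verifying that transversality of $\Delta$ with $\trop(X)$ and the containment $\trop(X)\subseteq\supp\Delta$ really do force $O_\sigma\subseteq\overline X$ precisely when $\mathring\sigma$ meets $\trop(X)$, and that $\trop(\overline X)$ has no ``extra'' pieces in the boundary strata beyond what sits over $\trop(X)$. This is where the transversality hypothesis is essential: without it the closure could acquire components inside the toric boundary whose tropicalizations would contribute cones not of the form $\ptrop(X,\sigma)$, or the closure could fail to meet some orbit $O_\sigma$ even though $\mathring\sigma$ meets $\trop(X)$. Here one uses that the star $\Star(\sigma)$ of $\sigma$ in $\Delta$ is transversal, in $(N/N_\sigma)_\R$, to the local cone of $\trop(X)$ at a point of $\mathring\sigma$, which by the structure theory (Theorem~\ref{T:main} and Remark~\ref{R:locstructoftrop}) guarantees that the recession behaviour matches up and the germ of $\overline X$ at $O_\sigma$ is nonempty of the expected dimension. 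The remaining steps — the disjointness of the decomposition, and identifying $\trop(\overline X)\cap\mathring\sigma$ with $\ptrop(X,\sigma)$ — are then immediate from Theorem~\ref{normLocglob} and the definitions, and I would leave those routine verifications to the reader.
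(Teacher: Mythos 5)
Your overall plan coincides with the paper's: form the closure $\overline X$ in $\zcal(\Delta,N)$, show that $O_\sigma\subseteq\overline X$ for the cones $\sigma$ whose relative interior meets $\trop(X)$, invoke the comparison $\trop(\overline X)\cap\mathring\sigma=\ptrop(X,\sigma)$ from Theorem~\ref{normLocglob}, and assemble over the stratification $\supp\Delta=\bigsqcup_\sigma\mathring\sigma$. You also correctly single out the containment $O_\sigma\subseteq\overline X$ as the one nontrivial step.

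Where you diverge is in how that step is justified, and there I see a genuine gap. You invoke ``Tevelev-type arguments'' and the transversality hypothesis to conclude that $\overline X$ meets $O_\sigma$ ``properly and nontrivially.'' But Tevelev's compactification results yield nonempty intersection of $\overline X$ with $O_\sigma$ (with the expected intersection dimension, roughly $\dim X-\dim\sigma$), which is strictly weaker than the containment $O_\sigma\subseteq\overline X$; for a proper subvariety $X$ one cannot deduce containment from a dimension count, since $\dim(\overline X\cap O_\sigma)$ is in general smaller than $\dim O_\sigma$. What is needed is that the \emph{ideal} of $\overline X$ restricted to the chart $\zcal(\sigma(v),N)$ is contained in the ideal of $O_{\sigma(v)}$. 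The paper gets this directly from a Newton-diagram argument: it observes that $I_{X,\sigma(v)}$ is generated by those $f\in I_X$ with support in $\check\sigma(v)$, that such an $f$ automatically vanishes on $O_{\sigma(v)}$ as soon as $\Newton^+(f)$ is not a translate $m_0+\check\sigma(v)$ of the full cone, and that this last condition is enforced for every such $f$ by $v\in\trop(X)$ together with transversality. This ideal-theoretic argument is the crucial ingredient, and your sketch should be amended to use it (or some other argument that actually establishes vanishing of the defining ideal on the orbit) rather than the intersection-theoretic reasoning, which does not close the gap.
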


\begin{proof}
We shall only outline the main ideas in the proof, leaving the details to the
reader. It suffices to show that for each point $v$ of $\trop(X)$, the closure of
$X$ in the toric variety $\zcal(\Delta, N)$ contains the orbit $O_{\sigma(v)}$.
The ideal $I_{X,\sigma(v)}$ of the closure of $X$ in the affine toric variety
$\zcal(\sigma(v), N)$ is generated by all polynomials $f\in I_X$ whose support
is contained in $\check{\sigma}(v)$. A sufficient condition for such a polynomial 
$f=\sum a_m\chi^m$ to vanish on $O_{\sigma(v)}$ is that the extended Newton 
diagram $\Newton^+(f)$ is not generated by one point, i.e., there is no 
$m\in\Gamma$ such that $\Newton^+(f)= m+\check{\sigma}(v)$. But this condition 
indeed holds for each $f\in I_{X,\sigma(v)}$ because $v\in \trop(X)$ and 
$\trop(X)$ and $\Delta$ are transversal.
\end{proof}

\medskip
\section{Toroidal meaning of local tropicalization} \label{Toroidal}

In this section, we show that tropicalization is an invariant of the ambient 
toroidal structure. More precisely, the tropicalization of an algebraic, analytic 
or formal germ of subvariety of an affine toric variety at its closed orbit, 
depends only on the associated toroidal structure. We use this fact to define the 
tropicalization of a subvariety of a toroidal embedding. 
\medskip

The basic reference for the notions used in this section is 
\cite[Chapter II]{KKMS}. First, we recall the basic definitions and fix the 
notations. The ground field $K$ will be assumed to be algebraically closed.

\begin{definition}
  (\cite[Chapter II, Definition 1]{KKMS}). A {\bf toroidal embedding} over a field 
  $K$ is a pair $(U,X)$, where $U\subseteq X$ is a Zariski open subset of a normal 
  algebraic variety $X$ over $K$, such that for every closed point $x\in X$ there 
  exists an affine toric variety $(\torus, \mathcal{Z})$ over $K$, where $\torus$ 
  is the open torus $\torus\subseteq \mathcal{Z}$, a closed point 
  $t\in \mathcal{Z}$, and an isomorphism of $K$-local algebras:
  $$\widehat{\mathcal{O}}_{X,x}\simeq \widehat{\mathcal{O}}_{\mathcal{Z},t}$$
  such that the ideal in $\widehat{\mathcal{O}}_{X,x}$ generated by the ideal of
  $X\setminus U$ maps isomorphically to the ideal in 
  $\widehat{\mathcal{O}}_{\mathcal{Z},t}$ generated by the ideal of $\mathcal{Z} 
  \setminus\torus$.
\end{definition}  

Notice that the previous definition implies that $U$ is smooth. The notation 
$(U,X)$, with $U$ coming first, is intended to suggest that $X$ is thought of 
as a total space into which $U$ embedds and that, as for toric varieties, this 
total space may change without changing $U$. 

The orbit of $t\in \mathcal{Z}$ can always be assumed closed, by diminishing 
perhaps $\mathcal{Z}$. Such a pair $(\mathcal{Z},t)$, together with a formal 
isomorphism as above is called a \emph{local model} of $(U, X)$ at $x$. The 
definition implies that the irreducible components of $X\setminus U$ 
(if nonempty) have codimension $1$ in $X$. We denote them by $(E_i)_{i\in I}$, so 
that $X\setminus U=\cup_{i\in I} E_i$. If all the varieties $E_i$ are normal, 
a toroidal embedding $(U,X)$ is called a \emph{toroidal embedding without self 
intersections}. 

\medskip
{\em In the sequel we consider only toroidal embeddings without self intersections.} 
The set $U$ and the connected  components of the sets 
$\cap_{i\in J} E_i\setminus \cup_{i\notin J} E_i$, $J\subseteq I$, define
a natural \emph{stratification} of the space $X$. If $Y$ is a stratum,
the \emph{star} $\Star(Y)$ of $Y$ is the union of all strata $Z$ such that $Y$
is contained in the closure of $Z$.

Let $Y$ be a stratum. Following \cite[Chapter II, Definition 3]{KKMS}, we denote:
\begin{itemize}
  \item 
            $M^Y=$  the group of Cartier divisors on  $\Star(Y)$,\\
                   supported on the hypersurface  $\Star(Y)\setminus U$;
  \item $N^Y=\Hom(M^Y,\Z)$;
  \item $M_{+}^{Y}=$ subsemigroup of $M^Y$ of effective divisors;
  \item $\sigma^Y=\{w\in N_{\R}^{Y}\,|\,\langle w,u\rangle\geq 0 
  \text{ for all } u\in M_{+}^{Y}\}\subseteq N_{\R}^{Y}$.
\end{itemize}
Note that the cone $\sigma^Y$ is strongly convex and that $\mathrm{rk} \ M^Y =  
\mathrm{codim}_X Y$.

\begin{proposition}
  Let $Y$ be a stratum of the toroidal embedding without self-intersection 
  $(U, X)$. Then, the completion $\widehat{\mathcal{O}}_{X,Y}$ of the local ring 
  of $X$ at $Y$ is isomorphic to the ring  $K(Y)[[M_{+}^{Y}]]$ of formal power 
  series over the semigroup $M_{+}^{Y}$ with coefficients in the field $K(Y)$ of 
  rational functions on $Y$. This isomorphism is defined up to multiplication by 
  units.
\end{proposition}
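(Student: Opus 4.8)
The plan is to reduce the statement to the structure theory of toroidal embeddings in \cite{KKMS} together with the Cohen structure theorem. First I would fix a stratum $Y$ and recall from \cite[Chapter II]{KKMS} that in a neighbourhood of $Y$ the toroidal embedding $(U,X)$ admits a local model: for a (in fact any) closed point $x$ in $Y$, there is an affine toric variety $(\torus, \mathcal{Z})$ over $K$ with a closed point $t$ such that $\widehat{\mathcal{O}}_{X,x}\simeq \widehat{\mathcal{O}}_{\mathcal{Z},t}$, carrying the ideal of $X\setminus U$ to that of $\mathcal{Z}\setminus\torus$. The key observation of \cite{KKMS} is that the combinatorial data of this local model near $Y$ — namely the cone $\sigma^Y\subseteq N^{Y}_{\R}$ and the semigroup $M^{Y}_{+}=\check{\sigma}^{Y}\cap M^{Y}$ — are intrinsic to the star $\Star(Y)$: $M^Y$ is the group of Cartier divisors on $\Star(Y)$ supported on $\Star(Y)\setminus U$, and $M^{Y}_{+}$ the subsemigroup of effective ones. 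Because $X$ is normal and $(U,X)$ has no self-intersections, $\sigma^Y$ is strongly convex of rank equal to $\operatorname{codim}_X Y$, so the associated saturated affine semigroup $M^{Y}_{+}$ is pointed.

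Next I would pass from the local ring at the closed point $x\in Y$ to the local ring at the generic point of $Y$. Since $Y$ is a stratum, it is a locally closed irreducible smooth subvariety; the completion $\widehat{\mathcal{O}}_{X,Y}$ of $X$ along $Y$ is, by definition, the completion of the local ring $\mathcal{O}_{X,\eta_Y}$ at the generic point $\eta_Y$ of $Y$, whose residue field is the function field $K(Y)$. The point is that the ideal sheaf of $X\setminus U$ near $Y$ is, locally, generated by the equations of the boundary divisors through $Y$, and these are exactly the monomials $\chi^{m}$, $m\in M^{Y}_{+}$, of the toric local model; moreover these divisors are transverse to $Y$. Choosing local parameters $\chi^{m_1},\dots,\chi^{m_r}$ (the $r=\operatorname{codim}_X Y$ generators of a basis of $M^{Y}$ realising the extremal rays of $\check{\sigma}^{Y}$), completing $\mathcal{O}_{X,\eta_Y}$ with respect to the ideal they generate, and invoking the Cohen structure theorem for the complete local ring $\widehat{\mathcal{O}}_{X,Y}$, whose residue field $K(Y)$ lifts to a coefficient field, one obtains that $\widehat{\mathcal{O}}_{X,Y}$ is finite (in fact isomorphic, by transversality and dimension count) over the power series ring $K(Y)[[\chi^{m_1},\dots,\chi^{m_r}]]$. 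Identifying the boundary combinatorics with $M^{Y}_{+}$ upgrades this to the claimed isomorphism $\widehat{\mathcal{O}}_{X,Y}\simeq K(Y)[[M^{Y}_{+}]]$, where the right-hand side is the formal toric ring of Section~\ref{S:powerseriesring} (well-defined since $M^{Y}_{+}$ is a pointed affine semigroup).

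Finally I would address the uniqueness-up-to-units clause. As in the definition of toroidal embedding, the formal isomorphism of the local model is not canonical: it depends on a choice of generators of the ideal of $X\setminus U$ inside $\widehat{\mathcal{O}}_{X,Y}$, and any two choices of a monomial $\chi^{m}$ defining the same boundary Cartier divisor differ by a unit of $\widehat{\mathcal{O}}_{X,Y}$. This is exactly the ambiguity quantified in Lemma~\ref{L:fc} and Proposition~\ref{P:toroidalinvariance}: a section of $p\colon\Gamma\to\Gamma'$ is determined only up to $\Gamma^{*}$, and two sections induce isomorphisms of the completion differing by multiplication of each monomial by a unit. So the isomorphism $\widehat{\mathcal{O}}_{X,Y}\simeq K(Y)[[M^{Y}_{+}]]$ is canonical precisely up to such an automorphism, i.e.\ up to multiplication by units, which is what is asserted.

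The main obstacle I expect is making rigorous the passage from the local model at a closed point $x\in Y$ (which is what \cite{KKMS} literally provides) to a description of the completion \emph{along} the whole stratum $Y$ with coefficient field $K(Y)$ rather than $K$; this requires checking that the boundary divisors are relatively transverse to $Y$ and that the Cohen structure theorem applies with the right coefficient field, and then matching the resulting power-series presentation with the intrinsic semigroup $M^{Y}_{+}$ of $\Star(Y)$ rather than with the a priori chart-dependent one. Once transversality and the intrinsic identification of $M^{Y}_{+}$ are in hand, the rest is a routine application of completion theory and of the results of Section~\ref{S:powerseriesring}.
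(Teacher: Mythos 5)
Your overall strategy — construct a map using defining functions of the boundary Cartier divisors, lift a coefficient field $K(Y)$ by Cohen's theorem, and control the ambiguity by observing that defining functions of the same divisor differ by a unit — matches the spirit of the paper's proof, and your treatment of the uniqueness-up-to-units clause is correct. However, there is a genuine gap in the middle of your argument.

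You claim that, after choosing $\chi^{m_1},\dots,\chi^{m_r}$ with $r=\operatorname{codim}_X Y=\operatorname{rk} M^Y$, the completion $\widehat{\mathcal{O}}_{X,Y}$ is ``finite (in fact isomorphic, by transversality and dimension count) over the power series ring $K(Y)[[\chi^{m_1},\dots,\chi^{m_r}]]$,'' and then that ``identifying the boundary combinatorics with $M_+^Y$ upgrades this'' to the target isomorphism. This step fails whenever the local toric model is singular. The stratum $Y$ can lie inside the singular locus of $X$; in that case $M_+^Y=\check{\sigma}^Y\cap M^Y$ is a saturated affine semigroup that is \emph{not} isomorphic to $\N^r$ (it needs more than $r$ generators, or has relations among them, whenever $\sigma^Y$ is not a smooth cone), so $K(Y)[[M_+^Y]]$ is not a regular local ring and cannot coincide with $K(Y)[[x_1,\dots,x_r]]$. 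Thus the asserted isomorphism with a power series ring on $r$ variables is false, and there is nothing to ``upgrade.'' Transversality and a dimension count cannot repair this: the issue is the internal structure of the semigroup, not the codimension.

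The paper avoids this entirely. It never invokes a regular presentation of $\widehat{\mathcal{O}}_{X,Y}$. Instead it constructs the semigroup morphism $M_+^Y\to\mathcal{O}_{X,Y}$ directly (choose a defining function for each generator of a basis, extend multiplicatively), uses Cohen only to lift a copy of $K(Y)$ into $\widehat{\mathcal{O}}_{X,Y}$, and thereby gets a ring homomorphism $\alpha_Y\colon K(Y)[[M_+^Y]]\to\widehat{\mathcal{O}}_{X,Y}$. Injectivity is immediate. Surjectivity is proved in two steps: first, the image of $M_+^Y$ generates the maximal ideal of $\mathcal{O}_{X,Y}$ — this is where the local model at a closed point $x\in Y$ enters, since the ideal of $Y$ maps to the ideal of the closed orbit of $\mathcal{Z}$, which is generated by the monomials from $M_+^Y$, and Noetherianity transfers this generation back to $\mathcal{O}_{X,Y}$; second, an inductive reduction modulo powers of the maximal ideal represents any $f\in\mathcal{O}_{X,Y}$ as the image of a series in $K(Y)[[M_+^Y]]$. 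To fix your argument you would need to replace your smooth power series ring with the genuine $K(Y)[[M_+^Y]]$ from the start and argue bijectivity of the induced map in some such direct manner, rather than via Cohen's regular presentation.
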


\begin{proof}
  Each Cartier divisor on $\Star(Y)$ defines a principal ideal in the local ring
  $\mathcal{O}_{X,Y}$. Thus, to each element of $M_{+}^{Y}$ we may assign a 
  defining function, i.e., an element of $\mathcal{O}_{X,Y}$ (well-defined up to a 
  unit). We Define this correspondence on a set of elements of $M_{+}^{Y}$ which 
  form a basis of $M_{\R}^Y$ and then extend it to all of $M_{+}^{Y}$. We obtain 
  a morphism of semigroups $M_{+}^{Y}\to\mathcal{O}_{X,Y}$. Notice also that the 
  ring $\mathcal{O}_{X,Y}$ and hence its completion $\widehat{\mathcal{O}}_{X,Y}$, 
  contain a field, say, the field $K$. Then, it follows from the theory of 
  complete rings (see \cite[Chapter V]{N 62}) that $\widehat{\mathcal{O}}_{X,Y}$ 
  contains also a field isomorphic to its residue field, that is to $K(Y)$. Let us 
  fix such a subfield. In this way, we get a morphism of rings: 
  $$\alpha_Y\colon K(Y)[[M_{+}^{Y}]]\to \widehat{\mathcal{O}}_{X,Y},$$
  unique up to a unit. We now prove that it is an isomorphism of complete
  local rings.

  The injectivity of $\alpha_Y$ is clear, so let us prove the surjectivity. Since 
  $\mathcal{O}_{X,Y}$ naturally embeds into its completion 
  $\widehat{\mathcal{O}}_{X,Y}$ and the ring $K(Y)[[M_{+}^{Y}]]$ is complete, it 
  suffices to prove that $\mathcal{O}_{X,Y}$ lies in the image of 
  $K(Y)[[M_{+}^{Y}]]$. First, note that the image of $M_{+}^{Y}$ generates the 
  maximal ideal of $\mathcal{O}_{X,Y}$. Indeed, consider the diagram: 
  $$\mathcal{O}_{X,Y}\hookrightarrow \mathcal{O}_{X,x}\hookrightarrow
  \widehat{\mathcal{O}}_{X,x} \overset{\varphi}{\simeq} 
  \widehat{\mathcal{O}}_{\mathcal{Z},t}$$
  of rings, where $x$ is a closed point of the stratum $Y$ and $(\mathcal{Z},t)$ 
  is a local model at $x$. By the properties of toroidal embeddings (see 
  \cite[Chapter II, Corollary~1]{KKMS}) the ideal of the stratum $Y$ maps
  to the ideal of the closed orbit of $\mathcal{Z}$ under $\varphi$. This last 
  ideal is generated by the image of $M_{+}^{Y}$ in 
  $\widehat{\mathcal{O}}_{\mathcal{Z},t}$. Let $\mathfrak{m}$ be the maximal ideal 
  of $\mathcal{O}_{X,Y}$. We see that $M_{+}^{Y}$ is a subset of $\mathfrak{m}$ 
  and it generates the ideal $\widehat{\mathfrak{m}}$ of the stratum $Y$ in 
  $\widehat{\mathcal{O}}_{X,x}$. But since the ring $\mathcal{O}_{X,x}$ is 
  Noetherian, we conclude that $M_{+}^{Y}$ also generates $\mathfrak{m}$.

  Consider now some $f\in\mathcal{O}_{X,Y}$. Fix a finite subset $\{f_1,\dotsc,f_k\}$ of 
  $M_{+}^{Y}$ which generates the maximal ideal $\mathfrak{m}$. Let $a_0\in K(Y)$ 
  be a representative of the class of $f$ in $\mathcal{O}_{X,Y}/\mathfrak{m}$. 
  Then, $f-a_0\in\mathfrak{m}$ and we can write:
  $$f-a_0=\sum_i g_i f_i,\quad g_i\in\mathcal{O}_{X,Y} \text{ for all } i.$$
  Applying the same argument to $g_i$ we find $a_1 ,\dotsc, a_k\in K(Y)$ 
  such that:
  $$f=a_0+\sum_i a_i f_i \mod \mathfrak{m}^2.$$
  Repeating this argument we represent $f$ as an image of a series in
  $K(Y)[[M_{+}^{Y}]]$. This proves that $\alpha_Y$ is surjective, as we wanted
  to show.
\end{proof}

To each toroidal embedding, we canonically associate a \emph{conical polyhedral 
complex with integral structure}. Let us recall the construction.

\begin{definition}(\cite[Chapter II, Definition 5]{KKMS}).
  A {\bf conical polyhedral complex} $\Delta$ is formed by: 
  \begin{itemize}
    \item a topological space $|\Delta|$; 
    \item a finite family of closed subsets $\sigma_i$ called {\bf cones};
    \item a finite
  dimensional real vector space $V_i$ of real valued continuous functions on 
  $\sigma_i$ such that:
    \end{itemize} 
    
  \begin{enumerate}
  \item a basis of $V_i$ defines a homeomorphism from $\sigma_i$ to a polyhedral
  cone ${\sigma'}_i\subset \R^{n_i}$, not contained in a hyperplane;
  \item faces of ${\sigma'}_i$ correspond also to cones of $\Delta$;
  \item $|\Delta|$ is a disjoint union of relative interiors of $\sigma_i$ for all
  $i$;
  \item if $\sigma_j$ is a face of $\sigma_i$, then the restriction of $V_i$ to
  $\sigma_j$ is $V_j$.
  \end{enumerate}
\end{definition}

\begin{remark}
  Even if we use the same notation as for fans of cones, it is important to note 
  that in a conical polyhedral complex we do not have an embedding of the various 
  cones in a fixed vector space. In particular, if we consider the conical 
  polyhedral complex associated to a fan, we loose the information about this 
  embedding. 
\end{remark}

\begin{definition}(\cite[Chapter II, Definition 6]{KKMS}).
  An {\bf integral structure} on a conical polyhedral complex $\Delta$ is a set
  of finitely generated abelian groups $L_i\subset V_i$ such that:
  \begin{enumerate}
  \item $(L_i)_{\R}\simeq V_i$;
  \item if $\sigma_j$ is a face of $\sigma_i$, then the restriction of $L_i$ to
  $\sigma_j$ is $L_j$.
  \end{enumerate}
\end{definition}

Let $(U,X)$ be a toroidal embedding. Let $Y$ be a stratum, and $Z$ a stratum in
$\Star(Y)$. Then, the canonical surjective map $M^Y\to M^Z$ induces a canonical
inclusion $N_{\R}^{Z}\to N_{\R}^{Y}$ such that $N^Z=N_{\R}^{Z}\cap N^Y$, and if
$Z$ corresponds to the face $\tau$ of $\sigma^Y$, then the inclusion 
$N_{\R}^{Z}\to N_{\R}^{Y}$ maps $\sigma^Z$ isomorphically to $\tau$ 
(see \cite[Chapter II, Corollaries 1 and 2]{KKMS} for the details). Now consider 
the topological space:
      $$|\Delta|=\bigsqcup_Y \sigma^Y / \sim,$$
where the disjoint union is taken over all strata of $(U,X)$ and the equivalence
relation $\sim$ is the gluing of cones along common faces. The triple 
$(|\Delta|,M_{\R}^{Y},M^Y)$ is called the conical polyhedral complex (simply 
\emph{conical complex} in the sequel) of the toroidal embedding $(U,X)$.

For each cone $\sigma^Y$ of the conical complex $\Delta$ we have a linear variety
$L(\sigma^Y,N^Y)$ and the closure $\overline{\sigma^Y}$ (see 
Section~\ref{troptor}). {\em The gluing of cones of $\Delta$ naturally extends to 
a gluing of their closures}. More precisely, let $Y_1$, $Y_2$, and $Z$ be strata of $(U,X)$, 
and suppose that $Y_1$
  and $Y_2$ are contained in the closure of $Z$. Recall that 
  $\overline{\sigma^{Y_1}}$ is defined as the set of all nonnegative simigroup
  homomorphisms from $\sigma^{Y_1}\cap M^{Y_1}$ to $\ri$, and similarly
  $\overline{\sigma^{Y_2}}$ and $\overline{\sigma^Z}$. Since $M^Z$ is naturally
  a sublattice of both $M^{Y_1}$ and $M^{Y_2}$, and $\sigma^Z$ is a common
  face of $\sigma^{Y_1}$ and $\sigma^{Y_2}$, $\Hom_{sg}(\sigma^Z\cap M^Z,
  \ri)$ is a common subset of $\Hom_{sg}(\sigma^{Y_1}\cap M^{Y_1},\ri)$ and
  $\Hom_{sg}(\sigma^{Y_2}\cap M^{Y_2},\ri)$. This allows to glue the extended
  cones $\overline{\sigma^{Y_1}}$ and $\overline{\sigma^{Y_2}}$ along 
  $\overline{\sigma^Z}$. The stratum at infinity of $\overline{\sigma^{Y_1}}$ that 
  corresponds to the face $\sigma^Z$ is equipped with the lattice $N^{Y_1}/N^Z$ and 
  the vector space $(N^{Y_1}/N^Z)_{\R}$. For an illustration in dimension two, 
  see Example~\ref{Ex:Toroitrop} and the accompanying Figure~\ref{fig:Toroitrop}.

\begin{definition}
  Let $\Delta$ be the conical complex of a toroidal embedding $(U,X)$. Denote by 
  $|\overline{\Delta}|=(\bigsqcup_Y \overline{\sigma^Y}) / \sim \ $ 
  the topological space obtained by gluing the extended cones of $\Delta$ as 
  explained before. Equip it with the additional structure $M^Y$ that is 
  inherited from $\Delta$, and with all the analogous additional structure (quotient 
  lattices, vector spaces of real functions) on the strata of 
  $\overline{\sigma^Y}$ at infinity. We call it the 
  {\bf extended conical complex} of the toroidal embedding $(U,X)$, denoted 
  $\overline{\Delta}$.
\end{definition}

Now, let $\mathcal{I}$ be an ideal sheaf on a toroidal embedding $(U,X)$ defining
a subscheme $W$. This sheaf generates an ideal $I^Y$ (perhaps non-proper) in the 
local ring $\mathcal{O}_{X,Y}$ of every stratum $Y$. Fix an isomorphism 
$\widehat{\mathcal{O}}_{X,Y}\simeq K(Y)[[M_{+}^{Y}]]$ and let $\widehat{I}^Y$ be the 
ideal generated by $I^Y$ in $K(Y)[[M_{+}^{Y}]]$. Let $\Gamma$ be the semigroup 
$M_{+}^{Y}$ and $\gamma$ the natural morphism of semigroups: 
$$\gamma\colon  M_{+}^{Y} \to K(Y)[[M_{+}^{Y}]]/\widehat{I}^Y.$$
Then, we have the positive tropicalization $\ptrop(W,Y)=\ptrop(\gamma)$ and the
nonnegative tropicalization $\nntrop(W,Y)=\nntrop(\gamma)$, which are conical
sets in $(\overline{\sigma^Y})^{\circ}$ respectively in $\overline{\sigma^Y}$. 
By Proposition~\ref{P:toroidalinvariance}, these tropicalizations do not depend 
on the choice of an isomorphism between $\widehat{\mathcal{O}}_{X,Y}$ and 
$K(Y)[[M_{+}^{Y}]]$.
  
\begin{lemma}
  Let $Y$ and $Z$ be strata, and $Z\subseteq \Star(Y)$. If $Z\subseteq W$, then:
  $$\nntrop(W,Y)\cap (\overline{\sigma^Z})^{\circ}=\ptrop(W,Z).$$
\end{lemma}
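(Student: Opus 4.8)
The plan is to deduce this lemma from the already-established local statement Lemma~\ref{L:compatibilityofltrop}, which treats exactly the same situation for subschemes of (affine) toric varieties, together with the isomorphism $\widehat{\mathcal{O}}_{X,Y}\simeq K(Y)[[M_{+}^{Y}]]$ and the naturality of the gluing of extended cones. The point is that the toroidal setting is, by definition, locally modeled on the toric one at the level of completed local rings, and all the objects appearing in the statement — the ideals $\widehat{I}^{Y}$, $\widehat{I}^{Z}$, the extended cones $\overline{\sigma^{Y}}$, $\overline{\sigma^{Z}}$, and the embedding of $\overline{\sigma^{Z}}$ as a face-closure inside $\overline{\sigma^{Y}}$ — are defined purely in terms of these completed local rings and the semigroups $M_{+}^{Y}$, $M_{+}^{Z}$. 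So the equality of tropicalizations is really a statement about the pair of rings $K(Y)[[M_{+}^{Y}]]$ and $K(Z)[[M_{+}^{Z}]]$ together with the canonical morphisms relating them.

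Concretely, I would proceed as follows. First, recall from \cite[Chapter II, Corollaries 1 and 2]{KKMS} that the surjection $M^{Y}\to M^{Z}$ realizes $\sigma^{Z}$ as the face $\tau$ of $\sigma^{Y}$ cut out by the monomials in $M_{+}^{Y}$ that restrict to $0$ on $Z$; correspondingly $M_{+}^{Z}$ is (up to the semigroup identifications used in Section~\ref{S:powerseriesring}) the $\tau$-truncation semigroup $M_{+}^{Y}\cap\tau^{\perp}$ after passing to the residue field $K(Z)$ of the localization. Second, I would write the diagram of rings exactly parallel to the one in the proof of Lemma~\ref{L:compatibilityofltrop}: the completion $K(Y)[[M_{+}^{Y}]]$ of $\mathcal{O}_{X,Y}$, the localization of $\mathcal{O}_{X,Y}$ at the prime defining the stratum $Z$, and its completion $K(Z)[[M_{+}^{Z}]]$, with the ideals $\widehat{I}^{Y}$, $I^{Z}$, $\widehat{I}^{Z}$ sitting inside. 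Here $\mathcal{O}_{X,Y}$ localized at $Z$ has residue field $K(Z)$ — this is where the coefficient field genuinely changes from $K(Y)$ to $K(Z)$, but that causes no difficulty since in the local/nonnegative tropicalization any nonnegative valuation is automatically trivial on the coefficient field, so the field plays no role. Third, I would run the valuation-lifting argument verbatim: given a nonnegative valuation $\overline{v}$ on $K(Y)[[M_{+}^{Y}]]$, infinite on $\widehat{I}^{Y}$, whose tropicalization point $w$ lies in $(\overline{\sigma^{Z}})^{\circ}$ — i.e.\ $w$ is finite on $M_{+}^{Y}\cap\tau^{\perp}=\chi^{M_{+}^{Z}}$ and $+\infty$ (hence positive) on the complementary monomials — one restricts $\overline{v}$ to $\mathcal{O}_{X,Y}$, pushes it forward to the localization at $Z$ (possible because $\overline{v}$ is $0$ on $\chi^{M_{+}^{Z}}$), and then, since $\overline{v}$ is positive on the monomials with exponents outside $\tau^{\perp}$, further pushes it forward to a local valuation of $K(Z)[[M_{+}^{Z}]]$ infinite on $\widehat{I}^{Z}$, yielding a point of $\ptrop(W,Z)$. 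For the converse inclusion, a local valuation on $K(Z)[[M_{+}^{Z}]]$ infinite on $\widehat{I}^{Z}$ pulls back to a nonnegative valuation on $K(Y)[[M_{+}^{Y}]]$ that is trivial on $\chi^{M_{+}^{Z}}$, positive on the complementary monomials, and infinite on $\widehat{I}^{Y}$, hence gives a point of $\nntrop(W,Y)$ lying in $(\overline{\sigma^{Z}})^{\circ}$; Corollary~\ref{C:completion} is what lets us pass freely between $\mathcal{O}_{X,Y}$ and its completion. The equivalent reformulation with $\overline{\ptrop(W,Y)}$ in place of $\nntrop(W,Y)$ then follows from Theorem~\ref{Tropinit}(i) (or Proposition~\ref{P:closedtrop}), which identifies the closure of the positive tropicalization in the extended cone with the nonnegative tropicalization, exactly as in Lemma~\ref{L:compatibilityofltrop}.

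The only real work, as opposed to transcription, is bookkeeping: checking that the canonical identifications $M_{+}^{Z}\cong M_{+}^{Y}\cap (\sigma^{Z})^{\perp}$ and $\overline{\sigma^{Z}}\hookrightarrow\overline{\sigma^{Y}}$ used in the definition of the extended conical complex match up with the ring-theoretic truncation and localization maps, so that the phrase ``$w\in(\overline{\sigma^{Z}})^{\circ}$'' translates precisely into ``$w$ is finite exactly on the monomials of $M_{+}^{Y}$ lying in $(\sigma^{Z})^{\perp}$.'' Once that dictionary is in place the proof is the same push-forward/pull-back of valuations as in Lemma~\ref{L:compatibilityofltrop}, so I expect the main obstacle to be merely making this dictionary explicit and invoking \cite[Chapter II, Corollaries 1 and 2]{KKMS} cleanly; there is no new geometric input. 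I would therefore keep the proof short, stating that it repeats the argument of Lemma~\ref{L:compatibilityofltrop} after replacing the toric semigroup ring $K[\Gamma]_{(\Gamma^{+})}$ and its completion by $\mathcal{O}_{X,Y}$ and $\widehat{\mathcal{O}}_{X,Y}\simeq K(Y)[[M_{+}^{Y}]]$, and pointing out that the change of coefficient field is immaterial because nonnegative valuations are trivial on subfields.

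\begin{proof}
  This is proved exactly as Lemma~\ref{L:compatibilityofltrop}, replacing the
  toric data by the corresponding toroidal data. By
  \cite[Chapter~II, Corollaries~1 and~2]{KKMS}, the surjection $M^Y\to M^Z$
  realizes $\sigma^Z$ as a face $\tau$ of $\sigma^Y$, and under the
  identifications of Section~\ref{S:powerseriesring} the semigroup $M_{+}^{Y}$
  is, as an abstract semigroup, the $\tau$-truncation semigroup
  $M_{+}^{Y}\cap\tau^{\perp}$ (with coefficient field enlarged from $K(Y)$ to
  $K(Z)$). Fix isomorphisms $\widehat{\mathcal{O}}_{X,Y}\simeq K(Y)[[M_{+}^{Y}]]$
  and $\widehat{\mathcal{O}}_{X,Z}\simeq K(Z)[[M_{+}^{Z}]]$; by
  Proposition~\ref{P:toroidalinvariance} the tropicalizations $\ptrop(W,Y)$,
  $\nntrop(W,Y)$, $\ptrop(W,Z)$ do not depend on these choices. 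We have the
  diagram of rings and ideals
  $$
  \xymatrix{
  & K(Y)[[M_{+}^{Y}]]  & \mathcal{O}_{X,Y}\ar[l]_a \ar[r]^c
  & (\mathcal{O}_{X,Y})_{\mathfrak{p}_Z}\ar[r]^(.4){b}
  & K(Z)[[M_{+}^{Z}]] \\
  & \widehat{I}^Y\ar@{^{(}->}[u]
  & I^Y\ar@{^{(}->}[u]\ar[l]\ar[r] & I^Z\ar@{^{(}->}[u]\ar[r]
  & \widehat{I}^Z\ar@{^{(}->}[u]
  }
  $$
  where $\mathfrak{p}_Z$ is the prime of $\mathcal{O}_{X,Y}$ defining the
  stratum $Z$, $c$ is the morphism of localization, $a$ is the completion
  morphism composed with the fixed isomorphism, and $b$ is defined similarly.
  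Note that $(\mathcal{O}_{X,Y})_{\mathfrak{p}_Z}$ has residue field $K(Z)$; the
  change of coefficient field is immaterial, since any valuation nonnegative on a
  ring is trivial on any subfield.

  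Let $\overline{v}$ be a valuation of $K(Y)[[M_{+}^{Y}]]$, infinite on
  $\widehat{I}^{Y}$, defining a point $w\in\nntrop(W,Y)\cap
  (\overline{\sigma^Z})^{\circ}$. By the description of the extended conical
  complex, $w$ is finite exactly on the monomials $\chi^{m}$ with
  $m\in M_{+}^{Y}\cap\tau^{\perp}=M_{+}^{Z}$, and $+\infty$ (hence positive) on
  the remaining monomials. Restrict $\overline{v}$ to $\mathcal{O}_{X,Y}$: since
  it is $0$ on $\chi^{M_{+}^{Z}}$, it pushes forward to a valuation of the
  localization $(\mathcal{O}_{X,Y})_{\mathfrak{p}_Z}$, and since it is positive
  on $\chi^{(M_{+}^{Y}\setminus M_{+}^{Z})}$, it pushes further forward to a
  local valuation of $K(Z)[[M_{+}^{Z}]]$ infinite on $\widehat{I}^{Z}$, i.e.\ to
  a point of $\ptrop(W,Z)$. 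Conversely, any local valuation of
  $K(Z)[[M_{+}^{Z}]]$ infinite on $\widehat{I}^{Z}$ pulls back along $b\circ c$
  to a valuation on $\mathcal{O}_{X,Y}$ which is nonnegative, trivial on
  $\chi^{M_{+}^{Z}}$, positive on $\chi^{(M_{+}^{Y}\setminus M_{+}^{Z})}$, and
  infinite on $I^{Y}$; by Corollary~\ref{C:completion} it extends to
  $K(Y)[[M_{+}^{Y}]]$ and defines a point of
  $\nntrop(W,Y)\cap(\overline{\sigma^Z})^{\circ}$. This proves the first
  equality. The second is equivalent to it by Theorem~\ref{Tropinit}~(i), which
  identifies $\nntrop$ with the closure of $\ptrop$ in the extended cone.
\end{proof}
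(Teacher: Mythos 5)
Your proof takes essentially the same route as the paper, whose own proof is simply the one-line statement that the argument is the same as for Lemma~\ref{L:compatibilityofltrop}; you have unpacked that sentence correctly, with the right diagram of rings and the right invocations of Proposition~\ref{P:toroidalinvariance} and Corollary~\ref{C:completion}, and the observation that the change of coefficient field from $K(Y)$ to $K(Z)$ is harmless because a nonnegative valuation is trivial on any subfield is exactly the right thing to point out.

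One characterization you state is not quite right, however, and it masks a small unjustified jump. You write that $w\in(\overline{\sigma^Z})^{\circ}$ means ``$w$ is finite exactly on the monomials $\chi^{m}$ with $m\in M_{+}^{Y}\cap\tau^{\perp}$, and $+\infty$ on the remaining monomials,'' and then immediately use that $\overline{v}$ is \emph{zero} on $\chi^{M_{+}^{Z}}$. Both halves of the quoted sentence are wrong: the relative interior $(\overline{\sigma^Z})^{\circ}=\overline{\tau}^{\circ}$ (with $\tau=\sigma^Z$ regarded as a face of $\sigma^Y$) consists of $\mathring{\tau}$ together with \emph{all} projections of $\mathring{\tau}$ to the strata $(N^Y/N^Y_\rho)_\R$ for $\rho\leq\tau$, so a point of it may be finite on all of $M_{+}^{Y}$ (if $\rho=\{0\}$) or finite on some $\rho^\perp\cap M_{+}^{Y}$ strictly larger than $\tau^\perp\cap M_{+}^{Y}$. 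The correct statement, and the one that actually feeds the push-forward argument, is that $w$ is \emph{zero} on $M_{+}^{Y}\cap\tau^{\perp}=M_{+}^{Z}$ (not merely finite there) and \emph{positive} (possibly $+\infty$, possibly finite) on $M_{+}^{Y}\setminus\tau^{\perp}$; this follows because $w$ is the image in $\overline{\sigma^Y}$ of a covector in $\mathring{\tau}$, which annihilates $\tau^\perp$ and is strictly positive on $\check{\sigma}^Y\cap M^Y$ off $\tau^\perp$. With this corrected, the rest of the argument goes through exactly as you wrote it. You should also fix the typo in the first paragraph (``the semigroup $M_{+}^{Y}$ is ... the $\tau$-truncation'' should read $M_{+}^{Z}$), and note that the lemma you are proving states only one equality, so your closing sentence about ``the second equality'' has nothing to refer to and should be dropped.
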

\begin{proof}
  The proof is essentially the same as the proof of 
  Lemma~\ref{L:compatibilityofltrop}.
\end{proof}

This lemma justifies the following definition: 
\begin{definition}
  Let $W$ be a subscheme of a toroidal embedding $(U,X)$. The disjoint union:
  $$\trop(W)=\bigsqcup_Y \ptrop(W,Y)$$
  of positive tropicalizations of all germs of $W$ at strata of $(U,X)$, 
  considered as a subset of the extended conical complex $\overline{\Delta}$ of 
  the toroidal embedding $(U,X)$ is called the {\bf tropicalization} of the 
  subscheme $W$.
\end{definition}

\begin{theorem}
  Let $W$ be a subscheme of a toroidal embedding $(U,X)$. Then for every stratum
  $Y$ of $(U,X)$ the intersection $\trop(W)\cap \overline{\sigma^Y}$ is a rational 
  polyhedral conical set. If the germ of $W$ at $Y$ has pure dimension $d$, then
  $\trop(W)\cap \overline{\sigma^Y}$ has pure real dimension $d$.
\end{theorem}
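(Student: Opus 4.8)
The plan is to reduce the statement to the affine-toric results of Section~\ref{S:lstruct} by working stratum by stratum in the extended conical complex $\overline{\Delta}$. Fix a stratum $Y$ of the toroidal embedding $(U,X)$. By the previous lemma and the definition of $\trop(W)$, we have the decomposition
$$
\trop(W)\cap\overline{\sigma^Y}=\bigsqcup_{Z\colon Y\subseteq\overline{Z}}\ptrop(W,Z),
$$
and each $\ptrop(W,Z)$ with $Z\subseteq\Star(Y)$ sits naturally inside a stratum at infinity $(N^Y/N^Z)_\R$ of $\overline{\sigma^Y}$ (with $Z=Y$ giving the open stratum $\mathring{\sigma^Y}$). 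So it suffices to show: each intersection $\bigl(\trop(W)\cap\overline{\sigma^Y}\bigr)\cap(N^Y/N^Z)_\R=\ptrop(W,Z)$ is a rational PL cone of pure dimension $d$ (when the germ of $W$ at $Z$, equivalently at $Y$, has pure dimension $d$), and that the gluing along common faces does not destroy these properties (which is automatic, since the defining lemma says the limit points of $\ptrop(W,\sigma^Y)$ in a stratum at infinity are exactly $\ptrop(W,Z)$, so the conical set structure is compatible with the stratification and closures).

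First I would set $\Gamma:=M_+^Y$, an affine pointed semigroup (since $\sigma^Y$ is strongly convex), and fix an isomorphism $\widehat{\mathcal O}_{X,Y}\simeq K(Y)[[M_+^Y]]=K(Y)[[\Gamma]]$, which exists by the proposition computing the completion; by Proposition~\ref{P:toroidalinvariance} the tropicalizations do not depend on this choice. Let $\widehat I^Y$ be the extension of the ideal $I^Y$ to $K(Y)[[\Gamma]]$ and $\gamma\colon\Gamma\to K(Y)[[\Gamma]]/\widehat I^Y$ the natural semigroup morphism, so that $\ptrop(W,Y)=\ptrop(\gamma)$ and $\nntrop(W,Y)=\nntrop(\gamma)$. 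Now I invoke the Corollary after Lemma~\ref{L:equidim}, which states precisely that for an ideal $I$ of a formal power series ring $K'[[\Gamma]]$ over an affine pointed semigroup $\Gamma$ (here with ground field $K'=K(Y)$), the sets $\nntrop(I)\subseteq\overline{\sigma}$ and $\ptrop(I)\subseteq\overline{\sigma}^\circ$ are rational PL conical subspaces. This gives the first assertion of the theorem for each $Y$: $\trop(W)\cap\overline{\sigma^Y}=\nntrop(W,Y)$ (by the defining lemma, since $\nntrop(W,Y)$ is the closure of $\ptrop(W,Y)$ and decomposes into the pieces $\ptrop(W,Z)$) is a rational polyhedral conical set. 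One technical point to check here: the cited results of Section~\ref{S:lstruct} are stated for a field $K$ "endowed with the trivial valuation''; here the coefficient field is $K(Y)$, which is likewise taken with the trivial valuation since we only ever consider valuations nonnegative on the ring, and such a valuation is automatically trivial on any subfield — this is remarked explicitly before Proposition~\ref{P:hypersurface}, so nothing new is needed.

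For the dimension statement, I would first reduce to the case where the germ of $W$ at $Y$ is irreducible of dimension $d$. Indeed, if $\mathfrak p_1,\dots,\mathfrak p_r$ are the minimal primes of $\widehat I^Y$ in $K(Y)[[\Gamma]]$, then by Lemma~\ref{redval} we have $\ptrop(W,Y)=\bigcup_i\ptrop(\mathfrak p_i)$, and "pure dimension $d$'' for the germ means every $\dim K(Y)[[\Gamma]]/\mathfrak p_i=d$. Then Theorem~\ref{T:main} applies to each prime ideal $\mathfrak p_i$: $\ptrop(\mathfrak p_i)\subseteq\overline{\sigma}^\circ$ and $\nntrop(\mathfrak p_i)\subseteq\overline{\sigma}$ are rational PL conical subspaces of pure dimension $d$, and $\nntrop(\mathfrak p_i)$ is the closure of $\ptrop(\mathfrak p_i)$. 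Taking the union over $i$ preserves pure dimension $d$, and then taking the union over strata $Z\subseteq\Star(Y)$ — which, by the defining lemma, assembles into $\nntrop(W,Y)=\trop(W)\cap\overline{\sigma^Y}$ with its stratification — also preserves it, because each $\ptrop(W,Z)\subseteq(N^Y/N^Z)_\R$ already has pure dimension $d$ as a piece of $\nntrop(W,Y)$ restricted to that stratum (this is exactly the content of Theorem~\ref{T:main} applied relative to the face $\tau$ corresponding to $Z$, via the monomial-ideal reduction $\nntrop(I)=\nntrop(I_\tau)$ used at the start of that proof). Finally I restate the conclusion: $\trop(W)\cap\overline{\sigma^Y}$ is a rational polyhedral conical set, and of pure real dimension $d$ when the germ of $W$ at $Y$ is pure $d$-dimensional.

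The main obstacle is bookkeeping rather than a genuinely new difficulty: one must carefully match the stratification of $\overline{\sigma^Y}$ by the strata at infinity $(N^Y/N^Z)_\R$ (indexed by strata $Z$ of $\Star(Y)$, via \cite[Chapter II, Corollaries 1, 2]{KKMS}) with the stratification of $\nntrop(W,Y)$ coming from truncations $\widehat I^Y_\tau$, and confirm that $\ptrop(W,Z)$ as defined via $K(Z)[[M_+^Z]]$ coincides with the piece of $\nntrop(W,Y)$ lying in the corresponding stratum at infinity — but this is exactly the content of the preceding lemma together with the commutative-diagram argument of Lemma~\ref{L:compatibilityofltrop} (localization, then completion), transported verbatim to the toroidal setting. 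Once this identification is in place, the two assertions follow by direct citation of the Corollary after Lemma~\ref{L:equidim} and of Theorem~\ref{T:main}.
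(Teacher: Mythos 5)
Your proof is correct and follows the same approach as the paper, whose own proof simply cites Theorem~\ref{T:main}; you work out the reduction $\trop(W)\cap\overline{\sigma^Y}=\nntrop(W,Y)$, pass to $K(Y)[[M_+^Y]]$, decompose into minimal primes, and invoke Theorem~\ref{T:main} and the Corollary after Lemma~\ref{L:equidim}, all of which is sound.

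One small imprecision worth fixing, though it is not load-bearing in your argument: the claim that each $\ptrop(W,Z)$ for $Z\subseteq\Star(Y)$ ``sits naturally inside a stratum at infinity $(N^Y/N^Z)_\R$'' is not correct — $\ptrop(W,Z)$ lives in $(\overline{\sigma^Z})^\circ$, which is the union of $\mathring{\sigma^Z}$ and its projections to \emph{several} strata at infinity, not a single one. The correct stratification to use is $\overline{\sigma^Y}=\bigsqcup_{Z\in\Star(Y)}(\overline{\sigma^Z})^\circ$, and the preceding lemma gives $\nntrop(W,Y)\cap(\overline{\sigma^Z})^\circ=\ptrop(W,Z)$; you implicitly switch to this in the sentence identifying $\trop(W)\cap\overline{\sigma^Y}$ with $\nntrop(W,Y)$, and from there the argument goes through exactly as you describe.
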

\begin{proof}
The proof follows from Theorem~\ref{T:main}.
\end{proof}

\begin{example}\label{Ex:Toroitrop}
  In the top part of Figure \ref{fig:Toroitrop} is represented a (singular) curve
  $W$ in a smooth surface $X$, and $E_1 ,\dotsc, E_4$ are smooth curves of $X$ 
  crossing normally in succession at the points $A, B, C$. 
  Therefore, if $U := X \setminus \bigcup_{1 \leq i \leq 4} E_i$, the pair 
  $(X, U)$ is a toroidal embedding. In the bottom part of the figure we 
  represent the associated tropicalisation, which is obtained by gluing the 
  positive local tropicalisations in the neighborhood of the points $A, B, C$. 
  We denote by $\sigma_P$ the 2-dimensional cone corresponding to each point 
  $P \in \{A, B, C\}$, and by $\tau_i$ the $1$-dimensional cone corresponding to 
  the curve $E_i$, for each $i \in \{1 ,\dotsc,  4\}$. Notice that at the point $C$ we 
  have two irreducible components of $W$, but that their tropicalizations coincide,
  as both are smooth and transversal to $E_3$ and $E_4$.  
\end{example}

\bigskip
\begin{figure}[h!]
\labellist
\small\hair 2pt
\pinlabel  {$A$} at 140 457
\pinlabel  {$B$} at 240 363
\pinlabel  {$C$} at 364 469
\pinlabel  {$\sigma_A$} at 144 146
\pinlabel  {$\sigma_B$} at 260 196
\pinlabel  {$\sigma_C$} at 394 150
\pinlabel  {$L_A$} at 95 223
\pinlabel  {$L_B$} at 200 260
\pinlabel  {$L_C$} at 414 236
\pinlabel  {$E_1$} at 170 483
\pinlabel  {$E_2$} at 88 480
\pinlabel  {$E_3$} at 200 320
\pinlabel  {$E_4$} at 495 370
\pinlabel  {$W$} at 236 436
\pinlabel  {$\tau_1$} at 136 56
\pinlabel  {$\tau_2$} at 180 84
\pinlabel  {$\tau_3$} at 266 100
\pinlabel  {$\tau_4$} at 320 56
\endlabellist
\centering
\includegraphics[scale=0.50]{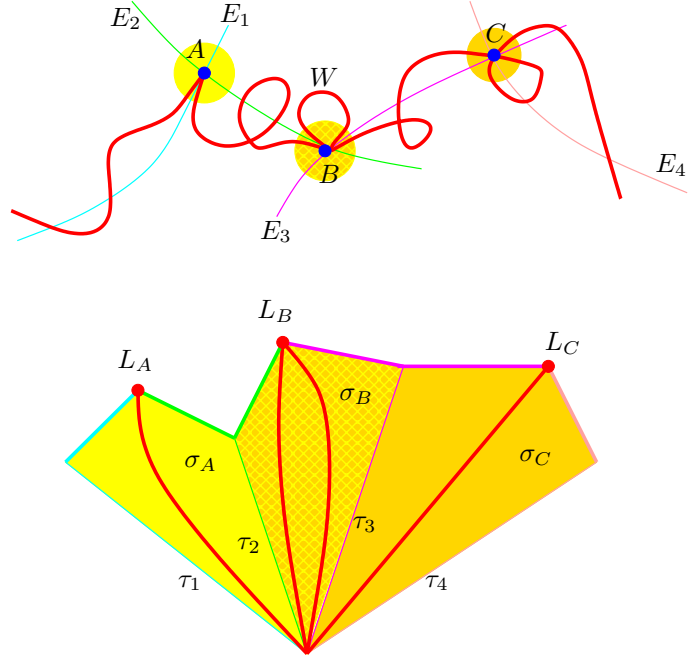}
\caption{Tropicalization of a curve in a toroidal embedding of dimension 2}
\label{fig:Toroitrop}
\end{figure}
\medskip

\medskip
\section{An extension of the definition of tropicalization} \label{S:extdef}

There is yet a more general version of local tropicalization. We are not going to
develop the theory here, but, as we promised in Introduction, we shall describe 
the main idea of the construction. 
\medskip

If $R$ is any commutative ring with unit and $(R^*, .)$ its group of units, then, as 
explained after Definition \ref{congr}, we can define the quotient  $R/R^*$ as a 
multiplicative semigroup. Any nonnegative valuation $v$ on $R$ defines a semigroup 
morphism $R/R^*\to \R_{\geq 0}$ (the argument is the same as the one given in the 
proof of Proposition~\ref{P:toroidalinvariance}). Then, we can speak about 
tropicalization of subsets $\mathcal{W} \subset \mathcal{V}(R)$ not only in the 
presence of semigroup morphisms $\gamma\colon\Gamma\to R$, but also of morphisms 
defined modulo units, that is of semigroup morphisms:
$$\gamma\colon\Gamma\to R/R^*.$$

This yields a functorial construction that generalizes the one described in 
Section \ref{S:defoftrop}. 

For instance, let $\eta$ be a point (not necessarily closed) of a normal algebraic 
variety (over an arbitrary algebraically closed field $K$) or an analytic space 
$X$. From now on, we consider $X$ as a germ at the point $\eta$. Let $D=\cup D_i$ 
be a reduced hypersurface on $X$. \emph{We do not assume that the pair $(X,D)$ is 
toroidal in any sense}. Let $\Gamma$ be the semigroup of effective Cartier 
divisors supported on $D$. The semigroup $\Gamma$ generalizes of the semigroup 
$M^Y_+$ defined in Section~\ref{Toroidal}. Since the semigroup of 
\emph{all} effective Cartier divisors on $X$ is isomorphic to 
$\mathcal{O}_{X,\eta}/\mathcal{O}_{X,\eta}^{*}$, the semigroup $\Gamma$ is 
naturally embedded in $\mathcal{O}_{X,\eta}/\mathcal{O}_{X,\eta}^{*}$. 
This embedding is given by assigning to each Cartier divisor a defining 
function (well-defined modulo a unit). 

Let us show that $\Gamma$ is an affine semigroup. Denote by $G$ the group of all 
Weil divisors supported on $D$, and by $H$ the group of all Cartier divisors 
supported on $D$. The group $G$ is free, thus $H$ is free as a subgroup of $G$. 
All effective Weil $\Q$-divisors form a rational polyhedral cone $\sigma$ of 
maximal dimension in $G_{\Q}$. Thus $\Gamma=H\cap\sigma$ is finitely generated
by Gordan's lemma (\cite[Section 1.2, Proposition 1]{F 93}). We conclude that 
$\Gamma$ is indeed an affine semigroup. Therefore, whenever a hypersurface $D$ on 
a normal germ $X$ is fixed, we can tropicalize any ideal $I$ of the local ring 
$\mathcal{O}_{X,\eta}$, by considering either the positive or the nonnegative 
local tropicalization of the canonical map $\Gamma \to R/R^*$, where 
$R := \mathcal{O}_{X,\eta} / I$. 

In this way, we extend the notions of positive and nonnegative local 
tropicalizations to the case of local semigroup morphisms  $(\Gamma, +)  \to 
(R/R^*, \cdot)$, where $R$ is an arbitrary  local ring.

\medskip
\section{Comparison with the literature}\label{S:literature}

In this section we compare our work with other results in the literature, we 
sketch some possible directions of development and we conclude by stating two 
open problems. 
\medskip

There are already several books and  plenty of papers on tropical geometry. The 
field is developing very fast, and sometimes ideas come to minds of several authors
almost simultaneously. It may well happen that our work is very close to something
already done or something currently being developed by other researchers. In this 
section we would like to explain what we think is new in our approach and 
what is taken from other sources.

The idea of tropicalization, though the term itself is relatively new, appeared 
already in Bergman's paper \cite{Bm2 71} from 1971. Even all three definitions of 
the tropicalization (using valuations, the definition based on initial ideals, 
and the one using $K$-valued points) are present there. Bieri and Groves 
\cite{BG 84} proposed the elegant point of view that the piecewise-linear 
complexes that are now called tropicalizations are \emph{invariants of the 
morphisms $M\to K^*$ from a finitely generated free abelian group $M$ to the
multiplicative group $K^*$ of a field $K$} or, more generally, of the morphisms 
$M\to (R,\cdot)$ to the multiplicative semigroup of a ring $R$. 

As the reader should remember, we defined local tropicalization as a subset of an 
extended affine space, and this subset corresponds to a morphism 
$\Gamma\to (R,\cdot)$ from a semigroup $\Gamma$. This generalizes
Bieri and Groves' point of view, though Payne's work \cite{P 08}, where 
tropicalizations of embeddings into arbitrary toric varieties are studied, was 
also very motivating for us. Extensions of affine spaces (called linear varieties 
in our paper) were already defined in \cite{AMRT 75}.  They are explained also in 
\cite{K 08}, \cite{P 08} and \cite{R 10}; our presentation has no substantial 
differences, but we describe in more detail the topology of those spaces.

As far as we know, tropicalizations of semigroup morphisms $\Gamma\to R$ for
arbitrary  local rings $R$ have not been studied in the literature before.
However, tropicalizations of not only algebraic but also analytic objects were
defined and studied by Touda \cite{T 05}, Rabinoff \cite{R 10}, and 
Gubler \cite{Gub 07}. 

In fact, the main part of our paper (Sections~\ref{S:powerseriesring}, 
\ref{S:lstruct}, and \ref{S:localglobal}) were an extension of Touda's work 
\cite{T 05}, though we started this project without knowing about it. Touda 
studies tropicalizations of ideals in the ring of formal power series over the 
field $\C$ of complex numbers. He works with the definition of local 
tropicalization using weights (analog of the second one used for global 
tropicalization, as recalled in the introduction). He proves then a theorem about 
piecewise-linear structure of the local tropicalization. As an important tool in 
his proofs, he uses the notion of local Gr\"obner fan of an ideal in a formal 
power series ring, as well as its properties proven by Bahloul and Takayama 
in \cite{BT 04}. 

The differences with our approach are the following. We work in the more general 
setting of morphisms $\Gamma\to (R,\cdot)$, in particular, $R$ can be an algebra over
an arbitrary field $K$, and we consider general ring valuations which lead to local
tropicalizations living in an extended affine space, whereas Touda restricts
only to the real part of the local tropicalization. Another new result in our
local finiteness theorem is the statement about dimension of local 
tropicalization. We should also note that some important steps of the 
construction of a tropical basis (e.g., \cite[Proposition~6.3]{T 05}) are left 
without proof in \cite{T 05}.

The main objects of the papers \cite{R 10} and \cite{Gub 07} are rings of series 
with some convergence conditions over fields endowed with a \emph{nontrivial} 
valuation and ideals in these rings. Notice that our local conditions (see 
Definition~\ref{D:loctrop}) imply that if the local ring $R$ has a subfield $K$, 
then any local valuation on $R$ is \emph{trivial} on $K$. Thus we think that our 
work is in a way complementary to \cite{Gub 07} and \cite{R 10}. Another important 
difference is that we could work completely without the theory of affinoid 
algebras that plays a major role in \cite{Gub 07} and \cite{R 10}, and in the 
proof of piecewise-linear structure of the tropicalization in \cite{EKL 06}. The 
local conditions lead also naturally to the question about extensions of 
nonnegative valuations treated in Section~\ref{S:extval}. Despite the fact that 
the literature on the valuation theory is very rich, we are not aware of any 
reference for questions of this kind.

In the proof of the local finiteness theorem we follow well-known ideas. The
use of Gr{\"o}bner basis techniques in describing the structure of tropicalization
is common, perhaps, since the paper \cite{SS 04} of Speyer and Sturmfels. To show 
the existence of universal standard, or Gr{\"o}bner, bases in power series rings 
we apply the method of Sikora \cite{S 04} (as explained by Boldini in \cite{Bol 09}).
Different and more constructive proofs should exist, but we do not know about 
them. It would be interesting to check if Sikora's method is applicable also to 
affinoid algebras. As it is said in \cite[Remark~8.8]{R 10}, a theorem on the 
existence of a universal standard basis for an ideal in an affinoid algebra would 
be an important part of the analytic tropical geometry. The method of a flat 
degeneration of an ideal to its initial ideal is rather standard, see, e.g., 
\cite[Theorem~15.17]{Eis 04}. The fact that an ideal $I$ and its initial ideal 
$\init_w(I)$ locally around $w$ have the same tropicalization has also been 
observed earlier, see \cite[Remark~7.9.2]{R 10}.

As we showed in Section~\ref{S:localglobal}, the usual tropicalization of 
subvarieties of a torus or of a toric variety can be glued from the local
tropicalizations. However, to claim that our local tropicalization generalizes the 
usual one would not be completely honest, since we essentially use properties
of the tropicalization of subvarieties of toric varieties in the proof of
Theorem~\ref{T:main}.

We are not aware of any other treatment of tropicalization of subvarieties of 
toroidal embeddings. A new feature in this case is the absence of the 
``big torus'' in a toroidal embedding. However, our local tropicalization is well 
suited for this situation since it uses only the ``formal torus embedding'' 
$\spec K[[x_1,\dots,x_n]]$. Once the theory of tropicalization of ideals of 
the rings $K[[\Gamma]]$ has been developed, the construction of tropicalization 
of subvarieties of toroidal embeddings is very natural and straightforward.

\medskip

Let us describe now some possible interactions of our work with developing parts
of mathematics. 

One should be able to prove in the toroidal 
setting an analog of Payne's main theorem from \cite{P 08} 
relating tropicalizations and analytifications in the Berkovich sense.
This would allow to make a bridge with Thuillier's work \cite{Thu 07} 
on the analytification of toroidal embeddings.

Our final general definition of tropicalization associated to a 
morphism of semigroups $\Gamma \rightarrow R/ R^*$ should be useful as a starting
point for 
tropicalizing log-structures. This seems to be one of the current directions of
development of tropical geometry, as indicated by Gross in his book \cite{G 11}
and in his talk \cite{Gtalk 11}. Indeed, a log-scheme is a scheme $X$ equipped
with a morphism of sheaves of (multiplicative) semigroups $\alpha_X\colon 
\mathcal{M}_X \rightarrow \mathcal{O}_X$, such that $\alpha_X$ realizes an 
isomorphism between $\alpha_X^{-1}(\mathcal{O}_X^*)$ and $\mathcal{O}_X^*$.
Let $\overline{\mathcal{M}}_X : = \mathcal{M}_X / \alpha_X^{-1}(\mathcal{O}_X^*)$.
Quoting from \cite[Page 101]{G 11} : ``\emph{The sheaf of monoids 
$\overline{\mathcal{M}}_X$, written additively [...] should be viewed as 
containing combinatorial information about the log structure}''.
Note that $\alpha_X$ induces a canonical morphism of sheaves of semigroups:
  $$ \overline{\mathcal{M}}_X \rightarrow \mathcal{O}_X / \mathcal{O}_X^*.$$
That is, we are ready for gluing our affine definitions of tropicalizations! 
 
The fact that we have isolated the category of semigroups as part of the structure 
allowing tropicalization should allow us to also make connections with algebraic 
geometry over the field with one element, as described for instance by Connes 
and Consani in \cite{CC 09}. As explained in Chapter 3 of that paper, the 
category of semigroups and morphisms of semigroups is an essential component 
of it. 
 
Another field which has already very important connections with tropical geometry
is the theory of Berkovich analytic spaces. As explained by Berkovich \cite{B 11},
the category of semigroups also plays an important role there. As the 
title of Berkovich's talk indicates, this should be seen as part of a project of
relating analytic geometry to geometry over the field with one element.

\medskip
We finish with two problems about local tropicalization.

\begin{problem}
Let $\gamma\colon (\Gamma, +) \to (R/ R^*, \cdot)$ be an arbitrary local morphism, 
where $\Gamma$ is a pointed affine semigroup and $R$ is a complete local ring. 
We do not suppose that $\gamma$ is the natural morphism of $\Gamma$ to a quotient 
of a power series ring $K[[\Gamma]]$ over a field $K$, as in 
Section \ref{S:lstruct}. Does the local tropicalization $\ptrop(\gamma)$ have 
piecewise-linear structure in such a general case? This question is interesting 
both in the case when $R$ contains a field or when it does not.
\end{problem}

\begin{problem}
Find a proof of Theorem~\ref{T:main} that is independent of the standard theory 
of tropicalization of subvarieties of toric varieties.
\end{problem}

\bibliographystyle{amsalpha}

\end{document}